\title{Critical exponent gap and leafwise dimension}
\author{Omri Nisan Solan}
\thanks{\texttt{omrinisan.solan@mail.huji.ac.il}, Einstein Institute of Mathematics, Hebrew University, Israel.}
\thanks{This research was supported by ERC 2020 grant HomDyn (grant no.~833423).}
\date{\today}
\begin{document}
\begin{abstract}
  We show that for every geometrically finite Kleinian group $\Gamma<\SL_2(\CC)$ there is a gap $\varepsilon_\Gamma>0$ such that for every $g\in \SL_2(\CC)$ the intersection $\SL_2(\RR)\cap g\Gamma g^{-1}$ is either a lattice in $\SL_2(\RR)$ or has critical exponent $\delta(\SL_2(\RR)\cap g\Gamma g^{-1}) \leq 1 - \varepsilon_\Gamma$.
\end{abstract}

\maketitle
\section{Introduction}
In his landmark work, Margulis \cite{margulis1977discrete} showed that there are no irreducible nonarithmetic lattices in higher-rank semisimple Lie groups (see Definition \ref{def: homogeneous dynamics} of arithmetic lattice). However, there are nonarithmetic lattices in $\SL_2(\RR)$ and $\SL_2(\CC)$ and more generally $\SO(n,1)$ for $n\geq 2$. This paper focuses on nonarithmetic lattices in $G = \SL_2(\CC)$.
There are several constructions for such lattices. One such construction is given by Gromov and Piatetski-Shapiro \cite{gromov1987non} as the fundamental group of a certain surgery of two arithmetic hyperbolic manifolds.
Some other constructions can be found e.g.\ in \cite{vinberg1967discrete, reid1991arithmeticity}.

A recent result of Mohammadi and Margulis \cite{mohammadi2022arithmeticity} and of Bader, Fisher, Miller and Stover \cite{bader2021arithmeticity} gives a geometric sufficient criterion for a lattice $\Gamma<G$ to be arithmetic, namely, if there are infinitely many totally geodesic surfaces in $\HH^3/\Gamma$. This is equivalent to $G/\Gamma$ having infinitely many periodic $\SL_2(\RR)$-orbits.
The Bader, Fisher, Miller and Stover result is more general in that it deals with lattices in $SO(n, 1)$ for any $n\ge 3$.

A notable distinction between arithmetic and nonarithmetic lattices is the following. Let $\SL_2(\RR).x$ be an orbit for some $x\in G/\Gamma$.
If $\Gamma$ is arithmetic and $\stab_{\SL_2(\RR)}(x)$ is Zariski dense in $\SL_2(\RR)$, then a theorem of Borel and Harish-Chandra \cite{BorelHarishChandra} says that $\stab_{\SL_2(\RR)}(x)$ must be a lattice as well. In the nonarithmetic case, this is no longer true.
One can quantify the ``size'' of a Zariski dense subgroup $\Lambda < \SL_2(\RR)$ by its critical exponent:
\begin{definition}[Critical exponent]\label{def: critical exponent}
  Let $\Lambda < \SL_2(\RR)$ be a discrete subgroup. Define the \emph{critical exponent} of $\Lambda$ by
  \[\delta(\Lambda) = \limsup_{R\to \infty}\frac{\log \#(B_{\SL_2(\RR)}(R)\cap \Lambda)}{R}.\]
  Here $B_{\SL_2(\RR)}(R)$ is a ball in $\SL_2(\RR)$ around the identity, with respect to the natural metric $d_{\SL_2(\RR)}$ which will be specified in Section \ref{sec: notations}.
  Alternatively, $\delta(\Gamma)$ is the abscissa of convergence of the Poincare series $L_\Gamma(s) = \sum_{\lambda \in \Lambda} d_{\SL_2(\RR)}(\lambda, I)^{-s}$. 
  If $\Lambda$ is a lattice, then $\delta(\Lambda) = 1$, and this is the maximal possible critical exponent.
\end{definition}


The main result of this paper is the following theorem.

\begin{theorem}[Critical exponent gap]\label{thm: main}
  Let $\Gamma<G$ be a geometrically finite Kleinian group.
  For every $g\in G$ define \[\Gamma_g = \SL_2(\RR)\cap g\Gamma g^{-1} = \stab_{\SL_2(\RR)}(\pi_\Gamma(g)),\] where $\pi_\Gamma(g)$ image of $g$ in $G/\Gamma$, and consider the critical exponent
  $\delta(\Gamma_g)$.
  Then there is an $\varepsilon_\Gamma>0$ such that for all $g\in G$ one of the following holds:
  \begin{enumerate}
    \item $\delta(\Gamma_g) \le 1-\varepsilon_\Gamma$;
    \item $\Gamma_g$ is a lattice.
  \end{enumerate}
\end{theorem}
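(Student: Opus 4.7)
The strategy is to convert $\delta(\Gamma_g)$ into the Hausdorff dimension of a slice of the Kleinian limit set $\Lambda(\Gamma) \subset S^2 = \partial \HH^3$ by a round circle, and then to prove a uniform slicing gap using the \emph{leafwise dimension} functional alluded to in the title. Write $C_g := g^{-1}(\RR\cup\{\infty\})$ for the $\SL_2(\RR)$-invariant round circle. By the Bishop--Jones theorem, $\delta(\Gamma_g) = \dim_H(\Lambda_c(\Gamma_g))$, where $\Lambda_c$ denotes the conical limit set. From $\Lambda_c(\Gamma_g)\subseteq \Lambda_c(g\Gamma g^{-1})\cap C_g$, together with the fact that for geometrically finite $\Gamma$ the set $\Lambda(\Gamma)\setminus \Lambda_c(\Gamma)$ consists of countably many bounded parabolic points, one obtains
\[
  \delta(\Gamma_g)\le \dim_H(\Lambda(\Gamma)\cap C_g).
\]

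The heart of the argument is a uniform slicing inequality: there exists $\varepsilon_\Gamma>0$ such that for every circle $C\subset S^2$ with $C\not\subseteq \Lambda(\Gamma)$, $\dim_H(\Lambda(\Gamma)\cap C)\le 1-\varepsilon_\Gamma$. A pointwise version should follow from Sullivan's exact-dimensionality of the Patterson--Sullivan measure $\mu_\Gamma$: a round slice of $\Lambda(\Gamma)$ not contained in $\Lambda(\Gamma)$ cannot carry a $1$-dimensional Ahlfors regular piece of $\mu_\Gamma$. The delicate step is the upgrade to a uniform bound, because Hausdorff dimension is discontinuous under Hausdorff convergence of compacta: one can have $\dim_H(\Lambda(\Gamma)\cap C_n)<1$ along $C_n\to C_\infty$ yet $\dim_H(\Lambda(\Gamma)\cap C_\infty)=1$. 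The resolution is to introduce a leafwise dimension functional on the $\SL_2(\RR)$-orbit space of $G/\Gamma$ which (i) dominates $\delta(\Gamma_g)$ pointwise, (ii) equals $1$ precisely when $\Gamma_g$ is a lattice, and (iii) is upper semicontinuous in a suitable Chabauty-type topology; combined with the compactness of the nonwandering part of the $\SL_2(\RR)$-action on $G/\Gamma$ (extracted from geometric finiteness of $\Gamma$ via a Margulis-lemma-style argument), this produces the gap.

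It remains to handle the case $C_g\subseteq \Lambda(\Gamma)$. For non-lattice $\Gamma$ one shows that such a $C_g$ either bounds a properly immersed finite-area totally geodesic surface in $\HH^3/\Gamma$ (so $\Gamma_g$ is a lattice) or is approximated by circles not contained in $\Lambda(\Gamma)$ to which the uniform slicing bound already applies. When $\Gamma$ itself is a lattice and the slicing bound becomes vacuous, one instead invokes Ratner and Mozes--Shah to classify $\SL_2(\RR)$-orbit closures on $G/\Gamma$: closed orbits correspond to lattice stabilizers, while on non-closed orbits the leafwise dimension framework still produces the gap.

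The main obstacle is the uniformity step: designing a leafwise dimension which is both upper semicontinuous and sharp at the lattice threshold is nontrivial, and is presumably where the bulk of the paper's technical work resides. Once the framework is in place, the rest of the argument reduces to the Bishop--Jones / Patterson--Sullivan reduction and a compactness-plus-semicontinuity exercise.
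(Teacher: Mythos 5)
There is a genuine gap, and it sits exactly where the theorem is hardest. Your slicing framework is vacuous in the case that carries essentially all of the paper's weight: when $\Gamma$ is a lattice (in particular a nonarithmetic lattice), the limit set is all of $S^2$, every circle is contained in it, and you dispose of this case with ``invoke Ratner and Mozes--Shah to classify $\SL_2(\RR)$-orbit closures \dots on non-closed orbits the leafwise dimension framework still produces the gap.'' No argument is given, and orbit-closure classification cannot give one: a dense $\SL_2(\RR)$-orbit can have a Zariski-dense, non-lattice stabilizer whose critical exponent is arbitrarily close to $1$ --- this is precisely Theorem \ref{thm: example} of the paper, which also shows the gap must depend on $\Gamma$ and so cannot come from any soft topological dichotomy. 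The paper's actual route in the lattice case is a contradiction argument: represent $\delta(\Gamma_{g_k})$ as entropy of (finitely generated approximations of) Bowen--Margulis--Sullivan measures (Proposition \ref{prop: critical exponent via entropy}), push these to $G/\Gamma$, use the leafwise Markov chain result (Theorem \ref{thm: dimension implies invariance}) to make any weak-$*$ limit $\SL_2(\RR)$-invariant, rule out escape of mass and concentration on periodic orbits by linearization and Margulis functions (Lemma \ref{lem: non-degenerate limits}), apply Ratner's \emph{measure} classification to identify the limit with $m_{G/\Gamma}$, and then --- crucially --- run the Bader--Fisher--Miller--Stover machinery (lifting to the projective bundle $(G\times \PP(V))/\Gamma$ and their Theorem 1.6) to contradict nonarithmeticity. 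Nothing in your proposal replaces this last arithmetic input: for a nonarithmetic lattice, having invariant measures equidistribute toward Haar is not by itself a contradiction.

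The uniformity step you lean on is also not available in the form you assume. You posit a leafwise dimension functional that is (ii) equal to $1$ exactly when $\Gamma_g$ is a lattice and (iii) upper semicontinuous; (ii) is essentially the statement being proved (so the argument is circular there), and (iii) --- semicontinuity of $\dim^\tu$ under weak-$*$ limits --- is not established in the paper and is in fact posed there as an open question (Question \ref{ques: dimension preservation}). What the paper proves is the weaker Theorem \ref{thm: dimension implies invariance} (if $\dim^\tu\mu_k\to 1$ then the limit is $\tu$-invariant), and that weaker statement only yields the theorem after the additional work above. Finally, ``compactness of the nonwandering part'' fails for geometrically finite $\Gamma$ with cusps; escape of mass into cusps and degeneration onto periodic $\SL_2(\RR)$-orbits are real obstructions that the paper spends Section \ref{sec: proof of geometric lemma} eliminating, and your compactness-plus-semicontinuity exercise does not address them. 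The Bishop--Jones/Patterson--Sullivan reduction in your first paragraph is a reasonable translation of the problem, but as it stands the proposal is missing the ideas that actually produce the gap.
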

To show that this $\varepsilon_\Gamma$ cannot be chosen uniformly even for nonarithmetic lattices $\Gamma$, we prove the following.
\begin{theorem}\label{thm: example}
  For every $\varepsilon>0$ there is a nonarithmetic lattice $\Gamma<G$ and $g\in G$ such that $\Gamma_g$ is not a lattice but $\delta(\Gamma_g) > 1-\varepsilon$.
\end{theorem}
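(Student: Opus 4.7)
The plan is to use the Gromov--Piatetski-Shapiro hybrid construction \cite{gromov1987non} to produce a nonarithmetic lattice $\Gamma<G$ containing a totally geodesic plane whose stabilizer is ``almost'' but not quite a cocompact Fuchsian lattice. Concretely, I would start from two non-commensurable arithmetic hyperbolic $3$-manifolds $N_1,N_2$ that share an isometric copy of a closed totally geodesic surface $V$, cut each $N_i$ along $V$, and glue chosen sides $M_1\subset N_1$ and $M_2\subset N_2$ to form the hybrid manifold $M=M_1\cup_V M_2$, whose fundamental group $\Gamma=\pi_1(M)$ is then a nonarithmetic lattice.

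The second step is to exploit the arithmetic structure of $N_1$ to find a closed totally geodesic surface $\Sigma\subset N_1$ whose intersection with $V$ has total length $L$ very small compared to the area $A$ of the piece $\Sigma_+:=\Sigma\cap M_1$. A generic pair $(\Sigma,V)$ in $N_1$ will have $L/A$ bounded below, but one should pass to a sufficiently deep congruence cover $\widetilde N_1\to N_1$ in which the preimage of $V$ splits into many components while the preimage of $\Sigma$ remains in few components; gluing along just one component of the preimage of $V$ then leaves most of $\widetilde\Sigma$ on the $M_1$ side and makes $L/A$ arbitrarily small.

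The third step is to identify $\Gamma_g$ and estimate its critical exponent. Let $P\subset\HH^3$ be a lift of the totally geodesic plane in $M$ that extends $\Sigma_+$ across $V$ into $M_2$, and pick $g\in G$ so that $\Gamma_g=\SL_2(\RR)\cap g\Gamma g^{-1}$ equals the $\Gamma$-stabilizer of $P$. Because $N_2$ is not commensurable with $N_1$, the extension into $M_2$ does not close up, so the surface $\HH^2/\Gamma_g$ has infinite area and $\Gamma_g$ is not a lattice. On the other hand $\Gamma_g$ contains the convex cocompact Fuchsian subgroup $\Lambda_+=\pi_1(\Sigma_+)\hookrightarrow \pi_1(M_1)\hookrightarrow\Gamma$, whose convex core is $\Sigma_+$ of area $A$ and geodesic boundary length at most $L$. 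A standard estimate for critical exponents of convex cocompact Fuchsian groups (via Patterson--Sullivan measures, or the pressure formalism on the Bowen--Series coding) then gives $\delta(\Lambda_+)\to 1$ as $L/A\to 0$, so $\delta(\Gamma_g)\geq \delta(\Lambda_+)>1-\varepsilon$ for a sufficiently good choice.

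The main obstacle is the arithmetic step: confirming that pairs $(\Sigma,V)$ with $L/A$ arbitrarily small can indeed be realized in (some cover of) an arithmetic hyperbolic $3$-manifold. This will need control over how totally geodesic surfaces split in congruence covers, which I expect to follow from a Chebotarev-style density statement combined with the explicit description of totally geodesic surfaces in arithmetic hyperbolic $3$-manifolds coming from quaternion algebras. The critical-exponent comparison in the last step is standard in spirit but also needs to be made fully quantitative in the convex-core parameters $A$ and $L$.
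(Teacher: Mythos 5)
Your skeleton (Gromov--Piatetski-Shapiro hybrid, a piece of an arithmetic totally geodesic surface cut along the gluing locus, a lower bound for the critical exponent of the stabilizer of the plane extending that piece) is the same as the paper's, but your central arithmetic step is both unproven and, as formulated, cannot work. You ask for congruence covers in which the preimage of $V$ splits into many components while the preimage of $\Sigma$ ``remains in few components''. Both $\pi_1(V)$ and $\pi_1(\Sigma)$ are Zariski-dense only in conjugates of $\SO(2,1)$, so by strong approximation their images in a congruence quotient of $\Gamma_1$ have comparable size, and the two preimages split into comparably many components; you cannot keep $\widetilde\Sigma$ essentially connected while $V$ splits a lot. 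What you actually need is weaker: one component $\widetilde V_1$ of the preimage of $V$ whose intersection with a fixed component $\widetilde\Sigma$ is \emph{nonempty} but short compared with the area of a complementary piece. Pigeonhole gives a component with small intersection only if you first show that $\widetilde\Sigma$ meets many \emph{distinct} components of the preimage of $V$ (and the minimizer could have empty intersection, which produces a closed surface and hence a lattice, the opposite of what you want); this equidistribution statement is exactly the ``Chebotarev-style'' input you leave open, and it is the crux, not a routine verification. You would also have to arrange that the chosen cover $\widetilde V_1$ of $V$ embeds totally geodesically in (a congruence cover of) the second arithmetic manifold so that the gluing can be performed at all. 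The paper sidesteps all of this: it takes no covers, but builds $\Gamma_1,\Gamma_2$ from the forms $7x_1^2-x_2^2-x_3^2-A_ix_4^2$ with $A_i$ huge, shows that distinct $\Gamma_i$-translates of the cutting plane are at mutual distance at least $c\log A_i$ (Corollary \ref{cor: far planes}, Proposition \ref{prop: far lines}), and then proves directly that rays avoiding a union of pairwise-distant lines have Hausdorff dimension $1-O(1/\log A_1)$ (Proposition \ref{prop: high dimension away of lines}), concluding with Sullivan's theorem; note the controlled quantity is the mutual distance of the cutting lines inside the surface, not your ratio of boundary length to area (your spectral route via $\lambda_0=\delta(1-\delta)$ and a test function on the convex core is plausible in spirit, but it is not what makes the paper's argument work, and it still depends on the unproven arithmetic input).

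There is a second genuine gap: ``because $N_2$ is not commensurable with $N_1$, the extension into $M_2$ does not close up'' is not an argument. The hybrid manifold does contain closed totally geodesic surfaces (the cutting surface itself, surfaces disjoint from it, and possibly surfaces crossing it orthogonally assembled from both sides), so non-commensurability alone does not prevent your plane from closing up. The paper establishes non-latticeness by choosing the initial periodic $\SL_2(\RR)$-orbit so that the surface meets $V_1$ \emph{non-orthogonally} and invoking \cite[Thm.~4.1]{fisher2021finiteness} or \cite[Prop.~12.1]{benoist2022geodesic}; such orbits exist by density of closed orbits. Your construction needs the same non-orthogonality condition (or a substitute), and you must check it is compatible with whatever surface $\Sigma$ your arithmetic step produces.
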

\begin{remark}
  It seems likely that in the homogeneous space $G/\Gamma$ we construct in Theorem \ref{thm: example} there are infinitely many orbits $\SL_2(\RR).\pi_\Gamma(g)$ so that $\delta(\Gamma_g) > 1-\varepsilon$, but we do not know how to show it.
\end{remark}
\begin{remark}
  A gap in critical exponent was shown by Phillips and Sarnak \cite{phillips1985laplacian} for Schottky groups in $\SO(n, 1)$. 
\end{remark}
\subsection{Application to polynomial equidistribution}
We will relate Theorem \ref{thm: main} to a recent result of Lindenstrauss, Mohammadi, and Wang \cite{lindenstrauss2022effective}. Let $\Gamma<G$ be a lattice.
Let \[\tu(s) = \begin{pmatrix}
    1 & s \\0&1
  \end{pmatrix}, \quad\ta(t) = \begin{pmatrix}
    e^{t/2} & 0 \\0&e^{-t/2}
  \end{pmatrix}, \qquad\forall s,t\in \RR,\]
and let $x\in G/\Gamma$.
Ratner's Equidistribution Theorem (See \cite{ratner1990strict,ratner1990acta,ratner1991raghunathan}) shows that $\tu(s).x$ equidistributes in some homogeneous subspace of $G/\Gamma$. More formally, the sequence of measures \[\mu_{T,x} = \frac{1}{T}\int_{0}^T\delta_{\tu(s).x}\bd s\] converges to the Haar measure on a homogeneous subspace. Moreover, unless $x$ lies in a $\tu(s)$-invariant homogeneous subspace, $\mu_{T,x} \xrightarrow{T\to \infty} m_{G/\Gamma}$.
Lindenstrauss, Mohammadi, and Wang \cite{lindenstrauss2022effective} effectivized this claim whenever $\Gamma$ is arithmetic. \cite[Thm.~1.1]{lindenstrauss2022effective} can be seen to be equivalent to the effectivization of Ratner's Equidistribution Theorem. Informally and inaccurately, it states that either
$\ta(t)\mu_{1,x}$ is $\exp(-\star t)$ close to the Haar measure $m_{G/\Gamma}$ or one of the following algebraic obstructions occurs:
\begin{itemize}
  \item
        $\tu(s)x$ is $\exp(-\star t)$ close to a periodic orbit
        $\SL_2(\RR).x'$ of volume $\exp(-\star t)$ for all $t\ge 0$.
  \item $x$ lies too deep in a cusp of $G/\Gamma$.
\end{itemize}

Lindenstrauss, Mohammadi and Wang also give a version of their theorem for nonarithmetic lattices in $G$ (\cite[Thm.~1.3]{lindenstrauss2022effective}), but its statement is more complicated as it cites another, more complicated, type of obstruction, unrelated to periodic $\SL(2,\RR)$-orbits or cusp excursions, e.g.~that the initial point is close to a point $\pi_\Gamma(g)$ for which $\Gamma_g$ is Zariski dense but not a lattice. And indeed, this potentially is an obstruction:
suppose that $x\in G/\Gamma$ has a stabilizer $\Lambda = \stab_{\SL_2(\RR)}x < \SL_2(\RR)$ with critical exponent
$\delta(\Lambda)$, and suppose $\delta(\Lambda)$ is very close to $1$ (its maximal value).
This allows $\ta(t)\tu(s).x$ to return $\Theta(\exp(\delta(\Lambda)t))$ times to a ball $B_{\SL_2(\RR)}(1).x$ for $s\in [0,1]$. This gives an lower bound of $\exp((\delta(\Lambda)-1)t)$ on the distance of $\mu_{T,x}$ and the Haar measure $m_{G/\Gamma}$.

Therefore, if one wants also for a nonarithmetic lattice a polynomial equidistribution theorem analogous to \cite[Thm.~1.1]{lindenstrauss2022effective}, the first step is to bound $\delta(\Lambda)$, which is done by Theorem \ref{thm: main}.
In a follow-up to this paper, we will show the following polynomial unipotent equidistribution result.
\begin{corollary}[Polynomial unipotent equidistribution in nonarithmetic $G/\Gamma$]\label{cor: main}
  Let $\Gamma<G$, be a nonarithmetic lattice.
  For every $x_0 \in G/\Gamma$ and
  large enough $R$ (depending only on $\Gamma$ and the injectivity radius of $x_0$), for any $T \ge R^{A}$, at least one
  of the following holds.
  \begin{enumerate}
    \item For every $\varphi \in C^\infty_c(G/\Gamma)$,
          \[\left|\int_0^1\varphi(\ta(\log T)\tu(r)) \bd r - \int_{G/\Gamma}\varphi \bd m_{G/\Gamma}\right| < S(\varphi)R^{-\kappa_1}.\]
          where $S(\varphi)$ is a certain Sobolev norm.
    \item There exists $x_1\in G/\Gamma$ such that the orbit $\SL_2(\RR) x_1$ is periodic
          and
          \[d_{G/\Gamma}(x_0, x_1) < R^A(\log T)^AT^{-1}.\]
  \end{enumerate}
  The constants $A$ and $\kappa_1$ are positive and depend on $\Gamma$ but not on $x_0$.
\end{corollary}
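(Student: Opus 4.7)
The plan is to bootstrap from the nonarithmetic effective equidistribution theorem \cite[Thm.~1.3]{lindenstrauss2022effective} by using Theorem \ref{thm: main} to neutralize its extra obstruction. For nonarithmetic $\Gamma$, \cite[Thm.~1.3]{lindenstrauss2022effective} gives the same conclusion as in the arithmetic case \cite[Thm.~1.1]{lindenstrauss2022effective} unless one of three algebraic obstructions occurs: (i) the trajectory passes within $T^{-1}$ of a periodic $\SL_2(\RR)$-orbit; (ii) $x_0$ lies deep in a cusp; (iii) the trajectory passes close to some $\pi_\Gamma(g)$ for which $\Gamma_g$ is Zariski dense but not a lattice. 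Obstruction~(i) is exactly case~(2) of the corollary, obstruction~(ii) is ruled out by the hypothesis that $R$ exceeds a function of the injectivity radius of $x_0$, so the task is to convert obstruction~(iii) into case~(1).

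My approach is to apply \cite[Thm.~1.3]{lindenstrauss2022effective} with parameters tuned to $R$ and $T$; if neither case of the corollary already holds, then (iii) yields $g \in G$ with $\pi_\Gamma(g)$ lying $T^{-1}$-close to $\tu(s_0).x_0$ for some $s_0 \in [0,1]$, with $\Gamma_g$ Zariski dense but not a lattice. Theorem \ref{thm: main} then supplies $\delta(\Gamma_g) \le 1 - \varepsilon_\Gamma$. The number of visits of $\ta(\log T)\tu(s).x_0$ to a unit ball around $\pi_\Gamma(g)$ for $s \in [0,1]$ is controlled by $\#(B_{\SL_2(\RR)}(\log T) \cap \Gamma_g) \le T^{1 - \varepsilon_\Gamma/2}$, much smaller than the $\sim T$ returns required to meaningfully obstruct equidistribution. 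Combined with the standard distortion and thickening estimates from \cite{lindenstrauss2022effective}, this means the contribution of this obstruction to the discrepancy in case~(1) is at most $T^{-\varepsilon_\Gamma/2}$, which is $\le R^{-\kappa_1}$ once $T \ge R^A$ with $A$ sufficiently large, and hence is absorbed into the error term.

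The main obstacle is producing a uniform counting estimate: Definition \ref{def: critical exponent} is only a $\limsup$ statement, so Theorem \ref{thm: main} gives no a priori uniform bound $\#(B_{\SL_2(\RR)}(R_0) \cap \Gamma_g) \le C_\Gamma R_0^{1 - \varepsilon_\Gamma/2}$ across all $g \in G$. The family $\{\Gamma_g\}_{g \in G}$ is noncompact since $\pi_\Gamma(g)$ can escape into a cusp of $G/\Gamma$ or its $\SL_2(\RR)$-leaf can develop its own thin part, and the parabolic subgroup structure of $\Gamma_g$ may change in such limits. I would handle this by splitting into thick and thin cases: on a fixed compact part of $G/\Gamma$ a compactness-and-net argument upgrades Theorem \ref{thm: main} to a uniform quantitative bound, while in the thin part the parabolic contribution to $\Gamma_g$ is analyzed explicitly via horoball geometry, as is standard in the Patterson--Sullivan theory for geometrically finite Kleinian groups. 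Interfacing the resulting uniform counting bound with the precise form of obstruction~(iii) in \cite[Thm.~1.3]{lindenstrauss2022effective} is bookkeeping, but requires care to track the polynomial dependence on $R$ and on the Sobolev norm $S(\varphi)$.
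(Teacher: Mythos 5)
First, a point of comparison: this paper does not actually prove Corollary \ref{cor: main}; the proof is explicitly deferred to a follow-up, the only indication of strategy being that Theorem \ref{thm: main} supplies the needed bound on $\delta(\Gamma_g)$ and that the rest should follow the scheme of \cite{lindenstrauss2022effective}. Your high-level plan (apply \cite[Thm.~1.3]{lindenstrauss2022effective} and use Theorem \ref{thm: main} to dispose of the extra obstruction) therefore matches the announced strategy, but as a proof it has a genuine gap, and it sits exactly at the point you dismiss as upgradeable. Theorem \ref{thm: main} bounds a $\limsup$: it says nothing about $\#\bigl(B_{\SL_2(\RR)}(\log T)\cap \Gamma_g\bigr)$ at any finite scale for an individual $g$, let alone uniformly over the continuum of groups $\Gamma_g$ that can arise as the obstruction point varies with $x_0$, $R$, $T$. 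Your proposed repair on the thick part, a ``compactness-and-net argument,'' does not do the job: for a fixed radius the counting function $g\mapsto \#\bigl(\Gamma_g\cap B_{\SL_2(\RR)}(R_0)\bigr)$ is upper semicontinuous (elements of bounded norm in $\Gamma_{g_k}$ come from a fixed finite subset of $\Gamma$ as $g_k\to g$), so one gets a bound at each fixed scale, but no control on the growth rate as $R_0=\log T\to\infty$: a sequence $g_k\to g$ could exhibit counts $\geq e^{(1-\varepsilon)R_{0,k}}$ at scales $R_{0,k}\to\infty$ without forcing $\delta(\Gamma_g)$ to be large, because the witnessing group elements have norms tending to infinity and need not converge into $\Gamma_g$. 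Ruling out this scenario is essentially a second limiting/rigidity argument of the same nature as the proof of Theorem \ref{thm: main} itself (and is presumably the real content of the follow-up); it is not bookkeeping, and the thin-part ``horoball geometry'' only controls parabolic elements, not the whole count.

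Second, the way you intend to use the bound misreads \cite[Thm.~1.3]{lindenstrauss2022effective} as a black box. When their extra obstruction occurs, the theorem asserts no equidistribution estimate at all, so there is no ``contribution of this obstruction to the discrepancy'' that can be ``absorbed into the error term''; to land in case (1) of the corollary you must show, for suitable parameters, that the third alternative cannot occur (or that it forces case (2)). Your heuristic --- at most $T^{1-\varepsilon_\Gamma/2}$ returns versus the $\sim T$ returns needed to genuinely obstruct --- is the right intuition, but implementing it means either interfacing the (currently missing) uniform finite-scale counting bound with the precise quantitative form of the obstruction in \cite{lindenstrauss2022effective}, or re-entering their proof at the dimension-increment step. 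As written, the proposal is a correct articulation of the intended strategy with the decisive step left unproven.
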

The Sobolev norm used here is the same as in \cite{lindenstrauss2022effective}.
See \cite{mohammadi2022arithmeticity}, \cite{bader2021arithmeticity} for a finiteness result of the periodic orbits in Option $2$ in the above corollary.
\begin{remark}\label{rem: high crit lattices}
  The proof of Theorem \ref{thm: main} is via a limiting argument, and uses Ratner's Measure Classification Theorem. In the lattice case it invokes also elements of \cite{bader2021arithmeticity}. Hence, Theorem \ref{thm: main} is not effective, which implies the same regarding the constants in Theorem \ref{thm: main} and Corollary \ref{cor: main}.
  Similarly, the results of in \cite{mohammadi2022arithmeticity} and \cite{bader2021arithmeticity} prove that for a nonarithmetic lattice $\Gamma$ there are only finitely many $\SL_2(\RR)$ periodic orbits without any estimate on their number. In contrast, the constants in \cite{lindenstrauss2022effective} are explicit in principle; cf.\ also \cite[Thm.~1.4]{lindenstrauss2023polynomial}.
\end{remark}
\subsection{Structure of the proof of the gap in critical exponent}
\label{ssec: on the proof}
As mentioned above, the proof of Theorem \ref{thm: main} is based on an ergodic theoretic arguments, and in the nonarithmetic lattice case, it uses also results from \cite{bader2021arithmeticity}. 
In \cite{bader2021arithmeticity}, the first step assumes to the contrary that there is a nonarithmetic lattice $\Gamma<G$ for which there are infinitely many periodic orbits $(\SL_2(\RR).x_k)_{k=1}^\infty$ for $x_k = \pi_\Gamma(g_k)\in G/\Gamma$.
Then, using Ratner's theorem (or more precisely a result of Mozes and Shah that relies on this theorem as well as the Dani-Margulis linearization method), the authors show that the sequence of Haar measures on these periodic orbits converges to the Haar measure, i.e.
\[m_{\SL_2(\RR).x_k}\xrightarrow{k\to \infty} m_{G/\Gamma}.\]
In our case, $\Gamma_{g_k} = \stab_{\SL_2(\RR)}(x_k)$ are not lattices, so the Haar measures on them are infinite.
Instead, we will use for each $k$ the Bowen-Margulis-Sullivan measure $\mu_k$ corresponding\footnote{In fact, we use Bowen-Margulis-Sullivan measures corresponding to finitely generated subgroups of $\Gamma_{g_k}$ so that the measure will be finite. We ignore this subtlety for the introduction.} to $\Gamma_{g_k}$ on $\SL_2(\RR).x_k$.
It has an entropy $h_{\mu_k}(\ta(1)) = \delta(\Gamma_{g_k})$.
An $\ta$-invariant measure on $G/\Gamma$ can have any entropy $\leq 2$, so these entropies are certainly far from being maximal entropy. Thus we cannot show that such a measure is close to Haar using only the uniqueness of measure of maximal entropy on $G/\Gamma$ (See e.g. \cite{bowen1973maximizing}, \cite[\S11]{viana2016foundations}).
However, all of the entropy of these $\mu_k$ comes ``from the $\SL_2(\RR)$ direction''. This intuition can be formalized to say that the $\tu$-leafwise dimension (see Definition \ref{def: leafwise dimension}) of $\mu_k$ is almost everywhere $\delta(\Gamma_{g_k})$ which is close to the maximal value $1$.
This leads us to the ergodic component of the proof, Theorem \ref{thm: dimension implies invariance} below. This theorem enables us to utilize this large dimension to show that any weak-$*$ limit is $\SL_2(\RR)$-invariant, and is of interest by itself.
However, there is yet work to be done to show that the limit is the Haar measure $m_{G/\Gamma}$, as there may be an escape of mass, or positive mass to $\SL_2(\RR)$-periodic orbit.
To rule out these options we use linearization methods and Margulis functions.
Once we show that the limit is the Haar measure, we get a contradiction in the infinite volume case, and use \cite{bader2021arithmeticity} for the nonarithmetic lattice case. 

\subsection{Structure of the construction of a lattice \texorpdfstring{$\Gamma$}{Gamma} with small gap}
As for Theorem \ref{thm: example}, its proof can be divided to three main parts. 

\textbf{Part 1, construction of a homogeneous space:} We implement a construction of a nonarithmetic lattice given by Gromov and Piatetski-Shapiro \cite{gromov1987non}, who construct a nonarithmetic space $G/\Gamma$ in the following way:
Take two (carefully constructed) arithmetic spaces $G/\Gamma_1$ and $G/\Gamma_2$, and identify isomorphic codimension-$1$ submanifold $V_i\subseteq \HH^3/\Gamma_i$. 
Cut $\HH^3/\Gamma_i$ along $V_i$ for each $i=1,2$ to obtain two hyperbolic threefolds with isomorphic boundaries. Finally, glue these manifolds along their boundaries to obtain a compact hyperbolic threefold of the form $\HH^3/\Gamma$. The lattice $\Gamma$ is the non-arithmetic manifold we look for.

\textbf{Part 2, construction of an orbit:} 
In this part we construct a certain orbit in a similar way to Benoist and Oh \cite[\S12.5]{benoist2022geodesic}.
To construct the element $g$ required by Theorem \ref{thm: example}, we construct the orbit $\SL_2(\RR).\pi_\Gamma(g)$ as follows. Take a periodic $\SL_2(\RR)$-orbit $H.x_0$ in $G/\Gamma_i$ for some $i=1,2$.
Denote its projection to $G/\Gamma_i$ by $S_0$. This is an immersed hyperbolic surface. 
Cut $S_0$ along $V_i$ into finitely many pieces, and consider the image $S_1$ in $\HH^3/\Gamma$ of one of these pieces. This yields an immersed hyperbolic surface 
$S_2 \cong \HH^2 / \Gamma_{g_2}\subset \HH^3/\Gamma$ contains $S_1$. We show that by properly choosing $S_0$ and $S_1$ we can ensure that $\Gamma_{g_2}$ is not a lattice.

\textbf{Part 3, estimation of the critical exponent in the form of high Hausdorff dimension: }
The estimation of the critical exponent of $\Gamma_{g_2}$ uses Sullivan \cite[Thm.~1]{sullivan1984entropy}, which reduces the estimation of the desired critical exponent to an estimation of the Hausdorff dimension of the collection of geodesic in $S_0$ that originates form a point $p_0\in S_0$ and do not intersect $V_i$. 
Viewing this problem in the universal cover of $S_0$, the inverse image of $V_i$ is a union of geodesics. This reduces the question of giving a lower bound on the Hausdorff Dimension of set of rays from $p_0$ on $S_0$ avoiding $V_i$ to the following two claims on an immersion $\iota_0:\HH^2\to \HH^3/\Gamma_i$.
\begin{itemize}
  \item $\iota_0^{-1}(V_i)$ is composed of many hyperbolic lines in $\HH^2$. Then there is a lower bound on the distances of these lines from one another,
  \item For every collection $L$ of lines in $\HH^2$ that are far from one another, and every point $p_0\in \HH^2$ not on any of these lines, the dimension of the set of geodesic rays from $p_0$ that do not hit any of the lines is large. 
\end{itemize}
The first point follows from arithmetic considerations.
The second can be reduced to an estimate of the dimension of a certain Cantor set.
\subsection{Structure of the paper}
In Section \ref{sec: notations} we introduce several notations and conventions.
Section \ref{sec: macroscopic flavor} is divided into three parts: In Subsections \ref{ssec: leafwise measures} and \ref{ssec: leafwise dimension} we recall a nonstandard definition for the leafwise measures and some of its properties.
In Subsection \ref{ssec: Markov chain} we introduce the leafwise Markov chain and complete the proof of Theorem \ref{thm: dimension implies invariance}.

In Section \ref{sec: proof of rigidity theorems} we prove Theorem \ref{thm: main}.
The section is divided into three parts:
Subsection \ref{ssec: lemmas for rigidity} states several claims which will be of use in the next subsection.
Subsection \ref{ssec: measure on G mod Gamma} follows the discussion at Subsection \ref{ssec: on the proof}, and shows that a certain sequence of measures $\mu_k$ on $G/\Gamma$ converges to the Haar measure. We prove $\SL_2(\RR)$-invariance, use Lemma \ref{lem: non-degenerate limits} (whose proof is left to Section \ref{sec: proof of geometric lemma}) to exclude escape of mass in the limit and any nontrivial homogeneous component, and finally use Ratner's theorem \cite{ratner1991raghunathan}
to conclude that the limit is the Haar measure.
In Subsection \ref{ssec: using rigidity} we conclude the proof of Theorem \ref{thm: main} by adapting the work of \cite{bader2021arithmeticity} to our setup.
In Section \ref{sec: proof of geometric lemma} we use a linearization method and Margulis functions to prove Lemma \ref{lem: non-degenerate limits}.
Section \ref{sec: example}, which is independent of the rest of the paper, is dedicated to the proof of Theorem \ref{thm: example}.


\subsection{Further research}
A natural question is to prove an effective version of Theorem \ref{thm: main}.
\begin{ques}
  Find an effective formula for an $\varepsilon_\Gamma$ depending on the lattice $\Gamma<G$ such that there are only finitely many $\SL_2(\RR)$-orbits $\SL_2(\RR).\pi_\Gamma(g)$ such that $1-\varepsilon_\Gamma < \delta(\Gamma_g)$ but $\Gamma_g$ is not a lattice.
  We expect $\varepsilon_\Gamma$ to depend on the spectral gap of $G/\Gamma$, however, it may depend also on the arithmetic nature of $\Gamma$.
\end{ques}
The example given by Theorem \ref{thm: example} inspires us to formulate the following more optimistic question. We do not know if it helps to answer the previous one.
\begin{ques}
  Let $\Gamma<G$ be a lattice.
  Is it true that there are only finitely many $\SL_2(\RR)$-orbits of points $x=\pi_\Gamma(g)$ in $G/\Gamma$ with $\Gamma_g$ Zariski dense in $\SL_2(\RR)$ for which there does not exist an arithmetic lattice $\Gamma_1 < G$ such that $\SL_2(\RR).x$ lift bijectively to $G/\Lambda$ where $\Lambda = \Gamma_1\cap \Gamma$ and $\Lambda$ is Zariski-dense in $G$?
  Can one find a finite collection $\varpi$ of arithmetic lattices such that for every point $x=\pi_\Gamma(g)$ in $G/\Gamma$ with Zariski dense $\Gamma_g$ the  orbit $\SL_2(\RR).x$ lifts to $G/\Lambda$ with $\Lambda = \Gamma_1\cap \Gamma$ and $\Gamma_1\in \varpi$, except perhaps for finitely many $\SL_2(\RR)$-orbits?
\end{ques}
One can also consider the analogous of Theorem \ref{thm: main} to geometrically finite groups in more general Lie groups.

We now discuss a possible extension of Theorem \ref{thm: dimension implies invariance} referred to above, and use similar notations. Let $B = \RR \ltimes \RR$ using the exponent action of $\RR$ on $\RR$.
\begin{ques}\label{ques: dimension preservation}
  Let $(\mu_k)_{k=1}^\infty$ be $\ta$-invariant and ergodic probability measures on a locally compact second countable space $X$ on which $B$ acts continuously.
  Suppose that there is a weak-$*$ probability measure limit $\mu_\infty = \lim_{k\to\infty}\mu_k$ with ergodic decomposition $\int_X\mu_\infty^x \bd \mu_\infty(x)$.
  Show that
  \begin{align}\label{eq: dimension bound}
    \int_X \dim^\tu\mu_\infty^x \bd \mu_\infty(x) \ge \limsup_{k\to \infty}\dim^\tu\mu_k,
  \end{align}
  with the convention that $\dim^\tu\mu_\infty^x = 1$ if $\mu_\infty^x$ is not $\tu$-free.
\end{ques}
One can try to extend this to actions of more general semi-direct products.
\subsection*{Acknowledgment}
I thank my advisor, Elon Lindenstrauss, for introducing me to the topic, and for his guidance, support, and constructive feedback throughout the process of writing this paper.
I wish to thank Amir Mohammadi and Hee Oh for helpful comments, and in particular for suggesting that the result could be extended beyond the nonarithmetic lattice case to include the geometrically finite case.
\section{Notations}
\label{sec: notations}
\begin{definition}[Homogeneous dynamics notations]\label{def: homogeneous dynamics}
  Let $G = \SL_2(\CC)$ and $\Gamma < G$ a geometrically finite group, see \cite{bowditch1993geometrical} for various definitions of notion of geometrical finiteness.
  We say that $\Gamma$ is an \emph{arithmetic lattice} if there is an algebraic group $\bG/\QQ$ and a homomorphism with compact kernel $f:\bG(\RR)\to G$ whose image is open in $G$ and $\Gamma$ is commensurable to $f(\bG(\ZZ))$. From now on we assume that $\Gamma$ is not an arithmetic lattice.
  Recall that $\ta(t) = \diag(e^{t/2}, e^{-t/2})$ and
  $\tu(s) = \begin{pmatrix}
      1 & s \\0&1
    \end{pmatrix}$ for all $t,s\in \RR$, generates a subgroup $B<\SL_2(\RR)$.
  The group $B$ is isomorphic to $\RR\ltimes \RR$, with the exponent action.
  Denote by $\pi_\Gamma: G \to G/\Gamma$ the standard projection.
\end{definition}
\begin{definition}[Metric notations]
  For every metric space $X$, we will always denote its metric by $d_X$, and whenever there is a natural base point to the space we denote by $B_X(R)$ a ball of radius $R$ around the base point.

  Let $d_{G}$ be the unique Riemannian metric on $G$ that is right $G$-invariant and left $\SU(2)$-invariant, normalized so that
  \[d_{G}\left(\begin{pmatrix}
        e^{t/2} & 0 \\ 0 & e^{-t/2}
      \end{pmatrix}\right) = |t|,\]
  for all $t\in \RR$.
  This metric restricts to a Riemannian metric $d_{\SL_2(\RR)}$ on $\SL_2(\RR)$ and gives rise to the standard hyperbolic metrics $d_{\HH^3}, d_{\HH^2}$ on $\HH^3 = \SU(2)\backslash G$ and $\HH^2 = \SO(2)\backslash\SL_2(\RR)$, respectively. This makes $\HH^3$ a right $G$-space and $\HH^2$ a right $\SL_2(\RR)$-space.
  Since $d_G$ is right invariant, it descends to a Riemannian metric $d_{G/\Gamma}$ on $G/\Gamma$.
\end{definition}
\begin{definition}[Measure notations]\label{def: measure notations}
  For every measure space $(X, \mu)$ and a measurable function $f:X\to \RR$, we define $\mu(f) = \int_X f\bd \mu$ and $f \cdot \mu$ the measure $U\mapsto \int_U f\bd \mu$ on $X$.
  For every two measurable spaces $(X,\mu),(Y,\nu)$, we denote by $\mu\times \nu$ the product measure on $X\times Y$.
\end{definition}
\begin{definition}[Law of a random variable]
  Let $(X, \Sigma)$ be a space together with a $\sigma$-algebra. Let $\mu$ be a probability measure on $X$. Whenever we think of $X$ as a probability space, then any measurable function $y:X\to Z$ to any other space $(Z, \cB)$, is called a \emph{random variable}. Denote by $\Law(y) = y_*\mu$ the pushforward probability measure on $Z$.
  For every two random variables $y_1:X\to Z_1, y_2:X\to Z_2$,
  measurable with respect to the $\sigma$-algebras $\cB_1, \cB_2$ on $Z_1, Z_2$ respectively,
  we define a random variable
  $\Law(y_1|y_2):X\to \{\text{probability measures on }Z_1\}$ as follows.
  Let $\cB_2' = y_2^{-1}\cB_2$ be the $\sigma$-algebra of all the information on $X$ given by $y_2$. Let $x\mapsto \mu^x_{\cB_2'}$ be the conditional measure, and $\Law(y_1|y_2)(x) = (y_1)_*\mu^x_{\cB_2'}$.
  Similarly, if $y_1, y_2,\dots,y_n$ are random variables, then we denote $\Law(y_1|y_2,y_3,\dots,y_n) = \Law(y_1|(y_2,y_3,\dots,y_n))$, where $(y_2,y_3,\dots,y_n)$ is the tuple random variable.
\end{definition}
\begin{definition}[Entropy notations]
  For every $p_1,...,p_k \in [0,1]$ with $p_1 + p_2 + \dots + p_k = 1$, denote
  \[H(p_1,\dots,p_k) = -\sum_{i=1}^k p_i \log p_i.\]
  For every space $X$, a measure $\mu$ on $X$ with countable support, denote 
  \[H(\mu) = -\sum_{p\in \supp(\mu)} \mu(\{p\})\log \mu(p).\]
  Removing the countable support assumption, let $\tau$ be a partition of $X$. Denote
  \[H_{\mu}(\tau) = -\sum_{A\in \tau}\mu(A)\log \mu(A).\]
  For every $x\in X$ denote by $[x]_\tau$ the unique element in $\tau$ containing $x$.
  Now, whenever we think of $X$ as a probability space and function from $X$ as random variables,
  let $y: X\to S$ be a random variable with a countable image. We denote \[H(y) =H(y_*\mu) = H_\mu(\{y^{-1}(x):x\in {\rm Im}(y)\}). \]
  Let $y_1, y_2$ be two random variables, such that given $y_2$, the random variable $y_1$ has countably many options, that is, $\Law(y_1|y_2)$ is almost surely a measure with countable support.
  Then we denote $H(y_1|y_2) = \int_X H(\Law(y_1|y_2))\bd \mu y_2$.
  Similarly, if $y_1,\dots,y_n$ are random variable, then we define
  \[H(y_1|y_2, y_3,\dots,y_n) = H(y_1|(y_2, y_3,\dots,y_n)) = \int_X H(\Law(y_1|y_2,y_3,\dots, y_n))\bd \mu y_2.\]
\end{definition}
\section{Leafwise measures}
\label{sec: macroscopic flavor}
The purpose of this section is to prove Theorem \ref{thm: dimension implies invariance} below.
We will define the leafwise measures and leafwise dimension in Subsections \ref{ssec: leafwise measures} and \ref{ssec: leafwise dimension}, and recall some of their properties.
At the end of Subsection \ref{ssec: leafwise dimension} we state Theorem \ref{thm: dimension implies invariance}. In subsection \ref{ssec: Markov chain} we introduce a different approach to the leafwise measures, and use it to prove Theorem \ref{thm: dimension implies invariance}.
\subsection{The leafwise measures}
\label{ssec: leafwise measures}
Let $X$ be a locally compact second countable (LCSC) space, $\RR\acts X$ be a continuous action via $\tu(s): X \to X$ for every $s\in \RR$. 
Let $\mu$ be a measure on $X$, not neccessarily $\tu$-invariant.
We assume that $\mu$ is \emph{$\tu$-free}, that is, $\mu$-almost every point $x\in X$ has $\stab_\tu(x) = \{0\}$.
We recall the following characterization for leafwise measures, which is equivalent to the one given in \cite[\S3]{einsiedler2023rigidity}.
\begin{definition}[Anti-convolution of a function with a measure]\label{def: anti-convolution}
  Let $f:\RR \to \RR$ be a nonnegative function with $\int_\RR f(s)\bd s = 1$.
  Denote
  \[S_f \mu = \int_{\RR}f(s)\tu(-s).\mu \bd s.\]
\end{definition}
\begin{theorem}[Fubini construction of Leafwise measures]\label{thm: Fubini construction of Leafwise measures}
  There is a measurable map $y\mapsto \mu_y^\tu$ which associates to every $y\in X$ a locally finite measure $\mu_y^\tu$ on $\RR$, which satisfies the following properties:
  \begin{enumerate}
    \item for every $s\in \RR$ and $y\in X$ we have
          \begin{align}\label{eq: translation invariance of leafwise}
            \mu_{\tu(s).y}^\tu \propto T^s_*\mu_y^\tu, \quad\text{where}\quad T^s:\RR\to \RR,\quad T^s(r) = r-s.
          \end{align}
    \item
          Let           
          \begin{align}\label{eq: def omega}
                      \omega = F_*(\mu \times m_\RR), \quad\text{where}\quad F:X \times \RR\to X \times \RR, \quad F(x,s) = (\tu(-s)x, s),
          \end{align}
          where $\mu \times m_\RR$ is the product measure on $X\times \RR$.
          Let $f: \RR \to \RR$ be a nonnegative integrable function with $\int_{\RR} f(s)\bd s = 1$, and set $\tilde f:X\times \RR\to \RR$ be defined by $\tilde f(x,s) = f(s)$. 
          For $S_f \mu$-almost every $y\in X$, we have
          \begin{align}
            \label{eq: integral defined}
            0                           & < \mu^\tu_y(f) < \infty\qquad \text{and}                                      \\
            \label{eq: disintegration formula}
            \tilde f\cdot\omega & = \int_{X}\delta_y\times\frac{f\cdot \mu^\tu_y}{\mu^\tu_y(f)} \bd S_f \mu(y).
          \end{align}
          Here we use the notations regarding measures from \S\ref{def: measure notations}, and $\pi_\RR:X \times \RR\to \RR$ is the projection.
  \end{enumerate}
  The map $y\mapsto \mu_y^\tu$ is unique in the following sense.
  If $y\mapsto \mu_y^{\tu,1}$ and $y\mapsto \mu_y^{\tu,2}$ are maps satisfying the above conditions, then for $\mu$-almost every $x\in X$ we have $\mu_y^{\tu,1}\propto \mu_y^{\tu,2}$.
\end{theorem}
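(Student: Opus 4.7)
The plan is to realize $\mu_y^\tu$, up to dividing out by a reference weight, as the conditional measures in a disintegration of $\omega$ along the projection $\pi_X:X\times\RR\to X$, and to deduce all stated properties from the intrinsic invariance of $\omega$ under a certain skew-translation on $X\times\RR$.

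First I would fix once and for all a continuous strictly positive function $f_0:\RR\to(0,\infty)$ with $\int f_0(s)\bd s=1$. A one-line Fubini computation from the definition of $\omega$ gives $(\pi_X)_*(\tilde f_0\cdot\omega)=S_{f_0}\mu$, so $\tilde f_0\cdot\omega$ is a probability measure on $X\times\RR$. The standard disintegration theorem for LCSC spaces supplies probability measures $\nu_y$ on $\RR$, defined for $S_{f_0}\mu$-a.e.\ $y$, with $\tilde f_0\cdot\omega=\int_X \delta_y\times\nu_y\,\bd S_{f_0}\mu(y)$. I then set $\mu_y^\tu:=(1/f_0)\cdot\nu_y$; local finiteness is immediate from continuity and positivity of $f_0$. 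Rewriting yields the formal identity $\omega=\int_X \delta_y\times\mu_y^\tu\,\bd S_{f_0}\mu(y)$, valid against compactly supported test functions. Pairing this against a tensor $\phi_1(y)f(s)$ and comparing with the direct Fubini identity $\omega(\phi_1\cdot\tilde f)=S_f\mu(\phi_1)$ gives the Radon-Nikodym-type identity $S_f\mu=\mu_y^\tu(f)\cdot S_{f_0}\mu$ on $X$, which simultaneously forces $0<\mu_y^\tu(f)<\infty$ for $S_f\mu$-a.e.\ $y$ and, substituting back, produces \eqref{eq: disintegration formula}.

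The translation covariance rests on the observation that $\omega$ is invariant under the skew-translation $C_t(y,s)=(\tu(-t)y,s+t)$: indeed $C_t\circ F(x,s)=F(x,s+t)$, so $(C_t)_*\omega=\omega$ by translation invariance of Lebesgue measure on $\RR$. Pushing the formal identity $\omega=\int\delta_y\times\mu_y^\tu\,\bd S_{f_0}\mu$ forward by $C_t$, changing the $X$-variable to $y'=\tu(-t)y$ (which sends $S_{f_0}\mu$ to $S_{f_0(\cdot-t)}\mu$ by a direct computation), and then comparing to the previous paragraph's formula applied with $f=f_0(\cdot-t)$ yields $\mu_{\tu(t)y'}^\tu\propto T^t_*\mu_{y'}^\tu$ via the uniqueness of disintegration applied to the probability measure $\tilde{f_0(\cdot-t)}\cdot\omega$, with an explicit proportionality constant $1/\mu_{y'}^\tu(f_0(\cdot-t))$. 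To extend from the $S_{f_0}\mu$-conull good set to a $\mu$-conull set I would propagate $\mu_y^\tu$ along $\tu$-orbits using this covariance; the $\tu$-saturation of the good set is $\mu$-conull because $\mu(\tu(s)N)=\mu(N)$ for $\tu$-invariant $N$ (so $\mu$-null and $S_{f_0}\mu$-null coincide on $\tu$-invariant sets), and well-definedness on orbits that meet the good set more than once follows directly from the covariance itself.

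For uniqueness, any two families $\mu_y^{\tu,1},\mu_y^{\tu,2}$ satisfying \eqref{eq: disintegration formula} induce the same probability measure $f\cdot\mu_y^{\tu,i}/\mu_y^{\tu,i}(f)$ for every admissible $f$, by uniqueness of the disintegration of $\tilde f\cdot\omega$; thus $f\cdot\mu_y^{\tu,1}\propto f\cdot\mu_y^{\tu,2}$ on $\supp f$, and varying $f$ over a countable family of bump functions exhausting $\RR$ pins down the proportionality globally. The main bookkeeping difficulty I anticipate is tracking the distinction between $\mu$-a.e.\ and $S_{f_0}\mu$-a.e.\ statements and keeping straight the two parametrizations ($\nu_y$ versus the locally finite $\mu_y^\tu$); the only substantive geometric input is the $C_t$-invariance of $\omega$, which cleanly encodes the translation covariance.
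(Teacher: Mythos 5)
Your construction is correct in outline, but it follows a genuinely different route from the paper. The paper does not reprove existence or translation covariance at all: it conditions $\tilde f_0\cdot(\mu\times m_\RR)$ on the $\sigma$-algebra $\cC=\Psi^{-1}(\cB_X)$, quotes \cite[\S3]{einsiedler2023rigidity} for the existence of the conditional measures and for Eq.~\eqref{eq: translation invariance of leafwise} on a $\tu$-invariant conull set, and then spends its effort on the bookkeeping you also carry out at the end: passing from the reference weight $f_0$ to a general $f$ (via the identification $S_f\mu=\mu_f$, i.e.\ your Radon--Nikodym identity $S_f\mu=\mu^\tu_\bullet(f)\cdot S_{f_0}\mu$), deducing Eq.~\eqref{eq: integral defined}, and getting uniqueness from uniqueness of conditional measures. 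Your proposal instead makes the core input self-contained: you disintegrate $\omega$ over $\pi_X$ (which is the same conditioning, just written after applying $F$, since $\Psi=\pi_X\circ F$) and derive the covariance from the invariance of $\omega$ under $C_t(y,s)=(\tu(-t)y,s+t)$, which is exactly where the special structure of a free $\RR$-action makes the argument short; this buys transparency and independence from the cited machinery, while the paper's reduction buys brevity and, importantly, offloads the delicate measure-theoretic upgrades onto the reference. The one place where your write-up is too quick is precisely such an upgrade: your covariance statement holds, for each fixed $t$, for $S_{f_0(\cdot-t)}\mu$-a.e.\ $y$, whereas the theorem (and the paper, via the $\tu$-invariant set $X'$ of \cite[\S3]{einsiedler2023rigidity} together with the convention $\mu^\tu_x:=0$ off $X'$) needs a single $\tu$-invariant conull set on which it holds for \emph{all} $s$ simultaneously. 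Saying that ``well-definedness on orbits follows directly from the covariance itself'' begs this question, because the covariance you have is not available for the particular pair $(y,t)$ you meet on a given orbit; you need a Fubini argument over $t$ (for a.e.\ $y$, the relation holds for Lebesgue-a.e.\ $t$, and consistency along an orbit is then assembled from a.e.\ pairs), or an equivalent measurable-selection step, before the propagation and the final ``for every $s\in\RR$ and $y\in X$'' statement are legitimate. With that step supplied, your argument is a valid self-contained replacement for the paper's citation.
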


\medskip
\noindent
\begin{remark}[On Eq. \eqref{eq: disintegration formula}]
  A different way to write the left-hand side is 
  \[(f\circ \pi_\RR)\cdot\omega = F_*(\mu \times (f\cdot m_\RR)),\]
  where $F$ is as in Eq. \eqref{eq: def omega}. 

  An alternative way to write the formula \eqref{eq: disintegration formula} is that for every compactly supported function and continuous function $g:X \times \RR \to \RR$
           \begin{align}\tag{\ref*{eq: disintegration formula}$'$}
             \int_{X\times \RR}f(s)g(x,s) \bd \omega(x,s) = \int_X \frac{1}{\mu^\tu_y(f)}\int_\RR g(y,s) f(s) \bd \mu^\tu_y(s) \bd S_f\mu(y),\quad\forall g\in C_c(X\times \RR).
           \end{align}
\end{remark}          
\begin{corollary}
  In the notations of Theorem \ref{thm: Fubini construction of Leafwise measures},
  note that $F^{-1}_* ((f\circ \pi_\RR)\cdot\omega) = (f\circ \pi_\RR)\cdot (\mu\times m_\RR)$. Applying this to Eq. \eqref{eq: disintegration formula} and projecting to $X$, we obtain
  \begin{align}\label{eq: Stationary measure formula}
    \mu = \int_{X}\frac{(s\mapsto \tu(s)y)_* (f\cdot \mu^\tu_y)}{\mu^\tu_y(f)}\bd S_f\mu(y).
  \end{align}
\end{corollary}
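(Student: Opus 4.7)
The plan is to carry out the substitution the corollary itself outlines: push the disintegration identity \eqref{eq: disintegration formula} through $F^{-1}_*$, and then project to $X$ via the first-coordinate projection $\pi_X:X\times\RR\to X$. The entire argument is formal bookkeeping of pushforwards, so I do not expect any genuine obstacle; the only care needed is in interchanging pushforwards with the integral decomposition over $S_f\mu$.

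I would handle the left-hand side first. Because $F(x,s)=(\tu(-s)x,s)$ preserves the $\RR$-coordinate and $\tilde f = f\circ \pi_\RR$ depends only on that coordinate, one has $\tilde f\circ F = \tilde f$. The general identity $F^{-1}_*(g\cdot F_*\nu) = (g\circ F)\cdot \nu$, applied with $\nu = \mu\times m_\RR$ and $g = \tilde f$, then gives
\[F^{-1}_*(\tilde f\cdot \omega) \;=\; \tilde f\cdot (\mu\times m_\RR) \;=\; \mu\times (f\cdot m_\RR).\]
Projecting by $\pi_X$ and using $\int_\RR f\, \bd m_\RR = 1$ produces $\mu$ on the left.

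For the right-hand side, since $F^{-1}(y,s) = (\tu(s)y,s)$, the fiber measure transforms as
\[F^{-1}_*\left(\delta_y\times\frac{f\cdot \mu^\tu_y}{\mu^\tu_y(f)}\right) \;=\; \bigl(s\mapsto(\tu(s)y,s)\bigr)_*\frac{f\cdot\mu^\tu_y}{\mu^\tu_y(f)},\]
and applying $\pi_X$ drops the second coordinate, yielding $(s\mapsto \tu(s)y)_*(f\cdot \mu^\tu_y)/\mu^\tu_y(f)$. The only remaining point is to commute the pushforward $(\pi_X\circ F^{-1})_*$ with the integral over $S_f\mu$; this is justified by the measurability of $y\mapsto \mu^\tu_y$ provided by Theorem \ref{thm: Fubini construction of Leafwise measures} together with \eqref{eq: integral defined}, which ensures the normalizer $\mu^\tu_y(f)$ lies in $(0,\infty)$ for $S_f\mu$-almost every $y$. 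Equating the two projections yields \eqref{eq: Stationary measure formula}.
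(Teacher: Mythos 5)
Your proposal is correct and follows exactly the route the paper itself indicates: apply $F^{-1}_*$ to Eq.~\eqref{eq: disintegration formula} using $\tilde f\circ F=\tilde f$ to identify the left-hand side with $\mu\times (f\cdot m_\RR)$, push the fiber measures forward along $F^{-1}(y,s)=(\tu(s)y,s)$, and project to $X$. The bookkeeping (including the interchange of the pushforward with the integral over $S_f\mu$ and the use of \eqref{eq: integral defined} to make sense of the normalizer) matches the paper's intended argument.
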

\begin{proof}[{Reduction of Theorem \ref{thm: Fubini construction of Leafwise measures} to {{\cite[\S3]{einsiedler2023rigidity}}}}]
  We will describe the statement of \cite[\S3]{einsiedler2023rigidity}, restricted to our $\RR$ action.
  Denote by $\cB_X$ the borel $\sigma$-algebra of $X$. 
  Consider the infinite measure $\mu \times m_\RR$ on $X\times \RR$. Define $\Psi:X \times \RR\to X$ by $\Psi(x,s) = \tu(-s).x$ and let $\cC = \Psi^{-1}(\cB_X)$. 
  Let $f_0: \RR \to \RR$ be a positive continuous integrable function. 
  Lift $f_0$ to a map $\tilde f_0:X\times\RR\to \RR$ by $\tilde f_0(x,s) = f_0(s)$. 
  The conditional measures of $\tilde f_0 \cdot (\mu \times m_\RR) = \mu \times (f_0\cdot m_\RR)$ with respect to the $\sigma$-algebra $\cC$ are denoted $(\tilde f_0 \cdot (\mu \times m_\RR))_y^\cC$, for $y\in X\times \RR$. This measure lies on the atom $[y]_\cC$ of $y$ which is of the form $\{(\tu(s).x, s):s\in \RR\} = \Psi^{-1}(x)$ for $x = \Psi(y)\in X$, and $(\tilde f_0 \cdot (\mu \times m_\RR))_y^\cC$ depends only on the atom, that is, only on $x$, and is supported on this atom.
  Define
  $\mu^\tu_x$ on $\RR$ so that 
  \begin{align}\label{eq: disintegration product coordinates}
    (\tilde f_0 \cdot (\mu \times m_\RR))_y^\cC = \tilde f_0\cdot  a^x_*\mu^\tu_x, \quad \text{where}\quad a^x:\RR\to X\times \RR, \quad a^x(s) = (\tu(s).x, s)
    .
  \end{align}
  Then \cite[\S3]{einsiedler2023rigidity} ensures that Eq. \eqref{eq: translation invariance of leafwise} holds in a $\tu$-invariant set $X'\subseteq X$ with $\mu(X')=1$. 
  
  We will now deduce our formulation of the result.
  To ensure that Eq. \eqref{eq: translation invariance of leafwise} holds everywhere, 
  we redefine $\mu^\tu_x := 0$ for $x\nin X'$. This implies that Eq. \eqref{eq: translation invariance of leafwise} holds for all $x\in X$.

  As to the second condition, 
  Eq. \eqref{eq: disintegration product coordinates} implies that $\tilde f_0 \cdot a^x_*\mu^\tu_x = a^x_*(f_0\cdot \mu^\tu_x)$ is a probability measure for all $x\in X'$. 
  That is, $\mu^\tu_x(f_0) = 1$. 
  In addition, 
  \begin{align}\label{eq: pre integral formula}
    \begin{split}
      \tilde f_0 \cdot (\mu \times m_\RR) &= 
      \int_X (\tilde f_0 \cdot (\mu \times m_\RR))_y^\cC \bd (\tilde f_0 \cdot (\mu \times m_\RR))(y) 
      \\&= 
      \int_{X} \tilde f_0 \cdot a^x_*\mu^\tu_x \bd \Psi_*(\tilde f_0 \cdot (\mu \times m_\RR))(x).
    \end{split}
  \end{align}
  To simplify Eq. \eqref{eq: pre integral formula}, first notice that \[\Psi_*(\tilde f_0 \cdot (\mu \times m_\RR)) = \Psi_*(\mu \times (f_0\cdot m_\RR)) = S_{f_0}\mu.\]
  Second, we can multiply Eq. \eqref{eq: pre integral formula} by $\tilde f_0^{-1}$ and obtain
  \begin{align}\label{eq: pre formula f0}
    \mu \times m_\RR = \int_X a^x_*\mu^\tu_x \bd S_{f_0}\mu(x).
  \end{align}
  Applying $F_*$ to Eq. \eqref{eq: pre formula f0} we obtain 
  \begin{align}\label{eq: formula f0}
    F_*(\mu \times m_\RR) = \int_X \delta_x\times  \frac{\mu^\tu_x}{\mu^\tu_x(f_0)} \bd S_{f_0}\mu(x),
  \end{align}
  where the denominator could be added since it is almost surely the constant $1$. 
  This formula is equivalent to Eq. \eqref{eq: disintegration formula}, for $f=f_0$, after multiplying with $\tilde f_0$. 
  To obtain it for general nonnegative $f\in L^1(\RR)$ with $\int_\RR f(x) \bd x = 1$, we multiply Eq. \eqref{eq: pre formula f0} with the function $\tilde f:X\times \RR\to \RR$ defined by $\tilde f(x,s) = f(s)$:
  \begin{align}\label{eq: disintegration changing f0}\begin{split}
    F_*(\mu \times (f\cdot m_\RR)) 
    &= 
    \tilde f \cdot F_*(\mu \times m_\RR)
    = \int_X \delta_x\times \frac{f\cdot \mu^\tu_x}{\mu^\tu_x(f_0)} \bd S_{f_0}\mu(x).
  \end{split}
  \end{align}
  From Eq. \eqref{eq: disintegration changing f0} we deduce that for $S_{f_0}\mu$-almost every $x\in X$ we have $f\cdot \mu^\tu_x$ is a finite measure. Since $f\cdot \mu^\tu_x = 0$ if and only if $\mu^\tu_x(f) = 0$ we deduce that we may restrict the integral in the right-hand side of Eq. \eqref{eq: disintegration changing f0} to $X_0 = \{x\in X:\mu^\tu_x(f) = 0\}$. 
  \begin{align}\label{eq: disintegration changing f1}\begin{split}
    F_*(\mu \times (f\cdot m_\RR)) 
    &
    = \int_{X_0} \delta_x\times \frac{f\cdot \mu^\tu_x}{\mu^\tu_x(f_0)} \bd S_{f_0}\mu(x)
      = \int_{X_0} \delta_x\times \frac{f\cdot \mu^\tu_x}{\mu^\tu_x(f)} \frac{{\mu^\tu_x(f)}}{{\mu^\tu_x(f_0)}} \bd S_{f_0}\mu(x)
      \\&= \int_{X_0} \delta_x\times \frac{f\cdot \mu^\tu_x}{\mu^\tu_x(f)} \bd \mu_f(x)
      = \int_{X} \delta_x\times \frac{f\cdot \mu^\tu_x}{\mu^\tu_x(f)} \bd \mu_f(x),
  \end{split}
  \end{align}
  where 
  \[\mu_f = \int_X \delta_x \bd \frac{{\mu^\tu_x(f)}}{{\mu^\tu_x(f_0)}} \bd S_{f_0}\mu(x), \]
  and last equality of Eq. \eqref{eq: disintegration changing f1} holds since $\mu_f$ is supported on $X_0$.
  To compute $\mu_f$, project Eq. \eqref{eq: disintegration changing f1} to $X$. 
  The projection of the right-hand side is $\mu_f$. 
  The projection of the left-hand side is $S_f\mu$, and hence $S_f\mu = \mu_f$.
  Therefore, Eq. \eqref{eq: disintegration changing f1} is equivalent to Eq. \eqref{eq: disintegration formula}. 
  Eq. \eqref{eq: integral defined} from the equality $S_f\mu = \mu_f$, the definition of $\mu_f$, and the fact that $S_f\mu$ is a probability measure. 
  
  To show the uniqueness of the measures $\mu_x^\tu$, note that we have established an equivalence between Eq. \eqref{eq: disintegration formula} applied to $f_0$ and 
  \begin{align*}
    (\tilde f_0 \cdot (\mu \times m_\RR))_y^\cC = f_0\cdot  \frac{a^x_*\mu^\tu_x}{\mu^{\tu}_x(f_0)}.
  \end{align*}
  for $\tilde f_0 \cdot (\mu \times m_\RR)$-almost every $y$ and $x = \Psi(y)$. 
  Since the conditional measures are uniquely defined almost everywhere, we deduce that $\mu^\tu_x$ is uniquely defined $S_{f_0}$ almost surely.
\end{proof}
The following claims follow from the uniqueness of the characterization of Theorem \ref{thm: Fubini construction of Leafwise measures}.
\begin{claim}[Equivariance of Leafwise measures]\label{claim: action equivariance of leafwise}
  If $\alpha:X\to Y$ is an injective map of LCSC spaces, $\RR\stackrel{\tu'}{\acts}Y$, and
  \[\alpha(\tu(s).x) = \tu'(s).\alpha(x),\qquad \forall x\in X, s\in \RR,\]
  then for $\mu$-almost every $x\in X$,
  \begin{align}\label{eq: leafwise measure pushforward injection}
    \mu^\tu_{x} \propto (\alpha_*\mu)^{\tu'}_{\alpha(x)}.
  \end{align}
  Moreover, there is a set $X'\subseteq X$ that is $\tu$ invariant and has $\mu(X')=1$ such that 
  Eq. \eqref{eq: leafwise measure pushforward injection} holds for all $x\in X'$. 
\end{claim}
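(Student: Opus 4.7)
The plan is to apply the uniqueness clause of Theorem \ref{thm: Fubini construction of Leafwise measures} to a candidate family of leafwise measures for $\alpha_*\mu$ on $Y$ that is constructed by transporting $x\mapsto \mu_x^\tu$ through $\alpha$. Concretely, I would define a measurable map $y\mapsto \nu_y$ on $Y$ by setting $\nu_y := \mu_{\alpha^{-1}(y)}^\tu$ when $y\in\alpha(X)$ and $\nu_y := 0$ otherwise; injectivity of $\alpha$ makes $\alpha^{-1}$ well-defined on its image. The goal is to show that $y\mapsto \nu_y$ satisfies the two properties characterizing the leafwise measures of $\alpha_*\mu$, and then to invoke uniqueness.

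Translation equivariance of $\nu_\cdot$ is immediate from the intertwining relation $\alpha(\tu(s).x)=\tu'(s).\alpha(x)$: applied to the existing equivariance of $\mu_\cdot^\tu$, it yields $\nu_{\tu'(s).y}\propto T_*^s\nu_y$ for every $y\in Y$ and $s\in\RR$ (note that $\alpha(X)$ is $\tu'$-invariant by the intertwining, so the two cases in the definition of $\nu$ are consistent). The key step is verifying the Fubini/disintegration formula. Let $F$ and $F'$ be the maps of Eq. \eqref{eq: def omega} on $X\times\RR$ and $Y\times\RR$ respectively, and let $\tilde\alpha(x,s)=(\alpha(x),s)$. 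The intertwining relation gives $\tilde\alpha\circ F = F'\circ\tilde\alpha$, hence
\[
\tilde\alpha_*\,\omega_\mu \;=\; \tilde\alpha_*F_*(\mu\times m_\RR) \;=\; F'_*(\alpha_*\mu\times m_\RR) \;=\; \omega_{\alpha_*\mu}.
\]
Multiplying by $\tilde f$ (which depends only on the $\RR$-coordinate and is thus invariant under $\tilde\alpha$), applying $\tilde\alpha_*$ to the disintegration formula \eqref{eq: disintegration formula} for $\mu$, and using $\alpha_* S_f\mu = S_f(\alpha_*\mu)$ (which follows by commuting $\alpha$ with the averaged $\tu(-s)$-translations), one obtains exactly the disintegration formula \eqref{eq: disintegration formula} for $\alpha_*\mu$ with leafwise data $y\mapsto \nu_y$.

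By the uniqueness part of Theorem \ref{thm: Fubini construction of Leafwise measures}, it follows that $(\alpha_*\mu)^{\tu'}_y \propto \nu_y = \mu^\tu_{\alpha^{-1}(y)}$ for $\alpha_*\mu$-almost every $y\in\alpha(X)$, which after unwinding gives Eq. \eqref{eq: leafwise measure pushforward injection} for $\mu$-a.e.\ $x\in X$. For the last sentence, both sides of \eqref{eq: leafwise measure pushforward injection} transform under $x\mapsto \tu(s).x$ by the same rule $T^s_*(\cdot)$ (up to proportionality) for \emph{every} $x$, so the $\mu$-conull set on which the proportionality holds can be replaced by its $\tu$-saturation, which is still $\mu$-conull because the almost-sure statement is $\tu$-invariant in character.

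The only subtle point I expect is bookkeeping around the injectivity and measurability of $\alpha^{-1}$ on $\alpha(X)$, and ensuring that the change of variables $S_f\mu\leadsto S_f(\alpha_*\mu)$ under $\tilde\alpha$ is legitimate; these are routine once one observes the clean intertwining $\tilde\alpha\circ F = F'\circ\tilde\alpha$, which is the only real input. No deeper dynamical argument appears to be needed.
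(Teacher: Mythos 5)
Your proposal is correct and follows exactly the route the paper intends: the paper offers no detailed argument, stating only that the claim ``follows from the uniqueness of the characterization of Theorem \ref{thm: Fubini construction of Leafwise measures}'', and your construction of the candidate family $y\mapsto\nu_y$, the verification of both characterizing properties via the intertwining $\tilde\alpha\circ F = F'\circ\tilde\alpha$ together with $\alpha_*S_f\mu = S_f(\alpha_*\mu)$, and the final appeal to uniqueness is precisely that argument spelled out. The closing step, upgrading the conull set to a $\tu$-invariant one by propagating the proportionality along orbits using Eq.~\eqref{eq: translation invariance of leafwise} (which holds for all points), is also the natural way to obtain the ``moreover'' clause.
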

\begin{claim}[Rescaling of the action]\label{claim: rescaling}
  Suppose that $\beta \ne 0$, and define the rescaled action $\RR\stackrel{\tu'}{\acts}X$ by $\tu'(s).x = \tu(\beta s).x$.
  Then $\mu$-almost every for all $x\in X$ have
  \begin{align}\label{eq: leafwise measure rescaled}
    \mu^{\tu'}_x \propto (s\mapsto \beta^{-1} s)_*\mu^\tu_x.
  \end{align}
  Moreover, there is a set $X'\subseteq X$ that is $\tu$ invariant and has $\mu(X')=1$ such that 
  Eq. \eqref{eq: leafwise measure rescaled} holds for all $x\in X'$. 
\end{claim}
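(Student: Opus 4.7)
The natural strategy is to invoke the uniqueness part of Theorem \ref{thm: Fubini construction of Leafwise measures}: define the candidate assignment
\[\tilde\mu^{\tu'}_x := R_*\mu^\tu_x, \quad\text{where}\quad R:\RR\to\RR,\ R(s) = \beta^{-1}s,\]
and verify that it satisfies properties (1) and (2) of the theorem for the rescaled action $\tu'$. Then uniqueness forces $\mu^{\tu'}_x \propto \tilde\mu^{\tu'}_x$ for $\mu$-almost every $x$.

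\textbf{Step 1: translation equivariance.} Since $\tu'(s).x = \tu(\beta s).x$, the translation equivariance \eqref{eq: translation invariance of leafwise} for $\mu^\tu$ gives $\mu^\tu_{\tu'(s).x} \propto T^{\beta s}_*\mu^\tu_x$ on a $\tu$-invariant set $X'$ of full measure. The elementary identity $R\circ T^{\beta s} = T^s\circ R$ then yields
\[\tilde\mu^{\tu'}_{\tu'(s).x} = R_*\mu^\tu_{\tu'(s).x} \propto R_*T^{\beta s}_*\mu^\tu_x = T^s_*R_*\mu^\tu_x = T^s_*\tilde\mu^{\tu'}_x,\]
which is \eqref{eq: translation invariance of leafwise} for $\tu'$. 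After shrinking $X'$ if necessary, this holds everywhere on $X'$.

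\textbf{Step 2: the Fubini/disintegration identity.} Let $f:\RR\to\RR$ be nonnegative with $\int_\RR f = 1$ and set $g = |\beta|^{-1}(f\circ R)$, which also integrates to $1$. The plan is to reduce the $\tu'$-version of \eqref{eq: disintegration formula} to the $\tu$-version with $g$ in place of $f$, by changing variables $u = \beta s$ throughout. Concretely, applied to $S^{\tu'}_f\mu = \int f(s)\tu(-\beta s).\mu\,\bd s$, the substitution gives $S^{\tu'}_f\mu = S_g\mu$; applied to $\omega' = F'_*(\mu\times m_\RR)$ with $F'(x,s) = (\tu(-\beta s).x,s)$, the same substitution, combined with pushing forward by $(x,u)\mapsto(x,\beta^{-1}u) = (\mathrm{id}\times R)$, intertwines $\omega'$ with $(\mathrm{id}\times R)_*\omega$. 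Plugging these identifications into the $\tu$-version of \eqref{eq: disintegration formula} (with test function $g$) and pushing the right-hand side forward by $\mathrm{id}\times R$ produces exactly the $\tu'$-version of \eqref{eq: disintegration formula} for the candidate $\tilde\mu^{\tu'}_x = R_*\mu^\tu_x$; the various Jacobian factors $|\beta|^{\pm1}$ cancel because \eqref{eq: disintegration formula} is normalized by $\tilde\mu^{\tu'}_y(f) = |\beta|^{-1}\mu^\tu_y(f\circ R)$.

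\textbf{Conclusion and the invariant set.} Having verified both characterizing properties, the uniqueness clause of Theorem \ref{thm: Fubini construction of Leafwise measures} gives \eqref{eq: leafwise measure rescaled} for $\mu$-almost every $x$. The set $X'$ where it holds can be taken $\tu$-invariant: starting from any full-measure set on which \eqref{eq: leafwise measure rescaled} holds, intersect with the full-measure $\tu$-invariant set from Step 1; since both sides of \eqref{eq: leafwise measure rescaled} satisfy compatible equivariance under $\tu$ (both $\mu^{\tu'}_x$ via $T^s$ under $\tu'(s) = \tu(\beta s)$ and the right-hand side via the identity $R\circ T^{\beta s} = T^s\circ R$), the resulting intersection is $\tu$-invariant. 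The main bookkeeping obstacle is purely the change-of-variables computation in Step 2, which is routine once one recognizes that \eqref{eq: disintegration formula} is a statement up to normalization.
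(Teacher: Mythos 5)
Your proposal is correct and takes essentially the same route as the paper: the paper simply asserts that Claim \ref{claim: rescaling} "follows from the uniqueness of the characterization of Theorem \ref{thm: Fubini construction of Leafwise measures}," and your argument is precisely the implicit verification — checking the translation equivariance and the disintegration identity (via the change of variables $g = |\beta|^{-1}(f\circ R)$, with the Jacobians cancelling against the normalization) for the candidate $R_*\mu^\tu_x$ and then invoking uniqueness. The equivariance argument you give for upgrading the almost-everywhere statement to a $\tu$-invariant full-measure set is likewise the standard saturation argument the paper leaves unstated.
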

\subsection{Leafwise dimension}
\label{ssec: leafwise dimension}
We will discuss measures $\mu$ on an LCSC space $X$. We require that there is a measurable action $B\acts X$ where $B\cong \RR\ltimes \RR$ is defined as in Homogeneous Dynamics Notations \ref{def: homogeneous dynamics}.
Our measures $\mu$ will be $\ta$-invariant and $\tu$-free, and we will analyze their $\tu$-leafwise measures.
\begin{definition}[Leafwise dimension]\label{def: leafwise dimension}
  Let $X$ be an LCSC space with a continuous action $B\acts X$.
  Let $\mu$ be an $\ta$-invariant $\tu$-free probability measure on $X$. We say that $\mu$ has \emph{$\tu$-leafwise dimension} $\delta$ and write $\dim^\tu(\mu) = \delta$ if for $\mu$-almost-all $x$,
  \begin{align}\label{eq: dimension definition}
    \lim_{t\to \infty} \frac{1}{t} \log \mu^\tu_x([ -e^{-t},e^{-t}]) = -\delta.
  \end{align}
  If $\mu$ is ergodic, then $\dim^\tu(\mu)$ exists. This existence is proved in the homogenous setting in \cite[Thm~7.6(i)]{einsiedler2010diagonal}. However, their proof works for our setting as well.
\end{definition}

One can relate leafwise dimension to entropy.
This will be useful in Section \ref{sec: macroscopic flavor}.
\begin{theorem}[Relation of leafwise dimension to entropy]\label{thm: leafwise dim = entropy}
  Let $\Lambda \subseteq \SL_2(\RR)$ be a discrete subgroup and $\mu$ an $\ta$-invariant and ergodic probability measure on $\SL_2(\RR)/\Lambda$. Then \[h_\mu(\ta(t)) = |t|\dim^\tu(\mu).\]
\end{theorem}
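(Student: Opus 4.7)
The plan is to prove a Ledrappier--Young-type equality, adapting the approach of \cite[Thm.~7.6(i)]{einsiedler2010diagonal} to the possibly infinite-covolume setting here. By Abramov's formula for flows, $h_\mu(\ta(t)) = |t|\,h_\mu(\ta(1))$ for $t>0$, and by the equality $h_\mu(\ta(1))=h_\mu(\ta(-1))$, it suffices to prove $h_\mu(\ta(1)) = \dim^\tu(\mu)$.

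For the lower bound, I would construct a countable measurable partition $\xi$ of $X = \SL_2(\RR)/\Lambda$ that is \emph{subordinate} to $\tu$-orbits: each atom $[x]_\xi$ is a precompact interval inside $\tu(\RR).x$ of $\tu$-length roughly $1$. The boundaries are chosen $\ta$-equivariantly so that $\ta(-1)\xi$ refines $\xi$; since the conjugation identity $\ta(1)\tu(s)\ta(-1) = \tu(es)$ shows that $\ta(1)$ expands $\tu$ by the factor $e$, the refinement $\ta(-1)\xi$ cuts each $\tu$-atom of $\xi$ into pieces of length $\sim e^{-1}$. The standard entropy identity for $\ta(-1)\xi\succeq\xi$ then yields
\[
  h_\mu(\ta(1),\xi) \;=\; H_\mu\bigl(\ta(-1)\xi \,\big|\, \xi\bigr),
\]
and disintegrating $\mu$ along $\tu$-orbits via the leafwise measures from Theorem~\ref{thm: Fubini construction of Leafwise measures} (together with the rescaling from Claim~\ref{claim: rescaling}) expresses the right-hand side as the $\mu$-expectation of the Shannon entropy, with respect to $\mu_x^\tu$, of the subdivision of a unit $\tu$-interval into sub-intervals of length $e^{-1}$. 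Iterating this identity $n$ times and applying the defining property of leafwise dimension~\eqref{eq: dimension definition}, together with an $L^1$-domination (e.g.\ via Maker's theorem), identifies the resulting limit with $\dim^\tu(\mu)$.

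For the matching upper bound one must verify that $\xi$ is entropy-generating, i.e.\ $h_\mu(\ta(1))=h_\mu(\ta(1),\xi)$; this is where the main obstacle lies. Refinements of $\xi$ in the centralizer $\ta$-direction contribute zero conditional entropy, since the centralizer commutes with $\ta(1)$. Refinements in the stable direction $\tu^-$ cannot contribute either, because $\tu^-$ is exponentially contracted by $\ta(1)$, so such refinements add at most $O(1)$ information per iterate, which vanishes in the entropy limit. For finite-covolume $\Lambda$ this is classical; in the present possibly infinite-covolume setting the same local argument goes through, relying only on the product structure of $\SL_2(\RR)$ as $\tu^- \cdot \ta \cdot \tu$ near the identity and on the Radon-measure property of $\mu_x^\tu$. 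Combining the lower and upper bounds yields $h_\mu(\ta(1)) = \dim^\tu(\mu)$, and hence the claim for general $t$.
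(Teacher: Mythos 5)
Your outline is essentially the paper's own route: the paper does not reprove this statement but simply invokes \cite[Thm~7.6(ii)]{einsiedler2010diagonal} (noting, as with Definition \ref{def: leafwise dimension}, that the argument there works verbatim for discrete, possibly infinite-covolume $\Lambda$), and your subordinate-partition/conditional-entropy scheme, including the reduction via Abramov and the argument that the stable and centralizer directions carry no extra entropy, is exactly the content of that cited proof. So the proposal is correct in approach; the technical points you defer (finite conditional entropy of the countable subordinate partition, and the maximality step $h_\mu(\ta(1))=h_\mu(\ta(1),\xi)$) are precisely what the reference supplies.
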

\noindent
This theorem is proved in \cite[Thm~7.6 (ii)]{einsiedler2010diagonal}.

\medskip

The main result of this section is the following:
\begin{theorem}\label{thm: dimension implies invariance}
  Let $(\mu_k)_{k=1}^\infty$ be $\ta$-invariant and ergodic probability measures on an LCSC space $X$ on which $B$ acts continuously. We further assume that for every $k$ the measure $\mu_k$ is $\tu$-free.
  Suppose that the $\tu$-leafwise dimensions
  \begin{align}\label{eq: high dimensions}
    \dim^\tu\mu_k\xrightarrow{k\to \infty}1.
  \end{align}
  Suppose that there is a weak-$*$ probability measure limit $\mu_\infty = \lim_{k\to \infty} (\mu_k)_{k=1}^\infty$.
  Then $\mu_\infty$ is $\tu$-invariant.
\end{theorem}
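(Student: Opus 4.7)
The plan is to show that $\mu_k$ becomes approximately $\tu$-invariant as $k\to\infty$, so that the weak-$*$ limit $\mu_\infty$ is genuinely $\tu$-invariant. Concretely, for any compactly supported continuous $g: X \to \RR$ and any $t\in\RR$, I aim to show
\[\lim_{k\to\infty}\left|\int g(\tu(t).x)\, d\mu_k(x) - \int g(x)\, d\mu_k(x)\right| = 0,\]
which combined with weak-$*$ convergence yields $\tu(t)_* \mu_\infty = \mu_\infty$ for every $t$, i.e.\ $\tu$-invariance of $\mu_\infty$.

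To estimate the left-hand side, I would apply the Fubini construction (Theorem \ref{thm: Fubini construction of Leafwise measures}) with a family of smooth compactly supported mollifiers $f_k:\RR\to\RR$ at a macroscopic scale $R_k\to\infty$. Writing both $\int g\,d\mu_k$ and $\int g\circ \tu(t)\,d\mu_k$ via the disintegration formula \eqref{eq: disintegration formula}, and manipulating one of them by the translation property \eqref{eq: translation invariance of leafwise}, the difference can be packaged as an integral with respect to $dS_{f_k}\mu_k(y)$ of a quantity controlled by the $L^1$-norm of $f_k - f_k(\cdot + t)$ relative to the total mass $\mu^\tu_{k,y}(f_k)$. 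If the leafwise measures $\mu^\tu_{k,y}$ were proportional to Lebesgue, then a change of variables would make the difference identically zero, and for any reasonable $f_k$ the $L^1$ cost is $O(|t|/R_k)$, which suffices.

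The real content is thus to show, quantitatively, that the hypothesis $\dim^\tu\mu_k\to 1$ forces $\mu^\tu_{k,y}$ to be close to Lebesgue at macroscopic scales, uniformly outside a set of small $S_{f_k}\mu_k$-measure. Here I would use the leafwise Markov-chain viewpoint announced in Subsection \ref{ssec: Markov chain}: partition each leaf into dyadic intervals and encode $\mu^\tu_{k,y}$ by the resulting Markov chain of refinement ratios. The entropy rate of this chain is essentially $\dim^\tu\mu_k\cdot\log 2$ and therefore tends to its maximum $\log 2$. This near-maximality forces the transition probabilities to be close to $(1/2, 1/2)$ on average, i.e.\ the leafwise measure is close to Lebesgue on dyadic intervals. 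Combining this with Claims \ref{claim: action equivariance of leafwise} and \ref{claim: rescaling} and the $\ta$-invariance of $\mu_k$, which relates scales at $y$ to scales at $\ta(\log R_k).y$, this closeness extends from the microscopic scales where the dimension hypothesis applies directly to the macroscopic scale $R_k$.

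The main obstacle is quantifying this entropy-to-Lebesgue passage with an error decaying fast enough to beat the logarithmic growth of the number of scales involved, and then converting it into a bound on the Fubini difference integral valid for $S_{f_k}\mu_k$-most $y$. Once such a quantitative Lebesgue approximation is in place, the proof closes by sending $k\to\infty$, using that $R_k\to\infty$ while $t$ is fixed, so that the difference integral vanishes in the limit. A secondary technical point is that the leafwise measures are only defined up to scaling, so one must be careful to normalize consistently before invoking the dimension identity \eqref{eq: dimension definition}; the $\ta$-equivariance of the leafwise measures makes this normalization natural once one fixes a macroscopic reference window.
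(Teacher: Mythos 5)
Your plan assembles the right ingredients (the leafwise Markov chain, near-maximal entropy forcing the transition probabilities toward $\tfrac12$, and $\ta$-rescaling to move between scales), but the step you yourself label ``the main obstacle'' is exactly the content of the theorem, and as written it is a genuine gap. Two concrete problems. First, the hypothesis $\dim^\tu\mu_k\to1$ carries no rate: all you get from the entropy identity \eqref{eq: operator entropy} is that $\eta_k:=\int\bigl|p^{(k)}(x)-\tfrac12\bigr|\,\bd S_{\ind_{[0,1)}}\mu_k(x)\to0$, with no speed. Your ``quantitative Lebesgue approximation at scale $R_k$'' therefore cannot be obtained for any prescribed $R_k\to\infty$; it requires propagating this one-level, averaged information through roughly $\log_2 R_k$ levels of the dyadic tree, with errors that accumulate level by level (via the stationarity \eqref{eq: operator stationary}), and you never say how this accumulation is controlled — an adaptive choice such as $\log_2 R_k=o(\eta_k^{-1})$ is needed, and is absent from the proposal. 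Second, even granted a macroscopic Lebesgue approximation in the coarse sense (masses of large subintervals nearly uniform), that does not bound $\bigl|\int g(\tu(t).x)\,\bd\mu_k-\int g\,\bd\mu_k\bigr|$ for fixed $t$: a continuous $g$ oscillates at unit scale along the leaf, and a measure whose unit-interval masses are uniform can still be wildly non-invariant under translation by $t$ (e.g.\ one atom per unit interval at varying positions). What is actually needed is closeness to Lebesgue down to a fixed fine scale dictated by the modulus of continuity of $g$, i.e.\ a multi-scale statement with yet more levels to control. So the plan, as stated, does not close.

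The paper's proof shows that no quantitative estimate of this kind is needed, because one can pass to the limit \emph{before} iterating. From \eqref{eq: operator entropy}, $\eta_k\to0$, so in the weak-$*$ limit the smoothed measure $S_{\ind_{[0,1)}}\mu_\infty$ satisfies the \emph{exact} stationarity identity \eqref{eq: mu infinity stationary} with weights exactly $(\tfrac12,\tfrac12)$; this identity is then iterated exactly (no error accumulation), giving $S_{\ind_{[0,1)}}\mu_\infty=\frac{1}{2^n}\sum_{i=0}^{2^n-1}\tu(i)\ta(n\log2).S_{\ind_{[0,1)}}\mu_\infty$ and hence $\tu(1)$-invariance of $S_{\ind_{[0,1)}}\mu_\infty$. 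Finally, conjugating by $\ta(-n\log 2)$ and using the $\ta$-invariance of $\mu_\infty$ shows $\int_0^1\tu(-2^{-n}s).\mu_\infty\,\bd s$ is $\tu(2^{-n}\ZZ)$-invariant and converges to $\mu_\infty$, yielding full $\tu$-invariance; this last conjugation step also quietly resolves your second difficulty, since the smoothing window shrinks as the invariance group refines. If you insist on the finite-$k$ route, the missing lemma is a quantitative version of this iteration (each extra dyadic level costs $O(\eta_k)$ by stationarity, so $R_k$ growing slowly relative to $\eta_k^{-1}$ suffices), but the limit-first argument makes all of that bookkeeping unnecessary.
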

\subsection{The leafwise Markov chain}
In this subsection, we will prove Theorem \ref{thm: dimension implies invariance}.
To prove this theorem we need some macroscopic way to view the leafwise dimension, as opposed to Definition \ref{def: leafwise dimension}, which views it as a limit of a leafwise measure of very small intervals.
To do this we introduce a Markov chain, somewhat similar to the one introduced in Furstenberg \cite{furstenberg1970intersections}. The properties of the Markov chain are summarized in the following lemma.
\label{ssec: Markov chain}
\begin{lemma}[The leafwise Markov chain]\label{lem: high leafwise entropy meaning}
  Let $X$ be an LCSC topological space, let $B\acts X$ be a continuous action and let $\mu$ be an $\ta$-invariant, ergodic, and $\tu$-free probability measure.
  Then there is a function $p:X\to [0,1]$ such that
  \begin{align}
    \label{eq: operator entropy}
    \int_{X} H(p(x), 1-p(x)) \bd S_{\ind_{[0,1)}}\mu(x) & = \dim^u(\mu)\log 2,   \\
    \label{eq: operator stationary}
    \int_{X} \omega_x \bd S_{\ind_{[0,1)}}\mu (x)                             & = S_{\ind_{[0,1)}}\mu,
  \end{align}
  where \begin{align}\label{eq: def omega x}
    \omega_x = p(x)\delta_{\ta(\log 2).x} + (1-p(x))\delta_{\tu(1)\ta(\log 2).x}.
  \end{align}
\end{lemma}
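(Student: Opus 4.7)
The plan is to realize the Markov kernel as the projection to $X$ of a single measure-preserving doubling map $T$ on $X\times[0,1)$. Set $\tilde\nu:=\ind_{[0,1)}\cdot F_*(\mu\times m_\RR)$, a probability measure whose $X$-marginal is $S_{\ind_{[0,1)}}\mu$, and define
\[T(y,s)=\begin{cases}(\ta(\log 2).y,\,2s),& s<1/2,\\(\tu(1)\ta(\log 2).y,\,2s-1),& s\ge1/2.\end{cases}\]
The conjugation identity $\ta(\log 2)\tu(-s)=\tu(-2s)\ta(\log 2)$ together with $\ta$-invariance of $\mu$ makes a direct change of variable in each branch show $T_*\tilde\nu=\tilde\nu$.

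Now define $p(y):=\mu_y^\tu([0,1/2))/\mu_y^\tu([0,1))$; this is well-defined on a set of full $S_{\ind_{[0,1)}}\mu$-measure by Eq.~\eqref{eq: integral defined}. The disintegration formula \eqref{eq: disintegration formula} with $f=\ind_{[0,1)}$ identifies the $\tilde\nu$-conditional law of $s$ given $y$ with $\ind_{[0,1)}\cdot\mu_y^\tu/\mu_y^\tu([0,1))$, so $p(y)$ is exactly the $\tilde\nu$-conditional probability of $\{s<1/2\}$. Pushing $T_*\tilde\nu=\tilde\nu$ forward to the $X$-factor is then precisely Eq.~\eqref{eq: operator stationary}.

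For Eq.~\eqref{eq: operator entropy}, iterate: let $(y_k,s_k)=T^k(y_0,s_0)$ and let $b_k\in\{0,1\}$ record the branch at step $k$. By $T$-invariance each $y_k$ is $S_{\ind_{[0,1)}}\mu$-distributed, and conditionally on $y_{k-1}$ the digit $b_k$ has law $(p(y_{k-1}),1-p(y_{k-1}))$ and is independent of $(y_0,b_1,\dots,b_{k-2})$. The chain rule for entropy then yields
\[H(b_1,\dots,b_n\mid y_0)=\sum_{k=1}^nH(b_k\mid y_{k-1})=n\bar H,\qquad \bar H:=\int_XH(p(y),1-p(y))\bd S_{\ind_{[0,1)}}\mu(y).\]
On the other hand $(b_1,\dots,b_n)$ is just the depth-$n$ dyadic address of $s_0$, so the same quantity also equals $\int I_n(y)\bd S_{\ind_{[0,1)}}\mu(y)$, where $I_n(y)$ is the Shannon entropy of $\nu_y:=\ind_{[0,1)}\cdot\mu_y^\tu/\mu_y^\tu([0,1))$ with respect to the dyadic partition of $[0,1)$ of depth $n$. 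Hence $\bar H=\tfrac1n\int I_n(y)\bd S_{\ind_{[0,1)}}\mu(y)$ for every $n\ge1$.

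The remaining and main technical step is to show $\lim_nI_n(y)/n=\delta\log 2$ with $\delta=\dim^\tu(\mu)$. By the equivariance of leafwise measures (Claim~\ref{claim: action equivariance of leafwise}) the pointwise dimension of $\mu_y^\tu$ at $s$ coincides with the pointwise dimension at $0$ of $\mu_{\tu(s).y}^\tu$; since the map $(y,s)\mapsto\tu(s).y$ pushes $\tilde\nu$ forward to $\mu$, this dimension equals $\delta$ for $\tilde\nu$-a.e.\ $(y,s)$, and hence for $S_{\ind_{[0,1)}}\mu$-a.e.\ $y$ the measure $\nu_y$ has pointwise dimension $\delta$ at $\nu_y$-a.e.\ point. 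The Billingsley--Young-style identification of pointwise dimension with dyadic-partition entropy then gives $I_n(y)/n\to\delta\log 2$ pointwise; combined with the uniform bound $I_n(y)\le n\log 2$, dominated convergence yields $\bar H=\delta\log 2$, completing the proof.
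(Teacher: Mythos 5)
Your construction coincides with the paper's: the skew product $T$ on $X\times[0,1)$ is exactly the conjugated doubling system used there, $p$ is defined identically, and Eq.~\eqref{eq: operator stationary} does follow, as you say, from the one-step conditional law plus $T$-invariance. The gap is in Eq.~\eqref{eq: operator entropy}: you assert that conditionally on $y_{k-1}$ the digit $b_k$ has law $(p(y_{k-1}),1-p(y_{k-1}))$ \emph{and is independent of} $(y_0,b_1,\dots,b_{k-2})$, and you attribute this to $T$-invariance. That attribution is insufficient. $T$-invariance gives $(y_{k-1},s_{k-1})\sim\tilde\nu$, hence the law of $b_k$ conditioned on $y_{k-1}$ \emph{alone}; but the chain rule needs the law of $b_k$ conditioned on the finer data $(y_0,b_1,\dots,b_{k-1})$, and since the map $(y_0,b_1,\dots,b_{k-1})\mapsto y_{k-1}$ need not be essentially injective, conditioning on the finer $\sigma$-algebra can a priori change this law. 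Without that step you only get $H(b_k\mid y_0,b_1,\dots,b_{k-1})\le H(b_k\mid y_{k-1})$, hence only the inequality $\bar H\ge \dim^\tu(\mu)\log 2$. The missing idea is the compatibility of leafwise measures along the orbit, $\mu^\tu_{y_{k-1}}\propto\bigl(s\mapsto 2^{k-1}(s-s_{k-1})\bigr)_*\mu^\tu_{y_0}$, which requires the translation property \eqref{eq: translation invariance of leafwise} together with the equivariance and rescaling claims (Claims \ref{claim: action equivariance of leafwise} and \ref{claim: rescaling}), plus $\tu$-freeness (Claim \ref{claim: almost no periods}) to identify conditioning on $(y_0,\dots,y_{k-1})$ with conditioning on $(y_0,s_{k-1})$; this is precisely the paper's Claim \ref{claim: is Markov}, and it is where the real work of the lemma lies.

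A second, lesser issue is the final step. Your argument that $\nu_y$ is exact-dimensional for a.e.\ $y$ is essentially the paper's (note it is the translation property \eqref{eq: translation invariance of leafwise}, not Claim \ref{claim: action equivariance of leafwise}, that transfers the dimension of $\mu^\tu_y$ at $s$ to that of $\mu^\tu_{\tu(s).y}$ at $0$). But the implication ``exact local dimension $\delta$ at $\nu_y$-a.e.\ point $\Rightarrow$ $I_n(y)/n\to\delta\log 2$'' is not immediate: one must handle both the discrepancy between a dyadic cell and a ball centered near the cell boundary, and the fact that $-\tfrac1n\log\nu_y(\tau_n(\cdot))$ is unbounded, so a.e.\ convergence alone does not pass through the inner integral defining $I_n(y)$. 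The paper proves exactly this in Lemma \ref{lem: dimension via entropy} via Claim \ref{claim: pure dimension implies L1 converges}. Invoking a ``Billingsley--Young-style identification'' as a black box is acceptable only if you can point to a precise citable statement (exact-dimensionality implies the entropy/information dimension equals the exact dimension); otherwise you should supply the short Borel--Cantelli-plus-truncation argument, or the paper's $L^1_{\mathrm{loc}}$ argument, in its place.
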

Fix an $\ta$-invariant and ergodic $\tu$-free measure $\mu$ on $X$.
Consider the dynamical system with space $X\times [0,1)$, measure $\nu_0 = \mu\times m_{[0,1)}$, and action
$T_0(x,s) = (\ta(\log(2))x, 2s~{\rm mod}~1)$, which preserves $\nu_0$.
Conjugate it by \[F:X\times [0,1)\to X\times [0,1), \qquad F(x,s) = (\tu(-s)x, s).\]
We obtain a dynamical system $(T, X\times [0,1),\nu)$, where
\begin{align}\label{eq: nu from mu}
  \nu = F_*\nu_0 \stackrel{\eqref{eq: disintegration formula}}{=} \int_{X}\delta_y\times \frac{\mu^\tu_y|_{[0,1)}}{\mu^\tu_y([0,1))}\bd S_{\ind_{[0,1)}}\mu(y)
\end{align}
and $T(y, s) = (\tu(b_1(s))\ta(\log 2)x, 2s~{\rm mod}~1)$, where $b_1(s)$ is the $2^{-1}$ bit of the binary point in the binary expansion, $s = \sum_{i=1}^\infty 2^{-i} b_i(s)$, where $b_i(s)\in \{0,1\}$.
Let $p_0 = (y_0, s)$ be a sample point in the probability space $(X\times [0,1), \nu)$.
A different interpretation of Eq. \eqref{eq: nu from mu} is the following almost sure equalities,
\begin{align}
  \label{eq: y0 law} \Law(y_0)           & = S_{\ind_{[0,1)}}\mu,                         \\
  \label{eq: s given y0 law} \Law(s|y_0) & = \frac{\mu^\tu_y|_{[0,1)}}{\mu^\tu_y([0,1))}.
\end{align}
Denote by $\pi_X:X\times [0,1)\to X$ the projection. For every $n\ge 1$ let $p_n = T^np_0$ and for every $n\ge 0$ denote $y_n = \pi_X(p_n)$.
One can see that
\begin{align}\label{eq: y_n explicit}
  y_n = \ta(n\log 2)\tu(s_n)y_0,
\end{align}
where $s_n = \sum_{i=1}^n 2^{-i} b_i(s)$.
Let $X' = \{x\in X:\mu^\tu_{x}([0,1)) > 0\}$. It has a full $S_{\ind_{[0,1)}}\mu$-measure from Eq. \eqref{eq: integral defined}.
Define
\[p(x) = \begin{cases}
    \frac{\mu^\tu_{x}([0,1/2))}{\mu^\tu_{x}([0,1))}, & x\in X',         \\
    0,                                               & \text{otherwise,}
  \end{cases}
\]
and let $\omega_x$ be as in Eq. \eqref{eq: def omega x}.
\begin{claim}\label{claim: is Markov}
  The stochastic process $(y_n)_{n=0}^\infty$ is a stationary Markov process, with the $\nu$-almost always law
  \begin{align}\label{eq: Markov stationary}
    \Law(y_n|y_0, y_{1}, \dots,y_{n-1}) = \Law(y_n|y_{n-1})
    = \omega_{y_{n-1}}.
  \end{align}
\end{claim}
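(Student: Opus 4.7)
\emph{Plan.} The claim splits into stationarity and the Markov identity. Stationarity is immediate: $T_0$ preserves $\nu_0 = \mu \times m_{[0,1)}$ because the doubling map preserves Lebesgue and $\mu$ is $\ta$-invariant; since $\nu = F_*\nu_0$ and $T = F T_0 F^{-1}$, the measure $\nu$ is $T$-invariant, so $y_n \sim S_{\ind_{[0,1)}}\mu$ for every $n$. For the Markov identity, $\tu$-freeness of $\mu$ makes $\ta(\log 2) y_{n-1} \neq \tu(1)\ta(\log 2) y_{n-1}$ $\nu$-a.s., so each $b_n \in \{0,1\}$ is a measurable function of $(y_{n-1}, y_n)$; consequently the filtration $\mathcal{F}_n := \sigma(y_0, \ldots, y_n)$ coincides with $\sigma(y_0, b_1, \ldots, b_n)$. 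Since $y_{n+1} = \tu(b_{n+1})\ta(\log 2)y_n$, the whole identity reduces to showing $\Pr(b_{n+1} = 0 \mid \mathcal{F}_n) = p(y_n)$ $\nu$-a.s.; the tower property then gives $\Law(b_{n+1} \mid \mathcal{F}_n) = \Law(b_{n+1} \mid y_n)$, and \eqref{eq: Markov stationary} follows.

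\emph{Two computations.} I would express both sides as ratios of $\mu^\tu_{y_0}$-masses and compare. The disintegration
\[
\nu = \int_X \delta_y \times \frac{\mu^\tu_y|_{[0,1)}}{\mu^\tu_y([0,1))}\, \bd S_{\ind_{[0,1)}}\mu(y)
\]
says that conditional on $y_0$, the coordinate $s$ is distributed as $\mu^\tu_{y_0}|_{[0,1)}$ normalized. Moreover $\{b_1 = a_1, \ldots, b_n = a_n\} = \{s \in [s_n, s_n + 2^{-n})\}$, so
\[
\Pr(b_{n+1} = 0 \mid \mathcal{F}_n) = \frac{\mu^\tu_{y_0}([s_n, s_n + 2^{-n-1}))}{\mu^\tu_{y_0}([s_n, s_n + 2^{-n}))}.
\]
For $p(y_n)$, rewrite $y_n = \tu(2^n s_n)\ta(n\log 2)y_0$ using $\ta(t)\tu(s)\ta(-t) = \tu(se^t)$. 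By \eqref{eq: translation invariance of leafwise}, $\mu^\tu_{y_n} \propto T^{2^n s_n}_*\mu^\tu_{\ta(n\log 2)y_0}$. Set $\tu'(s).x := \tu(2^n s).x$; the conjugation $\ta(n\log 2)\tu(s)\ta(-n\log 2) = \tu'(s)$ together with $\ta$-invariance of $\mu$ and Claim \ref{claim: action equivariance of leafwise} gives $\mu^{\tu'}_{\ta(n\log 2)y_0} \propto \mu^\tu_{y_0}$, while Claim \ref{claim: rescaling} with $\beta = 2^n$ gives $\mu^{\tu'}_{z} \propto (r \mapsto 2^{-n}r)_*\mu^\tu_{z}$. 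Combining these two identities yields $\mu^\tu_{\ta(n\log 2)y_0} \propto (r \mapsto 2^n r)_*\mu^\tu_{y_0}$, and consequently $\mu^\tu_{y_n}(A) \propto \mu^\tu_{y_0}(2^{-n}A + s_n)$. Plugging $A = [0, 1/2)$ and $A = [0, 1)$ reproduces the ratio above, so $p(y_n) = \Pr(b_{n+1} = 0 \mid \mathcal{F}_n)$, as required.

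\emph{Main obstacle.} The delicate point is the null-set bookkeeping in the leafwise-measure chain: Claims \ref{claim: action equivariance of leafwise} and \ref{claim: rescaling} hold only on full-measure subsets of $X$, and I apply them at $\alpha = \ta(n\log 2)$ for every $n \geq 1$. The intersection of the corresponding countably many full-measure sets is still $\mu$-full because $\mu$ is $\ta$-invariant, but this must be spelled out so that the displayed proportionality between $\mu^\tu_{y_n}$ and $\mu^\tu_{y_0}$ holds simultaneously for all $n$ on a single $\nu$-conull set. Once that is done, passing from the digit law to the transition $\omega_{y_n}$ via $y_{n+1} = \tu(b_{n+1})\ta(\log 2)y_n$ and deducing the Markov formula is entirely formal.
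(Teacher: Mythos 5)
Your proposal is correct and follows essentially the same route as the paper: reduce the conditional law to the digit law via $\tu$-freeness (Claim \ref{claim: almost no periods}), compute $\Pr(b_{n+1}=0\mid\mathcal F_n)$ as a ratio of $\mu^\tu_{y_0}$-masses of dyadic intervals using Eq. \eqref{eq: s given y0 law}, and identify this with $p(y_n)$ by chaining Eq. \eqref{eq: translation invariance of leafwise}, Claim \ref{claim: action equivariance of leafwise} (with $\alpha=\ta(n\log 2)$ and $\ta$-invariance of $\mu$) and Claim \ref{claim: rescaling}, with the null-set bookkeeping resolved exactly as in the paper by the $\tu$-invariant full-measure sets provided in those claims.
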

\begin{proof}
  It will be sufficient to show that the RHS and LHS of \ref{eq: Markov stationary} coincide.
  Indeed, by Eq. \eqref{eq: y_n explicit} and Claim \ref{claim: almost no periods},
  $\Law(y_n|y_0, y_1, \dots,y_{n-1}) = \Law(y_n|s_{n-1},y_0)$.
  Now,
  \begin{align}\label{eq: law is almost good}
    \Law(s_n|s_{n-1},y_0) \stackrel{\eqref{eq: s given y0 law}}{=} \begin{cases}
                                                                     s_{n-1},          & \text{with probability }\frac{\mu^\tu_{y_0}([s_{n-1},s_{n-1}+2^{-n}))}{\mu^\tu_{y_0}([s_{n-1}, s_{n-1}+2^{-n+1}))},            \\
                                                                     s_{n-1} + 2^{-n}, & \text{with probability }\frac{\mu^\tu_{y_0}([s_{n-1} + 2^{-n},s_{n-1}+2^{-n+1}))}{\mu^\tu_{y_0}([s_{n-1}, s_{n-1}+2^{-n+1}))}.
                                                                   \end{cases}
  \end{align}
  We wish to apply Claims \ref{claim: action equivariance of leafwise} and \ref{claim: rescaling} to $y_0$. Since $u(s) y_0 \sim \mu$ we deduce that $y_0 \in X'$ almost surely, where $X'$ is one of the sets in Claims \ref{claim: action equivariance of leafwise} and \ref{claim: rescaling}, and hence these claim are applicable.
  Similarly, $y_n\in X'$ almost surely for all $n\ge 0$.
  Apply Claim \ref{claim: action equivariance of leafwise} for the $\ta((n-1)\log 2)$-action,
  which takes the $\tu(s)$ action to $\tu'(s) = \tu(2^ns)$-action.
  Hence
  \begin{align*}
    \mu^\tu_{y_{n-1}}
     &\stackrel{\ref{claim: rescaling}}{\propto}
     (s\mapsto 2^{n-1}s)_*\mu^{\tu'}_{y_{n-1}}
      \stackrel{\eqref{eq: y_n explicit}}{=}
     (s\mapsto 2^{n-1}s)_*\mu^{\tu'}_{\ta((n-1)\log 2)\tu(s_{n-1})y_0}
    \\&
    \stackrel{\ref{claim: action equivariance of leafwise}}{\propto}
    (s\mapsto 2^{n-1}s)_*\mu^\tu_{\tu(s_{n-1})y_0}
    \stackrel{\eqref{eq: translation invariance of leafwise}}{\propto}
    (s\mapsto 2^{n-1}s)_*(s\mapsto s-s_{n-1})_*\mu^\tu_{y_0}
    \\&\,=
    (s\mapsto 2^{n-1}(s-s_{n-1}))_*\mu^\tu_{y_0}.
  \end{align*}
  Thus Eq. \eqref{eq: law is almost good} implies the desired.
\end{proof}
This shows Eq. \eqref{eq: operator stationary}.
\begin{proof}[Proof of Eq. \eqref{eq: operator entropy}]
  Let $C = \int_{X} H(p(x), 1-p(x))\bd S_{\ind_{[0,1)}}\mu(x)$.
  Then $C = H(y_1|y_0)$.
  The Markov chain property implies that
  \[C = H(y_n|y_{n-1}) = H(y_n|y_{n-1},\dots,y_0) = H(b_n(s)|b_{n-1}(s),\dots,b_1(s), y_0).\]
  Hence
  \begin{align}
    \label{eq: accumulated entropy}
    H(b_n(s),b_{n-1}(s),\dots,b_1(s)|y_0) = \sum_{m=1}^n H(b_m(s)|b_{m-1}(s),\dots,b_1(s), y_0) = nC.
  \end{align}
  Denote by $\tau_n = \{[m2^{-n}, (m+1)2^{-n}):m=0,\dots,2^n-1\}$ the partition of $[0,1)$.
  Rewriting Eq. \eqref{eq: accumulated entropy} using Eq. \eqref{eq: s given y0 law}, 
  we obtain
  \begin{align}\label{eq: entropy fixed}
    \int_{X}H_{\frac{\mu^\tu_{y_0}|_{[0,1)}}{\mu^\tu_{y_0}([0,1))}}\left(\tau_n\right)\bd S_{\ind_{[0,1)}}\mu(y_0) = nC.
  \end{align}
  It follows from Lemma \ref{lem: dimension via entropy} below and the Dominated Convergence Theorem that $C = \dim^\tu(\mu)\log 2$.
\end{proof}

\begin{lemma}\label{lem: dimension via entropy}
  For $\mu$-almost-all $x\in X$, and for all $t\in \RR$ such that $\mu^\tu_x([t, t+1])\neq 0$, denote $M_{x,t} = (x\to x-t)_*\frac{\mu^\tu_x|_{[t, t+1]}}{\mu^\tu_x([t, t+1])}$.
  Then $\frac{1}{n}H(M_{x,y}, \tau_n)\xrightarrow{n\to \infty} \dim^\tu(\mu)\log 2$.
\end{lemma}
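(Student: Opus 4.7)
The plan is to derive the lemma from a pointwise-dimension statement for the leafwise measures combined with the standard equivalence between dyadic entropy and pointwise dimension for probability measures on $[0,1)$. By Eq.~\eqref{eq: translation invariance of leafwise} one has $M_{x,t}=M_{\tu(t)x,0}$, so it suffices to treat $t=0$; write $\nu_x:=M_{x,0}=\mu^\tu_x|_{[0,1)}/\mu^\tu_x([0,1))$ and $\delta:=\dim^\tu\mu$.

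The first step is to upgrade Definition~\ref{def: leafwise dimension}---which gives pointwise dimension $\delta$ only at the origin of each leafwise measure---to the statement that for $\mu$-a.e.\ $x$ the measure $\mu^\tu_x$ has pointwise dimension $\delta$ at $\mu^\tu_x$-a.e.\ point. Using Eq.~\eqref{eq: translation invariance of leafwise}, the pointwise dimension of $\mu^\tu_x$ at $s$ equals that of $\mu^\tu_{\tu(s)x}$ at $0$, so the set of bad pairs $(y,s)$ is the $F$-preimage of a $\mu$-null set and hence $\omega$-null, where $F$ and $\omega$ are as in Eq.~\eqref{eq: def omega}. The disintegration Eq.~\eqref{eq: disintegration formula} with $f=\ind_{[0,1)}$ yields the claim for $S_f\mu$-a.e.\ $y$; since the property in question is $\tu$-invariant by Eq.~\eqref{eq: translation invariance of leafwise}, the set of such $y$ is $\tu$-invariant and therefore has full $\mu$-measure. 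Restricting to $[t,t+1]$ then shows that $M_{x,t}$ has pointwise dimension $\delta$ at $M_{x,t}$-a.e.\ point for $\mu$-a.e.\ $x$ and every admissible $t$.

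What remains is a purely real-variable claim: any probability measure $\nu$ on $[0,1)$ with pointwise dimension $\delta$ at $\nu$-a.e.\ point satisfies $H_\nu(\tau_n)/n\to\delta\log 2$. The $\liminf$ direction follows from Fatou applied to $-\log\nu([s]_{\tau_n})\geq -\log\nu([s-2^{-n},s+2^{-n}])$, since a dyadic cell of length $2^{-n}$ is contained in the symmetric interval of radius $2^{-n}$. The main obstacle is the $\limsup$: a dyadic cell can be much smaller than any ball around $s$ when $s$ lies near the boundary, so pointwise dimension does not directly bound $\nu([s]_{\tau_n})$ from below. I would handle this via shifted dyadic partitions $\tau_n^{(\alpha)}$ for $\alpha\in[0,2^{-n})$, whose entropies satisfy $|H_\nu(\tau_n)-H_\nu(\tau_n^{(\alpha)})|\leq\log 2$ since each pair refines the other up to splitting cells in two. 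Fubini converts the $\alpha$-average of $-\log\nu([s]_{\tau_n^{(\alpha)}})$ for fixed $s$ into the Lebesgue integral $2^{n+1}\int_0^{2^{-n-1}}-\log\nu([s-p,s+p])\,dp$, which by pointwise dimension on an Egorov set $G$ of $\nu$-measure $>1-\eta$ is bounded above by $(\delta+\varepsilon)n\log 2+O(1)$; the contribution from $G^c$ is controlled by splitting cells $I$ into those with $\nu(I)\leq 2^{-n(1+\eta)}$ (handled by a crude counting bound that gives $o(n)$) and the rest (bounded by $\eta\cdot n(1+\eta)\log 2$), yielding a total $G^c$-contribution of $O(\eta n\log 2)$. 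Sending $\varepsilon,\eta\to 0$ gives the matching $\limsup$.
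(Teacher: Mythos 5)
Your proposal proves the right statement and shares the paper's first step, but the real-variable core is genuinely different. The paper also upgrades Definition \ref{def: leafwise dimension} to the statement that for $\mu$-a.e.\ $x$ the measure $\mu^\tu_x$ has pointwise dimension $\dim^\tu(\mu)$ at $\mu^\tu_x$-a.e.\ point (the set $X_{good,1}$), but it then proves the stronger Claim \ref{claim: pure dimension implies L1 converges}: the information functions $u_n(s)=-\tfrac1n\log\nu(\tilde\tau_n(s))$ converge to $\Delta\log 2$ \emph{locally in $L^1(\nu)$}, via an Egorov set together with the set $J_+'$ of dyadic cells that are much lighter than a neighboring cell and a maximal-entropy bound for the exceptional cells. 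You instead prove only the convergence of the normalized entropies: the lower bound by Fatou (using that a dyadic cell sits inside the symmetric interval of radius $2^{-n}$), and the upper bound by averaging over shifted dyadic grids $\tau_n^{(\alpha)}$, comparing $H_\nu(\tau_n)$ and $H_\nu(\tau_n^{(\alpha)})$ up to $\log 2$, and converting the $\alpha$-average into (an upper bound by) an integral of $-\log\nu([s-p,s+p])$, with a crude counting bound off the Egorov set. This is a correct and arguably more elementary route; note only that your ``Fubini converts \dots into'' is really a one-sided inequality (the shifted cell merely \emph{contains} a symmetric interval of radius $\min(q,2^{-n}-q)$), which is the direction you need. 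What you lose relative to the paper is the $L^1$ statement itself, but the lemma as stated, and its use in the proof of Eq.~\eqref{eq: operator entropy} via dominated convergence (with the trivial bound $\tfrac1nH\le\log 2$), only need pointwise convergence of the entropies, so nothing downstream breaks.

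One small slip in your first step: with $f=\ind_{[0,1)}$, Eq.~\eqref{eq: disintegration formula} only gives that for $S_f\mu$-a.e.\ $y$ the measure $\mu^\tu_y$ has the right pointwise dimension at a.e.\ point \emph{of $[0,1)$}, and the set of $y$ with that restricted property is \emph{not} $\tu$-invariant (translation shifts the window), so you cannot immediately pass from full $S_f\mu$-measure to full $\mu$-measure as written. The fix is immediate and is what the paper does: take $f$ strictly positive (so that $f\cdot\mu^\tu_y$-null sets are $\mu^\tu_y$-null and you get the property on all of $\RR$, which \emph{is} $\tu$-invariant), or argue as in the paper via Eq.~\eqref{eq: Stationary measure formula} and translation invariance in the $s_1$-variable. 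With that one-line repair your argument is complete.
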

This lemma will follow from the following Lemma. Let \[\tilde \tau_n = \{[m2^{-n}, (m+1)2^{-n}):m\in \ZZ\},\] denote the partition of $\RR$.
For every $s\in \RR$ denote by $\tilde \tau_m(s)$ the unique partition element in $\tilde \tau_m$ containing $s$.
\begin{claim}\label{claim: pure dimension implies L1 converges}
  Let $\nu$ be a locally finite measure on $\RR$ satisfying that for $\nu$-almost-all $s\in \RR$,
  \begin{align}\label{eq: fixed dimension}
    \frac{1}{t}\log \nu([s-e^t, s+e^t])\xrightarrow{t\to \infty}\Delta.
  \end{align}
  Define the function $u_n:\RR\to\RR$ by $u_n(s) = -\frac{1}{n}\log \nu(\tilde\tau_n(s))$.
  Then $u_n\xrightarrow{m\to \infty} \Delta\log 2$ locally in $L^1(\RR, \nu)$.
  That is,
  for every finite interval $I\subseteq \RR$,
  \[\int _I \left|u_n(s) - \Delta \log 2\right| \bd \nu(s)\xrightarrow{n\to \infty} 0. \]
\end{claim}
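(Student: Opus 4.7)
The plan is to prove $L^1(I,\nu)$ convergence of $u_n$ to $\Delta\log 2$ on any bounded interval $I$ via two ingredients: convergence in $\nu$-measure on $I$, and uniform integrability of the sequence $(u_n)$. I interpret the hypothesis in its natural small-scale form, that for $\nu$-a.e.\ $s$, $\nu([s-r,s+r])=r^{\Delta+o(1)}$ as $r\to 0^+$; this is consistent with the normalization of Definition \ref{def: leafwise dimension}.

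For the lower bound in measure, the inclusion $\tilde\tau_n(s)\subseteq[s-2^{-n},s+2^{-n}]$ together with the hypothesis immediately gives $u_n(s)\ge(\Delta-\epsilon)\log 2$ for $n\ge N(s,\epsilon)$ at $\nu$-a.e.\ $s$; Egorov upgrades this to $\nu(\{s\in I:u_n(s)<(\Delta-\epsilon)\log 2\})\to 0$. The upper bound is more delicate because $\tilde\tau_n(s)$ is typically off-center around $s$. For each $s$ let $T_{\mathrm{nbr}}(s)$ be the dyadic neighbor of $\tilde\tau_n(s)$ on the side of $s$ closer to the boundary of $\tilde\tau_n(s)$, and set $A_n(s)=\tilde\tau_n(s)\cup T_{\mathrm{nbr}}(s)$. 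A direct check shows $A_n(s)\supseteq[s-2^{-n-1},s+2^{-n-1}]$, hence $\nu(A_n(s))\ge 2^{-(n+1)(\Delta+\epsilon/2)}$ for $\nu$-a.e.\ $s$ and $n\ge N(s,\epsilon)$. Define the bad set $B_n=\{s\in I:\nu(\tilde\tau_n(s))<\tfrac14\cdot 2^{-n(\Delta+\epsilon)}\}$; on $B_n$ (intersected with the Egorov-good subset) the neighbor $T_{\mathrm{nbr}}(s)$ must carry $\nu$-mass at least $\tfrac14\cdot 2^{-(n+1)(\Delta+\epsilon/2)}$. Since each level-$n$ dyadic interval is the neighbor of exactly two others, the number of such ``large'' intervals inside $I'=I+[-1,1]$ is at most $O(2^{n(\Delta+\epsilon/2)}\cdot\nu(I'))$, so the number of cells $T\subseteq B_n$ is at most twice that, giving $\nu(B_n)\le O(2^{-n\epsilon/2})\to 0$. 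Thus $u_n\to\Delta\log 2$ in $\nu$-measure on $I$.

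For uniform integrability, observe that for $M>\log 2$,
\[
\nu(\{u_n>M\}\cap I)\le \bigl(|I|\cdot 2^n+O(1)\bigr)\cdot e^{-nM},
\]
since at level $n$ there are only $O(|I|\cdot 2^n)$ dyadic cells meeting $I$, each of $\nu$-mass below $e^{-nM}$. A layer-cake computation then yields $\int_{\{u_n>M\}\cap I}u_n\,d\nu\le (M+1/n)\,|I|\,e^{-n(M-\log 2)}$, which is exponentially small in $n$ for fixed $M>\log 2$ and, for each fixed $n$, tends to $0$ as $M\to\infty$; handling the finitely many small $n$'s separately yields $\sup_n\int_{\{u_n>M\}\cap I}u_n\,d\nu\to 0$ as $M\to\infty$.

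Combining convergence in $\nu$-measure with uniform integrability gives $\int_I|u_n-\Delta\log 2|\,d\nu\to 0$, as required. The main obstacle is the upper bound for in-measure convergence: because the dyadic cell $\tilde\tau_n(s)$ is not centered at $s$, one must pair it with the appropriate neighbor to recover a centered ball for the hypothesis and then exploit the combinatorial fact that each level-$n$ dyadic interval has exactly two neighbors in order to bound the exceptional set.
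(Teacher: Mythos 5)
Your proposal is correct in outline, and it finishes the argument by a genuinely different mechanism than the paper, although the two proofs share the same core geometric ingredient: the lower bound on $u_n$ comes for free from $\tilde\tau_n(s)\subseteq[s-2^{-n},s+2^{-n}]$, while the off-centeredness of $\tilde\tau_n(s)$ is cured by adjoining a dyadic neighbor so that a centered ball is covered, together with the observation that each level-$n$ cell is adjacent to at most two others (the paper uses the cell plus \emph{both} neighbors and the mass-comparison set $J_+'$; you use one neighbor and count heavy cells, which also gives you a rate $O(2^{-n\varepsilon/2})$ for the bad set). Where you diverge is in how the small exceptional set is prevented from contributing to the integral: the paper estimates $\int(u_n-\Delta\log 2)^{+}\,\mathrm{d}\nu$ directly after normalizing $\nu$ to a probability measure on $(0,1)$, bounding the contribution of the bad cells $A$ by the concavity/entropy inequality $\sum_{I\in A}-\nu(I)\log\nu(I)\le-\zeta\log\zeta+\zeta\, n\log 2$ with $\zeta=\nu(\bigcup A)$ small; you instead prove convergence in $\nu$-measure and invoke Vitali, getting uniform integrability from the crude count that at most $O(|I|2^{n})$ level-$n$ cells meet $I$, each of mass below $e^{-nM}$ on $\{u_n>M\}$, a tail bound that is indeed uniform in $n$ once $M>\log 2$. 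Your route is more modular and elementary; the paper's is self-contained within a single $\varepsilon$-splitting and never needs the uniform-integrability language. Three small points to tidy: you correctly read the hypothesis at small scales, i.e. $-\frac1t\log\nu([s-e^{-t},s+e^{-t}])\to\Delta$, which is what the paper's own proof uses despite the sign/scale typo in the displayed hypothesis; in the covering step $A_n(s)\supseteq[s-2^{-n-1},s+2^{-n-1}]$ you need a tie-breaking convention (or simply take both neighbors) when $s$ is the exact midpoint of its cell, since the half-open neighbor on the wrong side can miss one endpoint; and the bound on $\nu(B_n)$ should carry the Egorov exceptional mass $\delta$ (so $\limsup_n\nu(B_n)\le\delta$ for every $\delta$), which is harmless but should be said, and on a non-normalized interval one should also note $u_n\ge-\frac1n\log\nu(I+[-1,1])$, so the sequence is uniformly bounded below and uniform integrability of the positive tail suffices.
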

\begin{proof}[Proof of Lemma \ref{lem: dimension via entropy} using Claim \ref{claim: pure dimension implies L1 converges}]
  Let \[X_{good, 0} = \left\{x\in X:-\frac{1}{t}\log \mu^\tu_x([ -e^{-t}, e^{-t}])\xrightarrow{t\to \infty}\dim^\tu(\mu)\right\}.\]
  As mensioned in Definition \ref{def: leafwise dimension},  $\mu(X_{good, 0}) = 1$.
  Let
  \[X_{good, 1} = \left\{x\in X: \substack{\text{for $\mu^\tu_x$-almost all }s\in \RR\\\text{we have }\tu(s)x \in X_{good, 0}}\right\}.\]
  This set is $\tu$-invariant by Eq. \eqref{eq: translation invariance of leafwise}. We will now show that $\mu(X_{good, 1}) = 1$. Let $f:\RR\to \RR$ be a positive integrable function with $\int_\RR f\bd s = 1$.
  \begin{align*}
    0 & = \mu(X_{good, 0}^c)
    \stackrel{\eqref{eq: Stationary measure formula}}{=} \int_{X}\frac{\int_\RR f(s_0)\ind_{\tu(s_0)y\in X_{good, 0}^c}\bd \mu^\tu_y(s_0)}{\mu^\tu_y(f)}\bd S_f\mu(y)
    \\&=
    \int_{X}\int_{\RR}\frac{\int_\RR f(s_0)\ind_{\tu(s_0){\tu(s_1)x}\in X_{good, 0}^c}\bd \mu^\tu_{\tu(s_1)x}(s_0)}{\mu^\tu_{\tu(s_1)x}(f)}f(-s_1)\bd s_1\bd \mu(x)
  \end{align*}
  Thus, the positivity of $f$ implies that for $\mu\times m_\RR$-almost all $x, s_1\in X\times \RR$,
  \begin{align}\label{eq: leafwise measure all good}
    \mu^\tu_{\tu(s_1)x}(\{s_0\in \RR: \tu(s_0){\tu(s_1)x}\in X_{good, 0}^c\}) = 0.
  \end{align}
  By Eq. \eqref{eq: translation invariance of leafwise}, Eq. \eqref{eq: leafwise measure all good} is independent of $s_1$. Therefore, for $\mu$-almost-all $x\in X$ we have Eq. \eqref{eq: leafwise measure all good} with $s_1 = 0$, which is equivalent to $x\in X_{good, 1}$. Hence, $\mu(X_{good, 1}) = 1$.

  By Eq. \eqref{eq: translation invariance of leafwise}, for every $x\in X_{good, 1}$ the measure $\mu^\tu_x$ satisfies the condition of Claim \ref{claim: pure dimension implies L1 converges}. This conclusion implies the desired result.

\end{proof}
\begin{proof}[Proof of Claim \ref{claim: pure dimension implies L1 converges}]
  We will first show that it is enough to prove this claim under some simplifying assertions.
  It is sufficient to prove the convergence for intervals $I = (a, a+1)$ with $a\in 1/2\ZZ$.
  Then we may restrict $\nu$ to $I$, while preserving the property for $\nu$-almost all $s\in I$.
  Translating $\nu$,
  we may assume that $\nu$ is supported on $(0,1)$.
  Normalizing $\nu$ to a probability measure does not change the result as well.
  We may now prove that $u_n\xrightarrow{m\to \infty} \Delta\log 2$ in $L^1(\RR,\nu)$, under the assumption that $\nu$ is a probability measure on $(0,1)$.

  Let $\varepsilon>0$ be small numbers and $n$ an integer going to $\infty$.
  By Eq. \eqref{eq: fixed dimension}, for all sufficiently large $n$ there is $S\subseteq (0,1)$ of volume $\nu(S) > 1-\varepsilon$ such that for all $s\in S$ and for all $t\ge n\log 2$,
  \[\left|-\frac{1}{t}\log \nu([s-e^{-t}, s+e^{-t}]) - \Delta\right|<\varepsilon.\]

  We will next show that $\int_{0}^1 (u_n - \Delta\log 2)^-\bd \nu \xrightarrow{n\to \infty}0$, where for every $x\in \RR$ we have $x^+ = \max(x,0)$ and $x^- = (-x)^+$, so that $x=x^+-x^-$.
  Let \[J_- = \{s\in [0,1): u_n(s) < \Delta\log 2 - \varepsilon\}.\]
  Then $J_-$ is a union on $\tau_n$ elements.
  Note that $J_- \cap S = \emptyset$. 
  Indeed, if $s\in J_- \cap S$, then we get the following contradiction:
  \begin{align*}
    \Delta \log 2 - \varepsilon & > u_n(s) = -\frac{1}{n}\log \nu(\tau_n(s))
    \\&
    \ge -\frac{1}{n}\log\nu([x+e^{-n\log 2}, x-e^{-n\log 2}]) \ge (\Delta - \varepsilon) \log 2.
  \end{align*}
  Consequently, $\nu(J_-) \le  1-\nu(S) < \varepsilon$. This implies that
  \begin{align*}
    \int_{0}^1 (u_n - \Delta\log 2)^-\bd \nu & < \int_{J_-} (u_n - \Delta\log 2)^-\bd \nu + \int_{J_-^c} (u_n - \Delta\log 2)^-\bd \nu
    \\&
    \stackrel{u_n \ge 0}{<}\Delta\nu(J_-)\log 2 + \varepsilon(1-\nu(J_-))\le \varepsilon(\Delta\log 2 + 1-\nu(J_-)).
  \end{align*}
  Taking $\varepsilon\to 0$ implies that
  $\int_{0}^1 (u_n - \Delta\log 2)^-\bd \nu\xrightarrow{n\to \infty} 0$.

  We will now show that $\int_{0}^1 (u_n - \Delta\log 2)^+\bd \nu \xrightarrow{n\to \infty}0$.
  Define
  \[J_+' = \bigcup\{I\in \tau_n:\exists I' = I\pm 2^{-n} \text{ with }3\varepsilon \nu(I') > \nu(I)\},\]
  which satisfies
  \begin{align}\label{eq: J + tag volume estimate}
    \nu(J_+') = \sum_{I}\nu(I) < 3\varepsilon\sum_{I,I'}\nu(I') \le 6\varepsilon,
  \end{align}
  where the sums are over $I,I'$ as in the definition on $J_+'$. The rightmost inequality of Eq. \eqref{eq: J + tag volume estimate} holds because each $I_0\in \tau_n$ can appear as $I'$ at most twice.
  Let \[A = \{I\in \tau_n:I\subseteq J_+' \cup S^c\}, \qquad J_+ = \bigcup_{I\in A}I.\]
  We can estimate
  \[
    \zeta := \nu\left(J_+\right) \le \nu(J_+') + \nu(S^c) \le 6\varepsilon + \varepsilon.
  \]

  Thus
  \begin{align}
    \nonumber
    \int_{0}^1 (u_n - \Delta\log 2)^+\bd \nu
                            & =
    \int_{J_+ } (u_n - \Delta\log 2)^+\bd \nu + \int_{J_+^c}(u_n - \Delta\log 2)^+\bd \nu
    \\
    \label{eq: plus part 1} &
    = \sum_{I\in A}\nu(I)\left(-\frac{1}{n}\log \nu(I) - \Delta\right)^+ + \sum_{I\in \tau_n\setminus A}\nu(I)\left(-\frac{1}{n}\log \nu(I) - \Delta\right)^+.
  \end{align}
  To bound the sum over $I\in A$
  \begin{align}\nonumber
    \sum_{I\in A}\nu(I)\left(-\frac{1}{n}\log \nu(I) - \Delta\right)^+ &
    \le
    \frac{1}{n}\sum_{I\in A}-\nu(I)\log \nu(I)
    \\
    \label{eq: plus part 11}
                                                                       & =
    -\zeta \log \zeta
    + \frac{\zeta}{n}\sum_{I\in A}-\frac{\nu(I)}{\zeta}\log \frac{\nu(I)}{\zeta}\le
    -\zeta \log \zeta + \zeta \log 2.
  \end{align}
  The last inequality holds since $(\frac{\nu(I)}{\zeta})_{I\in A}$ is a probability vector on at most $2^n$ and hence the maximal entropy it can have is $n\log 2$.
  For the other summand,
  let $I\in \tau_n\setminus A$. Since $I\nin A$ we deduce that $I \cap J_+' = \emptyset$ and $I\cap S \neq \emptyset$.
  Thus there is $s\in I\cap S$ with $s\nin J_+'$.
  \begin{align}\nonumber
    -\frac{1}{n}\log \nu (I) & = -\frac{1}{n}\log \nu (\tau_n(s))
    \stackrel{s\nin J_+'}{\le} -\frac{1}{n}\log (\varepsilon\nu (\tau_n(s) \cup (\tau_n(s) - 2^{-n})\cup (\tau_n(s) + 2^{-n})))
    \\\label{eq: plus part 12}&
    \le -\frac{1}{n}\log (\varepsilon\nu ([s-2^{-n}, s+2^{-n}]))
    \stackrel{s\in S}{\le} -\frac{1}{n}\left( \log\varepsilon - n(\Delta + \varepsilon) \right) = \Delta + \varepsilon -\frac{\log \varepsilon}{n}
  \end{align}
  Combining Eqs. \eqref{eq: plus part 1}, \eqref{eq: plus part 11}, \eqref{eq: plus part 12} we deduce that for all $n$ sufficiently large
  \[\int_{0}^1 (u_n - \Delta\log 2)^+\bd \nu  \le -7\varepsilon \log (7\varepsilon) + 7\varepsilon\log 2 + \varepsilon -\frac{\log \varepsilon}{n}.\]
  Taking $\varepsilon\to 0$ implies that
  $\int_{0}^1 (u_n - \Delta\log 2)^+\bd \nu\xrightarrow{n\to \infty} 0$.
  The desired follows.
\end{proof}
\begin{proof}[Proof of Theorem \ref{thm: dimension implies invariance}]
  For every $k=0,1,\ldots$, let $p^k, \nu_x^k$ as in Lemma \ref{lem: high leafwise entropy meaning} constructed for $\mu^k$.
  Let $f \in C_c(X)$ be a continuous compactly supported function.
  Then
  \begin{align*}
    S_{\ind_{[0,1)}}\mu^k(f) & - \frac{1}{2}\left(\ta(\log 2).S_{\ind_{[0,1)}}\mu^k\right)(f) - \frac{1}{2}\left(\ta(\log 2).S_{\ind_{[0,1)}}\mu^k\right)(f)
    \\&=
    \int_X \left(\left(p(x) - \frac{1}{2}\right)f(\ta(\log 2).x) + \left(1-p(x) - \frac12\right)f(\tu(1) \ta(\log 2).x)\right) \bd S_{\ind_{[0,1)}}\mu^k(x)  \\
                             & \le \|f\|_\infty
    \int_X \left|p(x) - \frac{1}{2}\right|\bd \mu^k(x)\xrightarrow{k\to \infty}0
  \end{align*}
  The convergence follows from Eq. \eqref{eq: operator entropy}.
  Hence we get an equality of the weak-$*$ limits
  \begin{align}\label{eq: mu infinity stationary}
    S_{\ind_{[0,1)}}\mu^\infty = \frac{1}{2}\ta(\log 2).S_{\ind_{[0,1)}}\mu^\infty + \frac{1}{2}\ta(\log 2).S_{\ind_{[0,1)}}\mu^\infty.
  \end{align}
  inductively applying \eqref{eq: mu infinity stationary} to itself, and using $\ta(\log 2)\tu(1) = \tu(2)\ta(\log 2)$, we get
  \[S_{\ind_{[0,1)}}\mu^\infty = \frac{1}{2^n}\sum_{i=0}^{2^n-1} \tu(i) \ta(n\log 2).S_{\ind_{[0,1)}}\mu^\infty.\]
  Hence $(\tu(1) S_{\ind_{[0,1)}}\mu^\infty - S_{\ind_{[0,1)}}\mu^\infty)(f) \le \frac{2}{2^n}\|f\|_\infty$ for every $f\in C_c(X)$. Taking $n\to \infty$ we deduce that $S_{\ind_{[0,1)}}\mu^\infty$ is $\tu(1)$ invariant, and hence $\tu(k)$-invariant for every $k\in \ZZ$.
  Note that $\ta(-n\log 2)S_{\ind_{[0,1)}}\mu^\infty$ is $\ta(-n\log 2)\tu(k)\ta(n\log 2) = \tu(2^{-n} k)$-invariant, for every $k\in \ZZ$.
  On the other hand,
  \begin{align*}
    \ta(-n\log 2)S_{\ind_{[0,1)}}\mu^\infty
     & = \int_{0}^{1}\ta(-n\log 2)\tu(-s).\mu^\infty \bd s
    = \int_{0}^{1}\tu(-2^{-n}s)\ta(-n\log 2).\mu^\infty \bd s                                          \\
     & = \int_{0}^{1}\tu(-2^{-n}s).\mu^\infty \bd s\xrightarrow[n\to \infty]{\text{weak-}*}\mu^\infty.
  \end{align*}
  where the last equality follows from the $\ta$-invariance of $\mu^\infty$.
  Since $\mu^\infty$ is a weak-$*$ limit of measures with more and more invariance, we obtain that $\mu^\infty$ is invariant to $\tu\left(\bigcup_{n=0}^\infty 2^{-n} \ZZ\right)$, and hence to the entire $\tu$-action.
\end{proof}

\section{Proof of Theorem \ref{thm: main}}
\label{sec: proof of rigidity theorems}
In this section, we will prove Theorem \ref{thm: main}, conditioned on Lemma \ref{lem: non-degenerate limits}.
The proof is an adaptation of \cite{bader2021arithmeticity}, and is composed of four components, which we will enumerate from last to first. The last component is \cite[Thm.~1.6]{bader2021arithmeticity}, whose outcome contradicts the nonarithmeticity assumption we assumed. The third is a reduction of our problem to \cite[Thm.~1.6]{bader2021arithmeticity}. We will do this similarly to \cite[\S3]{bader2021arithmeticity}. However, two new ergodic components will be needed for this part, namely, Theorem \ref{thm: dimension implies invariance}, and Lemma \ref{lem: non-degenerate limits}.
This section will focus on the reduction to \cite[Thm.~1.6]{bader2021arithmeticity}. If $\Gamma$ is of infinite volume, the arguments in this Subsections \ref{ssec: lemmas for rigidity} and \ref{ssec: measure on G mod Gamma} will be sufficient to prove Theorem \ref{thm: main}, and \cite[Thm.~1.6]{bader2021arithmeticity} will not be required.

Note that if $\Gamma$ is an arithmetic lattice then Theorem \ref{thm: main} follows from Borel and Harish-Chandra's Theorem \cite{BorelHarishChandra} with $\varepsilon_\Gamma = 1$. For the rest of the section we assume that $\Gamma$ is not an arithmetic lattice.

\subsection{Entropy intepretation of the critical exponent}
\label{ssec: lemmas for rigidity}
In this section, we deduce the following claim from known results.
\begin{prop}\label{prop: critical exponent via entropy}
  Let $\Lambda_0 < \SL_2(\RR)$ be a Zariski-dense subgroup. 
  Then 
  \begin{align}\label{eq: entrop big}
    \sup_\mu h_\mu(\ta(1)) \ge \delta(\Lambda_0),
  \end{align}
  where the supremum goes over all $\ta$-invariant and ergodic probability measures $\mu$ on $\SL_2(\RR) / \Lambda_0$.
\end{prop}
\begin{proof}
  We will first use the following theorem and replace $\Lambda_0$ by a finitely generated group. 
  \begin{claim}[{\cite[Cor. 6]{sullivan1979density}}]\label{claim: crit approx}
    For every discrete subgroup $\Lambda<\SL_2(\RR)$,
    \[\delta(\Lambda) = \sup_{\substack{\Lambda'<\Lambda\\\text{finitely generated}}}\delta(\Lambda').\] 
  \end{claim}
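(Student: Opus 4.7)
The plan is, given $\varepsilon>0$, to exhibit an $\ta$-invariant, $\ta$-ergodic probability measure on $\SL_2(\RR)/\Lambda_0$ with entropy at least $\delta(\Lambda_0)-\varepsilon$. Continuing the proof from where the excerpt leaves off, Claim \ref{claim: crit approx} first lets me replace $\Lambda_0$ by a finitely generated subgroup $\Lambda\le \Lambda_0$ with $\delta(\Lambda)\ge \delta(\Lambda_0)-\varepsilon$. The desired measure on $\SL_2(\RR)/\Lambda_0$ will then be produced by building a Bowen-Margulis-Sullivan measure on the intermediate quotient $\SL_2(\RR)/\Lambda$ and pushing it forward along the natural covering map.

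A finitely generated Fuchsian group is geometrically finite (Ahlfors/Greenberg), and since $\Lambda_0$ is Zariski dense in $\SL_2(\RR)$ the subgroup $\Lambda$ may be arranged to be non-elementary, hence also Zariski dense. If parabolic elements of $\Lambda$ threaten the finiteness of BMS at the boundary value $\delta=\frac{1}{2}$, I iterate Claim \ref{claim: crit approx} (or appeal to a Bishop-Jones-type argument inside $\Lambda_0$) and replace $\Lambda$ by a convex cocompact subgroup, still with $\delta(\Lambda)\ge \delta(\Lambda_0)-2\varepsilon$. For such a $\Lambda$, the Bowen-Margulis-Sullivan measure $m_\Lambda$ on $\SL_2(\RR)/\Lambda$ is a finite, $\ta$-invariant, $\ta$-ergodic Borel measure; let $\bar m_\Lambda$ denote its normalization to a probability measure.

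In Hopf coordinates, $\bar m_\Lambda$ is locally a product of two $\delta(\Lambda)$-conformal Patterson-Sullivan densities with Lebesgue measure in the flow direction. Consequently, the $\tu$-leafwise measures $(\bar m_\Lambda)^\tu_x$ produced by Theorem \ref{thm: Fubini construction of Leafwise measures} coincide, up to a positive scalar, with the local PS density on an unstable horocycle, and the $\delta(\Lambda)$-conformality gives $\dim^\tu(\bar m_\Lambda)=\delta(\Lambda)$ at $\bar m_\Lambda$-almost every point. Theorem \ref{thm: leafwise dim = entropy} then yields $h_{\bar m_\Lambda}(\ta(1))=\delta(\Lambda)$.

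Now let $\pi\colon \SL_2(\RR)/\Lambda\to \SL_2(\RR)/\Lambda_0$ be the natural $B$-equivariant covering map and set $\mu=\pi_*\bar m_\Lambda$, an $\ta$-invariant probability measure on $\SL_2(\RR)/\Lambda_0$. Since $\pi$ is a local isometry, Claim \ref{claim: action equivariance of leafwise} applied on trivializing neighborhoods shows that $\mu^\tu_{\pi(x)}\propto (\bar m_\Lambda)^\tu_x$ for $\bar m_\Lambda$-a.e.\ $x$; hence the pointwise limit in Definition \ref{def: leafwise dimension} still equals $\delta(\Lambda)$ at $\mu$-a.e.\ point. Passing to the $\ta$-ergodic decomposition $\mu=\int \mu_\xi\, \bd \nu(\xi)$, this pointwise limit persists $\mu_\xi$-a.e.\ for $\nu$-a.e.\ $\xi$, so $\dim^\tu(\mu_\xi)=\delta(\Lambda)$ and, by Theorem \ref{thm: leafwise dim = entropy}, $h_{\mu_\xi}(\ta(1))=\delta(\Lambda)\ge \delta(\Lambda_0)-2\varepsilon$. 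Sending $\varepsilon\to 0$ proves the proposition. The main technical point is the finiteness of the BMS measure, which is why I pass to a convex cocompact subgroup; preservation of the leafwise dimension under the potentially infinite-to-one pushforward is then essentially automatic from the locality of leafwise measures.
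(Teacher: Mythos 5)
You were asked to prove Claim \ref{claim: crit approx} itself --- that $\delta(\Lambda)$ is the supremum of the critical exponents of the finitely generated subgroups of $\Lambda$ --- but your write-up never argues this. You assume it (``Claim \ref{claim: crit approx} first lets me replace $\Lambda_0$ by a finitely generated subgroup\dots'') and then reconstruct the surrounding argument, namely Proposition \ref{prop: critical exponent via entropy}: pass to a finitely generated, hence geometrically finite, Zariski-dense subgroup, take its Bowen--Margulis--Sullivan measure, and push it forward to $\SL_2(\RR)/\Lambda_0$. Read as a proof of the Claim this is circular. For comparison, the paper does not prove the Claim either: it is quoted from Sullivan \cite[Cor.~6]{sullivan1979density}. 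What you have essentially rederived is the paper's proof of Proposition \ref{prop: critical exponent via entropy} (with your leafwise-dimension bookkeeping for the pushforward playing the role of the paper's Claim \ref{claim: countable factor map preserves entropy}), not a proof of the assigned statement.

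The genuine gap, then, is the nontrivial inequality $\delta(\Lambda)\le\sup_{\Lambda'}\delta(\Lambda')$. The reverse inequality is immediate from monotonicity of the orbit count under $\Lambda'<\Lambda$, but the forward one is not: the naive choice $\Lambda_R=\bra \Lambda\cap B_{\SL_2(\RR)}(R)\ket$ contains many elements in the single ball of radius $R$, which says nothing about the $\limsup$ over radii tending to infinity that defines $\delta(\Lambda_R)$. One really needs Sullivan's argument, or a Bishop--Jones-type construction producing, for every $s<\delta(\Lambda)$, a finitely generated (even convex cocompact) subgroup of $\Lambda$ with critical exponent exceeding $s$. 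Your only mention of such an argument is a passing aside used to dodge the cusp/finiteness issue for the BMS measure, not to establish the Claim. If your intention was that the Claim may simply be cited as known, that is legitimate --- it is exactly what the paper does --- but then the attempt should say so rather than fold the Claim into the proof of a different proposition.
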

  Let $\varepsilon>0$. By Claim \ref{claim: crit approx} there is $\Lambda'_0<\Lambda_0$ finitely generated such that $\delta(\Lambda'_0) > \delta(\Lambda_0) - \varepsilon$. 
  We may assume that we added enough generators so that $\Lambda'_0$ is Zariski dense in $\SL_2(\RR)$. 
  By \cite[Thm. 10.1.2]{beardon2012geometry}, $\Lambda'_0$ is geometrically finite. 
  
  To construct measures we use the following theorem.
  \begin{theorem}[Properties of Bowen-Margulis-Sullivan measures]\label{thm: bowen Margulis}
    Let $\Lambda < \SL_2(\RR)$ be a Zariski-dense geometrically finite discrete group.
    Then there is an $\ta$-invariant probability measure $\mu_\Lambda$ on $\SL_2(\RR)/\Lambda$ with
    \[h_\mu(\ta(t)) = |t|\delta(\Lambda).\]
  \end{theorem}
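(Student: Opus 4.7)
The plan is to take $\mu_\Lambda$ to be the Bowen--Margulis--Sullivan measure associated with the Patterson--Sullivan density of $\Lambda$, and then to derive the entropy identity via Theorem \ref{thm: leafwise dim = entropy}.

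First, following Patterson, I would build a $\delta(\Lambda)$-conformal $\Lambda$-equivariant measure $\nu$ on the limit set $\Lambda^{\mathrm{lim}}\subseteq \partial\HH^2$ as a weak-$*$ limit of normalized Poincar\'e sums $L_\Lambda(s)^{-1}\sum_{\lambda\in \Lambda} e^{-s\, d_{\HH^2}(o,\lambda o)}\delta_{\lambda o}$ as $s\searrow \delta(\Lambda)$. Identifying $\SL_2(\RR)$ with the unit tangent bundle $T^1\HH^2$ via the Hopf parametrization $(\xi^+,\xi^-,t)\in \partial^2\HH^2\times\RR$, I would set
\[
d\tilde m(\xi^+,\xi^-,t) = e^{2\delta(\Lambda)(\xi^+|\xi^-)_o}\,d\nu(\xi^+)\,d\nu(\xi^-)\,dt,
\]
where $(\cdot|\cdot)_o$ denotes the Gromov product at a base point $o\in\HH^2$. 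This is both $\Lambda$-invariant and $\ta$-invariant. Zariski-density of $\Lambda$ forces it to be non-elementary, so $\delta(\Lambda)>0$ and $\nu$ is atomless; Sullivan's finiteness theorem then gives that the measure induced by $\tilde m$ on $\SL_2(\RR)/\Lambda$ is finite, so after normalization we obtain a probability measure $\mu_\Lambda$, which is $\ta$-ergodic (in fact mixing) again by Sullivan.

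To compute the entropy, by Theorem \ref{thm: leafwise dim = entropy} it suffices to prove $\dim^\tu(\mu_\Lambda) = \delta(\Lambda)$. In the Hopf coordinates the $\tu$-leafwise measure at a typical point is, up to a smooth factor coming from the Busemann cocycle, the pushforward of $\nu$ onto the horocycle through the forward endpoint $\xi^+$. Sullivan's (global) shadow lemma then yields $\nu(\mathrm{shadow}_o B_\varepsilon(\xi^+))\asymp \varepsilon^{\delta(\Lambda)}$ for $\nu$-a.e.\ $\xi^+$, which translates to $\mu^\tu_x([-\varepsilon,\varepsilon]) \asymp \varepsilon^{\delta(\Lambda)}$ for $\mu_\Lambda$-a.e.\ $x$, and hence $\dim^\tu(\mu_\Lambda) = \delta(\Lambda)$.

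The main technical subtlety is the treatment of cusps: the basic shadow lemma picks up a correction at parabolic fixed points and must be replaced by its refinement on the radial limit set. For geometrically finite $\Lambda$ the radial limit set carries full $\nu$-mass, and in $\SL_2(\RR)$ all parabolic subgroups are of rank one, so this is classical; the same ingredients also yield the mixing property of $\mu_\Lambda$ that was used above.
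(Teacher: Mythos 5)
Your argument is essentially sound, but it takes a genuinely different (and much more labor-intensive) route than the paper: the paper does not prove this theorem at all, it simply cites Sullivan, namely \cite[Thms.~1, 3(i,ii)]{sullivan1984entropy}, where the finiteness of the Bowen--Margulis--Sullivan measure for a geometrically finite group and the identity $h_{\mu_\Lambda}(\ta(t))=|t|\delta(\Lambda)$ are already established. What you do instead is reconstruct that result from scratch: Patterson density, Hopf parametrization, Sullivan's finiteness and mixing theorems, and then the entropy computation via $\dim^\tu(\mu_\Lambda)=\delta(\Lambda)$ combined with Theorem \ref{thm: leafwise dim = entropy}. This is a legitimate proof scheme, and it has the aesthetic advantage of fitting the leafwise-dimension framework of Section \ref{sec: macroscopic flavor} (indeed, it produces directly the statement $\dim^\tu(\mu_\Lambda)=\delta(\Lambda)$ that the paper later extracts from the entropy identity by running Theorem \ref{thm: leafwise dim = entropy} in the other direction); the cost is that every ingredient you invoke (finiteness, ergodicity/mixing, the global shadow lemma) is itself a nontrivial theorem of Sullivan or Stratmann--Velani, so nothing is saved over the citation. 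Two small points you should tighten if you want this to stand as a proof: the Patterson construction requires the standard modification of the weights by a slowly varying function when the Poincar\'e series converges at $s=\delta(\Lambda)$ (divergence type for geometrically finite groups is again a theorem, not a triviality), and atomlessness at bounded parabolic fixed points uses $\delta(\Lambda)>1/2$ (Beardon), not just non-elementarity. Also, as you correctly note, in the presence of cusps the uniform two-sided bound $\nu(B(\xi,\varepsilon))\asymp\varepsilon^{\delta(\Lambda)}$ fails; one only gets the logarithmic asymptotics $\nu$-almost everywhere (via the global measure formula and the fact that cusp excursions are sublinear for almost every geodesic), which is exactly what the limit in Definition \ref{def: leafwise dimension} requires, so your conclusion $\dim^\tu(\mu_\Lambda)=\delta(\Lambda)$ does go through.
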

  See \cite[Thms.~1, 3(i,ii)]{sullivan1984entropy}. 
  Applying Theorem \ref{thm: bowen Margulis} to $\Lambda_0'$ 
  we get an $\ta$-invariant and ergodic measure $\mu_{\Lambda_0'}$ on $\SL_2(\RR)/\Lambda_0'$ with entropy $h_{\mu_{\Lambda_0'}}(\ta(1)) = \delta(\Lambda'_0) \ge \delta(\Lambda_0) - \varepsilon$. 
  The projection $\pi_* \mu_{\Lambda_0'}$ of $\mu_{\Lambda_0'}$ to $\SL_2(\RR) / \Lambda_0$ satisfies $h_{\pi_* \mu_{\Lambda_0'}}(\ta(1)) \ge \delta(\Lambda_0) - \varepsilon$ by the following simple fact on the entropy of ergodic systems and this completes the proof.
\begin{claim}\label{claim: countable factor map preserves entropy}
  Let $(X_1, \nu_1, T), (X_2,\nu_2, T_2)$ invertible ergodic systems and $f:X_1\to X_2$ a factor map with countable fibers, such that
  $f_*\nu_1 = \nu_2$ and $f\circ T_1 = T_2\circ f$.
  Then $h_{\nu_1}(T_1) = h_{\nu_2}(T_2)$.
\end{claim}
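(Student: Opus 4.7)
The direction $h_{\nu_2}(T_2) \le h_{\nu_1}(T_1)$ is the standard monotonicity of entropy under factor maps: every finite partition $\eta$ of $X_2$ pulls back to a finite partition $f^{-1}\eta$ of $X_1$ with the same rate $h_{\nu_1}(T_1, f^{-1}\eta) = h_{\nu_2}(T_2, \eta)$, and taking the supremum over $\eta$ yields the inequality. The content of the claim is the reverse inequality $h_{\nu_1}(T_1) \le h_{\nu_2}(T_2)$, which is where the countable-fiber hypothesis enters.

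The plan is to invoke the Abramov--Rokhlin relative entropy formula
\[ h_{\nu_1}(T_1) \;=\; h_{\nu_2}(T_2) \;+\; h_{\nu_1}(T_1 \mid \mathcal{F}), \qquad \mathcal{F} := f^{-1}(\mathcal{B}(X_2)),\]
which is valid because $\mathcal{F}$ is $T_1$-invariant (from $f\circ T_1 = T_2\circ f$), and to reduce the whole task to proving that the relative entropy $h_{\nu_1}(T_1 \mid \mathcal{F})$ vanishes.

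To show the relative entropy vanishes, I would apply the relative Shannon--McMillan--Breiman theorem: for every finite partition $\xi$ of $X_1$,
\[h_{\nu_1}(T_1,\xi \mid \mathcal{F}) \;=\; \lim_{n \to \infty} -\tfrac{1}{n}\log \nu_{1,x}^{\mathcal{F}}\!\bigl([x]_{\xi_0^{n-1}}\bigr) \qquad \text{for $\nu_1$-a.e.\ $x$,}\]
where $\nu_{1,x}^{\mathcal{F}}$ is a regular conditional probability given $\mathcal{F}$ and $[x]_{\xi_0^{n-1}}$ denotes the atom of $x$ in $\bigvee_{i=0}^{n-1} T_1^{-i}\xi$. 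The decisive observation is purely measure-theoretic: since the fibers of $f$ are countable, $\nu_{1,x}^{\mathcal{F}}$ is a probability measure on the countable set $f^{-1}(f(x))$, hence purely atomic, and in particular $\nu_{1,x}^{\mathcal{F}}(\{x\}) > 0$ for $\nu_1$-a.e.\ $x$. Because $x \in [x]_{\xi_0^{n-1}}$ for every $n$, we obtain the uniform lower bound $\nu_{1,x}^{\mathcal{F}}([x]_{\xi_0^{n-1}}) \ge \nu_{1,x}^{\mathcal{F}}(\{x\}) > 0$, which forces the logarithm in the SMB formula to be bounded and $-\tfrac{1}{n}\log \to 0$. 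Thus $h_{\nu_1}(T_1, \xi \mid \mathcal{F}) = 0$ for every finite $\xi$, and taking the supremum gives $h_{\nu_1}(T_1 \mid \mathcal{F}) = 0$, as desired.

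The only technical prerequisite, which I expect to be the main point to verify, is that the Abramov--Rokhlin and relative Shannon--McMillan--Breiman theorems apply in this generality; both require the underlying probability spaces to be standard (Lebesgue). This is automatic when $X_1, X_2$ are Polish, in particular for the intended application $f\colon \SL_2(\RR)/\Lambda_0' \to \SL_2(\RR)/\Lambda_0$ arising in the proof of Proposition \ref{prop: critical exponent via entropy}. Beyond this, the argument is essentially a one-line consequence of countability: atomic conditional measures automatically put positive mass on $\{x\}$, and SMB converts that into the vanishing of the relative entropy rate.
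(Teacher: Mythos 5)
The paper does not actually prove this claim: it is quoted inside the proof of Proposition \ref{prop: critical exponent via entropy} as a ``simple fact on the entropy of ergodic systems'' with no argument supplied, so there is nothing to compare against line by line; your proposal fills that gap, and it does so correctly along one of the standard routes. Monotonicity gives $h_{\nu_2}(T_2)\le h_{\nu_1}(T_1)$; the Abramov--Rokhlin identity $h_{\nu_1}(T_1)=h_{\nu_2}(T_2)+h_{\nu_1}(T_1\mid\mathcal{F})$ with $\mathcal{F}=f^{-1}\mathcal{B}(X_2)$ reduces everything to showing the relative entropy vanishes, and the relative Shannon--McMillan--Breiman theorem together with atomicity of the fiber conditionals does exactly that. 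Two small points deserve to be made explicit. First, ``purely atomic'' does not by itself give $\nu_{1,x}^{\mathcal{F}}(\{x\})>0$ for a.e.\ $x$; you need the one-line disintegration argument: for $\nu_2$-a.e.\ $y$ the set of points of the countable fiber $f^{-1}(y)$ carrying zero conditional mass is a countable union of null singletons, hence null for the fiber measure, so integrating over $y$ shows the exceptional set in $X_1$ is $\nu_1$-null. Second, the Abramov--Rokhlin formula and the identification of the $\mathcal{F}$-atoms with the fibers require the spaces to be standard (Lebesgue) and, for the formula as an identity of finite quantities, $h_{\nu_2}(T_2)<\infty$; in the degenerate case $h_{\nu_2}(T_2)=\infty$ the claim already follows from monotonicity, and in the intended application ($\SL_2(\RR)/\Lambda_0'\to\SL_2(\RR)/\Lambda_0$, entropies bounded by that of the geodesic flow) both hypotheses hold, as you note. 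With these remarks your argument is complete; note also that invertibility of the systems is not really needed for this route, only ergodicity of $\nu_1$ and the countability of the fibers.
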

\end{proof}
\begin{remark}
  In fact, there is an equality in Eq. \eqref{eq: entrop big}, and the measure constructed in Theorem \ref{thm: bowen Margulis} are the measures of maximal entropy. 
  After using \cite[Cor.~6.13]{raghunathan1972discrete} to reduce to the torsion free case,
  these facts are proved by \cite{otal2004principe}. 
\end{remark}



\subsection{Beginning of the proof}
\label{ssec: measure on G mod Gamma}

The contrary Theorem \ref{thm: main} is the existence of a sequence $(g_k)_{k=1}^\infty \subset G$ such that $\delta(\Gamma_{g_k})\xrightarrow{k\to \infty}1$ but $\Gamma_{g_k}$ is never a lattice.
Note that if there is an element $g_0\in G$ such that $\delta(\Gamma_{g_0}) = 1$ but $\Gamma_g$ is not a lattice, the sequence $(g_k)_{k=1}^\infty$ may be the constant sequence $g_0$.
We assume to the contrary that such a sequence exists.

By Proposition \ref{prop: critical exponent via entropy} there is a $\ta$-invariant and ergodic measure $\mu_k'$ on $\SL_2(\RR)/\Gamma_{g_k}$ with $h_{\mu_k'}(\ta(1)) > \delta(\Gamma_{g_k}) - \frac{1}{k}$. 
Then \begin{align*}
  \lim_{k\to \infty}h_{\mu_k'}(\ta(1)) = 1.
\end{align*}
Define $r_k: \SL_2(\RR)/\Gamma_{g_k}\to G/\Gamma$ as the map sending $\pi_{\Gamma_{g_k}}(x)\mapsto \pi_\Gamma(xg_k)$.
Note that $r_k$ is one-to-one. 
Then define
$\mu_k = (r_k)_*(\mu_k')$.
We wish to show that $\mu_k\xrightarrow{k\to \infty}m_{G/\Gamma}$.
Hence
\begin{align*}
  1\xleftarrow{k\to \infty}h(\mu_k') &
  \stackrel{\ref{thm: leafwise dim = entropy}}=
  \dim^\tu(\mu_k')
  \stackrel{\ref{claim: action equivariance of leafwise}}=
  \dim^\tu((r_k)_*\mu_k')
  =
  \dim^\tu(\mu_k).
\end{align*}

Restricting to a subsequence of $k$ we may assume that $\mu_k\xrightarrow{k\to \infty} \mu_\infty$, where $\mu_\infty$ is a probability measure.
\begin{claim}[Almost no periods]\label{claim: almost no periods}
  For any action $B\acts X$ and a $\ta$-invariant and ergodic probability measure $\mu$ on $X$, either $\mu$ is $\tu$-free, or $\tu(s).x = x$ for $\mu$-almost every $x$. 
\end{claim}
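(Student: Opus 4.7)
The plan is to analyze the pointwise $\tu$-stabilizer
\[
\stab_\tu(x) := \{s \in \RR : \tu(s).x = x\},
\]
which for each $x \in X$ is a closed subgroup of $\RR$ and hence is either $\{0\}$, a discrete lattice $s_0(x)\ZZ$ for some minimal $s_0(x) > 0$, or all of $\RR$. The crucial input is the commutation identity $\ta(t)\tu(s)\ta(-t) = \tu(e^t s)$, which immediately yields
\[
\stab_\tu(\ta(t).x) = e^t \stab_\tu(x) \qquad \forall x \in X,\; t \in \RR.
\]

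First I would introduce the three $B$-invariant sets
\[
A := \{x : \stab_\tu(x) = \RR\}, \qquad S := \{x : \stab_\tu(x) \neq \{0\}\}, \qquad D := S \setminus A.
\]
Each of them is $\tu$-invariant because $\tu$ is abelian, and $\ta$-invariant by the scaling identity above. Measurability of $A = \bigcap_{s \in \QQ} \{x : \tu(s).x = x\}$ is immediate; $S$ is the projection to $X$ of the closed subset $\{(x,s) : \tu(s).x = x\}$ of the $\sigma$-compact space $X \times (\RR \setminus \{0\})$, exhibiting it as an $F_\sigma$; hence $D$ is Borel too. Ergodicity of $\mu$ under $\ta$ forces $\mu(A), \mu(S) \in \{0,1\}$. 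If $\mu(S) = 0$ then $\mu$ is $\tu$-free, and if $\mu(A) = 1$ then $\tu(s).x = x$ for every $s$ and $\mu$-almost every $x$. In either case the claim holds.

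It remains only to rule out the remaining possibility $\mu(D) = 1$, i.e.\ that almost every point has a nontrivial discrete $\tu$-stabilizer. In that case define $s_0 : D \to (0,\infty)$ as the minimal positive element of $\stab_\tu(x)$; the scaling identity gives $s_0(\ta(t).x) = e^t s_0(x)$, so the nested Borel level sets $E_c := \{x \in D : s_0(x) \leq c\}$ satisfy $\ta(-t) E_c = E_{e^{-t}c}$. By $\ta$-invariance of $\mu$ the function $c \mapsto \mu(E_c)$ is therefore constant on $(0,\infty)$. Letting $c \to 0^+$ the sets $E_c$ shrink to $\emptyset$, forcing $\mu(E_c) \equiv 0$; letting $c \to \infty$ they exhaust $D$, whence $\mu(D) = 0$, a contradiction.

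The only mild technical obstacle is measurability of $S$ and $s_0$ (which does not come automatically from abstractness of the action), and this is a routine verification from local compactness and second countability of $X$ together with continuity of the $B$-action, as sketched above. The rest of the argument is a soft combination of the scaling identity $\stab_\tu(\ta(t).x) = e^t \stab_\tu(x)$ with $\ta$-ergodicity, so no further estimates are required.
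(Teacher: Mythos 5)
Your proof is correct. The paper itself gives no argument for this claim, dismissing it as standard and citing \cite[Lem.~7.12]{einsiedler2010diagonal} for the homogeneous case; your write-up is precisely that standard argument (the scaling $\stab_\tu(\ta(t).x)=e^t\stab_\tu(x)$ from $\ta(t)\tu(s)\ta(-t)=\tu(e^ts)$, ergodicity to reduce to the three cases, and constancy of $c\mapsto\mu(E_c)$ together with continuity of the finite measure to kill the discrete-stabilizer case), and the only delicate point, measurability of $S$ and of the level sets $E_c$, is handled adequately via the $\sigma$-compact projection argument.
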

The claim is fairly standard, see for instance \cite[Lem.~7.12]{einsiedler2010diagonal} in the homogeneous case.
We deduce that the measures $(\mu_k)_{k=1}^\infty$ are $\tu$-free. 
By Theorem \ref{thm: dimension implies invariance}, $\mu_\infty$ is $\tu$-invariant.
By Theorem \ref{thm: dimension implies invariance} applied for the negative time $\ta(t)$ action, together with the transpose inverse action of $B$ on $G/\Gamma$, we obtain that $\mu_\infty$ is $\tu^t$-invariant.
Hence $\mu_\infty$ is $\SL_2(\RR)$-invariant. 
To show that $\mu_\infty = m_{G/\Gamma}$, we need to eliminate degenerate limits to the sequence $\mu_k$. To this end we introduce the geometric nondegeneracy lemma. We will prove it in Section \ref{sec: proof of geometric lemma}.
\begin{lemma}[Geometric nondegeneracy]\label{lem: non-degenerate limits}
  Let $\mu_k$ be a sequence of $\ta$-invariant probability measures on $G/\Gamma$ such that $\dim^\tu\mu_k\xrightarrow{k\to \infty} 1$.
  Then $\mu_k$ has no escape of mass, that is,
  \[
    \sup_{\substack{K \subset G/\Gamma\\\text{compact}}}\liminf_{k\to \infty}{\mu_k(K)} = 1. 
  \]
  In addition, there is no nontrivial convergence to a periodic $\SL_2(\RR)$-orbit of $G/\Gamma$.
  In other words, let $g\in G$ such that $\Gamma_g$ is a lattice.
  The $\SL_2(\RR)$-orbit $\SL_2(\RR).\pi_\Gamma(g)\subset G/\Gamma$ is thus a closed set.
  Suppose that $\mu_k(\SL_2(\RR).\pi_\Gamma(g)) = 0$ for every $k$.
  Then
  \begin{align}\label{eq: no escape}
    \sup_{\substack{K \subset (G/\Gamma) \setminus \SL_2(\RR).\pi_\Gamma(g)\\\text{compact}}}\liminf_{k\to \infty}{\mu_k(K)} = 1. 
  \end{align}
\end{lemma}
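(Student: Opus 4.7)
The plan is to prove both assertions by Margulis-function arguments, adapted to the non-$\tu$-invariant setting by leveraging the high $\tu$-leafwise dimension hypothesis together with Lemma \ref{lem: high leafwise entropy meaning}.

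For the no-escape-of-mass part, I would construct a proper function $F_0: G/\Gamma \to [1,\infty)$ (targeting the cusps and ends of $G/\Gamma$) satisfying, for some fixed $T>0$, $c<1$, and $C>0$,
\[
  \int_0^1 F_0(\ta(T)\tu(s)x)\,ds \le c\,F_0(x) + C, \qquad \forall x\in G/\Gamma.
\]
For geometrically finite Kleinian groups this type of inequality is standard and essentially goes back to the Eskin--Margulis--Mozes non-divergence estimate. For the periodic-orbit avoidance part, given $g$ with $\Gamma_g$ a lattice, I would build $F_g: G/\Gamma \setminus \SL_2(\RR).\pi_\Gamma(g)\to [1,\infty)$ via Dani--Margulis linearization: embed the closed orbit in a suitable representation $V$ of $G$ so that it corresponds to a discrete $\Gamma$-orbit of a vector $v_0\in V$ whose $\SL_2(\RR)$-stabilizer is all of $\SL_2(\RR)$, control distance to the orbit by a polynomial function on $\SL_2(\RR)$-translates, and exploit the $(C,\alpha)$-goodness to obtain an analogous Margulis-type inequality outside a neighborhood of the orbit, with $F_g\to \infty$ on approach.

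To extract consequences, iterate the stationarity relation \eqref{eq: operator stationary} $n$ times to obtain
\[
  S_{\ind_{[0,1)}}\mu_k = \int \omega^{(n)}_x \, dS_{\ind_{[0,1)}}\mu_k(x),
\]
where $\omega^{(n)}_x$ is the $n$-step distribution of the Markov chain of Lemma \ref{lem: high leafwise entropy meaning}. The high-dimension hypothesis, via \eqref{eq: operator entropy}, forces the transition probability $p(x)$ to concentrate near $1/2$ in $L^1(S_{\ind_{[0,1)}}\mu_k)$ as $k\to\infty$, so $\omega^{(n)}_x$ approaches the uniform measure on $\{\tu(j)\ta(nT)x : j=0,\dots,2^n-1\}$. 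Integrating the $n$-fold iterate of the Margulis inequality against this yields a uniform-in-$k$ bound on $\int F_0\, dS_{\ind_{[0,1)}}\mu_k$ (respectively $\int F_g\, dS_{\ind_{[0,1)}}\mu_k$); properness of these functions then precludes escape of mass, respectively nontrivial mass on $\SL_2(\RR).\pi_\Gamma(g)$. The passage from $S_{\ind_{[0,1)}}\mu_k$ back to $\mu_k$ itself is routine using $\ta$-invariance and the $\ta$-expansion of the $\tu$-direction.

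The main obstacle will be reconciling the unboundedness of the Margulis functions with the only-in-measure control on $p(x)-\tfrac12$: a naive estimate multiplies a small quantity by a potentially unbounded one. To handle this I would truncate $F_0$ and $F_g$ at sub-level sets, prove the bound for $F^{(M)}=\min(F,M)$ uniformly in $k$ and $M$, and then send $M\to\infty$ by monotone convergence. A secondary technical point is extending Dani--Margulis linearization to the geometrically-finite (possibly infinite-volume) $\Gamma$, but this is routine in rank one since the periodic orbit in question has a lattice stabilizer and the linearization is carried out inside $\SL_2(\RR)$.
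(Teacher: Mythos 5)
Your overall strategy is in the same family as the paper's (a Margulis-function argument fed by the leafwise Markov chain of Lemma \ref{lem: high leafwise entropy meaning}, with the dimension hypothesis forcing the transition weights to be nearly uniform, exactly as in Observation \ref{obs: entropy maximaized}), but two of the steps you declare standard or routine are precisely where the real work lies, and as sketched they do not go through. First, a proper function $F_0$ on all of $G/\Gamma$ satisfying your contraction inequality at \emph{every} $x$ cannot exist when $\Gamma$ has infinite covolume: in the funnel ends the flow pushes points away from the convex core, so any proper function increases there; the paper's cusp height function is proper only on a unit neighborhood of the convex core, and the argument uses that all $\ta$-invariant probability measures are supported there (Observation \ref{obs: prob measures on core}). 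More seriously, for the closed-orbit function $F_g$ the crux is not $(C,\alpha)$-goodness but the fact that the expansion estimate (Claim \ref{claim: decay in rep}) applies to a \emph{single} vector, while $F_g$ is a supremum over the infinite discrete $\Gamma$-orbit of the marked vector; deep in the cusps --- and the closed orbit itself enters the cusps whenever $\Gamma_g$ is a non-uniform lattice --- several translates can be small simultaneously, so the supremum need not contract. The paper's Lemma \ref{lem: geometry near S}, Point (2), proving uniqueness of the dominant vector under the comparison $\alpha_{\bar S}(g) > 2\alpha_{\cusps_\Gamma}(g) + C_{\bar S}$, together with the max-combination of the two height functions (Claims \ref{claim: maxing to Margulis} and \ref{claim: alpha total is Margulis}), is exactly the ingredient your plan omits; it occupies most of Section \ref{sec: proof of geometric lemma} and is not supplied by rank-one linearization folklore.

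Second, your truncation scheme for handling the only-in-measure control of $p(x)-\tfrac12$ does not close as stated. The truncated function $F^{(M)}=\min(F,M)$ violates the drift inequality on its own top level set $\{F>M\}$ (there one only gets $\int F^{(M)}\,d\omega_x \le M$), and on the bad set you pay a term of order $M\delta_k$; the resulting bound on $\int F^{(M)}\,dS_{\ind_{[0,1)}}\mu_k$ contains an uncontrolled term $M\cdot S_{\ind_{[0,1)}}\mu_k(F>M)$, and killing it requires a priori integrability of $F$ --- the very thing you are trying to establish --- so "uniformly in $k$ and $M$, then monotone convergence" is not available. A repair is possible by coupling $M=M_k$ to $\delta_k$ and arguing at the level of distribution functions rather than integrals, and this is in effect what the paper does: it axiomatizes the situation as an $(\varepsilon;T_0,T_1)$-additive Margulis function (a drift inequality for $\log F$ allowed to fail on a set of measure $<\varepsilon$) and proves, by a telescoping argument that uses only finiteness of the measure and no integrability (Lemma \ref{lem: Margulis function goes down}), the level-set bound $\mu(\alpha\ge t)\le \bigl(\log\lfloor t/T_1\rfloor-1\bigr)^{-1}+O(\varepsilon)$, from which the no-escape statements follow by properness on the core and by $\alpha_{\bar S}^{-1}(\infty)=\bar S$. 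Until you replace the truncation step by such a level-set (or stopping-time) estimate and supply the dominant-vector analysis near the cusps, the proposal has genuine gaps.
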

Let $\mu_\infty = \int_{X} \mu_\infty^x\bd \mu_\infty(x)$ be the $\SL_2(\RR)$-ergodic decomposition.
Then by Lemma \ref{lem: non-degenerate limits}, we deduce that $\mu_\infty(\SL_2(\RR).\pi_\Gamma(g))=0$ for every periodic orbit $\SL_2(\RR).\pi_\Gamma(g)$.
Thus $\mu_\infty^x(\SL_2(\RR).\pi_\Gamma(g))=0$ for $\mu_\infty$-almost every $x\in G/\Gamma$.
By Ratner's Measure Classification Theorem \cite{ratner1991raghunathan}, $\mu_\infty^x$ is algebraic, i.e., there exists a connected intermediate group $\SL_2(\RR)\le L^x\le G$ such that $\mu_\infty^x$ is the Haar measure on a periodic $L^x$-orbit. However, we showed that $L^x\neq \SL_2(\RR)$ almost surely. 
One can see that there are no nontrivial connected intermediate subgroups between $\SL_2(\RR)$ and $G$.
If $\Gamma$ is not a lattice, then the Haar measure is infinite. Then $\mu_\infty^x$ is has no valid possibility, and we get a contradiction. 
For the rest of the section, $\Gamma$ is a nonarithmetic lattice. Then $\mu_\infty^x$ is the constant measure $m_{G/\Gamma}$, which implies that $\mu_\infty=m_{G/\Gamma}$.
\subsection{Completion of the proof using rigidity - lifting the measures to a projective bundle}
\label{ssec: using rigidity}
The weak-$*$ convergence $\mu_k\xrightarrow{k\to \infty}m_{G/\Gamma}$
is what we need to initiate the reduction to \cite{bader2021arithmeticity}, with the measures $\mu_k$ supported on the dense orbits $\SL_2(\RR).\pi_\Gamma(g_k)$, in place of the infinitely many periodic orbits. The proof continues similarly to \cite{bader2021arithmeticity}, except for one point, where we need to prove some extra invariance. We will recite some details up to that point from \cite[\S2]{bader2021arithmeticity}.
\begin{definition}[Witness of the non-arithmeticity]\label{def: nonarithmeticity}
  Let $\ell$ be the field generated by traces of the adjoint representations of $\Gamma$-elements. Here $G$ is thought of as a real algebraic group, thus $\ell\subseteq \RR$.
  The field $\ell$ is a number field (\cite{selberg1960discontinuous}, \cite{calabi1961compact}, \cite{raghunathan1968cohomology}, \cite{garland1966deformation}) contained in $\RR$. The inclusion map is a real embedding $\sigma:\ell\to \RR$.
  By \cite{vinberg1971rings}, there is an $\ell$-algebraic group $\bG$ which is an $\ell$-form of the image of $G$ under the adjoint homomorphism such that $\Ad(\Gamma)$ lands in $\bG(\ell)$.
  In other words, there is an isomorphism $\iota':\Ad G\to \bG(\RR)_0$ such that when we consider the composition $\iota = \iota'\circ \Ad:G\to \bG(\RR)$ we have $\iota(\Gamma)\subseteq\bG(\ell)$.
  Since $\Gamma$ is nonarithmetic, there is a place $\nu\in S(\ell)$ such that \[\iota_\nu = (\bG(\ell)\to \bG(\ell_\nu))\circ \iota|_{\Gamma}:\Gamma \to \bG(\ell_\nu)\] does not factor continuously through $\Gamma\to G$, and the image $\iota_\nu(\Gamma)$ does not lie in a compact subgroup of $\bG(\ell_\nu)$.
\end{definition}
\begin{claim}[Construction of invariant points]
  Up to restricting to a subsequence of $k$-s, there is an irreducible algebraic $\bG(\ell_\nu)$-representation $V$ independent of $k$ and a point $P_k\in \PP(V)$, for which the following holds.
  \begin{enumerate}
    \item
          Consider the action $\Gamma\acts \PP(V)$ induced by the homomorphism $\iota_\nu$ composed with the $\bG(\ell_\nu)$ action on $\PP(V)$. Then $P_k$ is $\Gamma_{g_k}^1$ invariant, where $\Gamma_{g_k}^1 = g_k^{-1}\Gamma_{g_k} g_k\subseteq \Gamma$.
    \item The representation $V$ is in fact a representation of $\bH(\ell_\nu)$, where $\bH$ is the image of $\bG$ under the adjoint representation. As such it is a faithful representation of $\bH(\ell_\nu)$.
  \end{enumerate}
\end{claim}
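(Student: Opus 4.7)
The plan is to imitate the Chevalley-theoretic construction used in \cite{bader2021arithmeticity}, applied to the $\ell_\nu$-Zariski closures $\bM_k\subseteq \bG_{\ell_\nu}$ of $\iota_\nu(\Gamma_{g_k}^1)$. First I would establish the dimension bound $\dim \bM_k\le 3<6=\dim\bG$. Since $\Gamma_{g_k}^1\subseteq g_k^{-1}\SL_2(\RR)g_k$ lies in a real $3$-dimensional Lie subgroup of $G$ and $\iota=\iota'\circ\Ad$ has finite kernel, the $\RR$-Zariski closure of $\iota(\Gamma_{g_k}^1)$ in $\bG_\RR$ has real dimension at most $3$; because $\iota(\Gamma)\subseteq\bG(\ell)$, this closure descends by Galois descent to an $\ell$-algebraic subgroup of the same dimension, and base change to $\ell_\nu$ yields the bound on $\dim\bM_k$. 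In particular, $\bM_k$ is a proper algebraic subgroup of $\bG_{\ell_\nu}$.

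Next, let $\mathfrak{m}_k\subseteq\mathfrak{g}$ denote the Lie algebra of $\bM_k$ and, after passing to a subsequence, assume $d=\dim\mathfrak{m}_k$ is constant (with $d\ge 1$, since $\delta(\Gamma_{g_k})>0$ forces $\Gamma_{g_k}^1$ to be infinite, so $\bM_k$ is positive-dimensional). In the adjoint representation on $\wedge^d\mathfrak{g}$, the line $L_k=[\wedge^d\mathfrak{m}_k]\in\PP(\wedge^d\mathfrak{g})$ is fixed by $\bM_k$, because $\bM_k$ normalises itself and hence preserves $\mathfrak{m}_k$. Decompose $\wedge^d\mathfrak{g}=\bigoplus_{i=1}^N V_i$ into irreducible $\bG(\ell_\nu)$-representations over $\ell_\nu$. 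For each $k$ at least one projection $\pi_i(L_k)$ is non-zero, so a pigeonhole argument on the finite set $\{1,\dots,N\}$ produces a further subsequence and a single summand $V=V_{i_0}$ on which the projection $P_k=[\pi_{i_0}(\wedge^d\mathfrak{m}_k)]\in\PP(V)$ is always non-zero. Since each $V_i$ is a $\bG$-subrepresentation, the $\bM_k$-fixed property descends from $L_k$ to $P_k$, so $P_k$ is $\bM_k$-fixed and in particular $\iota_\nu(\Gamma_{g_k}^1)$-invariant, establishing (1).

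For (2), observe that $V\subseteq\wedge^d\mathfrak{g}$ arises entirely from the adjoint action $\Ad:\bG\to\GL(\mathfrak{g})$, so the $\bG$-action on $V$ factors through $\bH=\Ad\bG$, giving $V$ the natural structure of a $\bH(\ell_\nu)$-representation. Faithfulness follows from $\bH$ being $\ell$-simple (an $\ell$-form of the simple adjoint group $\mathrm{PSL}_2(\CC)$) together with the nontriviality of the chosen irreducible summand $V$ (guaranteed by $0<d<\dim\mathfrak{g}$). The hard part will be handling the case where $\bG_{\ell_\nu}$ splits over $\overline{\ell_\nu}$ into several simple factors: ensuring faithfulness then requires choosing $V$ on which every factor acts non-trivially, which I plan to arrange through the exterior-power construction by exploiting that $\iota(\SL_2(\RR))$ sits as a diagonal-type subgroup in $\bG_{\overline{\ell}}$, so $\mathfrak{m}_k$ is not contained in any proper sum of simple factors of $\mathfrak{g}$ and its Plücker image therefore projects non-trivially onto mixed irreducible summands.
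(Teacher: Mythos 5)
Your construction follows the same route as the paper (Zariski closure of $\iota_\nu(\Gamma^1_{g_k})$, Plücker vector in $\bigwedge^d\fg(\ell_\nu)$, pigeonhole over the finitely many irreducible summands), but there is a genuine gap exactly at the point where you must know that the chosen summand $V$ is a nontrivial --- indeed faithful --- representation. Your pigeonhole only asks for a summand receiving a nonzero projection, and your justification that $V$ is nontrivial, ``guaranteed by $0<d<\dim\mathfrak{g}$'', is false as stated: over $\overline{\ell_\nu}$ one has $\fg\cong\mathfrak{sl}_2\oplus\mathfrak{sl}_2$, which has two proper $3$-dimensional ideals, and the Plücker line of an ideal is $\bG$-fixed, so all of its nonzero projections lie in \emph{trivial} summands (for instance $\bigwedge^3$ of a simple factor is the trivial representation). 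Properness of the closure, which is all you establish, therefore does not exclude that your $V$ is trivial, in which case point (2) fails outright. The input that is actually needed --- and that the paper supplies --- is that the closure is not \emph{normal} in $\bG$: the paper identifies the $\ell$-Zariski closure $\bL_k$ as an $\bSL_2$-form with $\bL_k(\RR)=\iota(g_k^{-1}\SL_2(\RR)g_k)$ (using Zariski density of $\Gamma_{g_k}$ in $\SL_2(\RR)$), notes that $\SL_2(\RR)$ is not normal in $G$, and concludes (non-normality being insensitive to base change to $\ell_\nu$) that $p_k$ is not $\bG(\ell_\nu)$-invariant, hence has a nonzero component in some \emph{nontrivial} irreducible summand; the subsequence is then extracted among those summands. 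In your setup this non-normality is in fact true (your $\mathfrak{m}_k$ is the base change of $\mathrm{Lie}\,\bL_k$, hence not an ideal), but you never state or prove it, and the argument does not close without it; note that the dangerous case $d=3$ is precisely the one your dimension count permits.

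The same omission resurfaces in your treatment of faithfulness. You correctly sense that when $\bG_{\overline{\ell_\nu}}$ has two simple factors one must choose $V$ on which both factors act nontrivially, but that constraint has to be built into the pigeonhole itself: one must show, for every $k$, that $p_k$ has a nonzero component in such a ``mixed'' summand (again a consequence of the diagonal, non-normal position of $\mathfrak{m}_k$), and only then select a summand that works for infinitely many $k$. As written, your unconstrained choice of $V$ may already have landed in a summand on which one factor acts trivially, and the repair you sketch afterwards cannot be grafted on. The paper sidesteps rederiving this by invoking the faithfulness argument of \cite[Prop.~3.4]{bader2021arithmeticity}; if you want a self-contained proof, the non-normality of the closure is the key lemma missing from your proposal in both places.
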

The proof mimics one-to-one parts of \cite[Prop.~3.4]{bader2021arithmeticity}, however, we recite the main details.
\begin{proof}
  For each $k=1,2,\dots$, consider $\Gamma^{1}_{g_k} = g_k^{-1}\Gamma_{g_k}g_k =  \Gamma \cap g_k^{-1}\SL_2(\RR)g_k \subseteq G$.
  Consider now the $\ell$-Zariski closure of $\iota(\Gamma^{1}_{g_k})$ in $\bG$.
  This is an $\ell$-algebraic subgroup $\bL_k \subseteq \bG$. The localization at $\sigma$ is \[\bL_k(\RR) = \iota\left(\overline{g_k^{-1}\Gamma_{g_k}g_k}^{\rm zar}\right) = \iota(g_k^{-1}\SL_2(\RR)g_k).\]
  Thus $\bL_k$ is an $\bSL_2$-form over $\ell$, and in particular, is three dimensional. Since $\SL_2(\RR)$ is not a normal subgroup of $G$, we deduce that $\bL_k$ is not a normal subgroup of $\bG$.
  Let $\fg=\mathrm{Ad}(\bG)$ and $\fl_k=\mathrm{Ad}(\bL_k)$ be the algebraic adjoint representations.
  The exterior product $\bigwedge^3\fg(\ell_\nu)$ is an $\ell_\nu$-algebraic representations of $\bG(\ell_\nu)$.
  The 3-dimensional subspace $\fl_k(\ell_\nu)$ of $\fg(\ell_\nu)$ induces a vector $p_k\in \bigwedge^3\fg(\ell_\nu)$, well defined up to multiplication by scalar.
  Since $\bL_k$ is not a normal subgroup of $\bG$, it follows that $p_k$ is not $\bG(\ell_\nu)$-invariant.
  Thus, it projects nontrivially to some nontrivial irreducible $\bG(\ell_\nu)$ sub-representation $V$ of $\bigwedge^3\fg(\ell_\nu)$.
  Denote this projection by $\pi_{V}$.
  Restricting to a subsequence we may assume that $V$ is constant, that is, independent of $k$.
  The point $P_k = [\pi_{V}(p_k)]\in \PP(V)$ is $\bL_k(\ell_\nu)$ invariant. It is not $\bG(\ell_\nu)$-invariant, as this would imply that $V$ is one-dimensional, but $\bG(\ell_\nu)$ is a semisimple group and has no nontrivial one-dimensional representations.
  Thus one observes the first point of the claim.
  The second follows from the construction, except for the faithfulness part. It follows in the same way as in \cite[Prop.~3.4]{bader2021arithmeticity}.
\end{proof}
Consider the right action $G\times \PP(V)\racts\Gamma$ by $(g,P)\gamma = (g\gamma, \iota_\nu(\gamma^{-1})P)$.
Consider the $\PP(V)$-bundle $(G\times \PP(V))/\Gamma$, and the projection $\tilde \pi:G\times \PP(V)\to (G\times \PP(V))/\Gamma$.
Forgetting the $\PP(V)$ coordinate yields a projection $\rho:(G\times \PP(V))/\Gamma\to G /\Gamma$.
It has a left action by $\SL_2(\RR)$, acting only on the $G$ coordinate.
The following claim is analogous to the result of \cite[Prop.~3.4]{bader2021arithmeticity} in our setting.
\begin{claim}
  There is an $\SL_2(\RR)$-invariant measure $\tilde \mu$ on $\PP(V)\times G /\Gamma$ such that $\rho_*\tilde \mu = m_{G/\Gamma}$.
\end{claim}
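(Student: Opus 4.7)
The plan is to lift each $\mu_k$ to a natural probability measure $\tilde\mu_k$ on $(G\times\PP(V))/\Gamma$ using the invariant point $P_k$, extract a weak-$*$ limit, and upgrade the limit's $\ta$-invariance to full $\SL_2(\RR)$-invariance via Theorem \ref{thm: dimension implies invariance}.

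\textbf{Construction of the lift.} Define $s_k:\SL_2(\RR).\pi_\Gamma(g_k)\to (G\times\PP(V))/\Gamma$ by $s_k(\pi_\Gamma(hg_k))=\tilde\pi(hg_k,P_k)$. If $\pi_\Gamma(hg_k)=\pi_\Gamma(h'g_k)$, then $(h')^{-1}h\in\Gamma_{g_k}$, so there is $\gamma\in\Gamma^1_{g_k}$ with $hg_k\gamma=h'g_k$, and the $\Gamma^1_{g_k}$-invariance of $P_k$ gives $\iota_\nu(\gamma^{-1})P_k=P_k$, whence $\tilde\pi(hg_k,P_k)=\tilde\pi(h'g_k,P_k)$. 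The same calculation run backwards shows $s_k$ is injective; it is clearly $\SL_2(\RR)$-equivariant and is a section of $\rho$. Set $\tilde\mu_k=(s_k)_*\mu_k$. Because $s_k$ is a Borel isomorphism onto its image intertwining the $B$-action, the lift $\tilde\mu_k$ is $\ta$-invariant, $\ta$-ergodic, $\tu$-free, and satisfies $\rho_*\tilde\mu_k=\mu_k$. By Claim \ref{claim: action equivariance of leafwise} the leafwise measures transfer under $s_k$, so $\dim^\tu\tilde\mu_k=\dim^\tu\mu_k\to 1$.

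\textbf{Limit and its invariance.} Since $\PP(V)$ is compact, $\rho$ is proper; hence the tightness of $\mu_k\to m_{G/\Gamma}$ lifts, and any subsequential weak-$*$ limit $\tilde\mu$ of $(\tilde\mu_k)$ is a probability measure satisfying $\rho_*\tilde\mu=m_{G/\Gamma}$. Theorem \ref{thm: dimension implies invariance} applied to $(\tilde\mu_k)$ then forces $\tilde\mu$ to be $\tu$-invariant. For the opposite unipotent, observe that $\tu^t$ is the unstable horocycle for $\ta(-t)$, so by Theorem \ref{thm: leafwise dim = entropy} one has $\dim^{\tu^t}\tilde\mu_k=h_{\tilde\mu_k}(\ta(-1))=h_{\tilde\mu_k}(\ta(1))=\dim^\tu\tilde\mu_k\to 1$. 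Applying Theorem \ref{thm: dimension implies invariance} to the reversed flow $\ta(-t)$ together with the transpose action of $B$ (exactly as in Subsection \ref{ssec: measure on G mod Gamma}) yields $\tu^t$-invariance of $\tilde\mu$. Since $\tu$ and $\tu^t$ generate $\SL_2(\RR)$, the measure $\tilde\mu$ is $\SL_2(\RR)$-invariant, as required.

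The main technical point is the transfer of the $\tu$-leafwise dimension through the lift $s_k$ (handled by Claim \ref{claim: action equivariance of leafwise}) and the parallel application of Theorem \ref{thm: dimension implies invariance} in the transposed direction. Tightness is automatic since $\PP(V)$ is compact and the base limit $m_{G/\Gamma}$ is a probability measure, so no additional escape-of-mass input beyond Lemma \ref{lem: non-degenerate limits} (already used to identify $\mu_\infty=m_{G/\Gamma}$) is needed.
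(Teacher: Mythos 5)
Your proposal is correct and follows essentially the same route as the paper: lift $\mu_k$ via the $\Gamma^1_{g_k}$-invariance of $P_k$ (your section $s_k$ is the paper's $\tilde r_k$ composed with $r_k^{-1}$), use compactness of the $\PP(V)$-fibers of $\rho$ to get a probability limit $\tilde\mu$ with $\rho_*\tilde\mu=m_{G/\Gamma}$, and apply Theorem \ref{thm: dimension implies invariance} in both the forward and transposed directions to upgrade to $\SL_2(\RR)$-invariance.
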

\begin{proof}
  Consider the point $\tilde Q_k = (g_k, P_k)\in G\times \PP(V)$, $Q_k = \tilde \pi(\tilde Q_k) \in (G\times \PP(V))/\Gamma$, and the map
  $\tilde r_k':\SL_2(\RR)\to (G\times \PP(V))/\Gamma$ defined by $h\mapsto hQ_k$.
  This map is invariant to $\Gamma_{g_k}$ from the right, indeed,
  \begin{align*}
    \tilde r_k'(h\gamma) & = h\gamma Q_k = \tilde\pi (h\gamma g_k, P_k) = \tilde \pi (hg_k g_{k}^{-1}\gamma g_k, P_k) \\&= \tilde \pi (hg_k g_{k}^{-1}\gamma g_k, \iota_\nu(g_{k}^{-1}\gamma^{-1} g_k) P_k) = \tilde \pi (hg_k, P_k).
  \end{align*}
  The fourth equality follows from the fact that $P_k$ is $g_k^{-1}\Gamma_{g_k}g_k$ invariant, and the last follows from the definition of the quotient map $\tilde \pi$, which is the quotient by the right $\Gamma$-action.
  Hence we may define $\tilde r_k:\SL_2(\RR)/\Gamma_{g_k}\to (G\times \PP(V))/\Gamma$ as the descent of $\tilde r_k'$.
  Thus we factored $r_k:\SL_2(\RR)/\Gamma_{g_k}\to G/\Gamma$ as a composition of $\SL_2(\RR)$-maps,
  $r_k = \rho\circ \tilde r_k$.
  In a diagram,
  \begin{align*}
    \xymatrix{\SL_2(\RR)/\Gamma_{g_k}\ar[r]^{\tilde r_k}\ar[rd]^{r_k} & (G\times \PP(V))/\Gamma\ar[d]^\rho \\&G/\Gamma}
  \end{align*}

  Therefore, the measure $\mu_k = (r_k)_* \mu_k'$ on $G/\Gamma$ lifts to probability measures $\tilde \mu_k = (\tilde r_k)_* \mu_k'$ satisfyies that $\rho_*\tilde \mu_k = \mu_k$.
  Similarly to $\mu_k$, we obtain $\dim^\tu(\tilde \mu_k)\xrightarrow{k\to \infty}1$.

  Since $(\mu_k)_{k=1}^\infty$ weak-$*$ converges to the probability measure $m_{G/\Gamma}$ and the fibers of $\rho$ are compact, we deduce that there is no escape of mass in $(\tilde \mu_k)_{k}$, and hence we may restrict to a subsequence of $k$-s and assume that $\tilde \mu_k$ weak-$*$ converges to a measure probability measure $\tilde \mu$, satisfying that $\rho_*\tilde \mu = m_{G/\Gamma}$.
  Since $\mu_k$ is $\tu$-free for all $k$, we deduce that $\tilde \mu_k$ is $\tu$-free as well.
  By Theorem \ref{thm: dimension implies invariance}, the measure $\tilde \mu$ is $\tu$-invariant. By Theorem \ref{thm: dimension implies invariance} applied for the negative time $\ta(t)$ action, together with the transpose inverse action of $B$ on $G/\Gamma$, we obtain that $\tilde \mu$ is $\tu^t$-invariant, and hence $\SL_2(\RR)$-invariant.
\end{proof}

The existence of such a measure $\tilde \mu$ is required in \cite[Thm.~1.6]{bader2021arithmeticity}.
The rest of the proof is now identical to the reduction of \cite[Thms.~1.1~and~1.5.]{bader2021arithmeticity} to \cite[Thm.~1.6]{bader2021arithmeticity}.
\cite[{\S}3.4]{bader2021arithmeticity} implies the compatibility assumption of \cite[Thm.~1.6]{bader2021arithmeticity}, and hence its result holds, and shows that in fact $\iota_\nu:\Gamma\to \bG(\ell_\nu)$ extends to a continuous homomorphism $G\to \bG(\ell_\nu)$, which contradicts the assertion made in Definition \ref{def: nonarithmeticity}. \qed

\section{Proof of Lemma \ref{lem: non-degenerate limits}}
\label{sec: proof of geometric lemma}
The proof of Lemma \ref{lem: non-degenerate limits} employs the linearization method.  Linearization is a general technique, introduced by Dani and Margulis \cite{dani16limit}, and it uses representations to control the distance to homogeneous subvarieties.

In Subsection \ref{ssec: margulis function} we introduce the notion of $(\varepsilon; T_0, T_1)$-additive Margulis function.
In Subsection \ref{ssec: proof assuming} we prove Lemma \ref{lem: non-degenerate limits}, assuming a representational description of certain geometric notions, and prove them in Subsections \ref{ssec: proof of geometric criterion - cusp}, \ref{ssec: proof of geometric criterion - SL2 orbit}.
\begin{remark}
  The results in this section are related to Mohammadi and Oh \cite[Thm.~1.5]{mohammadi2023isolations}.
  In \cite{mohammadi2023isolations} the authors prove a separation result for closed $\SL_2(\RR)$-orbits in geometrically finite quotients of $G$. To do so, Mohammadi and Oh show that the Bowen-Margulis-Sullivan measure on one $\SL_2(\RR)$-orbit must be separated from the other $\SL_2(\RR)$-orbit.
  In this section, we also prove a separation result of measures and closed orbits. Although we allow our measures to be more general than Bowen-Margulis-Sullivan's measures, they are the ones in our application.

  The proofs in this section and \cite{mohammadi2023isolations} share several similarities.
  First, the representation framework is similar. Second, both approaches use a Markov operator, though different ones.
  Third, we use a Margulis function similar to \cite{mohammadi2023isolations}; however, their Margulis function satisfies the Margulis inequality everywhere, while ours satisfies it only with high probability. The reason for this difference is how each paper effectivizes the high dimension of the leafwise measures $\mu^\tu_x$ for the Bowen-Margulis-Sullivan measures $\mu$.

  Here we use Lemma \ref{lem: high leafwise entropy meaning}.
  Mohammadi and Oh \cite{mohammadi2023isolations} use a different way to effectivize the dimension, by using a uniform bound
  $\frac{\mu^\tu_x([ -r,r])^{1/\delta'}}{r\mu^\tu_x([ -1,1])^{1/\delta'}} \le \mathtt{p}$ for $\mu$-almost-all $x$ and for all $r\in (0,2]$. Here $\delta'$ is either $\delta$ or $1-2(1-\delta)$.
  For our purposes this approach cannot be applied, since it requires a uniform bound for the $\mathtt{p}$ of all our $\mu_k$, which we were not able to obtain.



\end{remark}
\subsection{\texorpdfstring{$(\varepsilon; T_0, T_1)$}{(Epsilon; T0, T1)}-additive Margulis function}\label{ssec: margulis function}
In this section, we introduce the notion on $(\varepsilon; T_0, T_1)$-additive Margulis function and prove some results on the notion. It is similar, though not identical, to the similarly named notion in \cite{eskinMozesMargulisFunctions}.
\begin{definition}[$(\varepsilon; T_0, T_1)$-additive Margulis function]\label{def: ep-additive Margulis function}
  Let $(X,\mu)$ be a measure space, $x\mapsto \nu_x$ a measurable map from $X$ to measures on $X$ such that $\mu = \int_X\nu_x\bd \mu(x)$.
  In other words, $(X, x\mapsto \nu_x)$ is a Markov chain and $\mu$ is a stationary measure.
  A measurable function $\alpha: X \to [0,\infty)$ is called \emph{$(\varepsilon; T_0, T_1)$-additive Margulis function} for some $T_1> T_0>0$ large and $\varepsilon>0$ if the following conditions hold:
  \begin{enumerate}[label=M-\alph*), ref=(M-\alph*)]
    \item \label{cond: Lipshitz}For $\mu$-almost all $x\in X$, and for $\nu_x$-almost all $y\in X$, we have $\alpha(y) \in \alpha(x) + [ -T_1, T_1]$.
    \item \label{cond: decay on averege}
          \begin{align}\label{eq: Margulis bound}
            \mu\left( \left\{ x\in X: T_1 \le \alpha(x) < T_0 + \int_X\alpha(y)\bd \nu_x(y)\right\} \right)<\varepsilon.
          \end{align}
  \end{enumerate}
\end{definition}
Having equality to $0$ in Eq. \eqref{eq: Margulis bound} is an additive version of the standard definition of a Margulis function.
\begin{lemma}\label{lem: Margulis function goes down}
  In the setting of Definition \ref{def: ep-additive Margulis function},
  \begin{align}\label{eq: Margulis func control}
    \mu(\{x\in X:\alpha(x)\ge t\}) \le \frac{1}{\log \lfloor t / T_1\rfloor - 1} + \frac{T_0+T_1}{T_0}\varepsilon,
  \end{align}
  for all $t\ge 3T_1$.
\end{lemma}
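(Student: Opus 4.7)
The strategy is to apply Markov's inequality with a bounded logarithmic test function whose integral against $\mu$ is controlled via the stationarity relation $\mu = \int_X \nu_x\,d\mu(x)$ together with the drift condition~\ref{cond: decay on averege}. Set $n = \lfloor t/T_1\rfloor \ge 3$ and define
\[
  \phi_n(\alpha) \;=\; \min\bigl((\log(\alpha/T_1) - 1)^+,\ \log n - 1\bigr),
\]
so that $\phi_n \equiv 0$ on $[0, eT_1]$, $\phi_n(\alpha) = \log(\alpha/(eT_1))$ on $[eT_1, nT_1]$, and $\phi_n \equiv \log n -1$ on $[nT_1, \infty)$. This function is concave on $[eT_1, \infty)$, bounded by $\log n - 1$, and Lipschitz with constant at most $1/(eT_1)$; combined with~\ref{cond: Lipshitz}, this yields $|\phi_n(\alpha(y)) - \phi_n(\alpha(x))| \le 1/e$ for $\nu_x$-almost every $y$. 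Writing $q(s) = \mu(\{\alpha \ge s\})$, a layer-cake computation gives the key identity
\[
  \int \phi_n \, d\mu \;=\; \int_{eT_1}^{nT_1} \frac{q(s)}{s}\,ds,
\]
and monotonicity of $q$ yields $q(nT_1)(\log n - 1) \le \int \phi_n\,d\mu$. The lemma therefore reduces to proving $\int \phi_n\,d\mu \le 1 + \frac{T_0+T_1}{T_0}\varepsilon(\log n - 1)$, after which the bound on $\mu(\{\alpha \ge t\}) \le q(nT_1)$ follows from monotonicity in $t$.

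To prove the upper bound on $\int \phi_n\,d\mu$, I would exploit the stationarity identity
\[
  0 \;=\; \int \Bigl( \phi_n(\alpha(x)) - \int \phi_n(\alpha(y))\,d\nu_x(y) \Bigr)\,d\mu(x),
\]
analyzing the integrand on three regions. On the \emph{good} set $G_+ = \{x : \alpha(x) \in [eT_1 + T_1, nT_1]\} \setminus B$, concavity of $\phi_n$ on $[eT_1, \infty)$ together with Jensen's inequality and the drift $\int \alpha\,d\nu_x \le \alpha(x) - T_0$ furnished by~\ref{cond: decay on averege} gives an integrand $\ge \log\bigl(\alpha(x)/(\alpha(x)-T_0)\bigr) \ge T_0/\alpha(x)$. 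On the bad set $B$ of $\mu$-measure less than $\varepsilon$, the integrand has absolute value at most $1/e$ by Lipschitz. On the transitional region $\{\alpha < eT_1 + T_1\}$, including the kink at $eT_1$, the integrand is similarly bounded.

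\textbf{Main obstacle.} The central difficulty is to bridge the gap between the negative-moment estimate $\int (T_0/\alpha)\mathbf{1}_{G_+}\,d\mu \le O(1)$, which follows immediately from the stationarity identity above, and the log-moment bound $\int \phi_n \,d\mu \le 1 + \frac{T_0+T_1}{T_0}\varepsilon (\log n - 1)$ that the Markov step requires. I expect the resolution involves a bootstrap across exponential scales of $\alpha$: first establish the basic fact $\mu(\{\alpha \ge T_1\}) \le T_1/T_0 + \varepsilon$ (obtained by applying stationarity to the linear truncation $\min(\alpha, MT_1)$ and letting $M \to \infty$), then iterate this observation to get progressively better control on $q(s)$ across the intervals $[e^kT_1, e^{k+1}T_1]$. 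The coefficient $(T_0 + T_1)/T_0 = 1 + T_1/T_0$ arises naturally from the ratio of the Lipschitz fluctuation scale $T_1$ to the drift scale $T_0$ in the contributions that bad steps make to $\phi_n$ as the truncation level grows with $n$.
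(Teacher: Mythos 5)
Your reduction is sound as far as it goes: the layer-cake identity, the Markov step $q(nT_1)(\log n-1)\le\int\phi_n\,d\mu$, and the Jensen/drift computation on the good set are all correct. But the proof has a genuine gap exactly where you flag it, and the sketched repair does not close it. The bound $\int\phi_n\,d\mu\le 1+\frac{T_0+T_1}{T_0}\varepsilon(\log n-1)$ is essentially equivalent to the lemma itself: it requires decay of $q(s)=\mu(\{\alpha\ge s\})$ at every intermediate scale $s\in[eT_1,nT_1]$ (if $q$ were of order $1$ on that range the integral would be of order $\log n$), and nothing you derive supplies that decay. The negative-moment estimate $\int_{G_+}T_0/\alpha\,d\mu=O(1)$ is strictly weaker, and the proposed seed of the bootstrap, $\mu(\{\alpha\ge T_1\})\le T_1/T_0+\varepsilon$, is vacuous here because Definition \ref{def: ep-additive Margulis function} requires $T_1>T_0$ (and in the application, e.g.\ Claim \ref{claim: alpha total is Margulis}, $T_0<T_1$ is unavoidable), so its right-hand side exceeds $1$; moreover there is no renewal or conditional structure in the hypotheses that lets you ``iterate'' such a bound above a higher threshold: conditioning on $\{\alpha\ge e^kT_1\}$ destroys stationarity, and there is no conditional version of \ref{cond: decay on averege}.

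The missing mechanism, which is how the paper argues, is to apply stationarity not to a logarithmic test function but to $\alpha$ itself restricted to a window $[t_1,t_2]$: using \ref{cond: Lipshitz} and \ref{cond: decay on averege}, and then letting the upper endpoint run off to infinity along $t_2+nT_1$ (where a $\liminf$ of boundary terms vanishes), one gets $T_0\,\mu(\{\alpha\ge t_1\})\le(t_1+T_1)\,\mu(\{\alpha\in[t_1,t_1+T_1)\})+\varepsilon(T_0+T_1)$, i.e.\ the whole tail above $t_1$ is controlled by the mass of the single width-$T_1$ shell at $t_1$. The $1/(\log\lfloor t/T_1\rfloor-1)$ term then comes from a pigeonhole: since $\sum_{m=1}^n\frac{1}{m+1}\ge\log n-1$ and the shell masses sum to at most $1$, some shell $[mT_1,(m+1)T_1)$ with $m\le n$ satisfies $(m+1)\mu(\text{shell})\le 1/(\log n-1)$, and feeding that shell into the displayed inequality yields \eqref{eq: Margulis func control}. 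Neither the single-shell control nor the harmonic-sum pigeonhole appears in your sketch, and without them the $1/\log$ decay has no source. Incidentally, even if you had the shell inequality, summing it against $1/m$ would give your target bound on $\int\phi_n\,d\mu$ only with the constant $1$ replaced by $O(T_1/T_0)$, so your route would recover the lemma only up to a factor of order $T_1/T_0$ in the first term rather than with the stated constant.
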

\begin{proof}
  For every interval $I \subseteq \RR$ denote
  \begin{align*}
    A_I & = \{x\in X:\alpha(x) \in I\}
    ,                                                                              \\
    B_I & = \left\{x\in A_I:\alpha(x) < T_0 + \int_X\alpha(y)\bd \nu_x(y)\right\}.
  \end{align*}

  For every $t_1 \ge T_1$, $t_2 > t_1 + 2T_1$, we use the stationarity of $\mu$ and Condition \ref{cond: Lipshitz} to obtain
  \begin{align*}
    \int_{A_{[t_1 + T_1, t_2 - T_1]}} & \alpha(y) \bd \mu(y) \le
    \int_{A_{[t_1, t_2]}}\int_{X} \alpha(y)\bd \nu_x(y)\bd \mu(x) \\&\le
    \int_{A_{[t_1, t_2]}}(\alpha(x) - T_0)\bd \mu(x) + \mu(B_{[t_1, t_2]}) \cdot(T_0 + T_1).
  \end{align*}
  Canceling common terms yields
  \begin{align}\label{eq: bound on measure1}
    T_0 \mu(A_{[t_1, t_2]})
    \le
    (t_1 + T_1) \mu(A_{[t_1, t_1 + T_1)}) + t_2\mu(A_{[t_2 - T_1, t_2]}) + \varepsilon(T_0 + T_1).
  \end{align}
  Replacing $t_2$ by $t_2+nT_1$ for some $n$ yields different bounds
  \begin{align}\label{eq: bound on measure2}
    \begin{split}
      T_0 \mu(A_{[t_1, t_2 + nT_1]})
      \le
      (t_1 + T_1) \mu(A_{[t_1, t_1 + T_1)}) + (t_2 + nT_1)\mu(A_{[t_2 + (n-1)T_1, t_2 + nT_1]}) + \varepsilon(T_0 + T_1).
    \end{split}
  \end{align}
  Now, note that
  \[\liminf_{n\ge 0} (t_2 + nT_1)\mu(A_{[t_2 + (n-1)T_1, t_2 + nT_1]}) = 0.\] Otherwise we have
  $\mu(A_{[t_2 + nT_1, t_2 + (n+1)T_1]}) > \frac{\delta}{t_2 + (n+1)T_1}$ for some $\delta>0$ and all $n$ sufficiently large, but this is a divergent series.
  Taking liminf over $n$ in Eq. \eqref{eq: bound on measure2} gives us
  \begin{align}\label{eq: bound on measure3}
    T_0 \mu(A_{[t_1, \infty)}) \le (t_1 + T_1)\mu(A_{[t_1, t_1 + T_1)}) + \varepsilon(T_0 + T_1).
  \end{align}
  Let $n\ge 3$ and $n\ge m \ge 1$. Substitute $t_1 = mT_1$ to Eq. \eqref{eq: bound on measure3} and get
  \begin{align}\label{eq: bound on measure4}
    T_0 \mu(A_{[nT_1, \infty)}) \le T_0 \mu(A_{[mT_1, \infty)}) \le (m+1)T_1\mu(A_{[mT_1, (m+1)T_1)}) + \varepsilon(T_0 + T_1).
  \end{align}
  Now, for every $n$ consider
  \[\delta_n = \min_{m=1}^n (m+1)\mu(A_{[mT_1, (m+1)T_1)}).\]
  We deduce that
  $\mu(A_{[mT_1, (m+1)T_1)}) \ge \frac{T_1\delta_n}{(m+1)T_0}$,
  and by additivity,
  \begin{align*}
    1\ge \mu(A_{[T_1, (n+1)T_1)}) \ge \frac{T_1\delta_n}{T_0} \sum_{m=1}^n \frac{1}{m+1} \ge \frac{T_1\delta_n}{T_0}(\log n - 1).
  \end{align*}
  Altogether,
  $\delta_n \le \frac{T_0}{T_1(\log n - 1)}.$
  Plugging this to Eq. \eqref{eq: bound on measure4} yields
  \begin{align*}
    T_0 \mu(A_{[nT_1, \infty)}) \le \frac{T_0}{\log n-1} + \varepsilon(T_0 + T_1).
  \end{align*}
  Hence, for all $t\ge 3T_1$,
  \[\mu(A_{[t, \infty)}) \le \frac{1}{\log \lfloor t/T_1\rfloor - 1} + \varepsilon\frac{T_0+T_1}{T_0},\]
  as desired.
\end{proof}
\begin{remark}
  The summand
  $\frac{T_0+T_1}{T_0}\varepsilon$
  in Eq. \eqref{eq: Margulis func control}
  is tight, and cannot be improved.
\end{remark}
\begin{claim}\label{claim: maxing to Margulis}
  In the setting of Definition \ref{def: ep-additive Margulis function}, suppose that $\alpha:X\to [0,\infty)$ is an $(\varepsilon; T_0, T_1)$-additive Margulis function with $T_0 > T_1/2$, and $\beta:X\to [0,\infty)$ is a function satisfying Condition \ref{cond: Lipshitz} for $T_1$ and
  \begin{align}\label{eq: estimate semi-Margulis}
    \mu\left( \left\{ x\in X: \alpha(x) + T_1 \le \beta(x) < T_0 + \int_X\beta(y)\bd \nu_x(y)\right\} \right)<\varepsilon.
  \end{align}
  Then $\gamma = \max(0, \alpha - 2T_1, \beta - 5T_1)$ is a $(2\varepsilon; 2T_0 - T_1, T_1)$-additive Margulis function.
\end{claim}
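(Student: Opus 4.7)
My plan is to verify both conditions of Definition \ref{def: ep-additive Margulis function} for $\gamma$ separately. Condition \ref{cond: Lipshitz} is immediate because the pointwise maximum of $T_1$-Lipschitz-on-Markov functions is again $T_1$-Lipschitz-on-Markov; here the three constituents $0$, $\alpha - 2T_1$, and $\beta - 5T_1$ all satisfy this hypothesis (the constant trivially, and the latter two by assumption), so their maximum $\gamma$ inherits the same bound.

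For condition \ref{cond: decay on averege}, denote
\[
B_\alpha = \{x : T_1 \le \alpha(x) < T_0 + \textstyle\int \alpha \, d\nu_x\}, \qquad
B_\beta = \{x : \alpha(x) + T_1 \le \beta(x) < T_0 + \textstyle\int \beta \, d\nu_x\},
\]
the bad sets provided by the hypotheses, each of $\mu$-measure less than $\varepsilon$. The plan is to show that the candidate bad set $B_\gamma := \{x : T_1 \le \gamma(x) < (2T_0 - T_1) + \int \gamma \, d\nu_x\}$ is contained in $B_\alpha \cup B_\beta$; a union bound then yields $\mu(B_\gamma) < 2\varepsilon$ as required.

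To establish this containment, I would split $B_\gamma$ according to which of $\alpha - 2T_1$ and $\beta - 5T_1$ realizes $\gamma(x)$. The choice of shifts is engineered so that the $3T_1$ gap between them exceeds the combined $2T_1$-Lipschitz drift of $\alpha - \beta$ across one step of the chain, so the dominance at $x$ persists, up to a controlled error, at $\nu_x$-a.e.\ $y$. In the region $\alpha(x) - 2T_1 \ge \beta(x) - 5T_1$ with $\gamma(x) \ge T_1$, one has $\alpha(x) \ge 3T_1$, and the gap lets us identify $\int \gamma \, d\nu_x$ with $\int \alpha \, d\nu_x - 2T_1$, reducing the defining inequality of $B_\gamma$ to $\alpha(x) < T_0 + \int \alpha \, d\nu_x$ and placing $x \in B_\alpha$. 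In the complementary region $\beta(x) > \alpha(x) + 3T_1$, the inequality $\beta(x) > \alpha(x) + T_1$ activates the $\beta$-hypothesis \eqref{eq: estimate semi-Margulis}, and a symmetric computation reduces to $B_\beta$.

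The main obstacle I expect is the transition regime where $\beta(x)$ is close to $\alpha(x) + 3T_1$: the dominant summand between $(\alpha - 2T_1)_+$ and $(\beta - 5T_1)_+$ may switch between $x$ and various $y$ in the support of $\nu_x$, forcing a delicate bookkeeping of the $T_1$-shifts and Lipschitz errors. Carrying this through is what produces precisely the parameter $2T_0 - T_1$ in the conclusion; the hypothesis $T_0 > T_1/2$ enters so that this parameter is strictly positive and the resulting Margulis condition is nonvacuous.
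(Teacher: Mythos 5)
Your overall strategy is the same as the paper's: the Lipschitz condition for $\gamma$ is immediate, and the substance is the containment of the $\gamma$-bad set in the union of the $\alpha$-bad and $\beta$-bad sets, followed by a union bound (which is where $2\varepsilon$ comes from). The gap is exactly at the point you flag as the "main obstacle" and then leave unresolved. In the region $\alpha(x)-2T_1\ge\beta(x)-5T_1$ your identification of $\int\gamma\,d\nu_x$ with $\int\alpha\,d\nu_x-2T_1$ is only valid when the dominance margin is at least $2T_1$, i.e.\ when $\beta(x)\le\alpha(x)+T_1$; on the band $\alpha(x)+T_1<\beta(x)<\alpha(x)+5T_1$ the maximizer can indeed switch between $x$ and points $y$ in the support of $\nu_x$, and the assertion that "carrying this through" produces the parameter $2T_0-T_1$ is precisely the statement that needs an argument -- no amount of tracking which term is maximal at each $y$ will by itself give membership in one of the two hypothesized bad sets.

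The paper handles the transition band by giving up on identifying the maximizer and instead splitting the average increment: since $\gamma(x)\ge\alpha(x)-2T_1$, $\gamma(x)\ge\beta(x)-5T_1$ and $\gamma(x)\ge T_1$, one has for $\nu_x$-a.e.\ $y$
\[
\gamma(y)-(\gamma(x)-T_1)\;\le\;\bigl(\alpha(y)-(\alpha(x)-T_1)\bigr)+\bigl(\beta(y)-(\beta(x)-T_1)\bigr),
\]
both summands on the right being nonnegative by Condition \ref{cond: Lipshitz}. Averaging over $\nu_x$, failure of the $(2T_0-T_1)$-decay for $\gamma$ means the left-hand average exceeds $T_1-(2T_0-T_1)=2(T_1-T_0)$, so at least one of $\alpha,\beta$ fails its $T_0$-decay; and the specific offsets $2T_1$ and $5T_1$ are what guarantee the admission thresholds $\alpha(x)\ge T_1$, respectively $\beta(x)\ge\alpha(x)+T_1$, in the transition case (via $\alpha(x)>\gamma(x)\ge T_1$ and $\beta(x)>\gamma(x)+3T_1\ge\alpha(x)+T_1$), so that $x$ really lies in $X_{\alpha-\rm bad}\cup X_{\beta-\rm bad}$. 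This splitting-of-the-average step is the missing idea, and it -- not positivity considerations -- is the actual source of the degradation from $T_0$ to $2T_0-T_1$; the hypothesis $T_0>T_1/2$ only ensures the resulting parameter is admissible. Your two clean dominance cases (margin at least $2T_1$) do go through essentially as you describe, matching the paper's first two cases.
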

\begin{proof}
  One can easily see that $\gamma$ satisfies Condition \ref{cond: Lipshitz} for $T_1$. Let $T_0' = 2T_0-T_1$.
  We claim that
  \[X_{\gamma-\rm bad} = \left\{ x\in X: T_1 \le \gamma(x) < T_0' + \int_X\gamma(y)\bd \nu_x(y)\right\},\]
  is contained in the union of the sets $X_{\alpha-\rm bad}$ and $X_{\beta-\rm bad}$ estimated in Eqs. \eqref{eq: Margulis bound} and \eqref{eq: estimate semi-Margulis} respectively.
  Indeed, let $x\in X_{\gamma-\rm bad}$ be generic in the sense that it satisfies Condition \ref{cond: Lipshitz} for $\alpha$ and $\beta$.
  We have
  \begin{align*}
    -T_0' & < \int_X\gamma(y)\bd \nu_x(y) - \gamma(x) = \int_X(\gamma(y) - (\gamma(x) - T_1))\bd \nu_x(y) - T_1
  \end{align*}
  Note that the integrand of the right-hand side is almost surely positive.
  Now denote $\alpha' = \alpha - 2T_1, \beta' = \beta - 5T_1$ and distinguish between the following three cases:
  \begin{enumerate}[label=Case-\alph*), ref=(Case-\alph*)]
    \item \label{case: alpha big}$\gamma(x) = \alpha'(x)$ and $\beta'(x) \le \alpha'(x) - 2T_1$.
    \item \label{case: beta big}$\gamma(x) = \beta'(x)$ and $\alpha'(x)\le \beta'(x) - 2T_1$.
    \item \label{case: both big}$\alpha'(x), \beta'(x) > \gamma(x) - 2T_1$.
  \end{enumerate}
  In \ref{case: alpha big}, for $\nu_x(y)$-almost all $y$, \[\alpha'(y) \ge \alpha'(x) - T_1 \ge \beta'(x) + T_1 \ge \beta'(y). \]
  In addition, since $\gamma(x) = \alpha'(x) \ge T_1$, Condition \ref{cond: Lipshitz} for $\alpha$ show that $\alpha'(y) \ge 0$, which implies that $\gamma(y) = \alpha'(y)$. Consequently,
  \begin{align*}
    -T_0' & < \int_X(\gamma(y) - (\gamma(x) - T_1))\bd \nu_x(y) - T_1 \\&=
    \int_X(\alpha'(y) - (\alpha'(x) - T_1))\bd \nu_x(y) - T_1         \\&=
    \int_X(\alpha(y) - (\alpha(x) - T_1))\bd \nu_x(y) - T_1
  \end{align*}
  Since $\alpha(x) =\alpha'(x) + 2T_1 = \gamma(x) + 2T_1 \ge T_1$ we have $x\in X_{\alpha-\rm bad}$.

  In \ref{case: beta big}, for almost all $y$ we have $\gamma(y) = \beta'(y)$ as in the previous case.
  The inequality $-T_0' < \int_X(\beta(y) - (\beta(x) - T_1))\bd \nu_x(y) - T_1$ follows as well.
  Since \[\alpha(x) = \alpha'(x) + 2T_1 \le \gamma(x) = \beta'(x) = \beta(x) - 5T_1,\] we deduce that $x\in X_{\beta-\rm bad}$.

  In \ref{case: both big},
  \begin{align*}
    -T_0' & < \int_X(\gamma(y) - (\gamma(x) - T_1))\bd \nu_x(y) - T_1
    \\&\le
    \int_X\left((\alpha(y) - (\gamma(x) - T_1))^+ + (\beta(y) - (\gamma(x) - T_1))^+\right)\bd \nu_x(y) - T_1
    \\&\le
    \int_X\left((\alpha(y) - (\alpha(x) - T_1)) + (\beta(y) - (\beta(x) - T_1))\right)\bd \nu_x(y) - T_1,
  \end{align*}
  where $x^+ = \max(x,0)$ for all $x\in \RR$.
  Hence either
  \begin{align}\label{eq: alpha nondecay}
    \int_X(\alpha(y) - (\alpha(x) - T_1))\nu_x(y) >\frac{T_1-T_0'}{2} = T_1-T_0
  \end{align}
  or
  \begin{align}\label{eq: beta nondecay}
    \int_X(\beta(y) - (\beta(x) - T_1))\nu_x(y) >\frac{T_1-T_0'}{2} = T_1-T_0
  \end{align}
  If Eq. \eqref{eq: alpha nondecay} holds, $x\in X_{\alpha-\rm bad}$. Indeed, to show that
  $\alpha(x) \ge T_1$, we use \[\alpha(x) = \alpha'(x)+2T_1\ge \gamma(x) \ge T_1.\]
  If Eq. \eqref{eq: beta nondecay} holds, $x\in X_{\beta-\rm bad}$. Indeed, to show that
  $\beta(x) \ge T_1 + \alpha(x)$, we use
  \[\beta(x) = \beta'(x)+5T_1\ge \gamma(x) + 3T_1 \ge \alpha'(x) + 3T_1 = \alpha(x) + T_1.\]

  Consequently, $X_{\gamma-\rm bad}\subseteq X_{\alpha-\rm bad}\cup X_{\beta-\rm bad}$, and hence
  $\mu(X_{\gamma-\rm bad}) \le 2\varepsilon$.
\end{proof}

\subsection{Proof assuming representational description}
\label{ssec: proof assuming}
The proof of Lemma \ref{lem: non-degenerate limits} will go as follows: given a closed $\SL_2(\RR)$-orbit $S$ we will find a height function on its complement $(G/\Gamma) \setminus S$ which measures both how close a point is to $S$ and how deep it is in the cusp. Then we show that it is an $(\varepsilon; T_0, T_1)$-additive Margulis function with respect to the leafwise Markov chain.

We will first describe representation-theoretic ways to view the cusps of $G/\Gamma$ and with the periodic $\SL_2(\RR)$-periodic orbits in it.
Note that the normalizer of $\SL_2(\RR)$ in $G$ is an index $2$ extension $\SL_2(\RR)\lhd \NSL2\RR < G$, and more explicitly,
\[\NSL2\RR = \SL_2(\RR)\sqcup \{ih:h\in M_{2\times 2}(\RR)\text{ with }\det h = -1\}.\]
Any periodic orbit $S = \SL_2(\RR).\pi_\Gamma(g_S)$ is contained in the periodic $\NSL2\RR$-orbit $\bar S = \NSL2\RR\pi_\Gamma(g_S)$.
Let ${\cusps_\Gamma}$ denote the set of cusps of $G/\Gamma$. 
\newcommand{\core}{{\rm core}\,\Gamma}
\newcommand{\coreG}{\widetilde{\core}}
\newcommand{\coreGn}{\widetilde{\core}^{\rm nbd}}
\begin{definition}[Convex core]
  Let $\core\subseteq \HH^3/\Gamma$ denote the convex core, let $\pi_{\SU(2)}:G\to \HH^3$ denote the standard projection, $\coreG = \pi_{\SU(2)}^{-1}(\core)$ and $\coreGn$ the closed unit neighborhood of $\coreG$. 
\end{definition}  
The following observation will help us use these notions.
\begin{obs}\label{obs: prob measures on core}
  Every $\ta$-invariant probability measure on $G/\Gamma$ is supported on $\coreG$.  
  In particular, 
  \begin{enumerate}
    \item\label{point: mu k and Smu k} for every $k$ the measure $\mu_k$ is supported on $\coreG$, and hence $S_{\ind_{[0,1)}}\mu_k$ is supported on $\coreGn$;
    \item\label{cor: S is in core} every periodic $\SL_2(\RR)$ orbit in $G/\Gamma$ is contained in $\coreG$. 
  \end{enumerate}
\end{obs}
\begin{lemma}[Description of the cusps of $G/\Gamma$ using representations]\label{lem: geometry near cusp}
  There exists a $2$ dimensional complex representation $V$ of $G$ equipped with a norm $\|-\|$, and a $\Gamma$-invariant subset $V_{\cusps_\Gamma}\subset V$ such that for every $g\in G$ all the vectors in $g.V_{\cusps_\Gamma}\cap \{v\in V:\|v\| <1\}$ has the same length.
  This implies that the function $\alpha_{\cusps_\Gamma}:G\to [0,\infty)$ defined by
  \begin{align*}
    \alpha_{\cusps_\Gamma} & = 
    \begin{cases}
      -\log \|g.v\|, & \text{if }\|g.v\| < 1 \text{ for some }v\in V_{\cusps_\Gamma}, \\
      0,             & \text{otherwise,}
    \end{cases}
  \end{align*}
  is well-defined.
  Moreover, we claim that $\alpha_{\cusps_\Gamma}$ is continuous, right $\Gamma$-invariant, and descends to a proper map $\alpha_{\cusps_\Gamma}|_{\coreGn}:\coreGn \to [0,\infty)$, that is, \[\alpha_{\cusps_\Gamma}|_{\coreGn}^{-1}([0,T]) = \alpha_{\cusps_\Gamma}^{-1}([0,T])/\Gamma \cap \coreGn\] is compact for every $T > 0$.

\end{lemma}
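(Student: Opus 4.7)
\textbf{The plan} is to take $V = \CC^2$ to be the standard two-dimensional complex representation of $G = \SL_2(\CC)$, equipped with the standard Hermitian norm $\|\cdot\|$, which is $\SU(2)$-invariant. Via $\PP(V) \cong \partial\HH^3$, a nonzero vector $v$ projects to an ideal point $[v]$; for $p = \SU(2)g \in \HH^3$ the quantity $\|v\|_p := \|gv\|$ is well defined by $\SU(2)$-invariance, and its sublevel sets are horoballs at $[v]$, with the size of $v$ determining the depth. Since $\Gamma$ is geometrically finite, there are only finitely many $\Gamma$-orbits of parabolic fixed points; choose representatives $\xi_1,\ldots,\xi_n \in \partial \HH^3$ with stabilizers $\Gamma_{\xi_i}\le\Gamma$. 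Each $\Gamma_{\xi_i}$ preserves the line $L_i \subset V$ above $\xi_i$ and acts on it by complex scalars of absolute value $1$ (a scalar with $|\lambda|\neq 1$ would make the element loxodromic, incompatible with membership in a parabolic cusp subgroup). By the standard construction of $\Gamma$-equivariant disjoint horoballs for geometrically finite Kleinian groups, I choose closed horoballs $B_i$ at $\xi_i$, invariant under $\Gamma_{\xi_i}$, such that $\{B_i \gamma^{-1} : 1\le i \le n,\ \gamma \in \Gamma/\Gamma_{\xi_i}\}$ is a pairwise disjoint family in $\HH^3$. I then pick $v_i \in L_i$ normalized so that $B_i = \{p : \|v_i\|_p \le 1\}$, and set
\[V_{\cusps_\Gamma} := \bigcup_{i=1}^n \Gamma \cdot v_i \subset V,\]
which is left $\Gamma$-invariant by construction.

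\textbf{Checking same length, continuity, and right invariance.} For $v = \gamma_0 v_i \in V_{\cusps_\Gamma}$, a short computation gives $\{p : \|v\|_p \le 1\} = B_i\gamma_0^{-1}$. Suppose $gv_{(1)}, gv_{(2)} \in g V_{\cusps_\Gamma}$ both have norm $<1$, with $v_{(\ell)} = \gamma_\ell v_{i_\ell}$; then $\pi_{\SU(2)}(g)$ lies in the interiors of both $B_{i_\ell}\gamma_\ell^{-1}$, and by disjointness these two horoballs must coincide. Hence $i_1 = i_2 =: i$ and $\gamma_1^{-1}\gamma_2 \in \Gamma_{\xi_i}$; writing $\gamma_1^{-1}\gamma_2 \cdot v_i = \lambda v_i$ with $|\lambda|=1$ gives $v_{(2)} = \lambda v_{(1)}$, hence $\|gv_{(1)}\| = \|gv_{(2)}\|$, which is the same-length property. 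The same argument shows that for each $g$ there are only finitely many contributing vectors, all on a single sphere, from which continuity of $\alpha_{\cusps_\Gamma}$ follows. Right $\Gamma$-invariance is immediate since $g\gamma V_{\cusps_\Gamma} = g V_{\cusps_\Gamma}$ for all $\gamma \in \Gamma$.

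\textbf{Properness and the main obstacle.} For the properness of $\alpha_{\cusps_\Gamma}|_{\coreGn}$, I will use the thick-thin decomposition of the convex core, a key consequence of geometric finiteness: there is a compact $K \subset \core / \Gamma$ such that $\core / \Gamma \setminus K$ is a disjoint union of cusp neighborhoods $C_i$, each of the form $\Sigma_i \times [0,\infty)$ with compact cross-section $\Sigma_i$, and this decomposition extends to $\coreGn/\Gamma$. On the compact part of $\coreGn/\Gamma$, disjointness bounds $\alpha_{\cusps_\Gamma}$ uniformly; inside each $C_i$, the $[0,\infty)$-coordinate equals $\alpha_{\cusps_\Gamma}$ up to an additive constant, so $\alpha_{\cusps_\Gamma} \to \infty$ at the cusp end. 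Thus $\alpha_{\cusps_\Gamma}|_{\coreGn}^{-1}([0,T])$ is a finite union of the compact part with truncated cusps $\Sigma_i \times [0,T]$, which is compact. The main obstacle I anticipate is verifying the rank-$1$ case of the cross-section compactness, since then the horosphere quotient by the $\ZZ$-parabolic stabilizer is a noncompact flat cylinder; compactness of $\Sigma_i$ comes from intersecting with $\core$, because for a geometrically finite group the limit set meets a horosphere at a rank-$1$ cusp in a strip of bounded width, which becomes a compact annulus after quotienting by $\Gamma_{\xi_i}$.
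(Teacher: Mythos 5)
Your construction is correct and is essentially the paper's own argument: the paper likewise takes the standard two-dimensional representation, sets $V_{\cusps_\Gamma}=\bigcup_c \Gamma.g_c^{-1}e_1$ over the finitely many cusps, and obtains the same-length property, continuity, and properness from the disjointness and compact-complement statements of its thin-thick (Siegel-domain) decomposition, Claim \ref{claim: Siegel decomposition}, which is exactly the precisely-invariant-horoball and compact core-cross-section input you invoke (including the point that cusp stabilizers act on the distinguished line by unimodular scalars). The only difference is organizational: the paper first defines $\alpha_{\cusps_\Gamma}$ geometrically via Siegel domains and then verifies the vector-norm formula, whereas you define it directly from the representation and check the properties afterwards.
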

For the rest of the paper, we will use $\alpha_{\cusps_\Gamma}$ both as a function on $G$ and $G/\Gamma$.
\begin{remark}
  For every $v\in V_{\cusps_\Gamma}$ the set $\{\pi_\Gamma(g):\|g.v\| < 1\}$ is a cusp neighborhood, and
  $\pi_\Gamma(g)$ gets deeper in the cups the smaller $\|g.v\|$ is.
\end{remark}
\begin{lemma}[Description of $\NSL2\RR$-periodic orbits using representations]\label{lem: geometry near S}
  There exists a $4$-dimensional real representation $W$ equipped with norms $\|-\|$, such that the following happen:
  There is a vector $w_0\in W$ such that such that $\SL_2(\RR) = \stab_G(w_0)$, and $\NSL2\RR = \stab_G(\{\pm w_0\})$.
  Let $W/w_0 = W / \RR w_0 \cong w_0^\perp$ be the quotient space.
  It is an irreducible $\SL_2(\RR)$-representation \emph{(}equivalent to ${\rm Sym}^2$ of the standard representation\emph{)}.
  Let $\pi_{w_0}: W \to  W / w_0$, be the standard projection.
  Let $\bar S = \NSL2\RR\pi_\Gamma(g_{\bar S})$ be a periodic $\NSL2\RR$-orbit,
  $w_{\bar S} = g_{\bar S}^{-1}.w_0$ be a vector which is stabilized by
  $g_{\bar S}^{-1}\NSL2\RR g_{\bar S}$ up to sign and $W_{\bar S} = \Gamma.w_{\bar S}$.
  Define $\alpha_{\bar S}:G\to \RR\sqcup \{\infty\}$ by
  \begin{align}\label{eq: alpha S def}
    \alpha_{\bar S}(g) = \max_{w\in W_{\bar S}} -\log\|\pi_{w_0}(g.w)\|.
  \end{align}
  Then
  \begin{enumerate}
    \item \label{point: infty only on S}
          $\alpha_{\bar S}$ is continuous and attains $\infty$ only on $\pi_\Gamma^{-1}(\bar S)$.
          \item\label{point: S unique}
          There is $C_{\bar S}>0$ such if $\alpha_{\bar S}(g) > 2\alpha_{\cusps_\Gamma}(g) + C_{\bar S}$ for some $g\in G$ then for every $w\in W_{\bar S}$ exactly one of the following holds
          \begin{itemize}
            \item
                  $-\log\|\pi_{w_0}(g.w)\| = \alpha_{\bar S}(g)$,
            \item
                  $-\log\|\pi_{w_0}(g.w)\| < 2\alpha_{\cusps_\Gamma}(g) + C_{\bar S}$.
          \end{itemize}

  \end{enumerate}
\end{lemma}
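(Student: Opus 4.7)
My plan is to dispatch Point \ref{point: infty only on S} via the discreteness of $W_{\bar S}$ in $W$, and then derive the gap of Point \ref{point: S unique} by a Margulis-lemma-type analysis that forces coincidences of distinct lifts of $\bar S$ into the cusps of $G/\Gamma$.

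\textbf{Discreteness, continuity, and the pole set.} Since $\bar S$ is a periodic $\NSL2\RR$-orbit, the group $\Lambda_{\bar S} := \Gamma \cap g_{\bar S}^{-1}\NSL2\RR g_{\bar S}$ is a lattice in $g_{\bar S}^{-1}\NSL2\RR g_{\bar S}$, and $W_{\bar S}/\{\pm 1\} \cong \Gamma/\Lambda_{\bar S}$ parametrizes the distinct lifts of $\bar S$ to $G$. Closedness of $\bar S$ in $G/\Gamma$ then yields discreteness of $W_{\bar S}$ in $W$. Since the linear map $L_g: w \mapsto \pi_{w_0}(g.w)$ has one-dimensional kernel $\RR g^{-1}w_0$ and its nondegenerate part is bi-Lipschitz on any small neighborhood of $g$, only finitely many $w \in W_{\bar S}$ achieve $\|L_g(w)\| < 1$ locally. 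This makes $\alpha_{\bar S}$ a finite max of continuous functions in a neighborhood of any $g$, hence continuous. Unwinding, $\alpha_{\bar S}(g) = \infty$ iff $L_g(w) = 0$ for some $w = \gamma.w_{\bar S}$, iff $g\gamma g_{\bar S}^{-1}.w_0 \in \RR w_0$, iff $g \gamma g_{\bar S}^{-1} \in \NSL2\RR$, iff $\pi_\Gamma(g) \in \bar S$.

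\textbf{The gap inequality.} Suppose $w \ne w' \in W_{\bar S}$ both satisfy $-\log\|\pi_{w_0}(g.w)\|, -\log\|\pi_{w_0}(g.w')\| \ge T$. Writing $w' = \eta.w$ with $\eta \in \Gamma$ not stabilizing $w$, both $g.w$ and $g.w' = (g\eta g^{-1}).(g.w)$ lie within $e^{-T}$ of the line $\RR w_0$. Decomposing $g.w = c\, w_0 + e$ with $\|e\| \le e^{-T}$, this forces $g\eta g^{-1}$ to be close to $\NSL2\RR$, up to an error that is controlled by $(1+\|g\eta g^{-1}\|_{\rm op})e^{-T}/|c|$, without actually belonging to $\NSL2\RR$. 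Conjugating by $g_{\bar S}$ produces a non-trivial element of the geometrically finite group $g_{\bar S}\Gamma g_{\bar S}^{-1}$ close to $\NSL2\RR$; by a Margulis-lemma dichotomy such configurations occur only in the thin parts of $g_{\bar S}\Gamma g_{\bar S}^{-1}\backslash G$, i.e.\ in cusps of $\Gamma$. The precise factor of $2$ emerges from a weight computation: a cusp parabolic acts on the cusp vector in $V$ (from Lemma \ref{lem: geometry near cusp}) with weight $1$, but on the transverse direction $W/w_0$ with weight $2$, so $e^{-T}$-closeness in $W/w_0$ translates to only $e^{-T/2}$-closeness of $g$ to the cusp, yielding $T \le 2\alpha_{\cusps_\Gamma}(g) + C_{\bar S}$ for some uniform $C_{\bar S}$ absorbing thick-part and finite-cusp corrections.

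\textbf{Main obstacle.} The key technical challenge is making the Margulis-lemma step quantitative with the correct factor of $2$. This requires a case analysis over the cusps of $G/\Gamma$: in cusps that $\bar S$ actually enters, multiple lifts of $\bar S$ accumulate in the same cusp neighborhood and one must verify that the rate of accumulation is governed by the weight-$2$ action of the parabolic on $W/w_0$; in cusps disjoint from $\bar S$, a uniform thick-part bound together with compactness of the relevant region contributes to $C_{\bar S}$. I expect the cleanest route is to fix an adapted basis in $V$ and $W/w_0$ at each parabolic and directly read off the weights, with the uniform constant $C_{\bar S}$ then obtained from the finiteness of the cusps of $\bar S$ in $G/\Gamma$ and compactness of the core.
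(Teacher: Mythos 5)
Your handling of Point \ref{point: infty only on S} is correct and essentially identical to the paper's: discreteness of $W_{\bar S}$ from closedness of the orbit, local finiteness of $\{w:\|\pi_{w_0}(g.w)\|\le 1\}\cap g.W_{\bar S}$ giving continuity and an attained maximum, and the fact that $\pm w_0$ are the only $Q=-1$ vectors in $\ker\pi_{w_0}$ identifying the pole set. One slip that carries into Part 2: the separation statement must be made for $\{\pm w\}\neq\{\pm w'\}$, not $w\neq w'$, since $-w_{\bar S}$ can lie in $W_{\bar S}=\Gamma.w_{\bar S}$ (the case $\bar S=S$), and then $w'=-w$ gives two distinct vectors with equal, arbitrarily large values, so your claimed bound $T\le 2\alpha_{\cusps_\Gamma}(g)+C_{\bar S}$ is false as literally stated.

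For Point \ref{point: S unique} there is a genuine gap: the step you defer to a "case analysis'' and a "weight computation'' is precisely the content of the lemma, and the tools you name do not deliver it. First, the invoked Margulis-lemma dichotomy is not a valid statement in this form: knowing that $g\eta g^{-1}$ lies within roughly $(1+\|g\eta g^{-1}\|_{\rm op})e^{-T}$ of the noncompact four-dimensional subgroup $\NSL2\RR$ gives no small-displacement information at $\pi_\Gamma(g)$ (the element may translate far along the plane it almost preserves), so "such configurations occur only in cusps'' is not a citation but the thing to be proved. Second, and more importantly, your parabolic-accumulation picture only accounts for lifts of $\bar S$ that are aligned with the cusp; it does not exclude a lift that passes close to a point deep in a cusp transversally, and for such a lift the weight bookkeeping says nothing. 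This exclusion is where the paper does most of its work: it introduces the $G$-invariant pairing $\Psi(v,w)=v^{*}wv$ between the cusp representation $V$ and $W$ (Claim \ref{claim: vector stability function}) and proves that on $V_{\cusps_\Gamma}\times W_{\bar S}$ either $\Psi(v,w)=0$ (cusp-aligned) or $|\Psi(v,w)|\ge\varepsilon_{\Gamma,\bar S}$ (Claim \ref{claim: psi not small}); this lower bound is not a weight computation but a consequence of the structural fact that the cusps of $S$ go straight up the cusps of $G/\Gamma$ (Claim \ref{claim: small cusps in larger cusps}, Corollary \ref{cor: only known cusps in big cusps}), used through the absence of local maxima of $\alpha_{\cusps_\Gamma}$ along $S$ high in the cusp (Observation \ref{obs: no local maxima for alpha high in the cusp on S}). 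Only after that does the factor $2$ appear, via the estimate $\|\pi_{w_0}(k\ta(t).w)\|\le 4e^{t}\|\pi_{w_0}(k.w)\|$ valid on $\{\Psi(e_1,w)=0\}$ (Claim \ref{claim: lin alg weird claim}) against the rate $t/2$ of $\alpha_{\cusps_\Gamma}$, and the contradiction is obtained by flowing $g$ down the cusp to the thick part of (a neighborhood of) the convex core, where the compactness constant applies; Claim \ref{claim: close to orbit imp in coreGn} is needed to know one stays in that neighborhood, a point your plan omits and which matters since $\Gamma$ is only geometrically finite. In short, you have the thick-part constant and the correct heuristic for the exponent, but the alignment dichotomy and the flow-down argument — the actual proof of Point \ref{point: S unique} — are missing.
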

We postpone these lemmas' proofs to Subsections \ref{ssec: proof of geometric criterion - cusp} and \ref{ssec: proof of geometric criterion - SL2 orbit}.
Now that we have the height function, we recall
that Lemma \ref{lem: high leafwise entropy meaning} gives us a the leafwise Markov chain on the space $G/\Gamma$ with stationary measure $S_{\ind_{[0,1)}}\mu_k$ and a transformation law $x\mapsto \nu^{(k)}_{x}$ given by
\begin{align}\label{eq: Markov law 2}
  \nu^{(k)}_x
  = \begin{cases}
      \ta(\log 2){x}       & \text{with probability } p^{(k)}(x),   \\
      \tu(1)\ta(\log 2){y} & \text{with probability } 1-p^{(k)}(x).
    \end{cases}
\end{align}
where $p^{(k)}:G/\Gamma\to [0,1]$ satisfies
\[\int_{{G/\Gamma}} H(p^{(k)}(x), 1-p^{(k)}(x))\bd S_{\ind_{[0,1)}}\mu_k(x) = \dim^u(\mu_k)\log 2.\]

Fix a positive integer $\ell$ to be specified later.
\begin{definition}[Iteration of the leafwise Markov chain $x\mapsto \nu_x^{(k)}$]
  The $\ell$ iteration of the Markov chain $x\mapsto \nu_x^{(k)}$ is defined by 
  \[x_0\mapsto \nu^{(k,\ell)}_{x_0} = \int_{{G/\Gamma}}\int_{G/\Gamma}\cdots \int_{G/\Gamma} \delta_{x_\ell} \bd \nu^{(k)}_{x_{\ell-1}}(x_\ell)\bd \nu^{(k)}_{x_{\ell-2}}(x_{\ell-1})\dots\bd \nu^{(k)}_{x_{2}}(x_1)\bd \nu^{(k)}_{x_{1}}(x_0).\]
  In other words, given $x_0$ we sample $x_1$ via $\nu^{(k)}_{x_0}$, then sample $x_2$ via $\nu^{(k)}_{x_1}$, and so on, until we sample $x_\ell$ via $\nu^{(k)}_{x_{\ell-1}}$, and $\nu^{(k,\ell)}_{x_0} = {\rm Law}(x_\ell|x_0)$. 
  Explicitly, $x_i = u(b_i)\ta(\log 2)x_{i-1}$ for all $i=1,\dots,\ell$ where 
  \[b_i = \begin{cases}
    0       & \text{with probability } p^{(k)}(x_{i-1}),   \\
    1 & \text{with probability } 1-p^{(k)}(x_{i-1}),
  \end{cases}\]
  chosen independently of $b_1,\dots,b_{i-1}$. 
  Altogether, 
  \[x_\ell = u(b_\ell)\ta(\log 2)u(b_{\ell-1})\ta(\log 2)\cdots u(b_1)\ta(\log 2)x_0 = 
  u\left(\sum_{i=1}^\ell 2^{\ell-i} b_i\right)
  \ta(n\log 2)x_0.
  \]
  Let $b = \sum_{i=1}^\ell 2^{\ell-i} b_i$ and denote $p^{(k,\ell)}_j(x_0) := \PP(b=j|x)$ for every $j=0,1,\dots,2^\ell - 1$ so that 
  \[
    x_\ell = 
    \begin{cases}
      \tu(j)\ta(\ell\log 2) x_0 \text{ with probability $p^{(k,\ell)}_i(x_0)$ for each }i=0,\dots,2^{\ell-1}.
    \end{cases}
  \]
\end{definition}
We have seen in Eq. \eqref{eq: accumulated entropy} that
\begin{align}\label{eq: accumulated entropy2}
  \begin{split}
    \int_{G/\Gamma} &H\left(p^{(k,\ell)}_0(x_0), p^{(k,\ell)}_1(x_0), \dots,p^{(k,\ell)}_{2^\ell-1}(x_0)\right)\bd (S_{\ind_{[0,1)}} \mu_k)(x_0)\\& = H(x_\ell|x_0)= \ell\dim^\tu\mu_k\log 2.
  \end{split}
\end{align}
Since the map $q_1,\dots,q_n \mapsto H(q_1,\dots,q_n)$ obtain its maximal value only at $H(1/n,\dots,1/n) = \log n$, we obtain the following observation.
\begin{obs}\label{obs: entropy maximaized}
  For every $\delta>0$ and $n>0$ there is $\varepsilon>0$ so that the following holds. Suppose that 
  $\int_{Z} H(p_1(z),p_2(z),\dots,p_n(z))\bd \nu \ge (1-\varepsilon) \log n$, where $(Z,\nu)$ is a probability space and $p_1,\dots,p_n:Z\to [0,1]$ has $p_1+\dots+p_n \equiv 1$. 
  Then 
  \begin{align}\label{eq: usually fixed}
    \nu(\{z\in Z: |p_i(z) - \frac{1}{n}| < \delta,~ \forall i=1,\dots, n\}) > 1-\delta.
  \end{align}
\end{obs}
Let $\delta>0$ to be determined later. 
By Observation \ref{obs: entropy maximaized} and Eq. \eqref{eq: accumulated entropy2}, for all $k$ large enough as a function of $\ell$ and $\delta$ we have
$S_{\ind_{[0,1)}} \mu_k(X_{{\rm good}}^{(k,\ell, \delta)}) > 1-\delta$, where
\[X_{{\rm good}}^{(k,\ell, \delta)} = \{y\in G/\Gamma: |p^{(k,\ell)}_i(y)-2^{-\ell}| < \delta \text{ for all }i=0,\dots,2^\ell-1\}.\]
We will now recall the following property of $\SL_2(\RR)$-representations.

\begin{claim}\label{claim: decay in rep}
  For every nontrivial irreducible real or complex representation $W$ of $\SL_2(\RR)$ with highest weight $n$ equipped with a norm $\|-\|$, there is $C_W > 0$ such that for every $m\ge 0$ and for every $w\in W\setminus \{0\}$,
  \begin{align*}
    \frac{1}{2^{m}}\sum_{i=0}^{2^m-1}\log \|\tu(i)\ta(m\log 2).w\| - \log \|w\| \ge \frac{n m\log 2}{2} - C_W.
  \end{align*}
\end{claim}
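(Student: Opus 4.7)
My plan is to use the commutation relation $\tu(i)\ta(m\log 2) = \ta(m\log 2)\tu(i/2^m)$, which rewrites the left-hand side as the Riemann sum $\frac{1}{2^m}\sum_{i=0}^{2^m-1}\log\|\ta(m\log 2)\tu(s_i).w\|$ over the equispaced points $s_i = i/2^m \in [0,1)$. Normalizing so that $\|w\|=1$ and decomposing $w=\sum_{k=0}^{n}c_k e_k$ in a $\ta$-weight basis of $W$ (with $e_k$ of weight $2k-n$), a direct computation yields
\[
\|\ta(m\log 2)\tu(s).w\|^{2} = R(s) := \sum_{k=0}^{n}|a_k(s)|^{2}\,2^{(2k-n)m},
\]
where $a_k(s)=\sum_{j\le k}c_j\binom{n-j}{k-j}s^{k-j}$. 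In particular $a_n(s)=c_0 s^n+c_1 s^{n-1}+\cdots+c_n$ has $\ell^2$-coefficient norm equal to $\|w\|$, and $R$ is a real nonnegative polynomial of degree $\le 2n$. Crucially, $\tu(s).w\ne 0$ for every $s\in\RR$, so $R(s)>0$ on $\RR$ and every root of $R$ has nonzero imaginary part.

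The first step is to prove the integral lower bound
\[
\int_0^1 \log\|\ta(m\log 2)\tu(s).w\|\,ds \;\ge\; \frac{nm\log 2}{2}+\log\|w\|-C_n.
\]
Since the integrand is at least $\frac{nm\log 2}{2}+\log|a_n(s)|$, this reduces to a Mahler-type estimate $\int_0^1\log|a_n(s)|\,ds\ge \log\|w\|-C_n$. I would obtain the latter by factoring $a_n(s)=c\prod_j(s-\alpha_j)$, using $\int_0^1\log|s-\alpha|\,ds\ge -1-\log 2$ for every $\alpha\in\CC$, and invoking the Chebyshev-type equivalence between the $\ell^2$-coefficient norm and the $L^\infty([0,1])$-norm on polynomials of degree $\le n$.

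The second step is to compare the Riemann sum with the integral. Factoring $R(s)=L\prod_j|s-\zeta_j|^{2}$ (with $L$ the leading coefficient and $\zeta_j\in\CC\setminus\RR$ the roots) reduces this to a per-root discrepancy $\frac{1}{2^m}\sum_i\log|s_i-\zeta_j|-\int_0^1\log|s-\zeta_j|\,ds$. A quantitative lower bound $|\mathrm{Im}\,\zeta_j|\ge 2^{-A_n m}$ can be derived from $R_{\min}:=\min_{[0,1]}R(s)\ge \|w\|^2/(C_n\cdot 2^{nm})$ (which itself comes from the operator-norm inequality $\|w\|\le \|\ta(-m\log 2)\tu(-s)\|\cdot\|\ta(m\log 2)\tu(s).w\|$) together with $L\le C_n\cdot 2^{nm}$. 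This lower bound on $|\mathrm{Im}\,\zeta_j|$ implies that each per-root error is $O(m/2^m)$: at most one sample point $s_i$ lies within $2^{-m}$ of $\mathrm{Re}\,\zeta_j$, and its contribution is at worst $O(\log(2^{A_n m})/2^m)$. Summing over the $O(n)$ roots yields a total discrepancy $O(nm/2^m)$, which vanishes as $m\to\infty$. Combining with the integral bound gives the claimed inequality for all $m$ above a threshold $m_0$ depending on $n$; the finitely many smaller values of $m$ are handled by the trivial operator-norm estimate $\log\|\tu(i)\ta(m\log 2).w\|\ge \log\|w\|-3nm\log 2/2$, and the resulting finite correction is absorbed into $C_W$.

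The main obstacle is the second step: establishing $|\mathrm{Im}\,\zeta_j|\ge 2^{-A_n m}$ and carrying out the Riemann-sum error estimate sharply enough to obtain a uniform $O(1)$ bound. A naive Euler--Maclaurin approach fails because $\|(\log R)'\|_{\infty}$ can be as large as $2^{O(m)}$; one must instead exploit the explicit factorization of $R$ and argue root by root, showing that the worst-case contribution of a sample point close to the real projection of a near-real root is still polynomially small in $m$ once divided by $2^m$.
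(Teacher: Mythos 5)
Your strategy can be pushed through, but it is a genuinely different and much heavier route than the paper's, and its hard part is unnecessary. The paper also reduces to the highest-weight coefficient, but it compares the sum with the integral \emph{before} rescaling: since $\left|\log \|\tu(\sigma).v\| - \log\|v\|\right| \le C_0$ uniformly for $|\sigma|\le 1$, the average $\frac{1}{2^m}\sum_{i=0}^{2^m-1}\log\|\tu(i)\ta(m\log 2).w\|$ differs from $\frac{1}{2^m}\int_0^{2^m}\log\|\tu(s)\ta(m\log 2).w\|\,ds$ by at most $C_0$, and only then does one substitute $\tu(2^m s)\ta(m\log 2)=\ta(m\log 2)\tu(s)$ to land on $\int_0^1\log\|\ta(m\log 2)\tu(s).w\|\,ds$. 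This one-line observation eliminates your entire second step --- the root factorization of $R$, the lower bound $|\mathrm{Im}\,\zeta_j|\ge 2^{-A_n m}$, and the Riemann-sum discrepancy analysis --- which is precisely what you single out as the main obstacle. For the remaining integral bound the paper does not compute a Mahler-type integral at all: it observes that $f(w)=\int_0^1\log|\chi_W(\tu(s).w)|\,ds$ is continuous, satisfies $f(\alpha w)=\log|\alpha|+f(w)$, and is finite because $s\mapsto \chi_W(\tu(s).w)$ is a nonvanishing polynomial for $w\ne 0$; compactness of the unit sphere then gives $f(w)\ge \log\|w\|-C_1$, and $C_W=C_0+C_1$.

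If you do insist on your route, two steps need repair as written. First, in the Mahler estimate the chain ``factor out the leading coefficient, use $\int_0^1\log|s-\alpha|\,ds\ge -1-\log 2$, then invoke norm equivalence'' does not close: the leading coefficient of $a_n$ can be arbitrarily small (even zero) compared with $\|w\|$, so you need the refined per-root bound $\int_0^1\log|s-\alpha|\,ds\ge \log\max(1,|\alpha|)-C$, which lets you dominate $\|a_n\|_{L^\infty[0,1]}$ (up to $2^{n}$) and hence $\|w\|$; alternatively, use the compactness argument above. Second, the bound on $|\mathrm{Im}\,\zeta_j|$ does not follow from $R_{\min}$ and the leading coefficient $L$ alone: you should use that \emph{all} coefficients of $R$ are $O_n(\|w\|^2 2^{nm})$, hence $|R'|=O_n(\|w\|^2 2^{nm})$ on a neighborhood of $[0,1]$, and then $R_{\min}\le R(\mathrm{Re}\,\zeta_j)\le |\mathrm{Im}\,\zeta_j|\cdot\sup|R'|$ gives $|\mathrm{Im}\,\zeta_j|\ge c_n 2^{-2nm}$ (with a little care for roots whose real part falls just outside $[0,1]$). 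With these fixes your argument works and yields explicit constants, but it gains nothing over the paper's soft proof for the purpose at hand.
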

\begin{proof}
  Note that there is $C_0>0$ such that for all $s\in [ -1, 1], w\in W$ we have $\left|\log \|\tu(s).w\| - \log \|w\|\right| \le C_0$.
  Let $\chi_W$ denote the maximal weight character on $W$. This is a character satisfying $\chi_W(\ta(t).w) = e^{nt/2}\chi_W(w)$. Then
  \begin{align}\label{eq: estimate on unipotent sum}
    \begin{split}
      \frac{1}{2^{m}} &\sum_{i=0}^{2^m-1}\log \|\tu(i)\ta(m\log 2).w\|
      \ge
      \frac{1}{2^{m}}\int_{0}^{2^m}\log \|\tu(s)\ta(m\log 2).w\| \bd s - C_0
      \\&
      =
      \int_{0}^{1}\log \|\ta(m\log 2)\tu(s).w\| \bd s - C_0
      \ge
      \int_{0}^{1}\log |\chi_W(\ta(m\log 2)\tu(s).w)| \bd s - C_0
      \\&
      =
      \frac{mn}{2}\log 2 + \int_{0}^{1}\log |\chi_W(\tu(s).w)| \bd s - C_0
    \end{split}
  \end{align}

  Now consider the function \[f:W\setminus \{0\} \to \RR\cup \{-\infty\}, \qquad f(w) =  \int_{0}^{1}\log |\chi_W(\tu(s).w)| \bd s.\]
  It satisfies $\forall \alpha\in \RR^\times, f(\alpha w) = \log |\alpha| + f(w)$, hence is determined by its values on the unit sphere.
  One can see that it is continuous. We wish to show that it attains real values, (in contrast to $-\infty$).
  For that, we need to have that for every $w\neq 0$, the polynomial $s\mapsto \chi_W(\tu(s).w)$ does not vanish.
  This is a standard result on $\SL_2$-representations, which follows from their classification as homogeneous polynomials of degree $n$.
  Hence, $f$ has a lower bound on the unit sphere, that is, for some $C_1\in \RR$, for all $w\in W$ with $\|w\|=1$ we have $f(w) \ge -C_1$.
  Hence \[\forall w\in W\setminus 0, \qquad f(w) \ge -C_1 + \log \|w\|.\]
  Thus we can bound the right-hand side of \eqref{eq: estimate on unipotent sum} by
  \[      \frac{mn}{2}\log 2 + \int_{0}^{1}\log |\chi_W(\tu(s).w)| \bd s - C_0
    \ge \frac{mn}{2}\log 2 - C_1 + \log \|w\|-C_0.\]
  The desired inequality follows for $C_W = C_0 + C_1$.
\end{proof}
We now have a representation-theoretic tool to construct our $(\varepsilon; T_0, T_1)$-additive Margulis functions.

\begin{claim}\label{claim: alphas are Lipschitz}
  Let $k,\ell \ge 1$ and consider the Markov chain $(G/\Gamma, S_{\ind_{[0,1)}} \mu_k, \nu_y^{(k,\ell)})$. The functions $2\alpha_{\cusps_\Gamma}$ and $\alpha_{\bar S}$ satisfy Condition \ref{cond: Lipshitz} with $T_1 = \ell\log 2 + C_0$ respectively for some $C_0>0$.
\end{claim}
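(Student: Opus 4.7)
The plan is as follows. The Markov chain $\nu^{(k,\ell)}_x$ is supported on the orbit $\{\tu(j)\ta(\ell\log 2)x : 0 \le j < 2^\ell\}$, so Condition \ref{cond: Lipshitz} reduces to the deterministic uniform estimate
\[\bigl|\alpha(\tu(j)\ta(\ell\log 2)g) - \alpha(g)\bigr| \le T_1 = \ell\log 2 + C_0\]
for all $g\in G$ and all integers $j\in [0,2^\ell)$, where $\alpha$ is either $2\alpha_{\cusps_\Gamma}$ or $\alpha_{\bar S}$. Both functions are defined via $-\log \|\cdot\|$ in fixed $G$- or $\SL_2(\RR)$-representations, so the job reduces to bounding, uniformly in $j<2^\ell$, the operator norms of $\tu(j)\ta(\ell\log 2)$ and its inverse on the relevant representations.

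For $\alpha_{\cusps_\Gamma}$, I would work in the $2$-dimensional representation $V$ of Lemma \ref{lem: geometry near cusp}. The explicit matrix
\[\tu(j)\ta(\ell\log 2) = \begin{pmatrix} 2^{\ell/2} & j\,2^{-\ell/2} \\ 0 & 2^{-\ell/2}\end{pmatrix}\]
has all entries of size $O(2^{\ell/2})$ for $j<2^\ell$, and since its determinant is $1$, its inverse has the same bound. Because all norms on $V$ are equivalent, this yields
\[\bigl|\log\|\tu(j)\ta(\ell\log 2).v\| - \log\|v\|\bigr| \le \tfrac{\ell}{2}\log 2 + C_1\]
uniformly in $v\in V\setminus\{0\}$. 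The dichotomy in Lemma \ref{lem: geometry near cusp} (all short vectors in $g.V_{\cusps_\Gamma}$ share a single length) means $\alpha_{\cusps_\Gamma}(g)$ is controlled by the logarithm of that single length, and a short case analysis on whether $\alpha_{\cusps_\Gamma}(x), \alpha_{\cusps_\Gamma}(y)$ vanish then upgrades the estimate to $|\alpha_{\cusps_\Gamma}(y)-\alpha_{\cusps_\Gamma}(x)|\le \tfrac{\ell}{2}\log 2 + C_1$. Doubling yields the claim for $2\alpha_{\cusps_\Gamma}$ with $T_1 = \ell\log 2 + 2C_1$.

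For $\alpha_{\bar S}$, I would work in $W/w_0 \cong \mathrm{Sym}^2(\mathrm{std})$. Since $\SL_2(\RR)\subset \stab_G(w_0)$, the projection $\pi_{w_0}$ is $\SL_2(\RR)$-equivariant, giving
\[\pi_{w_0}(\tu(j)\ta(\ell\log 2)g.w) = \tu(j)\ta(\ell\log 2).\pi_{w_0}(g.w).\]
A direct computation in the basis $\{X^2, XY, Y^2\}$ gives
\[\tu(j)\ta(\ell\log 2) = \begin{pmatrix} 2^\ell & j & j^2 2^{-\ell}\\ 0 & 1 & 2j\,2^{-\ell}\\ 0 & 0 & 2^{-\ell}\end{pmatrix},\]
whose entries are all of size $O(2^\ell)$ for $j<2^\ell$ (the worst entry being $j^2 2^{-\ell}\le 2^\ell$), with determinant $1$. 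Equivalence of norms then yields
\[\bigl|\log\|\pi_{w_0}(\tu(j)\ta(\ell\log 2)g.w)\|-\log\|\pi_{w_0}(g.w)\|\bigr| \le \ell\log 2 + C_2\]
uniformly in $g,w,j$, a bound which is preserved under the supremum $\max_{w\in W_{\bar S}}$ defining $\alpha_{\bar S}$. Setting $C_0 = \max(2C_1, C_2)$ gives the required $T_1$ for both functions.

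The main obstacle, such as it is, is the case split for $\alpha_{\cusps_\Gamma}$ near the boundary where one of $\alpha_{\cusps_\Gamma}(x), \alpha_{\cusps_\Gamma}(y)$ vanishes; everything else reduces to the two explicit matrix-norm calculations above.
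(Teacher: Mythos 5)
Your proposal is correct and takes essentially the same route as the paper: both reduce Condition \ref{cond: Lipshitz} to a uniform bound on the log-norm distortion of $\tu(j)\ta(\ell\log 2)$, $0\le j<2^\ell$, acting on $V$ and on $W/w_0$, with the extra factor $2$ (resp.\ $1$) coming from the weights of the two representations. The paper gets this bound by writing the element as $\ta(\ell\log 2)\tu(j2^{-\ell})$ with $j2^{-\ell}\in[0,1)$ and combining a compactness bound for the bounded unipotent part with the diagonal bounds $|t|/2$ and $|t|$, while you bound the explicit matrices directly; this is the same estimate, and your handling of the vanishing cases of $\alpha_{\cusps_\Gamma}$ and of the supremum defining $\alpha_{\bar S}$ is exactly what the paper leaves implicit.
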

\begin{proof}
  There is $C_0>0$ such that
  \begin{itemize}
    \item for all $v\in V\setminus \{0\}$ and $s\in [ -1,1]$ we have $\left|\log \frac{\|\tu(s).v\|}{\|v\|}\right| < C_0/2$,
    \item for all $w\in W\setminus \{0\}$ and $s\in [ -1,1]$ we have $\left|\log \frac{\|\tu(s).w\|}{\|w\|}\right| < C_0$.
  \end{itemize}
  Note that for all $t\in \RR$,
  \begin{align*}
    \left|\log \frac{\|\ta(t).v\|}{\|v\|} \right| \le t/2\qquad & \text{for all}\qquad v\in V\setminus \{0\}, \\
    \left|\log \frac{\|\ta(t).w\|}{\|w\|} \right| \le t\qquad   & \text{for all}\qquad w\in W\setminus \{0\}.
  \end{align*}
  The desired follows from the definition of the functions and the Markov chain.
\end{proof}
\begin{claim}\label{claim: alpha cusp is Margulis}
  In the setting of Claim \ref{claim: alphas are Lipschitz}, there exists $\delta_0$ such that for all $\delta\in (0, \delta_0)$ the function $2\alpha_{\cusps_\Gamma}$ is $(\delta; T_0, T_1)$-additive Margulis function, for 
  \[T_1 = \ell\log 2 + C_0 + 1, T_0 = \ell\log 2 - 2C_V - 2^{\ell}\delta T_1,\] 
  provided that $T_0>0$ and $S_{\ind_{[0,1)}} \mu_k(X_{{\rm good}}^{(k,\ell, \delta)}) > 1-\delta$.
  Here $C_0$ is as in Claim \ref{claim: alphas are Lipschitz} and $C_V$ as in Claim \ref{claim: decay in rep}.
\end{claim}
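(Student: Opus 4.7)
The plan is to verify the two conditions of Definition \ref{def: ep-additive Margulis function} separately. For Condition \ref{cond: Lipshitz}, there is nothing new to prove: Claim \ref{claim: alphas are Lipschitz} already shows that a single step of the iterated chain $\nu^{(k,\ell)}_x$ moves $2\alpha_{\cusps_\Gamma}$ by at most $\ell\log 2 + C_0$, and since $T_1 = \ell\log 2 + C_0 + 1$ is strictly larger, the property holds.

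The real content is Condition \ref{cond: decay on averege}. My plan is to show the stronger statement that the ``bad'' set $B = \{x_0 : T_1 \le 2\alpha_{\cusps_\Gamma}(x_0) < T_0 + \int 2\alpha_{\cusps_\Gamma}(y)\,d\nu^{(k,\ell)}_{x_0}(y)\}$ is contained in the complement $(X_{{\rm good}}^{(k,\ell,\delta)})^c$, whose measure is at most $\delta$ by assumption. Writing $y_i = \tu(i)\ta(\ell\log 2)x_0$ and $p_i = p_i^{(k,\ell)}(x_0)$, I would first use the near-uniformity $|p_i - 2^{-\ell}| < \delta$ together with the Lipschitz bound (in the form $|2\alpha_{\cusps_\Gamma}(y_i) - 2\alpha_{\cusps_\Gamma}(x_0)| \le T_1$) to replace the weighted average by the uniform one at an additive cost of $2^\ell\delta T_1$:
\[
\sum_i p_i \cdot 2\alpha_{\cusps_\Gamma}(y_i) \le 2^{-\ell}\sum_i 2\alpha_{\cusps_\Gamma}(y_i) + 2^\ell \delta T_1.
\]

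For the uniform average I would apply Claim \ref{claim: decay in rep} to the $2$-dimensional representation $V$ (which as an $\SL_2(\RR)$-representation is the standard representation, of highest weight $1$), using the vector $w = g_0.v_0$ where $g_0$ is a lift of $x_0$ and $v_0 \in V_{\cusps_\Gamma}$ realizes $\alpha_{\cusps_\Gamma}(x_0) = -\log\|g_0.v_0\|$. This yields
\[
2^{-\ell}\sum_i(-2\log\|\tu(i)\ta(\ell\log 2)g_0.v_0\|) \le 2\alpha_{\cusps_\Gamma}(x_0) - \ell\log 2 + 2C_V.
\]
Combining the two estimates would give $\int 2\alpha_{\cusps_\Gamma}(y)\,d\nu^{(k,\ell)}_{x_0}(y) \le 2\alpha_{\cusps_\Gamma}(x_0) - (\ell\log 2 - 2C_V - 2^\ell\delta T_1) = 2\alpha_{\cusps_\Gamma}(x_0) - T_0$, showing $x_0 \notin B$.

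The main obstacle is the transition from $-2\log\|\tu(i)\ta(\ell\log 2)g_0.v_0\|$ to $2\alpha_{\cusps_\Gamma}(y_i)$: these agree only when $v_0$ remains an ``active'' vector at $y_i$, i.e.\ when $\|\tu(i)\ta(\ell\log 2)g_0.v_0\| < 1$. In the opposite situation $\alpha_{\cusps_\Gamma}(y_i)$ could a priori be realized by a different $v_1 \in V_{\cusps_\Gamma}$, and we would need an upper bound rather than the trivial lower bound $\alpha_{\cusps_\Gamma}(y_i) \ge -\log\|\tu(i)\ta(\ell\log 2)g_0.v_0\|$. I would handle this via the uniqueness clause in Lemma \ref{lem: geometry near cusp}: any two vectors in $hV_{\cusps_\Gamma} \cap \{v: \|v\|<1\}$ have the same norm, so $v_0$ and any competing $v_1$ at $y_i$ must give the same value of $\alpha_{\cusps_\Gamma}(y_i)$ \emph{provided both are active}. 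The remaining case --- where $v_0$ exits the unit ball while another $v_1$ enters --- requires a geometric input that such cusp transitions can happen only for $y_i$ whose $\alpha_{\cusps_\Gamma}$-value is bounded by a constant depending on the discrete $\Gamma$-orbit $V_{\cusps_\Gamma}$ and on $\ell$, which can be absorbed into an enlarged $C_V$. Pushing this technical point through is where I expect most of the care to lie.
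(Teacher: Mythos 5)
Your strategy is the same as the paper's: get Condition \ref{cond: Lipshitz} from Claim \ref{claim: alphas are Lipschitz}, then show the bad set of Definition \ref{def: ep-additive Margulis function} is contained in $(X_{{\rm good}}^{(k,\ell,\delta)})^c$ by centering at $2\alpha_{\cusps_\Gamma}(x_0)$, trading the weights $p_i^{(k,\ell)}(x_0)$ for the uniform ones at cost $2^{\ell}\delta T_1$, and applying Claim \ref{claim: decay in rep} to $V$ (highest weight $1$). The gap is the step you flag as the ``main obstacle'': it is not an obstacle, and your proposed patch would not prove the claim as stated. The bad set in Condition \ref{cond: decay on averege} only contains points $x_0$ with $2\alpha_{\cusps_\Gamma}(x_0)\ge T_1$, and this threshold is exactly what kills your ``remaining case''. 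Indeed, write $\alpha_{\cusps_\Gamma}(x_0)=-\log\|g_0.v_0\|$ with $v_0\in V_{\cusps_\Gamma}$ (possible since $2\alpha_{\cusps_\Gamma}(x_0)\ge T_1>0$). For every $y_i=\tu(i)\ta(\ell\log 2)x_0$ in the support of $\nu^{(k,\ell)}_{x_0}$, using $\tu(i)\ta(\ell\log 2)=\ta(\ell\log 2)\tu(2^{-\ell}i)$ with $2^{-\ell}i\in[0,1)$, the Lipschitz estimate gives
\begin{align*}
  -2\log\|\tu(i)\ta(\ell\log 2)g_0.v_0\| \;\ge\; 2\alpha_{\cusps_\Gamma}(x_0)-(\ell\log 2+C_0)\;\ge\; T_1-(T_1-1)\;=\;1\;>\;0,
\end{align*}
so $v_0$ never leaves the unit ball in one step of the iterated chain; the transition scenario (``$v_0$ exits while some $v_1$ enters'') simply does not occur on the set where the estimate is needed. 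Since by Lemma \ref{lem: geometry near cusp} all vectors of $g.V_{\cusps_\Gamma}$ of norm less than $1$ have the same length, this forces the exact equality $\alpha_{\cusps_\Gamma}(y_i)=-\log\|\tu(i)\ta(\ell\log 2)g_0.v_0\|$, and Claim \ref{claim: decay in rep} then finishes the computation with the stated $T_0=\ell\log 2-2C_V-2^{\ell}\delta T_1$.

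Your fallback, absorbing hypothetical cusp transitions into an ``enlarged $C_V$'', is both unsubstantiated (you give no argument for the asserted geometric input, and it is unclear how a bound depending on $\Gamma$ and $\ell$ would be controlled uniformly) and would in any case not prove the claim as written, since the statement fixes $T_0$ in terms of the $C_V$ of Claim \ref{claim: decay in rep}. Once you invoke the lower threshold $T_1\le 2\alpha_{\cusps_\Gamma}(x_0)$ from the definition of the bad set, none of this is needed and your argument closes exactly as in the paper.
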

\begin{proof}
  Let $T_1 = \ell\log 2 + C_0 + 1$.
  Let $x = \pi_\Gamma(g)\in X_{{\rm good}}^{(k,\ell, \delta)}$ with $2\alpha_{\cusps_\Gamma}(x) \ge T_1$.
  In particular, $\alpha_{\cusps_\Gamma}(x) = -\log \|g.v\|$ for some $v\in V_{\cusps_\Gamma}$.
  As in the proof of Claim \ref{claim: alphas are Lipschitz}, we deduce that
  for every
  $y=\tu(i)\ta(\ell\log 2)x \in \supp(\nu_x^{(k,\ell)})$ for $i=0,1,\dots,2^{\ell-1}$,
  \[ -2\log \|\tu(i)\ta(\ell\log 2)g.v\| > -2\log \|g.v\| - T_1 = 2\alpha_{\cusps_\Gamma}(x) - T_1 \ge 0.\]
  Thus, by its definition, $\alpha_{\cusps_\Gamma}(y) = -\log \|\tu(i)\ta(\ell\log 2)g.v\|$.
  We estimate
  \begin{align*}
    \int_{G/\Gamma} 2 & \alpha_{\cusps_\Gamma}(y)\bd\nu_x^{(k,\ell)}(y)
    \\&=
    2\alpha_{\cusps_\Gamma}(x) + 2\sum_{i=0}^{2^\ell - 1}p_i^{(k,\ell)}(x)(\alpha_{\cusps_\Gamma}(\tu(i)\ta(\ell\log 2){y}) - \alpha_{\cusps_\Gamma}(x))
    \\&\hspace{-15pt}\stackrel{x\in X_{{\rm good}}^{(k,\ell, \delta)}}{\le}
    2\alpha_{\cusps_\Gamma}(x) + 2\sum_{i=0}^{2^\ell - 1} 2^{-\ell}(\alpha_{\cusps_\Gamma}(\tu(i)\ta(\ell\log 2){y}) - \alpha_{\cusps_\Gamma}(x)) + 2^{\ell}\delta T_1
    \\&=
    2\alpha_{\cusps_\Gamma}(x) - 2\sum_{i=0}^{2^\ell - 1} 2^{-\ell}\log \frac{\|\tu(i)\ta(\ell\log 2)g.v\|}{\|g.v\|} + 2^{\ell}\delta T_1
    \\&\hspace{-1pt}\stackrel{\ref{claim: decay in rep}}{\le }
    2\alpha_{\cusps_\Gamma}(x) - \ell\log 2 + 2C_V + 2^{\ell}\delta T_1.
  \end{align*}
  Consequently, $2\alpha_{\cusps_\Gamma}$ is a $(\delta; T_0, T_1)$-additive Margulis function, with $T_0 = \ell\log 2 - 2C_V - 2^{\ell}\delta T_1$.
\end{proof}
\begin{claim}\label{claim: alpha total is Margulis}
  In the setting of Claim \ref{claim: alphas are Lipschitz},
  there exists $\ell\ge 1$ large and $\delta_0 > 0$ small such that for all $\delta\in (0,\delta_0)$ the function $\alpha_{{\cusps_\Gamma}, \bar S}=\max(0, 2\alpha_{\cusps_\Gamma}-2T_1, \alpha_{\bar S}-6T_1 - C_{\bar S})$ is a $(2\delta; T_0', T_1)$-additive Margulis function, for some $T_0' < T_1$, provided that $k\ge k_\delta$ for some $k_\delta$ depending on $\delta$.
\end{claim}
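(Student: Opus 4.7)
The plan is to apply Claim~\ref{claim: maxing to Margulis} with $\alpha := 2\alpha_{\cusps_\Gamma}$ and $\beta := \alpha_{\bar S} - T_1 - C_{\bar S}$. With this choice, $\max(0, \alpha - 2T_1, \beta - 5T_1)$ equals $\alpha_{{\cusps_\Gamma}, \bar S}$ exactly, and the output of Claim~\ref{claim: maxing to Margulis} is a $(2\delta; 2T_0 - T_1, T_1)$-additive Margulis function. Condition~\ref{cond: Lipshitz} for both $\alpha$ and $\beta$ with constant $T_1$ is immediate from Claim~\ref{claim: alphas are Lipschitz}, and the Margulis property for $\alpha$ with $T_0 = \ell\log 2 - 2C_V - 2^\ell\delta T_1$ is Claim~\ref{claim: alpha cusp is Margulis}. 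It remains to verify Eq.~\eqref{eq: estimate semi-Margulis} for $\beta$; after cancelling the constant shift $T_1 + C_{\bar S}$, this amounts to bounding by $\delta$ the $S_{\ind_{[0,1)}}\mu_k$-measure of
\[
B := \{x : \alpha_{\bar S}(x) \ge 2\alpha_{\cusps_\Gamma}(x) + 2T_1 + C_{\bar S}\} \cap \bigl\{x : \textstyle\int \alpha_{\bar S}\,\bd\nu_x^{(k,\ell)} > \alpha_{\bar S}(x) - T_0\bigr\}.
\]

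The key geometric input is Point~\ref{point: S unique} of Lemma~\ref{lem: geometry near S}: for $x = \pi_\Gamma(g)$ in the first set above, there exists $w^* \in W_{\bar S}$, unique up to sign, with $\alpha_{\bar S}(x) = -\log\|\pi_{w_0}(g.w^*)\|$, while every other $w \in W_{\bar S}$ satisfies $-\log\|\pi_{w_0}(g.w)\| < 2\alpha_{\cusps_\Gamma}(x) + C_{\bar S}$. For each successor $y_i = \pi_\Gamma(h_i)$ with $h_i := \tu(i)\ta(\ell\log 2)g$, $i = 0, \ldots, 2^\ell-1$, propagating the latter inequality via the $T_1$-Lipschitz estimate yields, for any non-dominant $w$, the bound $-\log\|\pi_{w_0}(h_i.w)\| < 2\alpha_{\cusps_\Gamma}(x) + C_{\bar S} + T_1 \le \alpha_{\bar S}(x) - T_1 \le \alpha_{\bar S}(y_i)$. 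Hence no non-dominant $w$ realizes the max defining $\alpha_{\bar S}(y_i)$, and therefore $\alpha_{\bar S}(y_i) = -\log\|\tu(i)\ta(\ell\log 2).v\|$ with $v := \pi_{w_0}(g.w^*) \in W/w_0$ (using that $\tu, \ta$ fix $w_0$). Claim~\ref{claim: decay in rep} applied to the irreducible $\SL_2(\RR)$-representation $W/w_0 \cong {\rm Sym}^2$ of highest weight $n=2$ then yields
\[
\frac{1}{2^\ell}\sum_{i=0}^{2^\ell-1} -\log\|\tu(i)\ta(\ell\log 2).v\| \le \alpha_{\bar S}(x) - \ell\log 2 + C_W.
\]
Replacing the uniform weight $2^{-\ell}$ by $p_i^{(k,\ell)}(x)$ on $X_{{\rm good}}^{(k,\ell,\delta)}$ costs an additive error of at most $2^\ell\delta T_1$, since each summand differs from $\alpha_{\bar S}(x)$ by at most $T_1$ while $\sum_i |p_i^{(k,\ell)}(x) - 2^{-\ell}| \le 2^\ell\delta$ there. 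Shrinking $T_0$ slightly to the common value $\ell\log 2 - \max(2C_V, C_W) - 2^\ell\delta T_1$ (which still witnesses $\alpha$ as $(\delta; T_0, T_1)$-additive Margulis) yields $\int \alpha_{\bar S}\,\bd\nu_x^{(k,\ell)} \le \alpha_{\bar S}(x) - T_0$ throughout the intersection of $B$'s first set with $X_{{\rm good}}^{(k,\ell,\delta)}$. Consequently $B \subseteq X_{{\rm good}}^{(k,\ell,\delta)c}$, whose measure is $<\delta$ for $k \ge k_\delta$ by Observation~\ref{obs: entropy maximaized} applied to Eq.~\eqref{eq: accumulated entropy2}.

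Taking $\ell$ large and $\delta$ small enough that $T_0 > T_1/2$ makes $T_0' := 2T_0 - T_1 \in (0, T_1)$ and renders Claim~\ref{claim: maxing to Margulis} applicable, giving the desired $(2\delta; T_0', T_1)$-additive Margulis property. The main obstacle is the geometric dichotomy of Point~\ref{point: S unique}: without the assured gap between the dominant $w^*$ and the other candidates, a non-dominant vector could overtake $w^*$ after a single step of the $\ell$-chain, and the representation-theoretic decay on $W/w_0$ would not directly control $\alpha_{\bar S}(y_i)$.
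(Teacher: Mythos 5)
Your proposal is correct and takes essentially the same route as the paper's proof: it feeds $\alpha=2\alpha_{\cusps_\Gamma}$ and a constant shift of $\alpha_{\bar S}$ into Claim \ref{claim: maxing to Margulis}, verifies Eq. \eqref{eq: estimate semi-Margulis} by tracking the dominant vector of $\alpha_{\bar S}$ along the $\ell$-step chain via Lemma \ref{lem: geometry near S}\ref{point: S unique}, applies Claim \ref{claim: decay in rep} to $W/w_0$ together with the $X_{{\rm good}}^{(k,\ell,\delta)}$ weight comparison, and then chooses $\ell$ large and $\delta$ small so that $T_0>T_1/2$, exactly as in the paper. The only cosmetic differences are that the paper takes $\beta=\max(\alpha_{\bar S}-T_1-C_{\bar S},0)$ so that $\beta\ge 0$ as Claim \ref{claim: maxing to Margulis} formally requires, and it secures persistence of the dominant vector by applying the dichotomy of \ref{point: S unique} at each successor point $y_i$ (whose hypothesis holds by the same propagation estimate you use), rather than invoking up-to-sign uniqueness of the maximizer at $x$ -- a true fact, proved inside the paper's proof of \ref{point: S unique}, but not literally contained in its statement.
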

\begin{proof}
  Let $x = \pi_\Gamma(g)\in X_{{\rm good}}^{(k,\ell, \delta)}$ with $\alpha_{\bar S}(x) \ge 2\alpha_{\cusps_\Gamma}(x) + 2T_1 + C_{\bar S}$.
  Then $\alpha_{\bar S}(x) = -\log \|\pi_{w_0}(g.w)\|$ for some $w \in W_{\bar S}$.
  As in the proof of Claim \ref{claim: alphas are Lipschitz}, we deduce that
  for every $y=\tu(i)\ta(\ell\log 2)x$ for $i=0,1,\dots,2^{\ell-1}$,
  \begin{align*}
    -\log \|\pi_{w_0}(\tu(i)\ta(\ell\log 2)g.w)\| & > -\log \|\pi_{w_0}(g.w)\| - T_1 = \alpha_{\bar S}(x) - T_1
    \\&\ge 2\alpha_{\cusps_\Gamma}(x) + T_1 + C_{\bar S} > 2\alpha_{\cusps_\Gamma}(y) + C_{\bar S}.
  \end{align*}
  Hence, Lemma \ref{lem: geometry near S} point \ref{point: S unique} implies that
  $\alpha_{\bar S}(y) = -\log \|\pi_{w_0}(\tu(i)\ta(\ell\log 2)g.w)\|$.
  As in the proof of Claim \ref{claim: alpha total is Margulis}, we deduce that
  \[\int_{G/\Gamma} \alpha_{\bar S}(y)\bd\nu_x^{(k,\ell)}(y) \le \alpha_{\bar S}(x) - \ell\log 2 + C_W + 2^{\ell}\delta T_1.\]
  Let $T_0'' = \min(\ell\log 2 - C_W - 2^{\ell}\delta T_1, \ell\log 2 - 2C_V - 2^{\ell}\delta T_1)$.
  Let $\ell$ be sufficiently large so that
  \[\min(\ell\log 2 - C_W, \ell\log 2 - 2C_V) > \frac{T_1}{2} = \frac{\ell\log 2 + C_0 + 1}{2}.\]
  For $\delta>0$ sufficiently small, $T_0'' > T_1/2$.
  Thus, applying Claim \ref{claim: maxing to Margulis} for $\alpha = \alpha_{\cusps_\Gamma}$, $\beta = \max(\alpha_{\bar S}(x) - T_1 - C_{\bar S}, 0)$, $T_0''$ and $T_1$ we deduce that $\max(0, 2\alpha_{\cusps_\Gamma}-2T_1, \alpha_{\bar S}-6T_1 - C_{\bar S})$ is a $(2\delta; 2T_0'' - T_1, T_1)$-additive Margulis function, as desired.
\end{proof}
Claim \ref{claim: alpha total is Margulis} and Lemma \ref{lem: Margulis function goes down} imply that \begin{align}\label{eq: measure to 1}
\lim_{k\to \infty}S_{\ind_{[0,1)}} \mu_k(\alpha_{{\cusps_\Gamma}, \bar S}^{-1}([0,t])) \ge 1 - \frac{1}{\log \lfloor t/T_1\rfloor - 1}.
\end{align}
By Observation \ref{obs: prob measures on core} Point \ref{point: mu k and Smu k} the measure $S_{\ind_{[0,1)}}\mu_k$ is supported on $\coreGn$. 
Since $\alpha_{{\cusps_\Gamma}}$ is proper on $\coreGn$, 
attains values in $[0,\infty)$, and $\alpha_{{\cusps_\Gamma}} \le \alpha_{{\cusps_\Gamma}, \bar S} + O(1)$ we deduce from Eq. \eqref{eq: measure to 1} that $S_{\ind_{[0,1)}} \mu_k$ has no escape of mass in $\coreGn$. 

Since $\alpha_{\bar S}^{-1}(\infty) = \bar S$ and $\alpha_{\bar S} \le \alpha_{{\cusps_\Gamma}, \bar S} + O(1)$, we deduce from Eq. \eqref{eq: measure to 1} that $S_{\ind_{[0,1)}} \mu_k$ has no escape of mass in $\coreGn \setminus \bar S$. 
We deduce Eq. \eqref{eq: no escape}. \qed
%
%

\subsection{Proof of Lemma \ref{lem: geometry near cusp}}
\label{ssec: proof of geometric criterion - cusp}
We will prove a more general version of Lemma \ref{lem: geometry near cusp}, which works also for $\SL_2(\RR)$, and later use it to understand periodic $\SL_2(\RR)$-orbits.
\begin{definition}[General setting]\label{def: field setting}
  Let $F = \RR$ or $\CC$ and $H = \SL_2(F)$.
  $U_F = \left\{\tu_F(s):s\in F\right\}$ where $\tu_F(s) = \begin{pmatrix}
      1 & s \\0&1
    \end{pmatrix}$.
  Let $\ta(t) = \diag(e^{t/2}, e^{-t/2})$ for $t\in \RR$, and $K_H$ be either $\SO(2)$ or $\SU(2)$ the maximal compact subgroup in $H$.
  Let $\Gamma < H$ be a geometrically finite discrete subgroup and $\coreG$ the inverse image in $H/\Gamma$ of the convex core of $\HH^i/\Gamma$, where $i = \begin{cases}
    2,&\text{if }F = \RR,\\
    3,&\text{if }F = \CC.
  \end{cases}$.
\end{definition}
We will use this setting for the rest of the subsection.
\begin{claim}[$QR$ Decomposition]\label{claim: QR}
  Any element $g\in H$ can be represented uniquely and continuously as $g = k\ta(t)\tu_F(s)$ for $t\in \RR$,$ k\in K_H$, and $s\in F$ where $F$, $H$, $K_H$, and $\tu_F$ are as in Claim \ref{def: field setting}.
\end{claim}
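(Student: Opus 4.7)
The statement is the classical Iwasawa decomposition $H = K_H \cdot A \cdot U_F$ (with $A = \{\ta(t):t\in \RR\}$); the plan is a direct Gram--Schmidt calculation on $F^2$ equipped with its standard inner product (Euclidean if $F = \RR$, Hermitian if $F = \CC$).

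\emph{Existence.} Given $g\in H$, set $r := \|g e_1\|>0$. Since $K_H$ acts transitively on the unit sphere of $F^2$, I can pick $k\in K_H$ with $k e_1 = r^{-1}\, g e_1$. Then $k^{-1}g$ has first column $(r,0)$; using $\det(k^{-1}g) = 1$ forces its second column to have the form $(b,\, r^{-1})$ for some $b\in F$. Setting $t := 2\log r$ and $s := b/r$ one verifies directly that $k^{-1}g = \ta(t)\tu_F(s)$, giving $g = k\,\ta(t)\,\tu_F(s)$.

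\emph{Uniqueness.} Suppose $k\,\ta(t)\,\tu_F(s) = k'\,\ta(t')\,\tu_F(s')$. Using the conjugation identity $\ta(t)\tu_F(r)\ta(-t) = \tu_F(e^t r)$ one rearranges this to $(k')^{-1}k = \ta(t'-t)\,\tu_F(e^t(s'-s))$, which lies in $K_H \cap AU_F$. Now $AU_F$ consists of upper triangular matrices with positive real diagonal entries, and I would verify by a one-line calculation that the only such matrix inside $\SO(2)$ or $\SU(2)$ is the identity: in $\SO(2)$ upper-triangularity forces $\sin\theta = 0$ and then positive diagonal forces $\cos\theta = 1$; in $\SU(2)$ a general element $\begin{pmatrix} a & b \\ -\bar b & \bar a \end{pmatrix}$ with $|a|^2+|b|^2 = 1$ is upper triangular iff $b = 0$, and then has positive real diagonal iff $a = 1$. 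Hence $k = k'$, and comparing entries of $\ta(t)\tu_F(s) = \ta(t')\tu_F(s')$ then yields $t = t'$ and $s = s'$.

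\emph{Continuity.} The decomposition is prescribed by the explicit formulas $t = 2\log \|g e_1\|$, $k e_1 = g e_1/\|g e_1\|$, with $k e_2$ the unique unit vector orthogonal to $k e_1$ making $\det k = 1$, and $s = \langle g e_2,\, k e_1\rangle/\|g e_1\|$. Each of these is a continuous (indeed smooth) function of $g$, so the map $g\mapsto(k,t,s)$ is continuous. There is no real obstacle in the proof; the only step that requires a small direct check is the triviality of $K_H\cap AU_F$ recorded above.
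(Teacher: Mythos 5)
Your proof is correct. The paper states this claim without proof, treating it as the standard $QR$/Iwasawa decomposition of $\SL_2(F)$, and your Gram--Schmidt argument (existence via rotating the first column to a positive multiple of $e_1$, uniqueness via $K_H\cap AU_F=\{I\}$, continuity via the explicit formulas for $k$, $t$, $s$) is exactly the standard way to fill that in; note also that your uniqueness check is even slightly redundant, since the stabilizer in $\SO(2)$ or $\SU(2)$ of a unit vector is trivial, so $k$ is already forced by $k e_1 = g e_1/\|g e_1\|$.
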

\begin{definition}
  Let $B_F^* = \left\{\begin{pmatrix}
      a & b \\0&a^{-1}
    \end{pmatrix}\in\SL_2(F): |a| = 1 \right\}$.
  Let $M < B_F^*$ be an infinite discrete subgroup.

  Let $H_{\ge \tau} = \{k\ta(-t)\tu_F(s): k\in K_H, t \ge \tau, s\in F\}$ for every $\tau\in \RR\cup \{-\infty\}$. Then $H_{\ge \tau}$ is preserved by the right $B_F^*$ action, and for every infinite discrete group $M < B_F^*$ denote $D_{M, \tau} = H_{\ge \tau} / M$. 
\end{definition}

Claim \ref{claim: Siegel decomposition} is a reformulation of the thin-thick decomposition \cite{bowditch1993geometrical}, phrased in the language of $G/\Gamma$ instead of $\HH^i/\Gamma$. 
We will use the following notation that will help us to parametrize the cusps.
\begin{definition}[Quotient product]\label{def: quotient product}
  For every two discrete subgroup $M_1, M_2<H$ and an element $h_0\in H$ such that $h_0^{-1}M_1h_0\subseteq M_2$, the map $h\mapsto hh_0$ descends to a map
  \[x\mapsto x\bullet h_0 :H/M_1\to H/M_2.\]
  Sometimes we will use this notation to denote the restriction of $-\bullet h_0$ into subsets of $H/M_1$.
  To avoid confusion, whenever we use this notation, we will specify the source and target of
  $-\bullet h_0$.
\end{definition}


\begin{claim}[Thin-thick decomposition]\label{claim: Siegel decomposition}
  Let $H=\SL_2(F)$, with $F=\RR,\CC$.
  Let $\Gamma$ be a geometrically finite discrete subgroup in $H$.
  Then there is a finite set $\cusps_\Gamma$ parameterizing the cusps of $H/\Gamma$, that satisfies the following properties:
  \begin{enumerate}[label=\emph{S-\arabic*)}, ref=(S-\arabic*)]
    \item For every $c\in \cusps_\Gamma$ there is a chosen element $g_c\in H$.
    \item For every $c\in \cusps_\Gamma$ the group
          $M_c = B_F^*\cap g_c\Gamma g_c^{-1}$ is an infinite discrete subgroup in $B_F^*$ that satisfies $g_c^{-1}M_cg_c<\Gamma$.
    \item \label{point: Siegel bijection}
          Let $D_{c,\tau} = D_{M_c, \tau}$ for every $\tau \in \RR \cup \{-\infty\}$.
          The map
          \begin{align}\label{eq: - bullet c}
            -\bullet g_c: D_{c,-\infty} = H/M_c \to H/\Gamma,
          \end{align}
          restricts to a bijection on the image
          $(-\bullet g_c)|_{D_{c, 0}}: D_{c, 0} \xrightarrow{\sim} D_{c, 0}\bullet g_c,$ and the map $(-\bullet g_c)|_{D_{c, 0}}:D_{c, 0}\to H/\Gamma$ is proper.
    \item \label{point: Siegel disjoint} The images $D_{c, 0}\bullet g_c$ for $c\in \cusps_\Gamma$ are disjoint, and the complement\\
          $\coreGn \setminus \bigcup_{c\in \cusps_\Gamma}D_{c, 0}\bullet g_c$ is precompact.
  \end{enumerate}
\end{claim}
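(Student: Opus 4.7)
The plan is to translate Bowditch's thin-thick decomposition for geometrically finite Kleinian (resp.\ Fuchsian) groups from the language of $\HH^i$ to the language of $H/\Gamma$. Concretely, Bowditch provides a finite collection of $\Gamma$-inequivalent parabolic fixed points $\xi_1,\dots,\xi_n\in \partial \HH^i$ (the cusps) together with horoballs $B_1,\dots,B_n$ centered at them, whose $\Gamma$-translates are pairwise disjoint and cover the non-compact part of $\core$.

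First, for each cusp $c=1,\dots,n$ I would choose $g_c\in H$ so that $g_c^{-1}$ sends the standard parabolic fixed point $\infty\in \partial \HH^i$ (the one stabilized by $B_F^*$) to $\xi_c$. Equivalently, $g_c\cdot \infty=\xi_c$, so that $\stab_\Gamma(\xi_c)$ conjugates into the stabilizer of $\infty$, which is precisely $B_F^*$. Setting $M_c=B_F^*\cap g_c\Gamma g_c^{-1}$ then makes $M_c$ the image under $g_c(\cdot)g_c^{-1}$ of the parabolic stabilizer of $\xi_c$ in $\Gamma$. This stabilizer is a nontrivial discrete parabolic subgroup of $H$ (infinite cyclic when $F=\RR$, and of rank $1$ or $2$ when $F=\CC$), so $M_c$ is discrete and infinite. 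The containment $g_c^{-1}M_cg_c<\Gamma$ is automatic.

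For Point \ref{point: Siegel bijection}, I would observe that the natural map $-\bullet g_c:H/M_c\to H/\Gamma$ is the composition of right multiplication by $g_c$ (a diffeomorphism of $H$) with the quotient $H/M_c\twoheadrightarrow H/\Gamma$, which is a covering map whose fiber over $\pi_\Gamma(h)$ is $M_c\backslash(hg_c\Gamma g_c^{-1})$. Injectivity of the restriction to $D_{c,0}$ is the statement that the horoball $g_c\cdot H_{\ge 0}$ projects injectively into $H/(g_c\Gamma g_c^{-1})$; this is the standard horoball packing lemma for geometrically finite groups, and follows from the fact that after conjugating $\xi_c$ to $\infty$, the non-identity cosets of $M_c$ in $g_c\Gamma g_c^{-1}$ move the horoball $H_{\ge 0}$ off itself (\cite{bowditch1993geometrical}). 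Properness follows because $H_{\ge 0}/M_c$ admits an exhaustion by the compact sets $(H_{\ge 0}\cap H_{\le N})/M_c$ (the bounded-height cross-sections of the cusp), and each of these has precompact image in $H/\Gamma$ by properness of $\pi_\Gamma$.

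Finally, for Point \ref{point: Siegel disjoint}, the disjointness of the cusp neighborhoods $D_{c,0}\bullet g_c$ for distinct cusps is precisely the statement that horoballs centered at $\Gamma$-inequivalent parabolic fixed points can be chosen to be pairwise disjoint, a standard consequence of geometric finiteness. Precompactness of $\coreGn\setminus\bigcup_c D_{c,0}\bullet g_c$ is the compactness of the thick part of the convex core, again from Bowditch's theorem. The main substantive step is the injectivity/horoball-separation claim in Point \ref{point: Siegel bijection}; once one chooses the horoballs deep enough (which is what the $t\ge 0$ threshold encodes), this is classical, and no new difficulty arises in the group-theoretic reformulation.
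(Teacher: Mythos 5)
Your overall route is the same as the paper's: the paper offers no argument for this claim beyond the remark that it is a restatement of Bowditch's thin--thick decomposition, so a translation of Bowditch's horoball/compact-core statement into the language of $H/\Gamma$, as you sketch, is exactly what is intended, and your treatment of injectivity (precise invariance of a sufficiently small horoball, absorbed into the choice of $g_c$), of disjointness for distinct cusps, and of the precompactness in Point \ref{point: Siegel disjoint} is in that spirit. One small imprecision: the stabilizer of $\infty$ in $H$ is the full Borel $B_F$, not $B_F^*$; what places $g_c\,\stab_\Gamma(\xi_c)\,g_c^{-1}$ inside $B_F^*$ is the standard fact that a discrete group containing a parabolic fixing $\xi_c$ contains no loxodromic fixing $\xi_c$ (and when $\Gamma$ has torsion the stabilizer may also contain elliptic elements, which still lie in $B_F^*$).

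The genuine gap is the properness argument in Point \ref{point: Siegel bijection}. First, the sets $(H_{\ge 0}\cap H_{\le N})/M_c$ are in general \emph{not} compact: when $F=\CC$ and the cusp has rank one, $M_c$ is virtually cyclic, so the transverse coordinate $s\in\CC$ modulo $M_c$ is a noncompact cylinder and your ``bounded-height cross-sections'' are noncompact; the claim must nevertheless cover this case, since geometrically finite Kleinian groups of infinite covolume typically have rank-one cusps. Second, even where the cross-sections are compact, exhibiting an exhaustion of $D_{c,0}$ by compact sets with precompact images proves nothing about properness --- every continuous map from a $\sigma$-compact space has such an exhaustion; properness requires the converse direction, namely that a sequence leaving every compact subset of $D_{c,0}$ (which may escape transversally at bounded height, not only by $t\to\infty$) has image leaving every compact subset of $H/\Gamma$. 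The classical input that does the job is precise invariance together with local finiteness of the horoball family: after shrinking the horoball (replace $g_c$ by $\ta(t_0)g_c$, which normalizes $B_F^*$), one has $H_{\ge 0}g_c\gamma\cap H_{\ge 0}g_c\neq\emptyset$ only for $\gamma\in g_c^{-1}M_cg_c$, and any compact subset of $H$ meets only finitely many of the pairwise disjoint translates $H_{\ge 0}g_c\gamma$. Hence $H_{\ge 0}g_c\Gamma$ is closed in $H$ and $(-\bullet g_c)|_{D_{c,0}}$ is a homeomorphism onto a closed subset of $H/\Gamma$, which is equivalent to being injective and proper. (An injectivity-radius argument handles the regime $t\to\infty$ but not the transverse escape in the rank-one case, which is exactly what the closed-embedding argument covers.)
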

\begin{definition}
  Consider the representation $V_F = F^2$. Denote its standard basis by $e_1, e_2$.
  It has the standard Euclidean norm $\|-\|$.
  Note that $H_{\ge\tau} = \{h\in H:\|he_1\| < e^{-\tau/2}\}$.
  Let $\alpha_{\rm Siegel}:H\to \RR$ be $\alpha_{\rm Siegel}(h) = -\log \|he_1\|$ satisfying that whenever $h = k\ta(-t)\tu_F(s)$ is the $QR$-Decomposition of $h$, then $\alpha_{\rm Siegel}(h) = t/2$. This function is $B_F^*$-invariant from the right, hence descends to $\alpha_{\rm Siegel}:H/M\to \RR$ for every infinite discrete $M<B_F^*$.
\end{definition}
\begin{definition}[First definition of $\alpha_{\cusps_\Gamma}:G\to [0, \infty)$]
    Define $\alpha_{\cusps_\Gamma}:G/\Gamma \to [0, \infty)$ by
    \[
      \alpha_{\cusps_\Gamma}(x) = \begin{cases}
        \alpha_{\rm Siegel}(z), & x = z\bullet g_c\text{ for some }c\in \cusps_\Gamma, z\in D_{c,0}, \\
        0,                      & \text{otherwise.}
      \end{cases},
    \]
    Here $-\bullet g_c$ is defined as in Eq. \eqref{eq: - bullet c}.
  We see that it is proper and continuous.
\end{definition}
\begin{definition}
  For any $c\in \cusps_\Gamma$ consider the vector $v_c = g_c^{-1}.e_1\in V_F$.
  Let $V_{\cusps_\Gamma} = \bigcup_{c\in \cusps_\Gamma} \Gamma .v_c \subseteq V_F$.
\end{definition}
\begin{corollary}[Reformulation of Lemma \ref{lem: geometry near cusp} in the general setting]\label{cor: Siegel via reps}
  The function $\alpha_{\cusps_\Gamma}:H \to [0,\infty)$ satisfies
  \begin{align}\label{eq: alpha gamma def}
    \alpha_{\cusps_\Gamma}(h) =
    \begin{cases}
      - \log \|h.v\|, & \text{if $\|h.v\| < 1$ for some $v\in V_{\cusps_\Gamma}$}, \\
      0,              & \text{otherwise.}
    \end{cases}
  \end{align}
\end{corollary}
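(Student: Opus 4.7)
The plan is to unfold both definitions of $\alpha_{\cusps_\Gamma}$ and verify they coincide, via the dictionary that identifies the Siegel set $H_{\ge 0}$ with the sub-level set $\{h' \in H : \|h'.e_1\| \le 1\}$. From the $QR$-decomposition of Claim \ref{claim: QR} one reads off $H_{\ge 0} = \{h' \in H : \|h'.e_1\| \le 1\}$ together with the identity $\alpha_{\rm Siegel}(h') = -\log\|h'.e_1\|$ on $H_{\ge 0}$. Writing an arbitrary $v \in V_{\cusps_\Gamma}$ as $v = \gamma \cdot v_c = \gamma g_c^{-1}.e_1$, we get $\|h.v\| = \|h\gamma g_c^{-1}.e_1\|$; so $\|h.v\| < 1$ if and only if $h\gamma g_c^{-1} \in H_{>0}$, and in that case $z := h\gamma g_c^{-1} M_c$ lies in $D_{c, 0}$, satisfies $z \bullet g_c = \pi_\Gamma(h)$, and $\alpha_{\rm Siegel}(z) = -\log\|h.v\|$.

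Next I would establish uniqueness of the data $(c,z)$. By the injectivity of $(-\bullet g_c)|_{D_{c, 0}}$ and the disjointness of the images $D_{c, 0} \bullet g_c$ across $c \in \cusps_\Gamma$ (Claim \ref{claim: Siegel decomposition}, points \ref{point: Siegel bijection} and \ref{point: Siegel disjoint}), the pair $(c, z)$ is uniquely determined by $\pi_\Gamma(h)$ whenever it exists. Hence if a second vector $v' = \gamma' \cdot v_{c'} \in V_{\cusps_\Gamma}$ also satisfies $\|h.v'\| < 1$, then necessarily $c' = c$ and $h\gamma' g_c^{-1} = h\gamma g_c^{-1} m$ for some $m \in M_c \subseteq B_F^*$. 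Since every element of $B_F^*$ fixes $e_1$ up to a scalar of absolute value one, we conclude $\|h.v'\| = \|h.v\|$, establishing the parenthetical well-definedness clause from Lemma \ref{lem: geometry near cusp}.

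Finally, I would match the two definitions case by case. If $\|h.v\| < 1$ for some $v \in V_{\cusps_\Gamma}$, the original definition returns $\alpha_{\rm Siegel}(z) = -\log\|h.v\|$, matching the formula. If no such $v$ exists, then either $\pi_\Gamma(h)$ lies outside every $D_{c, 0} \bullet g_c$---in which case the original definition trivially returns $0$---or $\pi_\Gamma(h) = z \bullet g_c$ with $\alpha_{\rm Siegel}(z) = 0$ (equivalently, $\|h.v\| = 1$ for the corresponding $v$, ruled out by the strict inequality in the formula); in either subcase both definitions return $0$. The argument is essentially bookkeeping, translating the thin-thick decomposition into representation-theoretic language; the only mildly delicate point is the boundary $\|h.v\| = 1$ handled in the last subcase, and I do not anticipate any substantive obstacle.
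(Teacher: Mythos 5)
Your proposal is correct and follows essentially the same route as the paper: write $v=\gamma g_c^{-1}.e_1$, observe that $\|h.v\|<1$ forces $h\gamma g_c^{-1}\in H_{\ge 0}$, and then invoke the thin-thick decomposition (Claim \ref{claim: Siegel decomposition}) to identify $\pi_\Gamma(h)$ with $\pi_{M_c}(h\gamma g_c^{-1})\bullet g_c$ and match $\alpha_{\rm Siegel}$ with $-\log\|h.v\|$. The only difference is that you spell out the uniqueness of $(c,z)$ and the boundary case $\|h.v\|=1$ explicitly, which the paper's terser proof leaves implicit.
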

\begin{proof}
  Suppose that $v = h\gamma .v_c = h\gamma g_c^{-1} .e_1 \in h.V_{\cusps_\Gamma}$, satisfies that $\|v\| < 1$ for some $h\in H, c\in \cusps_\Gamma$.
  Then $h\gamma g_c^{-1} \in H_{\ge 0}$, and hence $\pi_\Gamma(h) = \pi_{M_c}(h\gamma g_c^{-1}) \bullet g_c$ and $\alpha_{\cusps_\Gamma}(h) = \alpha_{\rm Siegel}(h\gamma g_c^{-1}) = -\log \|v\|$.
  Here $-\bullet g_c$ is defined as in \eqref{eq: - bullet c}.
  This implies Eq. \eqref{eq: alpha gamma def}.
\end{proof}
This claim proves Lemma \ref{lem: geometry near cusp} when using $F=\CC$.
We recall the following corollary, which is well known but follows easily from Corollary \ref{cor: Siegel via reps}.
\begin{definition}
  Let $B_F = \left\{\begin{pmatrix}
      a & b \\0&a^{-1}
    \end{pmatrix}: b\in F, a\in F^\times \right\}$, and note that $B_F = \stab_H Fe_1$.
  Note that the restriction
  $\alpha_{\rm Siegel}|_{B_F}:B_F\to \RR$ sends $\alpha_{\rm Siegel}\left(\begin{pmatrix}
        a & b \\0&a^{-1}
      \end{pmatrix}\right) = -\log a$ is a homomorphism.
\end{definition}
\begin{corollary}\label{cor: divergence only via cusp}
  Let $h\in H$ so that $\ta(-t) \pi_\Gamma(h) \in \coreG$ for evert $t\ge 0$. The following are equivalent:
  \begin{enumerate}
    \item The trajectory $\ta(-t)\pi_\Gamma(h)$ diverges as $t\to \infty$.
    \item $h = h_0 g_c\gamma$ for $h_0\in B_F, c\in \cusps_\Gamma, \gamma\in \Gamma$.
  \end{enumerate}
\end{corollary}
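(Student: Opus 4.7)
The plan is to exploit the representation-theoretic characterization of the cusps from Corollary \ref{cor: Siegel via reps}, combined with the behavior of $\ta(-t)$ on the standard representation $V_F = F^2$: the line $Fe_1$ is contracted while $Fe_2$ is expanded by $\ta(-t)$. A divergent trajectory in $\coreG$ must, by the thin-thick decomposition, enter a single cusp neighborhood, and this condition will be seen to be equivalent to $h$ sending some vector of $V_{\cusps_\Gamma}$ into the contracting line $Fe_1$, which is the algebraic condition that $h\gamma^{-1}g_c^{-1}$ lies in the Borel $B_F = \stab_H(Fe_1)$.

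For the direction $(2)\Rightarrow(1)$, writing $h_0 = \begin{pmatrix} a & b \\ 0 & a^{-1} \end{pmatrix}$ and applying the formula of Corollary \ref{cor: Siegel via reps} to $v = \gamma^{-1} v_c \in V_{\cusps_\Gamma}$, a direct computation gives $h.v = h_0 e_1 = a e_1$, so $\|\ta(-t) h.v\| = |a|e^{-t/2}$ and hence $\alpha_{\cusps_\Gamma}(\ta(-t)\pi_\Gamma(h)) \ge t/2 - \log|a|$ for $t$ large. Properness of $\alpha_{\cusps_\Gamma}$ on $\coreGn$ from Lemma \ref{lem: geometry near cusp} then forces the trajectory to diverge in $H/\Gamma$.

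For the direction $(1)\Rightarrow(2)$, divergence while staying in $\coreG$, together with properness of $\alpha_{\cusps_\Gamma}$ on $\coreGn$ and the fact from Claim \ref{claim: Siegel decomposition} \ref{point: Siegel disjoint} that $\coreGn \setminus \bigcup_c D_{c,0}\bullet g_c$ is precompact, forces $\alpha_{\cusps_\Gamma}(\ta(-t)\pi_\Gamma(h)) \to \infty$ and, by disjointness of the cusp neighborhoods and continuity, the trajectory to remain in one specific $D_{c,0}\bullet g_c$ for all $t \ge T_0$. Using the bijectivity of $-\bullet g_c$ on $D_{c,0}$ from Claim \ref{claim: Siegel decomposition} \ref{point: Siegel bijection}, together with the continuity of $t\mapsto \ta(-t)h$ and the discreteness of $\Gamma$, I would write $\ta(-t)h = z_t g_c \gamma$ for $t\ge T_0$ with a \emph{single} $\gamma \in \Gamma$ and $z_t \in H_{\ge 0}$. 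Then $\alpha_{\rm Siegel}(z_t) = -\log \|z_t e_1\| \to \infty$ translates, via $z_t e_1 = \ta(-t) h \gamma^{-1} g_c^{-1} e_1 = \ta(-t) h. v$ with $v := \gamma^{-1} v_c \in V_{\cusps_\Gamma}$, into $\|\ta(-t) h.v\| \to 0$. Writing $h.v = \lambda_1 e_1 + \lambda_2 e_2$, the expansion of $\ta(-t)$ on $Fe_2$ forces $\lambda_2 = 0$, so $h.v \in Fe_1 \setminus \{0\}$. Therefore $h\gamma^{-1}g_c^{-1}$ stabilizes the line $Fe_1$, i.e.\ $h\gamma^{-1}g_c^{-1} \in B_F$, which is (2).

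The main technical obstacle I anticipate is the uniformization of the discrete parameter $\gamma$: a priori one only obtains $\gamma_t$ depending on $t$, and one needs to argue that this is locally constant via the bijection of the cusp chart and then globally constant on the tail $[T_0,\infty)$ via its connectedness and the discreteness of $\Gamma$. Everything else is a routine translation between $\alpha_{\rm Siegel}$ and the representation-theoretic formula of Corollary \ref{cor: Siegel via reps}.
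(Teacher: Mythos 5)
Your proposal is correct, and the (2)$\Rightarrow$(1) direction is essentially the paper's: both reduce divergence to $-\log\|\ta(-t)h.v\|\to\infty$ for $v=\gamma^{-1}v_c\in V_{\cusps_\Gamma}$ via Corollary \ref{cor: Siegel via reps} and the properness of $\alpha_{\cusps_\Gamma}$ on $\coreGn$. Where you genuinely diverge is in (1)$\Rightarrow$(2). The paper never passes through the thin-thick decomposition or any uniformization of the discrete parameter: it picks a single $t_0$ with $\alpha_{\cusps_\Gamma}>0$ on $[t_0,\infty)$, a single $v\in V_{\cusps_\Gamma}$ realizing $\alpha_{\cusps_\Gamma}$ at $t_0$, and then compares $f_1(t)=\alpha_{\cusps_\Gamma}(\ta(-t)\pi_\Gamma(h))$ with $f_2(t)=-\log\|\ta(-t)h.v\|$; since $f_2(t)>0$ forces $f_1(t)=f_2(t)$ (the well-definedness clause of Lemma \ref{lem: geometry near cusp}), the set $\{f_1=f_2\}$ is open and closed in $[t_0,\infty)$, so $f_1\equiv f_2$ and $\ta(-t)h.v\to 0$, after which the expansion of the $e_2$-component gives $h.v\in Fe_1$ exactly as in your last step. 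Your route instead invokes Claim \ref{claim: Siegel decomposition}, confines the tail of the trajectory to one $D_{c,0}\bullet g_c$ (fine: these images are closed, disjoint, and the complement in $\coreGn$ is precompact, so connectedness of the tail does it), and then must make the element $\gamma_t$ with $\ta(-t)h=z_tg_c\gamma_t$ constant. The obstacle you flag is real but fillable along the lines you indicate: $(-\bullet g_c)|_{D_{c,0}}$ is a continuous proper injection, hence a homeomorphism onto its closed image, so $z_t\in D_{c,0}$ depends continuously on $t$; since $H\to H/M_c$ is a covering map one can lift $z_t$ continuously on small intervals, making $\gamma_t=g_c^{-1}\tilde z_t^{-1}\ta(-t)h$ continuous with values in the discrete group $\Gamma$, hence the coset $(g_c^{-1}M_cg_c)\gamma_t$ is locally constant and therefore constant on $[T_0,\infty)$; as $H_{\ge 0}$ is right $M_c$-invariant, a fixed representative $\gamma$ then works for all $t$, and the rest of your argument goes through. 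So both proofs succeed; the paper's two-function open-closed trick buys a shorter argument that sidesteps the cusp charts and the uniformization of $\gamma$ entirely, while your version is more geometric and makes explicit which cusp the trajectory enters, at the cost of the extra covering-space/discreteness bookkeeping.
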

\begin{proof}
  Since $\alpha_{\cusps_\Gamma}$ is proper on $\coreG$,
  the trajectory $\ta(-t)\pi_\Gamma(h)$, which lies in $\coreG$, diverges as $t\to \infty$,
  iff $\alpha_{\cusps_\Gamma}(\ta(-t)\pi_\Gamma(h))\xrightarrow{t\to \infty}\infty$.
  Let $t_0>0$ be such that for all $t\ge t_0$ we have $\alpha_{\cusps_\Gamma}(\ta(-t)\pi_\Gamma(h)) > 0$.
  Let $v\in V_{\cusps_\Gamma}$ satisfy that $\alpha_{\cusps_\Gamma}(\ta(-t_0)\pi_\Gamma(h)) = -\log \|\ta(-t_0)h .v\|$.
  The two functions $f_1: t\mapsto \alpha_{\cusps_\Gamma}(\ta(-t)\pi_\Gamma(h))$ and $f_2:t\mapsto -\log \|\ta(-t)h .v\|$
  \begin{itemize}
    \item are continuous,
    \item coincide at $t=t_0$,
    \item satisfy that $f_1(t)>0$ for all $t\ge t_0$, and
    \item for every $t\ge t_0$, if $f_2(t)>0$, $f_1(t) = f_2(t)$.
  \end{itemize}
  We deduce that for all $t\ge t_0$ we have $f_1(t) = f_2(t)$, that is,
  \[\alpha_{\cusps_\Gamma}(\ta(-t)\pi_\Gamma(h)) = -\log \|\ta(-t)h .v\|\xrightarrow{t\to \infty}\infty.\]
  This is equivalent to $\ta(-t)h.v\xrightarrow{t\to \infty} 0$.
  Note that \[\ta(-t)h.v\xrightarrow{t\to \infty} 0 \iff h.v = h\gamma g_c^{-1}.e_1\in Fe_1 \iff h\gamma g_c^{-1} \in B_F.\]
  The desired equivalence follows.
\end{proof}
\subsection{Proof of Lemma \ref{lem: geometry near S}}
\label{ssec: proof of geometric criterion - SL2 orbit}
We now return to our original setting with $\Gamma<G$, as in Definition \ref{def: homogeneous dynamics}.
Let $S = \SL_2(\RR).\pi_\Gamma(g_S)$ be a periodic $\SL_2(\RR)$-orbit in $G/\Gamma$.
Let $\Lambda = \stab_{\SL_2(\RR)} \pi_\Gamma(g_S) = g_S \Gamma g_S^{-1} \cap \SL_2(\RR)$, and $\pi_\Lambda:\SL_2(\RR)\to \SL_2(\RR)/\Lambda$ denote the standard projection.
Since $S$ is a periodic $\SL_2(\RR)$-orbit, it follows that  $\Lambda$ is a lattice in $\SL_2(\RR)$.
Let $\bar S = \NSL2\RR\pi_\Gamma(g_S)$ be a periodic $\NSL2\RR$-orbit.
\begin{remark}
  The distinction between $S$ and $\bar S$ bears no mathematical difficulties.
  If $\stab_{{\NSL2\RR}} \pi_\Gamma(g_S) \subseteq \SL_2(\RR)$,
  then $\bar S = S\sqcup g_0S$, where
  $g_0 = \begin{pmatrix}
      0 & i \\-i&0
    \end{pmatrix}\in \NSL2\RR \setminus \SL_2(\RR)$.
  Otherwise $\bar S = S = g_0S$.
\end{remark}
Let
\begin{align}\label{eq: - bullet gS}
  -\bullet g_S:\SL_2(\RR)/\Lambda\to G/\Gamma,
\end{align}
be defined as in Definition \ref{def: quotient product}.
In particular, it defines an isomorphism $-\bullet g_S:\SL_2(\RR)/\Lambda \xrightarrow{\sim} S$, which enables us to use Claim \ref{claim: Siegel decomposition} for $F=\RR$ to describe its cusps.
We will need the following claim which describes how the cusps of $S$ sit inside the cusps of $G/\Gamma$.
\begin{claim}[How the cusps of $S$ sit in the cusps of $G/\Gamma$]\label{claim: small cusps in larger cusps}
  Let $c_\Lambda\in \cusps_\Lambda$ be a cusp of $\SL_2(\RR) / \Lambda$.
  Then there are
  \begin{enumerate}
    \item a unique cusp $c_{\Gamma, c_\Lambda} = c_\Gamma \in \cusps_\Gamma$;
    \item $h_{c_\Lambda} \in B_\CC$;
    \item and $\gamma_{c_\Lambda}\in \Gamma$
  \end{enumerate}
  such that
  \begin{align}\label{eq: S cusp as Gamma cusp}
    h_{c_\Lambda}g_{c_\Gamma}\gamma_{c_\Lambda} & = g_{c_\Lambda} g_S,                                                \\
    \label{eq: lattice equality}
    M_{c_\Lambda}                           & = h_{c_\Lambda} M_{c_{\Gamma}} h_{c_\Lambda}^{-1} \cap B_\RR^*.
  \end{align}
\end{claim}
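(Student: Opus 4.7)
The plan is to apply Corollary \ref{cor: divergence only via cusp} (with $F=\CC$) to the element $g_{c_\Lambda}g_S\in G$. By the choice of $g_{c_\Lambda}$, the trajectory $\ta(-t)\pi_\Lambda(g_{c_\Lambda})$ diverges in $\SL_2(\RR)/\Lambda$ as $t\to\infty$. Since $S\subset \coreG$ by Observation \ref{obs: prob measures on core}\ref{cor: S is in core}, and the map $-\bullet g_S$ of Eq. \eqref{eq: - bullet gS} is an $\SL_2(\RR)$-equivariant bijection onto the closed set $S$, the pushed-forward trajectory $\ta(-t)\pi_\Gamma(g_{c_\Lambda}g_S)$ remains in $\coreG$ and still diverges in $G/\Gamma$. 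Corollary \ref{cor: divergence only via cusp} then furnishes $h_{c_\Lambda}\in B_\CC$, $c_\Gamma\in \cusps_\Gamma$, and $\gamma_{c_\Lambda}\in \Gamma$ satisfying Eq. \eqref{eq: S cusp as Gamma cusp}. The uniqueness of $c_\Gamma$ is immediate from Claim \ref{claim: Siegel decomposition}\ref{point: Siegel disjoint}: the sets $D_{c, 0}\bullet g_c$ for $c\in \cusps_\Gamma$ are pairwise disjoint with precompact complement in $\coreGn$, so the divergent trajectory is eventually captured by exactly one such neighborhood.

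For Eq. \eqref{eq: lattice equality}, begin with $M_{c_\Lambda} = B_\RR^*\cap g_{c_\Lambda}\Lambda g_{c_\Lambda}^{-1}$. Since $g_{c_\Lambda}\in\SL_2(\RR)$ and $\Lambda = \SL_2(\RR)\cap g_S\Gamma g_S^{-1}$, one has $g_{c_\Lambda}\Lambda g_{c_\Lambda}^{-1} = \SL_2(\RR)\cap g_{c_\Lambda}g_S\Gamma g_S^{-1}g_{c_\Lambda}^{-1}$, and the $\SL_2(\RR)$-factor is absorbed into $B_\RR^*\subset \SL_2(\RR)$. Substituting Eq. \eqref{eq: S cusp as Gamma cusp} gives
\[
M_{c_\Lambda} = B_\RR^*\cap h_{c_\Lambda} g_{c_\Gamma}\Gamma g_{c_\Gamma}^{-1} h_{c_\Lambda}^{-1}.
\]
Now $B_\CC^*$ is normal in $B_\CC$, being the kernel of the character $\diag(a, a^{-1})\cdot(\text{upper unipotent})\mapsto |a|$, so $h_{c_\Lambda} B_\CC^* h_{c_\Lambda}^{-1} = B_\CC^*$. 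Combined with $B_\RR^* = B_\CC^*\cap \SL_2(\RR)$, the right-hand side rewrites as
\[
h_{c_\Lambda}\bigl(B_\CC^*\cap g_{c_\Gamma}\Gamma g_{c_\Gamma}^{-1}\bigr)h_{c_\Lambda}^{-1}\cap \SL_2(\RR) = h_{c_\Lambda} M_{c_\Gamma} h_{c_\Lambda}^{-1}\cap B_\RR^*,
\]
proving Eq. \eqref{eq: lattice equality}.

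The only point requiring care is verifying divergence of the trajectory in the ambient space $G/\Gamma$ (which uses closedness of $S$ and the fact that $-\bullet g_S$ embeds $\SL_2(\RR)/\Lambda$ properly into $\coreG$) together with pinning down the uniqueness of $c_\Gamma$ via the thin-thick disjointness; the remaining algebraic steps are then routine intersections inside $B_\CC$.
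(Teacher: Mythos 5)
Your proof is correct and follows essentially the same route as the paper: you deduce divergence of the $\ta(-t)$-trajectory at the cusp point of $\SL_2(\RR)/\Lambda$, transfer it to $G/\Gamma$ through the closed orbit $S$ sitting inside the convex core (Observation \ref{obs: prob measures on core}), apply Corollary \ref{cor: divergence only via cusp} for $G/\Gamma$ to get Eq.~\eqref{eq: S cusp as Gamma cusp}, and then obtain Eq.~\eqref{eq: lattice equality} by the same conjugation computation the paper performs, with the normality of $B_\CC^*$ in $B_\CC$ and $B_\RR^*=B_\CC^*\cap\SL_2(\RR)$ made explicit. Your explicit verification of the uniqueness of $c_\Gamma$ via the disjointness in Claim \ref{claim: Siegel decomposition}\ref{point: Siegel disjoint} is a small detail the paper leaves implicit, and it is consistent with the intended argument.
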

\begin{proof}
  By Corollary \ref{cor: divergence only via cusp} applied for $\SL_2(\RR)/\Lambda$, we deduce that $\ta(-t)\pi_\Lambda(g_{c_\Lambda})$ diverges in $\SL_2(\RR)/\Lambda$ as $t\to \infty$. Thus $\iota(\ta(-t)\pi_\Lambda(g_{c_\Lambda})) = \ta(-t)\pi_\Gamma(g_{c_\Lambda} g_S)$ diverges in $G/\Gamma$ as $t\to \infty$. By Observation \ref{obs: prob measures on core} Point \ref{cor: S is in core} we deduce that $\ta(-t)\pi_\Gamma(g_{c_\Lambda} g_S)\subseteq S \subseteq \coreG$ for all $t\ge 0$. 
  Corollary \ref{cor: divergence only via cusp} applied for $G/\Gamma$ now implies Eq. \eqref{eq: S cusp as Gamma cusp}.
  To show Eq. \eqref{eq: lattice equality}, use the notation $a^b = b^{-1}ab$ for conjugation and note that
  \begin{align*}
    M_{c_\Lambda} & = B_\RR^*\cap \Lambda^{g_{c_\Lambda}^{-1}}=
    B_\RR^*\cap \Gamma^{g_S^{-1}g_{c_\Lambda}^{-1}} =
    B_\RR^*\cap \Gamma^{g_{c_\Gamma}^{-1}h_{c_\Lambda}}         \\&=
    B_\RR^*\cap \left(\Gamma^{g_{c_\Gamma}^{-1}} \cap B_\CC^*\right)^{h_{c_\Lambda}} =
    B_\RR^*\cap \left(M_{c_\Gamma}\right)^{h_{c_\Lambda}}.
  \end{align*}
\end{proof}
\begin{obs}
  Using \eqref{eq: lattice equality} we may apply Definition \ref{def: quotient product} and define
  \begin{align}
    z\mapsto z \bullet h_{c_\Lambda} : \SL_2(\RR)/M_{c_{\Lambda}} \to G/M_{c_\Gamma}.
  \end{align}
  Eq. \eqref{eq: S cusp as Gamma cusp} shows that for every $z\in D_{c_\Lambda, -\infty} = \SL_2(\RR) / M_{c_{\Lambda}}$ with $g\in \SL_2(\RR)$,
  \begin{align}\label{eq: S Siegel domain through G mod Gamma}
    (z\bullet g_{c_\Lambda})\bullet g_S =
    (z \bullet h_{c_\Lambda}) \bullet g_{c_\Gamma}.
  \end{align}
  Equivalently, the following diagram is commutative,
  \begin{align*}\tag{\ref*{eq: S Siegel domain through G mod Gamma}$'$}
    \xymatrix{
    D_{c_\Lambda, -\infty}
    \ar[r]^{-\bullet h_{c_\Lambda}}
    \ar[d]^{-\bullet g_{c_\Lambda}} &
    D_{c_\Gamma, -\infty}
    \ar[d]^{-\bullet g_{c_\Gamma}}    \\
    \SL_2(\RR)/\Lambda
    \ar[r]^{-\bullet g_S}       &
    G/\Gamma
    }
  \end{align*}
\end{obs}
A useful corollary is the following:
\begin{corollary}[Only the cusps of $S$ reach deep into the cusps of $G/\Gamma$]
  \label{cor: only known cusps in big cusps}
  There is a constant $T_{\Gamma, S}\ge 0$ such that the following holds.
  For every point $x_0\in S$ with $\alpha_{\cusps_\Gamma}(x) > T_{\Gamma, S}$, is of the form
  \begin{align}\label{eq: cusps alignment}
    \begin{split}
      x_0 &= (z_0\bullet g_{c_{\Gamma, c_\Lambda}})\bullet g_S \quad\text{for}\quad z_0 \in D_{c_\Lambda, 0},\\
      \alpha_{\cusps_\Gamma}(x_0) &= \alpha_{\cusps_\Lambda}(z_0) + \alpha_{\rm Siegel}(h_{c_\Lambda}).
    \end{split}
  \end{align}
\end{corollary}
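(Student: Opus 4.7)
The plan is to combine the thin-thick decomposition (Claim \ref{claim: Siegel decomposition}) applied to the lattice $\Lambda < \SL_2(\RR)$ with the commutative diagram \eqref{eq: S Siegel domain through G mod Gamma} established in Claim \ref{claim: small cusps in larger cusps}. First I would choose the constant $T_{\Gamma, S}$. Since $\cusps_\Lambda$ is finite, $C' := \max_{c_\Lambda \in \cusps_\Lambda} |\alpha_{\rm Siegel}(h_{c_\Lambda})|$ is finite. Consider the following subset of $\SL_2(\RR)/\Lambda$:
\[Y := \left(\SL_2(\RR)/\Lambda \setminus \bigcup_{c_\Lambda}D_{c_\Lambda, 0}\bullet g_{c_\Lambda}\right) \cup \bigcup_{c_\Lambda}\{z\in D_{c_\Lambda, 0}:\alpha_{\rm Siegel}(z)\le C'+1\}\bullet g_{c_\Lambda}.\]
By Claim \ref{claim: Siegel decomposition}\ref{point: Siegel bijection}, \ref{point: Siegel disjoint} applied to $\Lambda$, the set $Y$ is precompact. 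Its image $Y\bullet g_S$ lies in $S\subseteq \coreG$ by Observation \ref{obs: prob measures on core}, so by the properness of $\alpha_{\cusps_\Gamma}|_{\coreGn}$ (Lemma \ref{lem: geometry near cusp}) this image is contained in $\alpha_{\cusps_\Gamma}^{-1}([0, T_{\Gamma, S}])$ for some $T_{\Gamma, S} \ge 0$.

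Next, I would take $x_0\in S$ with $\alpha_{\cusps_\Gamma}(x_0) > T_{\Gamma, S}$ and write $x_0 = y_0\bullet g_S$ with $y_0\in \SL_2(\RR)/\Lambda$. Then $y_0\notin Y$, forcing $y_0 = z_0\bullet g_{c_\Lambda}$ for some $c_\Lambda \in \cusps_\Lambda$ and $z_0 \in D_{c_\Lambda, 0}$ with $\alpha_{\cusps_\Lambda}(z_0) > C'+1$. Setting $c_\Gamma = c_{\Gamma, c_\Lambda}$, the commutative diagram \eqref{eq: S Siegel domain through G mod Gamma} yields the equivalent presentation $x_0 = (z_0\bullet h_{c_\Lambda})\bullet g_{c_\Gamma}$. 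The additive formula is now a direct computation in the representation $V_\CC = \CC^2$: picking any lift $g\in H_{\ge 0}\subseteq \SL_2(\RR)$ of $z_0$ gives $\alpha_{\rm Siegel}(g) = \alpha_{\cusps_\Lambda}(z_0)$, and since $h_{c_\Lambda}\in B_\CC$ stabilizes the line $\CC e_1$ and acts on it by some $a\in \CC^\times$ with $|a| = e^{-\alpha_{\rm Siegel}(h_{c_\Lambda})}$, we obtain
\[\alpha_{\rm Siegel}(gh_{c_\Lambda}) = -\log\|gh_{c_\Lambda}.e_1\| = -\log(|a|\cdot \|g.e_1\|) = \alpha_{\rm Siegel}(h_{c_\Lambda}) + \alpha_{\cusps_\Lambda}(z_0) > 1 > 0.\]
The strict positivity shows $gh_{c_\Lambda}\in H_{\ge 0}$, equivalently $z_0\bullet h_{c_\Lambda}\in D_{c_\Gamma, 0}$, which by the definition of $\alpha_{\cusps_\Gamma}$ (Corollary \ref{cor: Siegel via reps}) gives $\alpha_{\cusps_\Gamma}(x_0) = \alpha_{\rm Siegel}(gh_{c_\Lambda}) = \alpha_{\cusps_\Lambda}(z_0)+\alpha_{\rm Siegel}(h_{c_\Lambda})$, as required.

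The main obstacle is essentially bookkeeping: the constant $T_{\Gamma, S}$ must be chosen large enough to guarantee not only that $y_0$ lands in one of the Siegel domains of $\SL_2(\RR)/\Lambda$, but also that the transported point $z_0\bullet h_{c_\Lambda}$ lies inside the canonical Siegel domain $D_{c_\Gamma, 0}$, so the definition of $\alpha_{\cusps_\Gamma}$ can be read off through $\alpha_{\rm Siegel}$. The finiteness of $\cusps_\Lambda$ supplies the uniform bound $C'$ on $|\alpha_{\rm Siegel}(h_{c_\Lambda})|$ that makes this absorption possible; everything else is a consequence of the already-established commutative diagram relating the cusp decompositions of $\SL_2(\RR)/\Lambda$ and $G/\Gamma$.
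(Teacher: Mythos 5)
Your proposal is correct and is essentially the paper's own argument: both rely on the commutative diagram of Claim \ref{claim: small cusps in larger cusps}, the additivity $\alpha_{\rm Siegel}(z\bullet h_{c_\Lambda})=\alpha_{\rm Siegel}(z)+\alpha_{\rm Siegel}(h_{c_\Lambda})$ to place the transported point in $D_{c_\Gamma,0}$, and the thin-thick decomposition for $\Lambda$ to make the remaining ``bad'' part of $S$ precompact, with $T_{\Gamma,S}$ defined as a bound of $\alpha_{\cusps_\Gamma}$ there (the paper uses per-cusp thresholds $T_{c_\Lambda}$ where you use the uniform $C'+1$). The only nitpick is that bounding $\alpha_{\cusps_\Gamma}$ on the precompact set $Y\bullet g_S$ uses its continuity rather than properness, but that is also supplied by Lemma \ref{lem: geometry near cusp}, so there is no gap.
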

\begin{proof}
  For every cusp $c_\Lambda \in \cusps_\Lambda$, let $T_{c_\Lambda} = \max(-\alpha_{\rm Siegel}(h_{c_\Lambda}), 0) + 1$ and $c_\Gamma = c_{\Gamma, c_\Lambda}$.
  Let $z_1 \in D_{c_\Lambda, T_{c_\Lambda}}$, and let
  $x_1 = (z_1\bullet g_{c_\Gamma})\bullet g_S \stackrel{\eqref{eq: S Siegel domain through G mod Gamma}}{=} (z_1 \bullet h_{c_\Lambda})\bullet g_{c_\Gamma}$.
  Since
  \[
    \alpha_{\rm Siegel}(z_1 \bullet h_{c_\Lambda}) =
    \alpha_{\rm Siegel}(z_1) + \alpha_{\rm Siegel}(h_{c_\Lambda}) \ge 0
  \]
  we deduce that $z_1 \bullet h_{c_\Lambda} \in D_{c_\Gamma, 0}$ and hence $x_1$ satisfy Eq. \eqref{eq: cusps alignment}.
  Therefore, by Point \ref{point: Siegel disjoint}, the set
  \[S_{\rm bad} = \{x_0\in S \text{ that does not satisfy Eq. \eqref{eq: cusps alignment}}\}\]
  is precompact and we can take $T_{\Gamma, S} = \sup_{S_{\rm bad}}\alpha_{\cusps_\Gamma}$.


\end{proof}
\begin{obs}\label{obs: no local maxima for alpha high in the cusp on S}
  Observing the definition of $\alpha_{\rm Sigel}$, we notice that it has no local maxima on $\SL_2(\RR)$. In view of Corollary \ref{cor: only known cusps in big cusps} and Eq. \eqref{eq: cusps alignment}, $\alpha_{\cusps_\Gamma}$ has no local maxima on $S\cap \alpha^{-1}_{\cusps_\Gamma}((T_{\Gamma, S}, \infty))$.
\end{obs}
We introduce the representation $W$.
\begin{claim}\label{claim: W}
  There is a $4$-dimensional irreducible real representation $W = \RR^4$ of $G$ and a vector $w_0\in W$, such that $\SL_2(\RR) = \stab_{G}(w_0)$ and $\NSL2\RR = \stab_G(\{\pm w_0\})$. $G$ preserves the real quadratic form $Q(x_1, x_2, x_3, x_4) = x_1^2 - x_2^2 - x_3^2 - x_4^2$ on $W$ of type $(1,3)$, satisfying $Q(w_0) = -1$.
\end{claim}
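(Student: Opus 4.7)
The plan is to realise $W$ explicitly as the $4$-dimensional real vector space of Hermitian $2\times 2$ matrices over $\CC$, equipped with the $G$-action $g \cdot H = gHg^*$. Choose the basis of the identity and the three Pauli matrices,
\[
E_1 = I, \quad
E_2 = \begin{pmatrix} 0 & 1 \\ 1 & 0 \end{pmatrix}, \quad
E_3 = \begin{pmatrix} 0 & -i \\ i & 0 \end{pmatrix}, \quad
E_4 = \begin{pmatrix} 1 & 0 \\ 0 & -1 \end{pmatrix},
\]
so that $H = \sum_{j=1}^{4} x_j E_j$ satisfies $\det H = x_1^2 - x_2^2 - x_3^2 - x_4^2$. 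Since $\det(gHg^*) = |\det g|^2 \det H = \det H$, the form $Q := \det$ is $G$-invariant. Set $w_0 = E_3$, so that $Q(w_0) = -1$. The elementary identity $g E_3 g^T = (\det g) E_3$, valid for any $2\times 2$ matrix $g$, shows that for $g \in \SL_2(\RR)$, where $g^* = g^T$, one has $g \cdot w_0 = w_0$; while for $g = ih$ with $h \in M_{2\times 2}(\RR)$ and $\det h = -1$ (the non-trivial coset of $\NSL2\RR$), $g \cdot w_0 = h E_3 h^T = -w_0$. Thus $\SL_2(\RR) \subseteq \stab_G(w_0)$ and $\NSL2\RR \subseteq \stab_G(\{\pm w_0\})$.

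For the reverse inclusions I would invoke the classical spin double cover arising from this representation, $\pi : G \to \SO(1,3)^+$. One checks $\ker \pi = \{\pm I\}$ by substituting in order $H = I$ (forces $g \in \SU(2)$), $H = E_4$ (forces $g$ diagonal) and $H = E_2$ (forces $g = \pm I$); the image then equals $\SO(1,3)^+$ by dimension and connectedness. The $\SO(1,3)^+$-stabiliser of the unit spacelike vector $w_0$ is the connected subgroup $\SO(1,2)^+$, so $\stab_G(w_0) = \pi^{-1}(\SO(1,2)^+)$. Write $H_1 := \SL_2(\RR)$ and $H_2 := \pi^{-1}(\SO(1,2)^+)$. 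Both contain $\ker \pi$, and both surject under $\pi$ onto $\SO(1,2)^+$ (for $H_1$ this is the classical identification $\SL_2(\RR)/\{\pm I\} \cong \SO(1,2)^+$, obtained by matching a connected image of dimension $3$ with the connected $3$-dimensional $\SO(1,2)^+$). Given $h_2 \in H_2$, pick $h_1 \in H_1$ with $\pi(h_1) = \pi(h_2)$; then $h_2 h_1^{-1} \in \ker \pi \subseteq H_1$, so $h_2 \in H_1$. This yields $\stab_G(w_0) = \SL_2(\RR)$. The stabiliser $\stab_G(\{\pm w_0\})$ is then an index-$\le 2$ extension of $\SL_2(\RR)$ containing $\NSL2\RR$, hence equals $\NSL2\RR$.

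Irreducibility of $W$ follows through $\pi$ from the classical irreducibility of the standard representation of $\SO(1,3)^+$ on $\RR^{1,3}$: a nonzero proper $\SO(1,3)^+$-invariant subspace would either yield a non-trivial $Q$-orthogonal decomposition (impossible, since $\SO(1,3)^+$ acts transitively on unit vectors of each signature, preventing invariant subspaces of intermediate signature) or force an invariant null line, which is also excluded by transitivity on non-zero null rays of a fixed time-orientation. The only genuinely non-formal step in the whole argument is the identification $\pi^{-1}(\SO(1,2)^+) = \SL_2(\RR)$; the rest is routine $2 \times 2$ matrix bookkeeping combined with well-known facts about the spin cover $\SL_2(\CC) \to \SO(1,3)^+$.
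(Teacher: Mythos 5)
Your proof is correct and takes essentially the same route as the paper: the paper also realizes $W$ as the Hermitian $2\times 2$ matrices with $Q=\det$, only with the action $g.A = g^{-*}Ag^{-1}$ and $w_0 = \left(\begin{smallmatrix}0&i\\-i&0\end{smallmatrix}\right)$, a model that the adjugate map carries isomorphically to yours (sending your $E_3$ to the paper's $w_0$). The paper dismisses the stabilizer and irreducibility verifications with ``the remaining follows,'' and your identity $gJg^T=(\det g)J$ together with the spin double cover $\SL_2(\CC)\to\SO(1,3)^+$ supplies exactly those omitted details correctly.
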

\begin{proof}
  We first introduce $W$ as the space of Hermitian $2\times 2$ matrices, on which $G$ acts by $g.A = g^{-*}Ag^{-1}$.
  Here $g^*$ refers to the complex conjugate of the transposed matrix, and $g^{-*} = (g^*)^{-1}$.
  The quadratic form $\det$ is preserved by the $G$ action.
  Identify $W$ with $\RR^4$ using the basis
  \begin{align}\label{eq: W basis}
    \left(\begin{pmatrix}
              1 & 0 \\0&1
            \end{pmatrix}, \begin{pmatrix}
                             1 & 0 \\0&-1
                           \end{pmatrix}, \begin{pmatrix}
                                            0 & 1 \\1&0
                                          \end{pmatrix}, \begin{pmatrix}
                                                           0 & i \\-i&0
                                                         \end{pmatrix}\right).
  \end{align}
  The remaining follows with $w_0=\left(\begin{smallmatrix}
        0 & i \\-i&0
      \end{smallmatrix}\right)$.
\end{proof}
Recall that $w_{\bar S} = g_S^{-1}.w_0$ and $W_{\bar S} = \Gamma .w_{\bar S}$.
\begin{claim}\label{claim: discreteness of W bar S}
  The set $W_{\bar S}$ is discrete.
\end{claim}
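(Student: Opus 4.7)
The plan is to identify $G\cdot w_{\bar S}$ as a closed subset of $W$, translate the discreteness of $\Gamma\cdot w_{\bar S}$ into the statement that the image of $\Gamma$ in the homogeneous space $G/H$ is discrete, where $H := g_S^{-1}\SL_2(\RR)g_S = \stab_G(w_{\bar S})$, and then exploit that the orbit $S$ is closed in $G/\Gamma$.

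First, I would observe that $\stab_G(w_{\bar S}) = g_S^{-1}\stab_G(w_0)g_S = H$, so $\stab_\Gamma(w_{\bar S}) = H\cap\Gamma = g_S^{-1}\Lambda g_S$. Since $Q$ is $G$-invariant and $Q(w_0) = -1$, the orbit $G\cdot w_{\bar S}$ lies in the level set $\{Q = -1\}\subset W$; a dimension and connectedness check (both are connected smooth manifolds of real dimension $6-3 = 3$) gives equality, so $G\cdot w_{\bar S}$ is closed in $W$ and the orbit map induces a $G$-equivariant homeomorphism $G/H \xrightarrow{\sim} G\cdot w_{\bar S}$. Hence $\Gamma\cdot w_{\bar S}$ is discrete in $W$ if and only if the image of $\Gamma$ is discrete in $G/H$.

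Next, I would bring in the geometry of $S$. Since $S = \SL_2(\RR)\cdot\pi_\Gamma(g_S)$ is periodic (hence closed) with $\Lambda$ a lattice in $\SL_2(\RR)$, left-translation by $g_S^{-1}$ shows that $H\cdot\pi_\Gamma(e) = g_S^{-1}\cdot S$ is closed in $G/\Gamma$ and that $H\cap\Gamma$ is a lattice in $H$. By the standard fact that a closed orbit of finite covolume gives a proper embedding, the map $H/(H\cap\Gamma)\to G/\Gamma$ is a proper injection onto the closed orbit $H\cdot\pi_\Gamma(e)$.

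Finally, I would argue by contradiction: suppose the image of $\Gamma$ in $G/H$ is not discrete, so there exist $\gamma_n\in\Gamma$ with distinct cosets $\gamma_n H$ converging to some $g_\infty H$. Using a local section of the principal $H$-bundle $G\to G/H$ near $g_\infty H$, write $\gamma_n = g_n h_n$ with $g_n\to g_\infty$ in $G$ and $h_n\in H$. Then, using $\gamma_n\in\Gamma$, we have $\pi_\Gamma(h_n) = g_n^{-1}\cdot\pi_\Gamma(e)$, a convergent (hence bounded) sequence in the closed orbit $H\cdot\pi_\Gamma(e)$. Properness of $H/(H\cap\Gamma)\to G/\Gamma$ lets us choose $\lambda_n\in H\cap\Gamma$ with $h_n\lambda_n^{-1}$ lying in a compact subset of $H$; extracting a subsequence, $h_n\lambda_n^{-1}\to h_\infty\in H$. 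Then $\gamma_n\lambda_n^{-1} = g_n\cdot(h_n\lambda_n^{-1})\to g_\infty h_\infty$ in $G$ with $\gamma_n\lambda_n^{-1}\in\Gamma$, and discreteness of $\Gamma$ forces $\gamma_n\lambda_n^{-1}$ to be eventually a fixed element $\mu\in\Gamma$; hence $\gamma_n H = \mu\lambda_n H = \mu H$ is eventually constant, contradicting distinctness. The main subtlety is coordinating the two lifts: from $G/H$ back to $G$ via the local section, and from $G/\Gamma$ back to $H$ via properness of the embedding $H/(H\cap\Gamma)\hookrightarrow G/\Gamma$.
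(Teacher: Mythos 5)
Your proposal is correct and follows essentially the same route as the paper: both identify the quadric $\{Q=-1\}$ with the homogeneous space $G/\SL_2(\RR)$ (closed in $W$), reduce discreteness of $W_{\bar S}$ to discreteness of the $\Gamma$-image there, and then rule out accumulation using the closedness of the orbit $S$ together with discreteness of $\Gamma$ in $G$. The only cosmetic difference is bookkeeping: you package the closed-orbit input as properness of $H/(H\cap\Gamma)\to G/\Gamma$ plus a local section of $G\to G/H$, while the paper uses directly that points of $S$ near a given point are obtained by small elements of $\SL_2(\RR)$ — the same ingredient in different clothing.
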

\begin{proof}
  Denote by $\pi_{\SL_2(\RR)}:G\to G/\SL_2(\RR)$ the natural projection.
  Note that \[W_{\bar S}\subseteq R = \{w\in W:Q(w) = -1\}\stackrel{g.e_1 \mapsto \pi_{\SL_2(\RR)}(g)} \cong G/\SL_2(\RR),\] and $R$ is closed in $W$.
  Hence to verify that $\Gamma .w_{\bar S}$ is discrete in $W$, it is sufficient to verify
  that $\pi_{\SL_2(\RR)}(\Gamma g_S^{-1})$ is discrete.
  Suppose that there is a sequence of points
  \[\pi_{\SL_2(\RR)}(\gamma_ig_S^{-1}) \xrightarrow{i\to \infty} \gamma_\infty \pi_{\SL_2(\RR)}(g_0),\] for $\gamma_1,\gamma_2,\ldots\in \Gamma$ and $g_0\in G$.
  Hence there is a sequence of matrices $h_i\in \SL_2(\RR)$ such that
  \begin{align}\label{eq: convergence in G}
    \gamma_i g_S^{-1}h_i \xrightarrow{i\to \infty} g_0.
  \end{align}
  Inverting and projecting to $G/\Gamma$ we deduce that
  \[\pi_\Gamma(h_i^{-1}g_S)  \xrightarrow{i\to \infty} \pi_\Gamma(g_0^{-1}). \]
  Since $\pi_\Gamma(h_i^{-1}g_S)\in S$ and $S$ is closed, also $\pi_\Gamma(g_0^{-1})\in S$.
  Since $S$ is a closed orbit, for some $\varepsilon>0$ sufficiently small (depending on $g_0$),
  for every $p\in S$ with $d_{G/\Gamma}(\pi_\Gamma(g_0^{-1}), p)<\varepsilon$ we have $p = h.\pi_{\Gamma}(g_0^{-1})$ for some $h\in \SL_2(\RR)$ with $d_{\SL_2(\RR)}(h,I) = O(d_{G/\Gamma}(\pi_\Gamma(g_0^{-1}), p))$.
  Here $d_{\SL_2(\RR)}$ is the right invariant Riemannian metric on $\SL_2(\RR)$.
  It follows that for sufficiently large $i$, there is $h_i'\in H$ such that
  \begin{align}\label{eq: equality mod Gamma}
    \pi_\Gamma(h_i'^{-1}h_i^{-1}g_S) = \pi_\Gamma(g_0^{-1}),
  \end{align}
  and $d_{\SL_2(\RR)}(h_i',I) \xrightarrow{i\to \infty}0$.
  Eq. \eqref{eq: equality mod Gamma} is equivalent to
  \begin{align}\label{eq: equality mod Gamma2}
    g_S^{-1}h_ih_i' \in  \Gamma g_0.
  \end{align}
  However, Eq. \eqref{eq: convergence in G} and the size estimate for $h_i'$ imply that
  \begin{align}\label{eq: equality in G}
    \gamma_i g_S^{-1}h_i h_i' = g_0.
  \end{align}
  for all $i$ sufficiently large.
  Consequently, $\pi_{\SL_2(\RR)}(\gamma_i g_S^{-1}) = \pi_{\SL_2(\RR)}(g_0)$ for all $i$ sufficiently large.
  This means that every converging sequence in $\pi_{\SL_2(\RR)}(\Gamma g_S^{-1})$ fixes on its limit, which implies that this set is discrete, as desired.
\end{proof}
We can now prove the remaining of Lemma \ref{lem: geometry near S}.
To construct $\alpha_{\bar S}$ we consider the quotient map and projection $\pi_{w_0}:W=\RR^4\to \RR^3$ sending $(x_i)_{i=1}^4$ to $(x_i)_{i=1}^3$.
Thus we consider the standard norm on $\RR^3$ and
Define $\alpha_{\bar S}:G\to \RR\sqcup \{\infty\}$ by
\begin{align}\label{eq: alpha S def2}
  \alpha_{\bar S}(g) = \sup_{w\in W_{\bar S}} -\log\|\pi_{w_0}(g.w)\|.
\end{align}
\begin{claim}\label{claim: sup obtaind in alpha bar S}
  The supremum in Eq. \eqref{eq: alpha S def2} is attained and the function $\alpha_{\bar S}$ is continuous.
\end{claim}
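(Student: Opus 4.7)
The strategy is to extract the following finiteness property and then bootstrap both attainment and continuity from it: for every $g\in G$ and every $M>0$, the set
\[\{w\in W_{\bar S}: \|\pi_{w_0}(g.w)\|\le M\}\]
is finite. To establish this, I would use $G$-invariance of $Q$ together with $Q(w_0)=-1$ (Claim~\ref{claim: W}) to note that every $v\in g.W_{\bar S}$ lies on the hyperboloid $R=\{Q=-1\}$. In the coordinates of basis~\eqref{eq: W basis}, the identity $v_4^2=1+v_1^2-v_2^2-v_3^2$ combined with $\|\pi_{w_0}(v)\|^2=v_1^2+v_2^2+v_3^2\le M^2$ forces $v_4^2\le M^2+1$ and hence the whole vector $v$ to be bounded. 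Since $g\cdot(-)$ is a homeomorphism of $W$ and $W_{\bar S}$ is discrete by Claim~\ref{claim: discreteness of W bar S}, the set $g.W_{\bar S}$ is also discrete, so its intersection with a bounded slab is finite.

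With this in hand, attainment of the supremum is immediate. Either some $w\in W_{\bar S}$ satisfies $\pi_{w_0}(g.w)=0$, in which case $g.w=\pm w_0$ by the hyperboloid calculation above and $\alpha_{\bar S}(g)=\infty$ is attained in the extended sense, matching point~\ref{point: infty only on S} of Lemma~\ref{lem: geometry near S}; or otherwise $w\mapsto\|\pi_{w_0}(g.w)\|$ is positive on $W_{\bar S}$ and by the finiteness statement the infimum is realized on a finite set, so the sup in~\eqref{eq: alpha S def2} is attained.

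For continuity, I would combine upper and lower semicontinuity. Lower semicontinuity is automatic: $\alpha_{\bar S}$ is a supremum of the continuous functions $g\mapsto -\log\|\pi_{w_0}(g.w)\|$ valued in $\RR\cup\{\infty\}$. For upper semicontinuity, take $g_n\to g$ and witnesses $w_n\in W_{\bar S}$ achieving $\alpha_{\bar S}(g_n)$. Fix any auxiliary $w^*\in W_{\bar S}$ with $g.w^*\notin \RR w_0$; then $\alpha_{\bar S}(g_n)\ge -\log\|\pi_{w_0}(g_n.w^*)\|$ is bounded below, so $\|\pi_{w_0}(g_n.w_n)\|$ stays bounded above by some $M_0$. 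Applying the hyperboloid bound uniformly in $n$ forces $\{g_n.w_n\}$ to be bounded in $W$, and since $\{g_n^{-1}\}$ is bounded near $g^{-1}$ the sequence $\{w_n\}$ is bounded too. Discreteness of $W_{\bar S}$ reduces us to a subsequence with $w_n=w^{**}$ constant, whence
\[\lim_n \alpha_{\bar S}(g_n)=\lim_n -\log\|\pi_{w_0}(g_n.w^{**})\|=-\log\|\pi_{w_0}(g.w^{**})\|\le \alpha_{\bar S}(g).\]

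I expect the main obstacle to be justifying the uniform boundedness of the witnesses $w_n$: unlike the pointwise finiteness, this requires that the hyperboloid slab bound $v_4^2\le \|\pi_{w_0}(v)\|^2+1$ is uniform in $g$, which is fine since it is purely a statement about $R$ and $\pi_{w_0}$, while the dependence on $g$ enters only through $g_n^{-1}$, which is automatically bounded on a convergent sequence. The case $\alpha_{\bar S}(g)=\infty$ is then handled by lower semicontinuity alone.
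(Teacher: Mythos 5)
Your proposal is correct and rests on exactly the same mechanism as the paper's proof: the sublevel sets $\{v: Q(v)=-1,\ \|\pi_{w_0}(v)\|\le M\}$ are compact (the hyperboloid slab bound), so by discreteness of $W_{\bar S}$ (Claim \ref{claim: discreteness of W bar S}) only finitely many $w\in W_{\bar S}$ can compete near the supremum, giving attainment. The paper derives continuity by showing that on each compact $C\subset G$ the function is a maximum of finitely many continuous functions $g\mapsto-\log\|\pi_{w_0}(g.w)\|$ over a single finite set $W_{\bar S,C}$, whereas you split into lower semicontinuity plus an upper-semicontinuity argument with witness stabilization along subsequences; these are equivalent packagings of the same finiteness fact, and your handling of the $\alpha_{\bar S}(g)=\infty$ case via lower semicontinuity alone is fine.
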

\begin{proof}
  Let $R = \{w\in W:Q(w) = -1\}$ and $\ell: R \to \RR\cup \{\infty\}$ be defined by $g(w) = -\log \|\pi_{w_0}(w)\|$. Note that $R_{t}:= \ell^{-1}([t,\infty])$ is compact for all $t\in \RR$.
  Since $W_{\bar S}$ is discrete, $g.W_{\bar S}$ is discrete as well for all $g\in G$, hence $R_t\cap g.W_{\bar S}$ is a finite set, for every $t\in \RR$ and $g\in G$.
  Rewrite Eq. \eqref{eq: alpha S def2},
  \begin{align}\label{eq: alpha S bar restrict sup}
    \begin{split}
      \alpha_{\bar S}(g) & =
      \sup\{\ell(w): w\in g.W_{\bar S}\} =
      \sup\{\ell(w): w\in g.W_{\bar S}, \ell(w) \ge \alpha_{\bar S} - 1\} \\&=
      \sup\{\ell(w): w\in g.W_{\bar S}\cap R_{\alpha_{\bar S} - 1}\}.
    \end{split}
  \end{align}
  The rightmost supremum in Eq. \eqref{eq: alpha S bar restrict sup} ranges over a finite set and must be attained.
  Let $C\subseteq G$ be a compact subset. We will prove that $\alpha_{\bar S}$ is continuous on $C$. Then the infimum
  \[z_C = \inf_{g\in C}\alpha_{\bar S}(g) \ge \inf_{g\in C}\ell(g.w_{\bar S}),\]
  satisfies $z_C \in \RR\cup \{\infty\}$ by the compactness of $C$.
  The set \[W_{\bar S, C} = W_{\bar S} \cap C^{-1}.R_{z_C},\]
  is finite by the discreteness of $W_{\bar S}$.
  Then for all $g\in G$,
  \begin{align*}
    \alpha_{\bar S}(g) & =
    \sup\{\ell(w): w\in g.W_{\bar S}, \ell(w) \ge z_C\} \\&=
    \sup\{\ell(w): w\in g.(W_{\bar S}\cap g^{-1}R_{Z_C})\} =
    \sup\{\ell(w): w\in g.W_{\bar S,C}\}.
  \end{align*}
  However, $W_{\bar S,C}$ is finite, and hence $\alpha_{\bar S}$ is continuous on $C$.
  Since $G$ is locally compact, this implies that $\alpha_{\bar S}$ is continuous everywhere.
\end{proof}
\begin{proof}[Proof of Claim \ref{lem: geometry near S} Point \ref{point: infty only on S}]
  Let $g\in G$.
  Note that
  $\alpha_{\bar S}(g) = \infty $ if and only if there exists $\gamma\in \Gamma$ such that $\pi_{w_0}(g\gamma g_S^{-1}.w_0) = 0$.
  Since $Q(g\gamma g_S^{-1}.w_0) = Q(w_0) = -1$ and $\pm w_0$ are the only vectors $w\in \ker \pi_{w_0}$ satisfying $Q(w) = -1$ we deduce that $g\gamma g_S^{-1}.w_0 = \pm w_0$, and hence $g\gamma g_S^{-1} \in \NSL2\RR$.
  Altogether, we have the equivalence \[\alpha_{\bar S}(g) = \infty \Leftrightarrow g \in \NSL2\RR g_S\Gamma = \pi_\Gamma^{-1}(\bar S).\]
\end{proof}
It remains to show the Point \ref{point: S unique}.
Previously we have shown that if $\alpha_{\bar S}(g) = \infty$ then $g\in S$ and in particular $g\in \pi_\Gamma^{-1}(\coreG)$. 
The following claim is similar.
\begin{claim}\label{claim: close to orbit imp in coreGn}
  There is $C_{Q}>0$ independent of $\Gamma$ such that for every $g_0\in G$ if $\alpha_{\bar S}(g_0) \ge C_{Q}$ then $g_0\in \pi_\Gamma^{-1}(\coreGn)$. 
\end{claim}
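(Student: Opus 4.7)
The plan is to exploit the explicit geometry of the hyperboloid $R = \{w \in W : Q(w) = -1\}$, together with the fact that $G$ acts transitively on $R$ with stabilizer $\NSL2\RR = \stab_G(\{\pm w_0\})$; the constant $C_Q$ will depend only on these objects together with the metric $d_G$, which are intrinsic to $G$.

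First I would unpack the hypothesis. If $\alpha_{\bar S}(g_0) \ge C_Q$, then by Claim \ref{claim: sup obtaind in alpha bar S} the supremum is attained at some $w \in W_{\bar S}$, so $w = \gamma g_S^{-1}.w_0$ for some $\gamma \in \Gamma$ and $\|\pi_{w_0}(g_0.w)\| \le e^{-C_Q}$. Set $v = g_0.w$; since $Q(v) = Q(w_0) = -1$, in the coordinates of Claim \ref{claim: W} where $Q(x_1,x_2,x_3,x_4) = x_1^2 - x_2^2 - x_3^2 - x_4^2$ and $w_0 = (0,0,0,1)$, we have $v_4^2 = 1 + v_1^2 - v_2^2 - v_3^2 \ge 1 - \|\pi_{w_0}(v)\|^2$, so $|v_4| \ge \sqrt{1 - e^{-2C_Q}}$ and $\|v - \epsilon w_0\| = O(e^{-C_Q})$ where $\epsilon = \operatorname{sign}(v_4) \in \{\pm 1\}$ and the implicit constant is absolute.

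Next I would use the transitivity $G.w_0 = R$ to lift proximity on $R$ to proximity in $G$. The orbit map $g \mapsto g.w_0$ is a smooth submersion, and since $\NSL2\RR$ contains an element $g_-$ with $g_-.w_0 = -w_0$ (e.g.\ $g_- = i\bigl(\begin{smallmatrix}1 & 0\\0&-1\end{smallmatrix}\bigr)$), local sections near $w_0$ composed with $g_-$ yield local sections near $-w_0$. By continuity, there exists $\delta > 0$, depending only on the representation, such that for every $v' \in R$ with $\|v' \mp w_0\| < \delta$ one can find $k \in G$ with $d_G(k,I) < 1/2$ and $k.v' = \pm w_0$. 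Choosing $C_Q$ large enough that the Euclidean error $O(e^{-C_Q})$ of Step 2 is below $\delta$ produces such a $k$ with $k.v = \epsilon w_0$.

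Finally I would close the loop. The equation $k g_0 \gamma g_S^{-1}.w_0 = \epsilon w_0$ means $k g_0 \gamma g_S^{-1} \in \stab_G(\{\pm w_0\}) = \NSL2\RR$, so $\pi_\Gamma(kg_0) \in \NSL2\RR\, g_S \Gamma / \Gamma = \bar S \subseteq \coreG$, the last inclusion being Observation \ref{obs: prob measures on core} Point \ref{cor: S is in core}. The right $G$-invariance of $d_G$ gives $d_G(g_0, kg_0) = d_G(I,k) < 1/2$, hence $d_{G/\Gamma}(\pi_\Gamma(g_0), \pi_\Gamma(kg_0)) < 1$, so $g_0 \in \pi_\Gamma^{-1}(\coreGn)$. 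There is no serious obstacle here: the only delicate point is verifying that $C_Q$ can be chosen purely from the local geometry of $R$ near $\{\pm w_0\}$ and the metric $d_G$, which is exactly what the implicit-function/continuity argument in Step 3 provides, and none of this data depends on $\Gamma$.
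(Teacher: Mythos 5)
Your proposal is correct and follows essentially the same route as the paper: attain the supremum, note that $\|\pi_{w_0}(g_0.w)\|\le e^{-C_Q}$ forces $g_0.w$ to be $O(e^{-C_Q})$-close to $\pm w_0$ on the hyperboloid $\{Q=-1\}$, perturb by a small $k\in G$ so the perturbed point lies exactly on $\bar S$, and then use $\bar S\subseteq\coreG$ (Observation \ref{obs: prob measures on core}) plus right-invariance of $d_G$ to land in the unit neighborhood $\coreGn$. The only cosmetic differences are that you verify the closeness to $\pm w_0$ by the explicit hyperboloid equation and identify $\bar S$ via the stabilizer $\NSL2\RR$ directly, where the paper cites transversality of $R$ to $\ker\pi_{w_0}$ and its Point \ref{point: infty only on S}.
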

\begin{proof}
  By Claim \ref{claim: sup obtaind in alpha bar S}, there is $w\in W_{\bar S}$ such that 
  $\alpha_{\bar S}(g_0) = -\log\|\pi_{w_0}(g_0.w)\|$. 
  This implies that $\|\pi_{w_0}(g_0.w)\| \le e^{-C_Q}$. 
  Since $R = \{w\in W: Q(w) = -1\}$ is a manifold that is transversal to $\ker \pi_{w_0}$, we deduce that there is a sign $\pm$ so that $\|g_0.w - \pm w_0\| = O(e^{-C_Q})$. 
  Hence there is $g\in G$ with $d_G(g,I) = O(e^{-C_Q})$ such that $gg_0.w = \pm w_0$. 
  If $C_Q$ is chosen sufficiently big we obtain an upper bound $d_G(g,I) \le 1$. 
  Hence $\alpha_{\bar S}(gg_0) = \infty$, 
  which implies that $\pi_\Gamma(gg_0)\in \bar S$. 
  By Observation \ref{obs: prob measures on core} Point \ref{cor: S is in core}, we deduce that $\pi_\Gamma(gg_0) \in \coreG$. 
  Since $d_G(g,I) \le 1$ we deduce that $\pi_\Gamma(g_0) \in \coreGn$.
\end{proof}
We will use the following claims:
\begin{claim}\label{claim: vector stability function}
  There is a form $\Psi: V \times W \to \RR$ that satisfies the following conditions
  \begin{enumerate}
    \item $\Psi$ is $G$-invariant in the sense that $\Psi(g.v, g.w) = \Psi(v,w)$ for all $v\in V$ and $w\in W$.
    \item $\Psi$ is linear in $W$ and Hermitian in $V$.
    \item It satisfies that $|\Psi(v,w_0)| = \inf_{g\in \SL_2(\RR)}\|g.v\|^2$.
    \item If the infimum is nonzero then it is attained.
  \end{enumerate}
\end{claim}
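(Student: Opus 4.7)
The plan is to work in the explicit model of $W$ from the proof of Claim \ref{claim: W}, namely as the space of $2\times 2$ complex Hermitian matrices with $G$-action $g.A = g^{-*}A g^{-1}$, and to define
\[\Psi(v, A) = v^* A v, \qquad v\in V = \CC^2,\ A\in W.\]

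The three algebraic conditions fall out immediately. Linearity in $A$ is clear. For each fixed $A$, the map $v\mapsto v^* A v$ is (by virtue of $A^* = A$) a real-valued Hermitian quadratic form on $V$, which is the ``Hermitian in $V$'' requirement. And $G$-invariance is the one-line computation
\[\Psi(g.v, g.A) = (gv)^*(g^{-*}Ag^{-1})(gv) = v^*(g^*g^{-*})A(g^{-1}g)v = v^*Av.\]

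The analytic content is the identity $|\Psi(v, w_0)| = \inf_{g\in \SL_2(\RR)} \|g.v\|^2$. A direct evaluation using $w_0 = \left(\begin{smallmatrix} 0 & i \\ -i & 0\end{smallmatrix}\right)$ yields $\Psi(v, w_0) = 2\,\mathrm{Im}(v_1 \bar v_2)$. For the right-hand side, I would decompose $v = v_R + iv_I$ with $v_R, v_I \in \RR^2$, so that for every $g\in \SL_2(\RR)$,
\[\|g.v\|^2 = \|g v_R\|^2 + \|g v_I\|^2 = \mathrm{tr}(gMg^T), \qquad M := v_R v_R^T + v_I v_I^T = [v_R\,|\,v_I]\,[v_R\,|\,v_I]^T.\]
The matrix $M$ is real, symmetric, and positive semi-definite, and since $\det g = 1$ we have $\det(gMg^T) = \det M = (\det[v_R|v_I])^2 = (\mathrm{Im}(v_1 \bar v_2))^2$. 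Applying AM-GM to the two eigenvalues of the $2\times 2$ positive semi-definite matrix $gMg^T$ gives $\mathrm{tr}(gMg^T) \geq 2\sqrt{\det M}$, which yields $\inf_g \|g.v\|^2 \geq 2|\mathrm{Im}(v_1 \bar v_2)| = |\Psi(v, w_0)|$.

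For the matching upper bound together with condition (4), when $\det M > 0$ the real positive-definite $M$ has a real positive-definite square root $M^{1/2}$, and one checks that $g_0 := (\det M)^{1/4} M^{-1/2}$ lies in $\SL_2(\RR)$ and satisfies $g_0 M g_0^T = \sqrt{\det M}\, I$, whence $\mathrm{tr}(g_0 M g_0^T) = 2\sqrt{\det M}$; so the infimum is attained. When $\det M = 0$, both sides vanish ($\|g.v\|^2 \to 0$ along $g = \ta(-t)k$ for a suitable rotation $k$), and condition (4) is vacuous. There is no serious obstacle here; the whole claim reduces to selecting the natural pairing $\Psi(v,A) = v^* A v$, and the only mildly nontrivial ingredient is the AM-GM computation of $\inf \mathrm{tr}(gMg^T)$.
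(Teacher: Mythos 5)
Your proposal is correct and follows essentially the same route as the paper: the same pairing $\Psi(v,w)=v^{*}wv$, the same splitting of $v$ into real and imaginary parts, and the same identification of $|\Psi(v,w_0)|$ with twice the determinant they span, your AM--GM bound $\mathrm{tr}(gMg^{T})\ge 2\sqrt{\det M}$ being exactly the paper's inequality $|2\det(v_1,v_2)|\le\|v_1\|^2+\|v_2\|^2$ rewritten in Gram-matrix notation. The only minor difference is that you exhibit the explicit minimizer $g_0=(\det M)^{1/4}M^{-1/2}$, whereas the paper argues the attainment by noting that some $h\in\SL_2(\RR)$ makes the two real vectors orthogonal and of equal length.
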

\begin{proof}
  Since $V = \CC^2$ and $W$ is the space of hermitian matrices, we can define the form $\Psi(v, w) = v^{*}wv$, where $v^*$ is the complex conjugate of $v$, thought of as a row vector.
  Writing $v = \binom{x}{y}\in \CC^2$, algebraic manipulations show that
  \[\Psi(v,w_0) = 2 {\rm Im} (x\bar y) = \det(v,\bar v)/i. \]
  Here, by $\det(v,\bar v)$ we refer to the determinant of the matrix whose columns are $v,\bar v$.
  Suppose that $\Psi(v,w_0) = 0$. Then $x \bar y\in \RR$, which is equivalent to saying that $v$ is proportional to a real vector.
  We know that $\SL_2(\RR)$ can shrink arbitrarily real vectors, which implies $\inf_{g\in \SL_2(\RR)}\|g.v\|^2 = 0$.
  Suppose now that $2 {\rm Im} (x\bar y)\neq 0$. Then the two vectors denote by $v_1 = \frac{v+\bar v}{2}$ and $v_2 = \frac{v-\bar v}{2i}$ are real.
  Hence,
  \[\Psi(v,w_0) = \det(v,\bar v)/i = -2\det(v_1, v_2).\]
  Note that $\|v\|^2 = \|v_1 + iv_2\|^2 = \|v_1\|^2 + \|v_2\|^2$.
  One can verify that
  \begin{align}\label{eq: det les sum of squeres}
    |2\det(v_1, v_2)| \le \|v_1\|^2 + \|v_2\|^2
  \end{align}
  for every pair of real vectors. Moreover, equality holds in Eq. \eqref{eq: det les sum of squeres} if and only if $v_1\perp v_2$ and $\|v_1\| = \|v_2\|$. Since whenever $v_1$ and $v_2$ are linearly independent, there is always $h\in \SL_2(\RR)$ such that $g.v_1\perp g.v_2$ and $\|g.v_1\| = \|g.v_2\|$, the desired result holds.
\end{proof}
\begin{claim}\label{claim: psi not small}
  There is $\varepsilon = \varepsilon_{\Gamma, \bar S} = e^{-2T_{\Gamma, S}}>0$ such that for all
  $v\in V_{\cusps_\Gamma}$ and $w\in W_{\bar S}$ we have that either $\Psi(v,w) = 0$ or $|\Psi(v,w)| \ge \varepsilon_{\Gamma, \bar S}$.
\end{claim}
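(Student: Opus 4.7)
My plan is to exploit $G$-invariance of $\Psi$ and Claim \ref{claim: vector stability function} to express $|\Psi(v,w)|$ as the squared minimum norm of an $\SL_2(\RR)$-orbit in $V$, then combine the critical-point geometry of this minimum with the cusp structure furnished by Corollary \ref{cor: only known cusps in big cusps} to force a contradiction whenever $|\Psi(v,w)| < e^{-2T_{\Gamma, S}}$. First, writing $v = \gamma_1 v_c$ for some $c \in \cusps_\Gamma$ and $w = \gamma_2 w_{\bar S}$, $G$-equivariance of $\Psi$ gives $\Psi(v,w) = \Psi(g_S \gamma v_c, w_0)$ with $\gamma := \gamma_2^{-1}\gamma_1$. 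Assuming $\Psi(v,w) \neq 0$, Claim \ref{claim: vector stability function} yields $|\Psi(v,w)| = \inf_{g \in \SL_2(\RR)} \|g \cdot g_S \gamma v_c\|^2$ and this infimum is attained at some $g^* \in \SL_2(\RR)$. Set $h := g^* g_S \gamma \in G$ and $u := h v_c$, so $\|u\|^2 = |\Psi(v,w)|$ and $\pi_\Gamma(h) \in S$.

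Next, a short Lagrangian computation along the standard $\mathfrak{sl}_2(\RR)$-generators $H, E, F$ shows that the first-order minimum condition $\Re\langle u, Xu\rangle = 0$ for all $X \in \mathfrak{sl}_2(\RR)$ forces $u = (x,y)^\top$ to satisfy $|x|=|y|$ and $\overline x y \in i\RR$; hence $u = \lambda(1, \pm i)^\top$ for some $\lambda \in \CC^\times$, and crucially $u \notin \CC \cdot \RR^2$.

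Now suppose for contradiction $|\Psi(v,w)| < e^{-2T_{\Gamma, S}}$. Then $\|hv_c\| = \sqrt{|\Psi(v,w)|} < e^{-T_{\Gamma, S}} < 1$, so by Lemma \ref{lem: geometry near cusp} we have $\alpha_{\cusps_\Gamma}(h) = -\log \|hv_c\| > T_{\Gamma, S}$. Since $\pi_\Gamma(h) \in S$, Corollary \ref{cor: only known cusps in big cusps} together with Claim \ref{claim: small cusps in larger cusps} puts $h = z_0^\Lambda h_{c_\Lambda} g_{c_\Gamma'} \gamma_2^{-1}$ for some $c_\Lambda \in \cusps_\Lambda$, $c_\Gamma' := c_{\Gamma, c_\Lambda}$, $\gamma_2 \in \Gamma$, and $z_0^\Lambda \in \SL_2(\RR)_{\geq 0}$ deep in the $c_\Lambda$-cusp. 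Since $\|hv_c\| < 1$ also places $\pi_\Gamma(h)$ in the $\Gamma$-cusp neighborhood of $c$, the disjointness of $\Gamma$-cusp neighborhoods (Claim \ref{claim: Siegel decomposition}, Point \ref{point: Siegel disjoint}) forces $c = c_\Gamma'$.

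The main obstacle—and the heart of the argument—is identifying $\kappa := g_{c_\Gamma'}\gamma_2^{-1}g_{c_\Gamma'}^{-1}$ as an element of $M_{c_\Gamma'}$. Both $z_0^\Lambda h_{c_\Lambda}$ and $\tilde z_0 := h g_{c_\Gamma'}^{-1} = z_0^\Lambda h_{c_\Lambda} \kappa$ lie in $H_{\geq 0}$ (the former because $\alpha_{\rm Siegel}(z_0^\Lambda h_{c_\Lambda}) = \alpha_{\cusps_\Gamma}(h) \geq 0$; the latter because $\|\tilde z_0 e_1\| = \|hv_c\| < 1$), and both are lifts of the same element of $D_{c_\Gamma', 0} = H_{\geq 0}/M_{c_\Gamma'}$ under the injection of Point \ref{point: Siegel bijection}. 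Hence they differ by right multiplication by an element of $M_{c_\Gamma'}$, forcing $\kappa \in M_{c_\Gamma'} \subseteq B_\CC^*$. Consequently $\kappa e_1 = \mu' e_1$ with $|\mu'| = 1$, and writing $h_{c_\Lambda} e_1 = \mu_{c_\Lambda} e_1$ we get $h v_c = z_0^\Lambda h_{c_\Lambda} \kappa e_1 = \mu' \mu_{c_\Lambda} z_0^\Lambda e_1 \in \CC \cdot \RR^2$, since $z_0^\Lambda \in \SL_2(\RR)$ preserves $\RR^2$. This contradicts $u = hv_c \notin \CC \cdot \RR^2$ from the critical-point analysis, completing the proof with $\varepsilon_{\Gamma, \bar S} = e^{-2T_{\Gamma, S}}$.
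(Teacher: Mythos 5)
Your proof is correct, and it shares the paper's skeleton: use Claim \ref{claim: vector stability function} to realize $|\Psi(v,w)|$ as the attained minimum of $\|g.(g_S\gamma.v_c)\|^2$ over $g\in \SL_2(\RR)$, observe that the minimizer produces a point of $S$ with $\alpha_{\cusps_\Gamma}>T_{\Gamma,S}$, and then invoke the alignment of the cusps of $S$ with those of $G/\Gamma$ (Corollary \ref{cor: only known cusps in big cusps}). Where you diverge is the finishing move. The paper packages the contradiction as a local-maximum statement: at the norm-minimizer, $\alpha_{\cusps_\Gamma}$ has a local maximum along the orbit $S$, which contradicts Observation \ref{obs: no local maxima for alpha high in the cusp on S} (itself a consequence of Corollary \ref{cor: only known cusps in big cusps} and the absence of local maxima of $\alpha_{\rm Siegel}$ on $\SL_2(\RR)$). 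You instead unwind Corollary \ref{cor: only known cusps in big cusps} explicitly, using the injectivity of $-\bullet g_c$ on $D_{c,0}$ and the disjointness of the cusp neighborhoods from Claim \ref{claim: Siegel decomposition} to pin the element $\kappa$ into $M_{c_\Gamma'}\subseteq B_\CC^*$, and conclude that the minimizing vector $u=h.v_c$ lies in $\CC\cdot\RR^2$, contradicting the first-order criticality computation $u=\lambda(1,\pm i)^\top$. Both finishes rest on the same structural input; the paper's is shorter because the local-maximum observation hides the bookkeeping with the $\bullet$-maps, while yours is more explicit and algebraic. Two small remarks: your Lagrangian analysis is more than you need, since $u\in\CC\cdot\RR^2$ already contradicts $\Psi(u,w_0)=\Psi(v,w)\neq 0$ directly (the minimization is still needed, of course, to get $\|u\|^2=|\Psi(v,w)|$ and hence the deep-cusp location); and you recycle the symbol $\gamma_2$ for the new element of $\Gamma$ produced when lifting $\pi_\Gamma(h)$, which should get a fresh name.
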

\begin{proof}
  Let $v = \gamma_1 g_{c_\Gamma}^{-1} .e_1\in V_{\cusps_\Gamma}, w = \gamma_2 g_S^{-1} .w_0 \in W_{\bar S}$ be two vectors with $0\neq |\Psi(v,w)| < \varepsilon$.
  Let $g_0 = g_S\gamma_2^{-1}$, be a matrix satisfying that $g_0.w = w_0$. We deduce that $g_0v$ satisfy that $\Psi(v,w) = \Psi(g_0.v, w_0)$.

  By Claim \ref{claim: vector stability function}, there is $h_0\in \SL_2(\RR)$ such that
  $|\Psi(g_0.v, w_0)| = |\Psi(h_0 g_0.v, w_0)| = \|h_0g_0.v\|^2$.
  Then $\alpha_{\cusps_\Gamma}(h_0g_0) = -\log \|h_0g_0.v\| > T_{\Gamma, S}$.
  However, for every $h_1$ sufficiently small, \[\alpha_{\cusps_\Gamma}(h_1h_0g_0) = -\log \|h_1h_0g_0.v\| \le -\frac{1}{2}\log |\Psi(h_0 g_0.v, w_0)| = \alpha_{\cusps_\Gamma}(h_0g_0),\]
  which implies that $-\frac{1}{2}\log |\Psi(v,w)|$ is a local maximum of $\alpha_{\cusps_\Gamma}$ at $\pi_\Gamma(h_0g_0)$ along the $\SL_2(\RR)$-orbit $\SL_2(\RR).\pi_\Gamma(g_0) = S$.
  This contradicts Observation \ref{obs: no local maxima for alpha high in the cusp on S}, and hence the equation $0\neq |\Psi(v,w)| < \varepsilon$.


\end{proof}
\begin{claim}\label{claim: lin alg weird claim}
  Consider the space \[W^{0+ } = \{w:\Psi(e_1, w) = 0\} = \left\{\begin{pmatrix}
      0 & * \\ *&*
    \end{pmatrix}\right\} \subset W,\] which contains $w_0$ and is the space of $\ta(t)$ noncontracting elements in $W$.
  Then for all $k\in \SU(2), w\in W^{0+ }, t\ge 0$,
  \[\|\pi_{w_0}(k\ta(t).w)\| \le 4 e^t \|\pi_{w_0}(k.w)\|.\]
\end{claim}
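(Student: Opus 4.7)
I plan to prove this by a direct computation in the Hermitian matrix model of $W$. Denote the four basis vectors of Eq.~\eqref{eq: W basis} by $I$ (identity), $J = \diag(1,-1)$, $E = \left(\begin{smallmatrix} 0 & 1 \\ 1 & 0\end{smallmatrix}\right)$, and $w_0$. By Claim~\ref{claim: vector stability function}, $\Psi(e_1, w) = e_1^{*} w e_1 = w_{11}$, so $W^{0+}$ consists exactly of Hermitian matrices with zero $(1,1)$-entry; write such a $w = \left(\begin{smallmatrix} 0 & \overline z \\ z & c\end{smallmatrix}\right)$ for $z\in\CC$, $c\in\RR$. Under $\ta(t).A = \ta(-t) A \ta(-t)$ the entries $a_{11}, a_{12}, a_{21}, a_{22}$ scale by $e^{-t}, 1, 1, e^t$, so for $w \in W^{0+}$ one finds
\[
\ta(t).w - w = (e^t-1)c\cdot \diag(0,1) = \tfrac{(e^t-1)c}{2}(I - J).
\]
Thus the $\ta(t)$-displacement of any $w\in W^{0+}$ lies along the single fixed direction $I - J$.

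I would then apply $k \in \SU(2)$. Its conjugation action $A \mapsto k A k^{*}$ fixes $I$ and rotates the traceless Hermitian subspace $\mathrm{span}\{J, E, w_0\}$ as the canonical double cover $\SU(2) \to SO(3)$. Writing $k.J = r_1 J + r_2 E + r_3 w_0$ with $r_1^2 + r_2^2 + r_3^2 = 1$, we obtain
\[
k\ta(t).w - k.w = \tfrac{(e^t-1)c}{2}\bigl(I - r_1 J - r_2 E - r_3 w_0\bigr),
\]
whose projection onto the first three basis coordinates via $\pi_{w_0}$ is the vector $\tfrac{(e^t-1)c}{2}(1, -r_1, -r_2)$, of Euclidean norm at most $\tfrac{|c|(e^t-1)}{2}\sqrt{1 + r_1^2 + r_2^2} \le \tfrac{|c|(e^t-1)}{\sqrt 2}$.

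The last step is to note that because $k$ fixes $I$, the first coordinate of $\pi_{w_0}(k.w)$ equals the $I$-coefficient of $w$, namely $c/2$; in particular $|c|/2 \le \|\pi_{w_0}(k.w)\|$. By the triangle inequality,
\[
\|\pi_{w_0}(k\ta(t).w)\| \le \bigl(1 + \sqrt 2\,(e^t-1)\bigr)\|\pi_{w_0}(k.w)\| \le \sqrt 2\, e^t\,\|\pi_{w_0}(k.w)\|
\]
for $t\ge 0$ (using $1 - \sqrt 2 \le 0$), which is sharper than the asserted bound $4e^t$. There is no substantive obstacle; the argument is forced by two structural facts — that $\ta(t)$ moves $W^{0+}$ only in the $I-J$ direction (the $(1,1)$-entry is pinned at $0$, so only the $(2,2)$-entry evolves), and that $\SU(2)$ preserves $I$, so the $I$-component of $\pi_{w_0}(k.w)$ automatically controls $|c|$.
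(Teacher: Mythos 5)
Your proof is correct and rests on the same mechanism as the paper's: for $w\in W^{0+}$ the action of $\ta(t)$ moves only the $(2,2)$-entry, and the coefficient $c$ is the trace, which is invariant under the $\SU(2)$-action and is visible in (hence bounded by) $\|\pi_{w_0}(k.w)\|$; the paper organizes this as a splitting $w=w_1+w_2$ into off-diagonal and diagonal parts plus the estimate $|c|=|\tr\pi_{w_0}(k.w)|\le\sqrt2\,\|\pi_{w_0}(k.w)\|$, while you track the displacement $k\ta(t).w-k.w$ along the direction $I-k.J$. Your bookkeeping is a touch sharper, giving the constant $\sqrt2\,e^t$ in place of $4e^t$, which of course still implies the stated claim.
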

\begin{proof}
  Here we will use the matrix description of $W$, and will distinguish between the matrix multiplication denoted without a dot, and the group action on the representation $w$, denoted $(g,w)\mapsto g.w: G\times W\to W$.
  We will prove the claim with the Hilbert-Schmidt norm given by
  \[\left\|\begin{pmatrix}
      a & b \\\bar b&d
    \end{pmatrix}\right\| = \sqrt{a^2 + 2|b|^2 + d^2}.\]
  This norm is proportional to the one given by the basis Eq. \eqref{eq: W basis}.
  Let $w = \begin{pmatrix}
      0 & a \\b&c
    \end{pmatrix}$ and define $w_1 = \begin{pmatrix}
      0 & a \\b&0
    \end{pmatrix}, w_2 = \begin{pmatrix}
      0 & 0 \\0&c
    \end{pmatrix}$.
  Then
  \begin{align*}
    \|\pi_{w_0}(k\ta(t).w)\| &
    \le \|\pi_{w_0}(k\ta(t).w_1)\| + \|\pi_{w_0}(k\ta(t).w_2)\|
    \\&\le \|\pi_{w_0}(k.w_1)\| + e^t\|\pi_{w_0}(k.w_2)\|
    \le e^t(\|\pi_{w_0}(k.w_1)\| + \|\pi_{w_0}(k.w_2)\|)
    \\&\le e^t(\|\pi_{w_0}(k.w)\| + 2\|\pi_{w_0}(k.w_2)\|)
    \le e^t(\|\pi_{w_0}(k.w)\| + 2\|k.w_2\|)
    \\&= e^t(\|\pi_{w_0}(k.w)\| + 2|c|)
    \intertext{
      To interpret $c$ in matrix terminology, note that \[c = \tr(w) = \tr(kwk^{-1}) = \tr(k^{-*}wk^{-1}) = \tr(k.w) = \tr \pi_{w_0}(k.w).\]
      Hence we may continue the estimate}
                             & = e^t(\|\pi_{w_0}(k.w)\| + 2|\tr \pi_{w_0}(k.w)|)
    \le
    e^t(\|\pi_{w_0}(k.w)\| + 2\sqrt 2 \|\pi_{w_0}(k.w)\|)
    \\&
    = (1+2\sqrt 2)e^t\|\pi_{w_0}(k.w)\|
    \le 4e^t\|\pi_{w_0}(k.w)\|
    .
  \end{align*}
\end{proof}
\begin{proof}[Proof of Lemma \ref{lem: geometry near S} Point \ref{point: S unique}]
  Let $T_0>0$ to be determined later, $\tilde X_{\rm comp, 1}\subseteq G$ be a compact set so that
  \[X_{\rm comp,1} = \pi_\Gamma(\tilde X_{\rm comp,1}) = \coreGn \setminus \bigcup_{\substack{c_{\Gamma}\in \cusps_\Gamma}}D_{c_\Gamma, T_0}\bullet g_{c_\Gamma} = \coreGn \cap \alpha_{\cusps_\Gamma}^{-1}([0, T_0/2]),\]
  where $\alpha_{\cusps_\Gamma}^{-1}$ is the preimage map of $\alpha_{\cusps_\Gamma}:G/\Gamma\to [0,\infty)$, and $-\bullet g_{c_\Gamma}$ is defined as in Claim \ref{claim: Siegel decomposition}.

  For every $g\in G$ denote by
  \[\beta(g) = \inf\left\{r>0:\substack{\text{there are } w_1, w_2 \in g.W_{\bar S}\text{ with }\{\pm w_1\}\neq \{\pm w_2\} \\\text{ such that }\|\pi_{w_0}(w_1)\|, \|\pi_{w_0}(w_2)\| < r}\right\}.\]
  Note that $\beta(g)>0$ for every $g\in G$. Indeed, otherwise there exist $w_1, w_2\in g.W_{\bar S}$ such that
  $\|\pi_{w_0}(w_1)\| = \|\pi_{w_0}(w_2)\| = 0$ and $\{\pm w_1\}\neq \{\pm w_2\}$.
  But $\ker \pi_{w_0} \cap \{w\in W:Q(w) = -1\} = \{\pm w_0\}$, which contradicts this existence of two different such vectors.
  Denote by $\delta_0 = \min_{\tilde X_{\rm comp, 1}}\beta > 0$, and set $C_{\bar S} = -\log \delta_0 + 2\log 2 + C_Q$.

  Suppose that $\alpha_{\bar S}(g) > 2\alpha_{\cusps_\Gamma}(g) + C_{\bar S}$ for some $g\in G$. By Claim \ref{claim: close to orbit imp in coreGn} we obtain that $g \in \pi_\Gamma^{-1}(\coreGn)$. 
  By Claim \ref{claim: sup obtaind in alpha bar S}, there is $\gamma_1\in \Gamma$ such that
  $\alpha_{\bar S}(g) = -\log \|\pi_{w_0}(g\gamma_1 .w_{\bar S})\|$.
  Suppose that $g\gamma_2 .w_{\bar S}$ is another vector with
  $-\log \|\pi_{w_0}(g\gamma_2 .w_{\bar S})\| \ge 2\alpha_{\cusps_\Gamma}(g) + C_{\bar S}$,
  and $\{\pm g\gamma_1 .w_{\bar S}\} \neq \{\pm g\gamma_2 .w_{\bar S}\}$.
  Let $w_i = \gamma_i .w_{\bar S} \in W_{\bar S}$.
  By the definitions of $C_{\bar S}$ and $\delta_0$ we deduce that $\beta(g) < \delta_0$, and hence $\pi_\Gamma(g)\nin X_{\rm comp, 1}$.
  \begin{remark}\label{rem: uniqueness holds on comp 1}
    This shows that Claim \ref{lem: geometry near S} Point \ref{point: S unique} holds provided that
    $\pi_\Gamma(g)\in X_{\rm comp, 1}$ for any $C_{\bar S}' > -\log \delta_0$, and in particular, for $C_{\bar S}' = -\log \delta_0 + C_Q$.
  \end{remark}
  Since $\pi_\Gamma(g)\in \coreGn \setminus X_{\rm comp, 1}$,
  we deduce that for some $v\in V_{\cusps_\Gamma}$ we have $\alpha_{\cusps_\Gamma}(g) = -\log \|g.v\| > T_0/2$.
  To estimate $\Psi(v, w_i)$, note that there is a constant $C_0>0$ such that for all $v'\in V, w'\in W$ with $\|v'\|, \|w'\|\le 1$ we have $|\Psi(v',w')| \le C_0$ (direct computation leads to $C_0 = \sqrt 2$).
  Then $|\Psi(v, w_i)| = |\Psi(g.v, g.w_i)| \le C_0 \|g.v\|^2 \|g.w_i\| \le C_0 e^{-T_0}\|g.w_i\|$ for $i=1,2$.
  To estimate $\|g.w_i\|$, note that there is a constant $C_1>0$ such if
  $w'\in W$ satisfies $\|\pi_{w_0}{w'}\|\le 1$ and $Q(w') = -1$ then $\|w'\| \le C_1$ (direct computation leads to $C_1 = \sqrt 3$).
  This shows that $\|g.w_i\| \le C_1$, and hence $|\Psi(v, w_i)| \le C_0C_1e^{-T_0}$.
  Set $T_0 = -\log (\varepsilon_{\Gamma, \bar S}/(C_0C_1)) + 1$ for $\varepsilon_{\Gamma, \bar S}$ as in Claim \ref{claim: psi not small}, and deduce that $\Psi(v, w_i) = 0$ for $i=1,2$.

  Suppose that \begin{align}\label{eq: prop v}
    v = \gamma .v_{c_\Gamma} = \gamma g^{-1}_{c_\Gamma} .e_1,
  \end{align} for some $\gamma\in \Gamma$ and $c\in \cusps_\Gamma$.
  $QR$-Decomposition \ref{claim: QR} shows that one can represent
  \begin{align}\label{eq: prog g}
    g\gamma g_{c_{\Gamma}}^{-1} = k\ta(-t)\tu_\CC(s)
  \end{align}
  for some
  $k\in K_G, t\in \RR, s\in \CC$.
  Since $\|g\gamma g_{c_{\Gamma}}^{-1} .e_1\| = \|g.v\| < e^{-T_0/2}$ and $\|g\gamma g_{c_{\Gamma}}^{-1} .e_1\| = \|k\ta(-t)\tu_\CC(s).e_1\| = e^{-t/2}$, it follows that $t > T_0$.

  For every $\tau>0$ write $g_\tau = k\ta(-t + \tau)\tu_\CC(s)g_{c_\Gamma}\gamma^{-1} = k\ta(\tau)k^{-1}g$.
  Let $\tau_0 = t - T_0 > 0$.
  Note that
  \begin{align}\label{eq: g tau is not to much in the cusp}
    \alpha_{\cusps_\Gamma}(g_{\tau_0}) = -\log \|g_{\tau_0} .v\| = -\log \|k\ta(-T_0)\tu_\CC(s).e_1\| = T_0/2.
  \end{align}
  
  Let us estimate $-\log \|g_{\tau_0}.w_i\|$.

  Note that $\|g_{\tau_0}.w_i\| = \|k\ta(\tau_0) k^{-1}g.w_i\|$.
  Denote $w_i' = k^{-1}g.w_i$ and notice that
  \[0 = \Psi(k^{-1}g.v, w_i') \stackrel{\eqref{eq: prop v} + \eqref{eq: prog g}}{=}
    \Psi(k^{-1}k\ta(t)\tu_\CC(s)g_{c_\Gamma}\gamma^{-1}\gamma g_{c_\Gamma}^{-1}.e_1, w_i') =
    \Psi(\ta(t).e_1, w_i') =
    e^t\Psi(e_1, w_i').
  \]
  Hence $\Psi(e_1, w_i') = 0$.
  Now we may apply Claim \ref{claim: lin alg weird claim}, and deduce that
  \[\|\pi_{w_0}(g_{\tau_0}.w_i)\| = \|\pi_{w_0}(k\ta(\tau_0).w_i')\| \stackrel{\ref{claim: lin alg weird claim}}{\le}4e^{\tau_0} \|\pi_{w_0}(k.w_i')\| = 4e^{\tau_0} \|\pi_{w_0}(g.w_i)\|.\]
  Therefore,
  \begin{align*}
    -\log \|\pi_{w_0}(g_{\tau_0}.w_i)\|
     & \ge -\log \|\pi_{w_0}(g.w_i)\|  - 2\log 2 - \tau_0
    \\&\ge 2\alpha_{\cusps_\Gamma}(g) + C_{\bar S} - 2\log 2 - \tau_0 \\
     & = 2\alpha_{\cusps_\Gamma}(g_{\tau_0}) -\log \delta_0 + C_Q.
  \end{align*}
  By Claim \ref{claim: close to orbit imp in coreGn} we deduce that $g_{\tau_0} \in \pi_\Gamma^{-1}(\coreGn)$, and by Eq. \eqref{eq: g tau is not to much in the cusp} we deduce that $\pi_\Gamma(g_{\tau_0})\in X_{\rm comp,1}$. 
  This contradicts Claim \ref{lem: geometry near S} Point \ref{point: S unique} as shown in Remark \ref{rem: uniqueness holds on comp 1} for $\pi_\Gamma(g_{\tau_0})\in X_{\rm comp,1}$.
\end{proof}
\section{Example with low \texorpdfstring{$\varepsilon_\Gamma$}{epsilon\_Gamma}}
\label{sec: example}
In this section, we Prove Theorem \ref{thm: example}. 
The section is divided into five subsections. 
In Subsection \ref{ssec: construction of a lattice} we construct a nonarithmetic lattice $\Gamma$ such that $G/\Gamma$ is glued from two homogeneous subspaces $G/\Gamma_1, G/\Gamma_2$.
In Subsection \ref{ssec: construction of an orbit and reduction} we construct an orbits $H.x$ in $G/\Gamma$, which comes from a piece of a periodic orbit in $G/\Gamma_1$. Then we reduce the problem of evaluating $\delta(\stab_H(x))$ into two independent problems. One arithmetic and one geometric. 
We then solve them in Subsections \ref{ssec: arith} and \ref{ssec: proof of geometric} respectively.

\subsection{Construction of a lattice}
\label{ssec: construction of a lattice}
In this subsection, we will construct a sublattice $\Gamma<G$ and show that it is nonarithmetic.
\subsubsection*{General setting}
Let $Q\left((x_i)_{i=1}^4\right) = x_1^2 - x_2^2 - x_3^2 -x_4^2$ be a quadratic form.
Let $W^{\rm op}$ be $\RR^4$ thought of as row vectors, on which $\SL_4(\RR)$ acts from the right, and we consider $Q$ as a quadratic form on $W^{\rm op}$.
Here we will use
\[G = \SO(3,1)^0 = \{g\in \SL_4(\RR):Q(w.g) = Q(w),\, \forall w\in W^{\rm op}\}^0,\]
which is isogenic to $\SL_2(\CC)$, via the action of $\SL_2(\CC)$ on $W$ as in Claim \ref{claim: W}.
Recall that $\HH^3$ is a right $G$-space, where here we identify
\[\HH^3 = \{w\in W^{\rm op}: Q(w) = 1, w_1 > 0\}.\]
Let $p_0 = (1,0,0,0)\in \HH^3$, and note that $K_G := \stab_G(p_0)$ is the maximal compact subgroup in $G$ and is a copy of $\SO(3)$ embedded in $G$ by the action on the last $3$ coordinates.
Let $H = \SO(2,1)^0$, embedded in $\SO(3,1)^0$ by action of the first $3$ coordinates. The $H$ action preserves the sign of the last coordinate, that is, it preserves
\begin{align*}
  \HH^2 = \{v\in \HH^3: v_4 = 0\}\subset \HH^3, \\
  (\HH^3)^\pm = \{v\in \HH^3: \pm v_4 > 0\}\subset \HH^3,
\end{align*}
The maximal compact subgroup in $H$ is $K_H = K_G \cap H$ and is isomorphic to $\SO(2)$ acting by rotations on the second and third coordinates of $W^{\rm op}$.
\subsubsection*{The arithmetic components}

Recall that $\Gamma(7) = \ker(\SL_4(\ZZ)\to \SL_4(\ZZ/7)) < \SL_4(\ZZ)$ is a finite index torsion free subgroup.
Let $A_1, A_2 > 0$ be big integers $\equiv 1 \mod8$ such that $A_1/A_2$ is not a rational square.
Let
\[Q_i = 7x_1^2 - x_2^2 - x_3^2 - A_ix_4^2 \in \ZZ[x_1,x_2, x_3,x_4] \qquad \text{for } i=1,2,\]
be quadratic forms on $W^{\rm op}$.
Define \[\SO(Q_i, \ZZ) = \{\gamma\in \SL_4(\ZZ):Q_i(w.\gamma) = Q_i(w) \ \forall w\in W\}.\]
This is a subgroup of $\SO(Q_i, \RR)$, which is a lattice in it by Borel and Harish-Chandra's Theorem \cite{BorelHarishChandra}.
Let $\SO(Q_i, \ZZ)' = \SO(Q_i, \ZZ)\cap \Gamma(7)$. This is a lattice in $\SO(Q_i, \RR)$ and torsion-free.
Let $\Gamma_i = g_i^{-1}\SO(Q_i, \ZZ)'g_i$ where $g_i = \diag(\sqrt 7, 1, 1, \sqrt A_i)$.
Then $\Gamma_i$ is a torsion-free lattice in $G$. Note that, $\Gamma_3 = H\cap \Gamma_1 = H\cap \Gamma_2$ is a lattice in $H$ similarly constructed from the quadratic form $Q_3 = 7x_1^2 - x_2^2 - x_3^2$.
\begin{claim}\label{claim: big volume}
  For $i=1,2$ we have $\vol(G/\Gamma_i) = \Omega(A_i^{1/2})$.
\end{claim}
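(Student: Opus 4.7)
The strategy is to reduce the computation to the covolume of the arithmetic group $\SO(Q_i,\ZZ)$ in $\SO(Q_i,\RR)^0$ and invoke the Borel volume formula for arithmetic lattices in $\SO(3,1)^0$.

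Since $\Gamma_i = g_i^{-1}\SO(Q_i,\ZZ)'g_i$ is contained in $g_i^{-1}\SO(Q_i,\ZZ)g_i$, and conjugation by $g_i$ induces an isomorphism of Lie groups $G = \SO(Q,\RR)^0 \xrightarrow{\sim} \SO(Q_i,\RR)^0$ preserving Haar measure up to a fixed constant independent of $i$, we get
\[
  \vol(G/\Gamma_i) \ge c_0 \cdot \vol\bigl(\SO(Q_i,\RR)^0 / (\SO(Q_i,\ZZ) \cap \SO(Q_i,\RR)^0)\bigr),
\]
for a constant $c_0 > 0$ independent of $i$, absorbing the bounded index of the identity component and the bounded index of $\SO(Q_i,\ZZ)'$ in $\SO(Q_i,\ZZ)$.

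Next I apply the Borel volume formula. The form $Q_i$ has discriminant $-7A_i$, and the hypothesis $A_i \equiv 1 \pmod 8$ ensures $-7A_i \equiv 1 \pmod 8$, so $K_i := \QQ(\sqrt{-7A_i})$ is imaginary quadratic with discriminant $d_{K_i} = -7A_i$. The $\mathrm{Spin}$-cover of $\SO(Q_i)$ is isogenous to $\mathrm{SL}_1(D_i)$ for a quaternion algebra $D_i$ over $K_i$ split at the unique archimedean place, and Borel's formula then gives
\[
  \vol\bigl(\SO(Q_i,\RR)^0 / \SO(Q_i,\ZZ)\bigr) \gg |d_{K_i}|^{3/2} \cdot \zeta_{K_i}(2) \gg A_i^{3/2},
\]
since $\zeta_{K_i}(2)$ is bounded below by an absolute constant and the Euler factors at primes ramified in $D_i$ are $\ge 1$. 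Hence $\vol(G/\Gamma_i) \gg A_i^{3/2}$, which is far stronger than the required $\Omega(A_i^{1/2})$.

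The main obstacle in executing this plan is the bookkeeping of local Euler factors and of the index between $\SO(Q_i,\ZZ)$ and the principal arithmetic lattice entering Borel's formula. The congruence mod~$8$ trivializes the analysis at $p=2$; at the remaining bad primes (namely $p = 7$ and primes dividing $A_i$) the local densities are polynomially bounded in the primes, and a standard estimate shows that in aggregate they can reduce the main term by at most $A_i^{o(1)}$, still comfortably above the target $A_i^{1/2}$. Alternatively, one can bypass this bookkeeping altogether by observing that the much weaker bound $\Omega(A_i^{1/2})$ follows already from the Siegel mass formula applied to the quaternary form $Q_i$, whose $|d_Q|^{3/2}$ scaling dominates any polynomial loss from local corrections given our constraints on $A_i$.
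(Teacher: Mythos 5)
Your reduction to the covolume of $\SO(Q_i,\ZZ)$ is fine (passing to the finite-index congruence subgroup and conjugating by $g_i$ only costs bounded constants), but the quantitative core of the argument has a genuine gap: you identify the discriminant of the field $K_i=\QQ(\sqrt{-7A_i})$ with $-7A_i$ and deduce $|d_{K_i}|^{3/2}\gg A_i^{3/2}$. The paper only assumes $A_1,A_2\equiv 1\pmod 8$ are large with $A_1/A_2$ not a rational square; it does not assume $A_i$ (or $7A_i$) squarefree. For example $A_i=49m^2$ with $m$ odd satisfies $A_i\equiv 1\pmod 8$ and can be taken arbitrarily large, yet then $-7A_i=-343m^2$ has squarefree kernel $-7$, so $K_i=\QQ(\sqrt{-7})$ and $|d_{K_i}|=7=O(1)$. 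In that case the main term of Borel's formula does not grow with $A_i$ at all, and all of the growth of $\vol(G/\Gamma_i)$ must come precisely from the index of (the spin lift of) $\SO(Q_i,\ZZ)$ inside a maximal arithmetic group, i.e.\ from the local/conductor bookkeeping that you dismiss as an $A_i^{o(1)}$ correction. The same objection applies to the fallback via Siegel's mass formula: when $A_i$ has a large square part, the local densities at the primes dividing $A_i$ are not $A_i^{o(1)}$ perturbations, and the assertion that the $|\det Q_i|$-power main term "dominates any polynomial loss from local corrections" is exactly the point that would need proof. So as written the argument establishes the claim only under an extra hypothesis (say, $7A_i$ squarefree) that the statement does not grant.

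For comparison, the paper's proof is entirely geometric and sidesteps all of this: using the discreteness of the orbit $\Gamma_i.w_0$ (Claims \ref{claim: discreteness of tiny orbits} and \ref{claim: new alpha unique}) it shows that distinct $\Gamma_i$-translates of the plane $\HH^2$ are at distance $\geq 2\cA$ with $\cA=\Theta(\log A_i)$, so the tube of radius $\cA$ around the fixed-area totally geodesic surface $V$ embeds in $M_i$ (Corollary \ref{cor: S injects}); integrating the $\cosh^2$ expansion along the orthogonal geodesic foliation gives $\vol(M_i)=\Omega(e^{2\cA})=\Omega(A_i^{1/2})$ uniformly in the arithmetic fine structure of $A_i$. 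If you want to salvage the arithmetic route, either add the hypothesis that $7A_i$ is squarefree (the construction permits choosing such $A_i$, but then the claim should be restated) or carry out the local density computation at the primes dividing $2\cdot 7\cdot A_i$ honestly; as it stands the key inequality $|d_{K_i}|^{3/2}\gg A_i^{3/2}$ is false for admissible choices of $A_i$.
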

The proof relies on a certain arithmetic aspect of $\Gamma_i$ and is given in Subsection \ref{ssec: arith}.

\begin{claim}\label{claim: cocompact}
  The lattices $\Gamma_1, \Gamma_2$ are cocompact in $G$ and $\Gamma_3$ is cocompact in $H$.
\end{claim}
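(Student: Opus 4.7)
The plan is to deploy the Godement compactness criterion: for a non-degenerate rational quadratic form $Q$ in $n\ge 3$ variables, the arithmetic group $\SO(Q,\ZZ)$ is cocompact in $\SO(Q,\RR)$ if and only if $Q$ is anisotropic over $\QQ$. Since cocompactness is preserved by passing to finite-index subgroups and by conjugation inside the ambient real Lie group, and since $\Gamma_i = g_i^{-1}\SO(Q_i,\ZZ)'g_i$ and $\Gamma_3$ arise in exactly this way, it suffices to show that each of $Q_1,Q_2,Q_3$ is anisotropic over $\QQ$.

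For the three-variable form $Q_3 = 7x_1^2 - x_2^2 - x_3^2$ I would give a short $\bmod 7$ descent. A nonzero primitive integer zero $(x_1,x_2,x_3)$ would satisfy $x_2^2+x_3^2 \equiv 0 \pmod 7$; since $-1$ is not a square in $\ZZ/7\ZZ$ (as $7\equiv 3\pmod 4$), this forces $7\mid x_2$ and $7\mid x_3$, and then $49 \mid 7x_1^2$ forces $7\mid x_1$, contradicting primitivity.

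For the four-variable forms $Q_i = 7x_1^2 - x_2^2 - x_3^2 - A_i x_4^2$, the real place is useless since $Q_i$ has indefinite signature $(1,3)$ and is therefore isotropic over $\RR$. I would instead force anisotropy at the prime $p=2$, leveraging the hypothesis $A_i\equiv 1\pmod 8$: by Hensel's lemma any $2$-adic unit congruent to $1$ modulo $8$ is a square in $\QQ_2^\times$, so both $A_i$ and $-7$ (note $-7\equiv 1\pmod 8$) are squares in $\QQ_2^\times$. Rescaling $x_1$ and $x_4$ by the corresponding $2$-adic square roots transforms $Q_i$ over $\QQ_2$ into $-\langle 1,1,1,1\rangle$, which is anisotropic because it is (up to sign) the reduced norm form of Hamilton's quaternion algebra over $\QQ_2$, the unique quaternion division algebra over $\QQ_2$; equivalently, a direct $\bmod 8$ computation shows that $x_1^2+x_2^2+x_3^2+x_4^2=0$ admits no nonzero $2$-adic solution. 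Consequently $Q_i$ is anisotropic over $\QQ_2$, and \emph{a fortiori} over $\QQ$.

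The only delicate point is the choice of prime: over $\RR$ each $Q_i$ is indefinite, and for odd $p$ a four-variable quadratic form over $\QQ_p$ is typically isotropic (Hilbert symbols of units in $\ZZ_p^\times$ are trivial), so the anisotropy must be forced at $p=2$, and the congruence $A_i\equiv 1\pmod 8$ is precisely the input that aligns $Q_i$ with the Hamilton norm form. Everything else is standard local-to-global machinery for quadratic forms combined with the identification of $\SO(Q_i,\RR)$ with $G=\SO(3,1)^0$ via conjugation by $g_i$.
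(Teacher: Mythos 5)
Your proof is correct and follows essentially the same route as the paper: the Godement/anisotropy criterion for cocompactness, with anisotropy of $Q_1,Q_2$ forced at the prime $2$ via the hypothesis $A_i\equiv 1\pmod 8$ — your $\QQ_2$/quaternion-norm argument is just a repackaging of the paper's ``no primitive solutions mod $8$'' computation. The only cosmetic difference is your separate mod-$7$ descent for $Q_3$, which the paper does not need since $Q_3$ is a subform of $Q_1$ and inherits its anisotropy.
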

\begin{proof}
  For a quadratic form $Q$ in $d$ variables, the lattice $\SO(Q,\ZZ)$ is cocompact if and only if $Q(v) \neq 0$ for all $v\in \QQ^d\setminus \{0\}$ (see \cite[Prop.~5.3.4]{witte2001introduction}).
  Hence it is sufficient to show that $Q_i(v)\neq 0$ for all $v\in \QQ^4$.
  However, one can normalize $v$ so that $v\in \ZZ^4$ and one of its coordinates is odd. Then there are no solutions modulo $8$.
\end{proof}

\subsubsection*{Construction of a hybrid manifold}
Fix $i=1,2$ and consider the manifold $M_i = \HH^3/\Gamma_i$ the submanifold $V = \HH^2/\Gamma_3$, and the cover $\bar M = \HH^3/\Gamma_3$ of $M_i$.
These are indeed manifolds since $\Gamma_i$ has no torsion elements.
Let $\bar\rho: \HH^3\to \bar M$, $\rho_i: \HH^3\to M_i$, $\tau_i:\bar M\to M_i$ for each $i$ and $\rho_3: \HH^2\to V$ denote the standard projections.
We think of $V$ as a subset of $\bar M$.
\[\xymatrix{&\HH^3\ar@{->>}[d]^{\bar \rho}\ar@{->>}[rd]^{\rho_2}\ar@{->>}[ld]_{\rho_1}&\\M_1&\bar M\ar@{->>}[l]^{\tau_1}\ar@{->>}[r]_{\tau_2}&M_2}\]
\begin{claim}\label{claim: V injects}
  The projection $\tau_i:\bar M\to M_i$ restricts to an embedding on $V$.
  \[\xymatrix{&V\ar@{^(->}[d]\ar@{^(->}[rd]\ar@{^(->}[ld]&\\M_1&\bar M\ar@{->>}[r]_{\tau_2}\ar@{->>}[l]^{\tau_1}&M_2}\]
\end{claim}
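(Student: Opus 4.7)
My plan is to reduce the claim to a short arithmetic computation driven by the $\Gamma(7)$-congruence in the definition of $\Gamma_i$. Since $V = \HH^2/\Gamma_3$ is compact by Claim \ref{claim: cocompact} and $\tau_i$ is a local homeomorphism of manifolds (being a quotient of the covering $\bar M \to M_i$), it suffices to verify that $\tau_i|_V$ is injective. Unwinding the quotient maps, this is equivalent to the following assertion: whenever $\gamma \in \Gamma_i$ satisfies $\gamma p \in \HH^2$ for some $p \in \HH^2$, one has $\gamma \in H$, and hence $\gamma \in H \cap \Gamma_i = \Gamma_3$. I would transfer the problem to the $Q_i$-integral model by setting $\gamma_0 := g_i \gamma g_i^{-1} \in \SO(Q_i,\ZZ)\cap \Gamma(7)$, whose entries then satisfy $(\gamma_0)_{jk} \equiv \delta_{jk} \pmod 7$.

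The input from hyperbolic geometry is the standard dichotomy that two distinct totally geodesic $2$-planes in $\HH^3$ are either disjoint or meet in a single geodesic. Since $\gamma\HH^2 \cap \HH^2$ already contains $q = \gamma p$, either $\gamma\HH^2 = \HH^2$ (Case 1) or the intersection is a $1$-dimensional geodesic (Case 2), and I would treat these separately.

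\emph{Case 1.} Preservation of the hyperplane $\{v_4=0\}$ in $\RR^{1,3}$ forces $\gamma$ to have the block form $\gamma = \diag(A, \varepsilon)$ with $\varepsilon = \pm 1$ and $A \in \mathrm{O}(2,1)$ preserving the time direction, so that $(\gamma_0)_{44} = \gamma_{44} = \varepsilon$. The congruence $(\gamma_0)_{44} \equiv 1 \pmod 7$ rules out $\varepsilon = -1$, which places $\gamma$ in the identity component $H$ and hence in $\Gamma_3$.

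\emph{Case 2.} The intersection being a genuine geodesic amounts to the hyperplane $\{\gamma_{41}v_1 + \gamma_{42}v_2 + \gamma_{43}v_3 = 0\}$ inside $(\RR^3,\, v_1^2 - v_2^2 - v_3^2)$ being of Lorentzian signature, i.e.\ to its Minkowski normal being spacelike, which reads $\gamma_{41}^2 < \gamma_{42}^2 + \gamma_{43}^2$. Conjugating through $g_i = \diag(\sqrt 7, 1, 1, \sqrt{A_i})$ converts this into $7(\gamma_0)_{41}^2 < (\gamma_0)_{42}^2 + (\gamma_0)_{43}^2$. On the other hand, the $(4,4)$-entry of the orthogonality identity $\gamma_0 M_i^{-1}\gamma_0^\top = M_i^{-1}$, with $M_i = \diag(7, -1, -1, -A_i)$, yields the quadric relation
\[
  A_i \bigl[(\gamma_0)_{41}^2 - 7(\gamma_0)_{42}^2 - 7(\gamma_0)_{43}^2\bigr] = 7\bigl((\gamma_0)_{44}^2 - 1\bigr).
\]
Substituting this identity into the displayed inequality collapses it to $(\gamma_0)_{44}^2 < 1$, forcing the integer $(\gamma_0)_{44}$ to vanish, in direct contradiction with $(\gamma_0)_{44} \equiv 1 \pmod 7$.

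The main obstacle is the correct translation between the geometric condition on $\gamma\HH^2 \cap \HH^2$ and the inequality on the last row of $\gamma_0$; once the Minkowski normal is identified in the right coordinates, the combined use of the quadric relation and the $\Gamma(7)$-congruence is routine. In particular no use of the non-squareness of $A_1/A_2$ is made at this stage, as the argument applies to each $\Gamma_i$ independently.
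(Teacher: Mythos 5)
Your argument is correct in substance, but it takes a genuinely different route from the paper's. The paper never argues directly about a single $\gamma$; it deduces Claim \ref{claim: V injects} from the stronger, quantitative Corollary \ref{cor: S injects}: $\tau_i$ is injective on the image of the whole neighborhood $S=\{p:d(p,\HH^2)<\Theta(\log A_i)\}$. That is proved by linearization: one studies the vector orbit $W_{\Gamma_i}=\Gamma_i.w_0$ and shows (Claim \ref{claim: discreteness of tiny orbits}, where the mod $7$ congruence enters exactly once, to exclude $-w_0$) that every vector in it other than $w_0$ has norm at least $\sqrt{A_i/7}$; Claims \ref{claim: new alpha unique} and \ref{claim: disjoint nbds} then give that distinct $\Gamma_i$-translates of $\HH^2$ have disjoint $\Omega(\log A_i)$-neighborhoods. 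The quantitative statement is needed elsewhere (Claim \ref{claim: big volume}, Proposition \ref{prop: far lines}); your argument only yields the qualitative fact that a $\gamma\in\Gamma_i$ carrying a point of $\HH^2$ into $\HH^2$ lies in $\Gamma_3$, but that is exactly what Claim \ref{claim: V injects} requires, and your route (signature dichotomy for the two planes, plus one entry of the orthogonality relation, plus the congruence on the $(4,4)$ entry in both cases) is more elementary and self-contained.

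One bookkeeping correction in Case 2: your two displayed relations are not simultaneously valid under a single convention. With the convention in which your geometric computation and your conjugation formula are correct (left action on column vectors and $\gamma_0=g_i\gamma g_i^{-1}$, so that $(\gamma_0)_{41}=\sqrt{A_i/7}\,\gamma_{41}$ and $(\gamma_0)_{4j}=\sqrt{A_i}\,\gamma_{4j}$ for $j=2,3$), the identity satisfied by $\gamma_0$ is $\gamma_0 M_i\gamma_0^\top=M_i$ (equivalently $\gamma_0^\top M_i^{-1}\gamma_0=M_i^{-1}$), not $\gamma_0 M_i^{-1}\gamma_0^\top=M_i^{-1}$; the relation you displayed is the transposed one and concerns the fourth column, not the fourth row. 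The $(4,4)$ entry of the correct identity reads
\begin{equation*}
  7(\gamma_0)_{41}^2-(\gamma_0)_{42}^2-(\gamma_0)_{43}^2 \;=\; A_i\bigl((\gamma_0)_{44}^2-1\bigr),
\end{equation*}
and it combines with your inequality $7(\gamma_0)_{41}^2<(\gamma_0)_{42}^2+(\gamma_0)_{43}^2$ even more directly: the left-hand side is negative, hence $(\gamma_0)_{44}^2<1$, so the integer $(\gamma_0)_{44}$ vanishes, contradicting $(\gamma_0)_{44}\equiv 1\pmod 7$. So the slip is harmless — every consistent choice of row/column conventions moves the factor $7$ in one of the two displays but leaves the contradiction intact — yet as written the quadric relation you invoke does not hold for your $\gamma_0$ and should be replaced as above. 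Case 1 is fine as stated.
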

The proof relies on a certain arithmetic aspect of $\Gamma_i$ and is given in Subsection \ref{ssec: arith}.
Denote by $V_i = \tau_i(V)$. By Claim \ref{claim: V injects} this is a submanifold.

We can now describe a new hyperbolic threefold $R$.
\begin{definition}[A hybrid manifold]
  Cut $M_i$ along $V_i$. The resulting manifold $M_i^{\rm cut}$ is a hyperbolic threefold with a hyperbolic surface boundary composed of two isometric copies of $V_i$, namely, $V^+_i, V_i^-$. Near $V_i^\pm$, the manifold $M_i^{\rm cut}$ is locally isometric to $\HH^2 \sqcup (\HH^3)^\pm$. 
  Glue $M_1^{\rm cut}$ to $M_2^{\rm cut}$ by gluing $V_1^+$ to $V_2^-$ and $V_1^-$ to $V_2^+$. 
  The resulting manifold is an orientable compact hyperbolic threefold $R$.
\end{definition}
For $i=1,2$ the embeddings $\chi_i:M_i^{\rm cut}\to R$, and the projections $\sigma_i:M_i^{\rm cut}\to M_i$.
\subsubsection*{Connectivity of $R$}
\begin{theorem}\label{thm: connectivity of M_i}
  For each $i = 1,2$, the manifold $M_i\setminus V_i$ is connected provided that $A_i$ is sufficiently large.
\end{theorem}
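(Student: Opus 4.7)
The plan is to argue by contradiction: assume $V_i$ separates $M_i$, and refute this by producing an explicit ``side-swapping'' element of $\Gamma_i$ for $A_i$ large. Lifting to $\HH^3$, Claim \ref{claim: V injects} gives
\[
   \rho_i^{-1}(V_i) \;=\; \bigsqcup_{\gamma \in \Gamma_i/\Gamma_3} \gamma\HH^2
\]
as a disjoint union of totally geodesic hyperplanes. The connected components of $\HH^3 \setminus \rho_i^{-1}(V_i)$ are the vertices of a tree $T$ whose edges are these planes; $\Gamma_i$ acts on $T$ with a single edge-orbit (with stabilizer $\Gamma_3$). By Bass--Serre theory, $M_i \setminus V_i$ is disconnected iff there are two vertex orbits, equivalently iff the bipartition of $T$ descends to a nontrivial character $\epsilon\colon \Gamma_i\to\ZZ/2$ with $\Gamma_3 \subseteq \ker\epsilon$. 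Concretely, $\epsilon(\gamma)$ is the parity of the number of planes separating a fixed ``$+$-side'' region near $\HH^2$ from its $\gamma$-translate.

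To contradict triviality of $\epsilon$, I would exhibit $\gamma\in \Gamma_i \setminus \Gamma_3$ whose action takes a point $p^+\in(\HH^3)^+$ just above $\HH^2$ to a point $\gamma p^+\in(\HH^3)^-$ just below $\HH^2$, such that the geodesic segment $[p^+, \gamma p^+]\subset \HH^3$ meets $\rho_i^{-1}(V_i)$ only in $\HH^2$ itself. Since the planes are discrete, the minimum separation $\delta := \min_{\gamma'\notin \Gamma_3} d(\HH^2, \gamma'\HH^2)>0$, and for $p^+$ at hyperbolic distance $\epsilon < \delta/2$ from $\HH^2$, any geodesic staying within the $\epsilon$-tube of $\HH^2$ avoids every other plane. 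Passing through the conjugation $\Gamma_i = g_i^{-1}\SO(Q_i,\ZZ)' g_i$, the required $\gamma$ corresponds to $\tilde\gamma \in \SO(Q_i,\ZZ) \cap \Gamma(7)$ whose first column has the form $(a,b,c,0)^t$ (so $\gamma p_0 \in \HH^2$) and whose entry $\tilde\gamma_{44}$ is negative (so the normal direction gets reversed, forcing $\gamma p^+$ to the $-$-side, while remaining near $\HH^2$). Seed solutions such as $(a,b,c) = (8, 21, 0)$ of $7a^2 - b^2 - c^2 = 7$ automatically satisfy $(a,b,c) \equiv (1,0,0)\pmod 7$, and for $A_i$ large the abundance of integral points of $\SO(Q_i)$ (via strong approximation for $\SO(Q_i)$) lets one complete the first column to a full matrix realizing the desired sign of $\tilde\gamma_{44}$ and the congruence $\equiv I \pmod 7$.

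The main obstacle is twofold. First, the arithmetic existence step: one must simultaneously realize the prescribed first column of $\tilde\gamma$, the congruence condition modulo $7$, and a chosen negative value of $\tilde\gamma_{44}$, all compatibly with the orthogonality relations imposed by $Q_i$. This is a representation-counting question for $\SO(Q_i)$ whose positive answer for $A_i$ large is essentially guaranteed by strong approximation, but requires care. Second, the quantitative displacement bound: the candidate $\gamma$ must shift $p^+$ by an amount keeping both endpoints within the $\epsilon$-tube of $\HH^2$, which constrains the $3\times 3$ block of $\tilde\gamma$ acting on $(w_1,w_2,w_3)$ to be not too hyperbolic. If this direct construction proves awkward, a fallback is a volume-isoperimetric contradiction: in the separating case $\vol(M^\pm)\ge \tfrac12 \vol(M_i) = \Omega(\sqrt{A_i})$ (Claim \ref{claim: big volume}) while $\text{Area}(V_i) = \text{Area}(V)$ is bounded, which together with a uniform Cheeger bound across the congruence family $(M_i)$ (property-$\tau$-type) yields the desired contradiction.
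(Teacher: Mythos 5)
Your primary route (Bass--Serre tree for the disjoint planes $\rho_i^{-1}(V_i)$, plus an explicit ``side-swapping'' element of $\Gamma_i$) is genuinely different from the paper's argument, but as written it has a real gap exactly at its crux: the existence of $\gamma\in\Gamma_i$ carrying a point just above $\HH^2$ to a point just below while crossing only $\HH^2$. Producing $\tilde\gamma\in \SO(Q_i,\ZZ)\cap\Gamma(7)$ with a prescribed first row of the form $(a,b,c,0)$, with $\tilde\gamma_{44}<0$, and with the whole matrix $\equiv I \pmod 7$ is not something strong approximation hands you directly: you need an integral representation statement (that the vector can be completed to a matrix of $\SO(Q_i,\ZZ)$ in the right congruence class) together with control of the sign of the $(4,4)$ entry, and you need this uniformly in $A_i$; note also that the form $Q_i$ degenerates arithmetically as $A_i\to\infty$ in the fourth variable, which is precisely where your sign condition lives. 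You acknowledge this (``requires care''), but that step is the entire content of the theorem in this approach, so the main route is a plan rather than a proof. (The geometric part of your plan is fine: convexity of the distance to a totally geodesic plane keeps the segment in the thin tube, and Claim \ref{claim: disjoint nbds} gives the needed separation of the other planes.)

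Your fallback is essentially the paper's actual proof (Cheeger constant bounded by ${\rm Area}(V_i)/\min(\vol(M_i^+),\vol(M_i^-))$, Buser's inequality, and property $(\tau)$ against $\vol(M_i)=\Omega(A_i^{1/2})$), but it too skips a needed step: you assert $\vol(M_i^{\pm})\ge\tfrac12\vol(M_i)$, whereas a priori $V_i$ could cut off a very small piece, in which case $\min(\vol(M_i^+),\vol(M_i^-))$ is small and the Cheeger ratio is not $O(A_i^{-1/2})$, so no contradiction with the spectral gap. The paper closes this by observing that $\diag(1,1,1,-1)$ normalizes $\Gamma_i$ and induces an isometry of $M_i$ swapping the two sides of $V_i$, forcing $\vol(M_i^+)=\vol(M_i^-)=\tfrac12\vol(M_i)$; alternatively one can note that the embedded $\cA$-tube around $V_i$ (Corollary \ref{cor: S injects}) contributes volume $\Omega(A_i^{1/2})$ to each side. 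With that addition your fallback coincides with the paper's argument; without it, the isoperimetric contradiction does not go through.
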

\begin{proof}
  Assume to the contrary that $M_i\setminus V_i$ is not connected. This implies that $M_i = V_i \sqcup M_i^+\sqcup M_i^-$, where $M^\pm_i$ are the different connected components of $M_i\setminus V_i$.
  We will estimate $\vol(M_i^{\pm})$. Since the matrix $g_{-1} = \diag(1, 1, 1, -1)$ normalize $\Gamma_i$, it acts on $M_i$.
  Since the $g_{-1}$ action replaces the two sides of $V$ in $\bar M$, it replaces the two sides of $V_i$ in $M_i$. 
  Thus $\vol(M_i^+) = \vol(M_i^-) = \frac{1}{2}\vol(M_i)$.
  By Claim \ref{claim: big volume}, $\vol(M_i^\pm) = \frac{1}{2}\vol(M_i) = \Omega(A_i^{1/2})$.
  This implies that the Cheeger constant 
  \[h(M_i) := \inf_{S\subseteq M_i}\frac{\vol(\partial S)}{\min(\vol(S), \vol(M_i\setminus S))} \le \frac{\vol(V_i)}{\min(\vol(M_i^+), \vol(M_i^-))} =  O(A_i^{-1/2}). \]
  By Burger's inequality \cite{buser1982note}, we deduce that $\lambda_1(M_i) = O(h(M_i)^2 + h(M_i)) = O(A_i^{-1/2})$, where $\lambda_1(M_i)$ is the minimal nontrivial eigenvalue of minus the laplacian operator $-\Delta$ on $M_i$.
  By Property $(\tau)$ for congruence subgroups in arithmetic groups (See \cite{selberg1965estimation, kazhdan1967connection, burger1991ramanujan, clozel2003demonstration}), there is an absolute constant $\lambda_0$ such that $\lambda_1(M_i) \ge \lambda_0$. This contradicts our previous estimate $\lambda_1(M_i) = O(A_i^{-1/2})$, as desired.
\end{proof}
We conclude that $R$ is connected.
\begin{remark}[Avoiding property $(\tau)$]
  The use of property $(\tau)$ is the least elementary piece of the arguments in this section and can be avoided, as 
  Theorem \ref{thm: connectivity of M_i} is not necessary to the proof, and is only provided to give the reader a better picture of $R$ and simplify the terminology.
\end{remark}
Since $R$ is a connected compact hyperbolic threefold, we deduce that $R \cong \HH^3/\Gamma$ for some cocompact lattice $\Gamma < G$, which is our desired nonarithmetic lattice. Since $A_1/A_2$ is not a square we get the following theorem.
\begin{theorem}[{\cite[\S2.9]{gromov1987non}}]
  The lattice $\Gamma$ is non-arithmetic.
\end{theorem}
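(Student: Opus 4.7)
The approach is a classical Gromov--Piatetski-Shapiro argument by contradiction: assume $\Gamma$ is arithmetic, and derive that $\Gamma_1$ and $\Gamma_2$ must be commensurable in $G$, contradicting the choice that $A_1/A_2 \notin (\QQ^\times)^2$. The first step is to extract a common Zariski-dense subgroup of $\Gamma$ and each $\Gamma_i$. Since $M_i \setminus V_i$ is connected by Theorem \ref{thm: connectivity of M_i}, van Kampen's theorem decomposes $\Gamma = \pi_1(R)$ as the fundamental group of a graph of groups with vertex groups $\Lambda_i := \pi_1(M_i^{\mathrm{cut}})$ and edge groups conjugates of $\Gamma_3$, so in particular each $\Lambda_i$ injects into $\Gamma$. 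The map $\sigma_i : M_i^{\mathrm{cut}} \to M_i$ also embeds $\Lambda_i$ into $\Gamma_i = \pi_1(M_i)$, and comparing the two developing maps of $\widetilde{M_i^{\mathrm{cut}}}$ into $\HH^3$ yields $h_i \in G$ with $h_i \Lambda_i h_i^{-1} \leq \Gamma_i$. Moreover $\Lambda_i$ is Zariski dense in $G$: it contains $\Gamma_3$, which is Zariski dense in $H$, together with at least one element outside $H$ (since $M_i^{\mathrm{cut}}$ is a genuine $3$-manifold, not a surface), and $H$ is a maximal proper connected algebraic subgroup of $G$.

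Next, I would invoke Margulis's commensurator density theorem: if $\Gamma$ is arithmetic, then $\mathrm{Comm}_G(\Gamma)$ is dense in $G$, and any lattice in $G$ containing a Zariski-dense subgroup commensurable (up to $G$-conjugation) with one in $\Gamma$ is itself commensurable with $\Gamma$ in $G$. Applied to $\Lambda_i \leq h_i^{-1} \Gamma_i h_i$, this yields that $h_i \Gamma h_i^{-1}$ is commensurable with $\Gamma_i$ in $G$ for each $i = 1, 2$. Transitivity of commensurability up to conjugation then shows that $\Gamma_1$ and $\Gamma_2$ are commensurable in $G$.

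Finally, I would rule out this commensurability by arithmetic considerations. The commensurability class of $\SO(Q, \ZZ)$ inside $\SO(3,1)^0$ (up to $G$-conjugation) determines the form $Q$ up to $\QQ^\times$-scaling and rational equivalence; in particular, it determines the discriminant of $Q$ in $\QQ^\times / (\QQ^\times)^2$, which is preserved under scaling since the number of variables is even. For $Q_i = 7 x_1^2 - x_2^2 - x_3^2 - A_i x_4^2$, this discriminant class is the class of $7 A_i$, and these agree modulo squares precisely when $A_1 / A_2 \in (\QQ^\times)^2$, which is excluded by hypothesis, completing the contradiction. The main obstacle is the commensurability step: one must carefully invoke Margulis's commensurator density theorem together with the fact that a Zariski-dense subgroup of a lattice controls its commensurability class. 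An alternative path avoiding Margulis passes through the invariant trace field and the invariant quaternion algebra of a Kleinian group, which are commensurability invariants computable from any Zariski-dense subgroup; these distinguish $\Gamma_1$ from $\Gamma_2$ exactly because of the non-squareness of $A_1 / A_2$.
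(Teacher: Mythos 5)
Your proposal is correct and is essentially the paper's approach: the paper gives no argument of its own here, citing Gromov--Piatetski-Shapiro \S 2.9, and your sketch is precisely their interbreeding argument --- the graph-of-groups vertex groups $\Lambda_i=\pi_1(M_i^{\rm cut})$ are Zariski dense and sit in both $\Gamma$ and (a conjugate of) $\Gamma_i$, a lattice sharing a Zariski-dense subgroup with an arithmetic lattice is commensurable with it, and commensurability of $\Gamma_1,\Gamma_2$ is excluded because $Q_1,Q_2$ are not similar over $\QQ$, their discriminants $7A_1,7A_2$ agreeing modulo squares exactly when $A_1/A_2\in(\QQ^\times)^2$. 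The only point to tighten is Zariski density: since the proper algebraic subgroups of $G$ containing $H$ are $H$ and $N(H)=H\cup g_0H$, you need an element of $\Lambda_i$ outside $N(H)$, not merely outside $H$; this holds because $\pi_1(M_i^{\rm cut})$ is the fundamental group of a compact hyperbolic threefold with totally geodesic boundary and hence cannot preserve the plane $\HH^2$.
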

\subsection{Reduction of Theorem \ref{thm: example} into arithmetic and hyperbolic questions}
\label{ssec: construction of an orbit and reduction}
In this section, we will reduce the construction of an element $g$ as in Theorem \ref{thm: example} to an arithmetic question. 
\begin{definition}
  For every complete hyperbolic manifold $M$ and a point $p\in M$ denote by ${\rm Ray}_p$ the collection of geodesic rays $\gamma:[0,\infty)\to M$ originating from $p = \gamma(0)$.
  The derivative at $0$ gives a metric isomorphism ${\rm Ray}_p \cong S^{\dim M - 1}$. 
\end{definition}
\begin{claim}\label{claim: critical exponent by rays}
  Let $\Lambda < H$ be a subgroup and $U\subseteq \HH^2/\Lambda$ be a precompact open subset. Then for every $p\in U$ we have 
  \[H.\dim(\left\{\gamma\in {\rm Ray}_p:\gamma(t)\in U \ \forall t\ge 0\right\}) \le \delta(\Lambda).\]
\end{claim}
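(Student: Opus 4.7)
The plan is to reduce the problem to the classical fact that the conical (radial) limit set of $\Lambda$ acting on $\HH^2$ has Hausdorff dimension at most $\delta(\Lambda)$, via Patterson--Sullivan theory.

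First I would lift the picture to the universal cover. Fix a lift $\tilde p\in \HH^2$ of $p$, and let $\pi\colon \HH^2\to \HH^2/\Lambda$ be the covering map. The differential at $0$ identifies ${\rm Ray}_p$ with the unit tangent sphere at $\tilde p$, and the endpoint-at-infinity map
\[
\iota\colon {\rm Ray}_p \xrightarrow{\sim} \partial \HH^2 = S^1,\qquad \gamma \mapsto \tilde\gamma(\infty),
\]
is a bi-Lipschitz homeomorphism when $\partial \HH^2$ carries the visual metric based at $\tilde p$ (equivalently, the standard $S^1$ metric in the Poincar\'e disk model centered at $\tilde p$). In particular $\iota$ preserves Hausdorff dimension.

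Next I would show that if a ray $\gamma\in {\rm Ray}_p$ stays in $U$ for all $t\ge 0$, then $\iota(\gamma)$ lies in the conical limit set $\Lambda_{\rm con}\subseteq \partial\HH^2$. Since $U$ is precompact, there is a compact $K\subset \HH^2$ with $\pi(K)\supseteq \bar U$, so there exists $R>0$ such that every point of $\pi^{-1}(U)$ lies within distance $R$ of the orbit $\Lambda.\tilde p$. Applied to $\tilde \gamma(t)\in \pi^{-1}(U)$ for all $t\ge 0$, this produces $\lambda_t\in \Lambda$ with $d_{\HH^2}(\tilde\gamma(t),\lambda_t\tilde p)\le R$. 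A standard argument in hyperbolic geometry then shows that the endpoint $\iota(\gamma)=\tilde\gamma(\infty)$ is a conical limit point based at $\tilde p$.

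Finally I would invoke the Patterson--Sullivan/Bishop--Jones bound $H.\dim(\Lambda_{\rm con})\le \delta(\Lambda)$, which holds for any discrete $\Lambda<H$. Combining with the two previous steps yields
\[
H.\dim\{\gamma\in {\rm Ray}_p : \gamma(t)\in U \ \forall t\ge 0\} \le H.\dim\bigl(\iota^{-1}(\Lambda_{\rm con})\bigr) = H.\dim(\Lambda_{\rm con}) \le \delta(\Lambda),
\]
as desired. The only real work is Step~2, the identification of rays that remain in $U$ with conical limit points; everything else is either a bi-Lipschitz book-keeping or a direct citation of Sullivan's theorem. The mildly subtle point is that the orbit $\Lambda.\tilde p$ need not intersect $\pi^{-1}(U)$, so one must use that $U$ is precompact (not merely bounded in $\HH^2/\Lambda$) to produce the compact set $K$ and the uniform constant $R$.
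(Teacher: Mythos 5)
Your proof is correct, but it follows a genuinely different route from the paper. You send a ray that stays in $U$ to its endpoint at infinity and observe that, by precompactness of $U$, the lifted ray stays within a uniform distance $R$ of the orbit $\Lambda.\tilde p$, so the endpoint is a \emph{conical} limit point of $\Lambda$ itself; you then quote the (easy direction of the) Bishop--Jones/shadow-lemma bound $H.\dim(\Lambda_{\rm con})\le \delta(\Lambda)$, valid for every discrete subgroup. The paper instead uses precompactness to pass to the subgroup $\Lambda'<\Lambda$ generated by the finitely many $\lambda$ with $U_0.\lambda\cap U_0\neq\emptyset$ (where $U_0$ is a precompact set with $\pi_\Lambda(U_0)=U$), argues by a chaining argument along the ray that the endpoint lies in the full limit set $D(\Lambda')$, and then invokes Sullivan's equality $H.\dim(D(\Lambda'))=\delta(\Lambda')$ for the geometrically finite group $\Lambda'$ (finitely generated Fuchsian groups are geometrically finite), together with $\delta(\Lambda')\le\delta(\Lambda)$. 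Your argument is more direct: it needs no finite-generation or geometric-finiteness input and only the elementary covering-argument half of the radial limit set theorem, and it works verbatim in higher dimensions; the cost is that you must know the uniform-$R$ conical characterization and cite Patterson--Sullivan/Bishop--Jones rather than the single Sullivan theorem the paper already uses elsewhere. The paper's route keeps all external input confined to \cite[Thm.~1]{sullivan1984entropy} (already cited for Theorem \ref{thm: bowen Margulis}) at the price of the auxiliary subgroup $\Lambda'$ and the chain argument. Both proofs use the same lifting step identifying rays from $p$ with rays from $\tilde p$, and both use precompactness in an essential (and correctly identified) way.
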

\begin{proof}
  Let $\pi_\Lambda:\HH^2\to \HH^2/\Lambda$ denote the standard projection.
  Let $U_0\subseteq \HH^2$ be precompact open set so that $\pi_\Lambda(U_0) = U$ and denote $\tilde U = \pi_\Lambda^{-1}(U) = \bigcup_{\lambda\in \Lambda} U_0.\lambda$. 
  Let $\Lambda' = \bra \lambda\in \Lambda:U_0.\lambda \cap U_0\neq \emptyset\ket$. 
  Since $U_0$ is precompact and $\Lambda$ discrete, it follows that the set of generators we wrote to $\Lambda'$ is finite and hence $\Lambda'$ is geometrically finite. 
  We will use Sullivan \cite[Thm.~1]{sullivan1984entropy} to give a lower bound on $\delta(\Lambda')\le \delta(\Lambda)$.

  Let $\tilde p \in U_0$ be a preimage of $p$ and note that there is a bijection between rays ${\rm Ray}_p$ and ${\rm Ray}_{\tilde p}$ that gives an equality of the Hausdorff dimensions 
  \[
    H.\dim\left(\left\{\gamma\in {\rm Ray}_p:\gamma(t)\in U \ \forall t\ge 0\right\}\right)  = 
    H.\dim\left(\left\{\gamma\in {\rm Ray}_{\tilde p}:\gamma(t)\in \tilde U \ \forall t\ge 0\right\}\right)
  .\]
  Denote $X = \left\{\gamma\in {\rm Ray}_{\tilde p}:\gamma(t)\in \tilde U \ \forall t\ge 0\right\}$ and let $\gamma \in X$. We will show that $\lim_{t\to \infty}\gamma(t) \in \partial \HH^2$ in fact lies in the limit set $D(\Lambda')$. 

  Since 
  $(\{t\in [0,\infty):\gamma(t)\in U_0.\lambda\})_{\lambda\in \Lambda}$ is an open cover of $[0,\infty)$ by bounded sets, 
  there is a sequence $t_0 = 0<t_1<t_2<\dots$ such that $\lim_{j\to \infty}t_j = \infty$
  and a sequence $(\lambda_j)_{j=0}^\infty \subseteq \Lambda$ so that $\lambda_0 = I$ and for all $j=0,1,\dots$ and for all $t\in [t_j, t_{j+1}]$ we have $\gamma(t)\in U_0.\lambda_j$. 
  Note that for all $j=1,2,\dots$ we have that $\gamma(t_j)\in U_0.\lambda_j \cap U_0.\lambda_{j-1}$. 
  Hence $\lambda_{j-1}\lambda_j^{-1}\in \Lambda'$. By induction we deduce that $\lambda_j \in \Lambda'$ for all $j\ge 0$. This implies that \[\lim_{t\to \infty}\gamma(t) = \lim_{j\to \infty}\tilde p.\lambda_j \in D(\Lambda').\] 
  Hence the limit embeds $X$ in $D(\Lambda')$ which implies that 
  \[H.\dim(X) \le H.\dim(D(\lambda')) \stackrel{\text{\cite[Thm.~1]{sullivan1984entropy}}}{=} \delta(\Lambda') \le \delta(\Lambda).\]
\end{proof}
Direct computation shows that the normalizer $N(H)$ of $H$ is given by 
\[N(H) = H \cup g_0 H, \qquad g_0 = \diag(1, 1,-1,-1).\]
\begin{obs}[The relation between $H$-orbits and immersed hyperbolic surfaces]\label{obs: immersed hyperbolic}
  Let $\pi_{K_G}: G / \Gamma \to \HH^3 / \Gamma$ be the standard projection. 
  Any $H$-orbit $H.\pi_\Gamma(g)$ in $G/\Gamma$ is projected to an immersion 
  $\iota_g:\HH^2 / \Gamma_g' \to \HH^3/\Gamma$, where $\Gamma_g' = g \Gamma g^{-1} \cap N(H)$. 
  The immersion is not necessarily bijective, however, the self-intersection is a countable union of geodesics, that is, the set $\{p\in\HH^2 / \Gamma_g: \iota^{-1}(\iota(p)) \neq \{p\}\}$ is a countable union of geodesics in $\HH^2 / \Gamma_g'$. 

  On the other hand, every immersion of open hyperbolic surface $\iota_U: U \to \HH^3/\Gamma$ that satisfies that the self-intersection is a countable union of geodesics factors as $\iota_U = \iota_g\circ \iota_0$ for some $g\in G$, where $\iota_0:U\to \HH^2/\Gamma'_g$ is an isometric embedding.
\end{obs}
\begin{definition}[Semi-periodic immersed hyperbolic surface]\label{def: semi periodic surface}
  Let $H.x_0$ be a periodic orbit in $G/\Gamma_1$, where $x_0 = \pi_{\Gamma_1}(g)$. Assume that it is not the periodic orbits $\pi_{\Gamma_1}(H)$ or $\pi_{\Gamma_1}(Hg_0)$, whose projection to $M_1$ lands in $V_1$. 
  Let $\iota_{0}:\HH^2 / \Lambda\to M_1$ denote the corresponding immersion of hyperbolic surface, where $\Lambda = (\Gamma_1)_{g}' = g\Gamma_1g_1^{-1} \cap N(H)$ is a lattice in $N(H)$. 
  Then $\HH^2 / \Lambda$ is a finite volume compact space. Let $U_1$ be a connected component of $(\HH^2/\Lambda)\setminus \iota_0^{-1}(V_1)$. 
  Then $\iota_0$ restricts to an embedding $\iota_0|_{U_1}:U_1\to M_1^{\rm cut}$. 
  Let $\iota_1 = \chi_1\circ \iota_0|_{U_1}:U_1\to \HH^3/\Lambda$. By Observation \ref{obs: immersed hyperbolic} the immersion $\iota_1$ 
  factors as $\iota_1 = \iota_{g_2} \circ \iota_2$ for some $g_2\in G$, $\iota_2:U_1\to \HH^2 / \Gamma_{g_2}'$. 
  Then $\iota_{g_2}:\HH^2/\Gamma_{g_2}'\to \HH^3/ \Gamma$ is a semi-periodic surface. 
\end{definition}
\begin{proof}[Reduction of Theorem \ref{thm: example} into two propositions]
  Let $g_0, \Lambda, U_1, \iota_1, g_2, \iota_{2}$ as in Definition \ref{def: semi periodic surface}. 
  Since $V_1$ is a hyperbolic surface in $M_1$ that differs from $\iota_0(\HH^2/\Lambda)$, we deduce that $\iota_0^{-1}(V_1)$ is a union of geodesics in $H/\Lambda$, and hence $U_1$ has finite diameter. 
  Hence $\iota_2(U_1)$ is precompact in $\HH^2/\Gamma_{g_2}'$. 
  Let $\rho:\HH^2/\Gamma_{g_2}\to \HH^2/\Gamma_{g_2}'$ denote the standard projection. It is a proper covering map of index at most $2$. Hence $\rho^{-1}(\iota_2(U_1))$ is precompact in $\HH^2/\Gamma_{g_2}$.
  Let $p\in \rho^{-1}(\iota_2(U_1))$. 
  It follows that 
  \begin{align}\label{eq: delta by rays}
    \begin{split}
      \delta(\Gamma_{g_2}) &\stackrel{\ref{claim: critical exponent by rays}}{\ge} 
      H.\dim(\left\{\gamma\in {\rm Ray}_{p}:\gamma(t)\in \rho^{-1}(\iota_2(U_1)) \ \forall t\ge 0\right\})
      \\&= 
      H.\dim(\left\{\gamma\in {\rm Ray}_{\rho(p)}:\gamma(t)\in \iota_2(U_1) \ \forall t\ge 0\right\})
      \\&=
      H.\dim(\left\{\gamma\in {\rm Ray}_{p'}:\gamma(t)\in U_1 \ \forall t\ge 0\right\}).
    \end{split}
  \end{align}
  for $p' = \iota_2{\rho(p)} \in U_1\subseteq \HH^2/\Lambda$. 
  Let $\pi_{\Lambda}: \HH^2 \to \HH^2/\Lambda$ denote the projection. 
  We will express the right-hand side of Eq. \eqref{eq: delta by rays} by this universal cover. 
  Let $\tilde p \in \pi_{\Lambda}^{-1}(p')$.
  Since $U_1$ is the connected component of $(\HH^2/\Lambda)\setminus \iota_0^{-1}(V_1)$, 
  we can lift each geodesic ray to the universal cover $\HH^2$ of $U_1$ get an equality 
  \begin{align}\label{eq: lift to cover}
    H.\dim(\left\{\gamma\in {\rm Ray}_{p'}:\gamma(t)\in U_1 \ \forall t\ge 0\right\}) = 
    H.\dim(\left\{\gamma\in {\rm Ray}_{\tilde p}:\gamma(t)\nin \cL \ \forall t\ge 0\right\}),
  \end{align}
  where $\cL = \pi_{\Lambda}^{-1}(\iota_0^{-1}(V_1))$ is a union of lines. 
  we now introduce the following propositions on $\cL$.
  \begin{prop}\label{prop: far lines}
    Let $\zeta:\HH^2\to M_1$ be a locally isometric immersion. Then the set 
    $\cL_\zeta = \zeta^{-1}(V_1)$ is a union of hyperbolic lines such that for every two geodesic lines $\ell_1\neq \ell_2\subseteq \cL$ we have 
    $d_{\HH^2}(\ell_1, \ell_2) > \frac{1}{2}\log A_1 + O(1)$. 
  \end{prop}
  \begin{prop}\label{prop: high dimension away of lines}
    Let $\cL\subseteq \HH^2$ be a union of lines so that for every two geodesic lines $\ell_1\neq \ell_2\subseteq \cL$ we have 
    $d_{\HH^2}(\ell_1, \ell_2) > \mathcal A$. Then 
    \[H.\dim(\left\{\gamma\in {\rm Ray}_{p}:\gamma(t)\nin \cL \ \forall t\ge 0\right\}) > 1 - O(1/\mathcal A),\]
    for every $p \in \HH^2 \setminus \cL$. 
  \end{prop}
  The combination of Propositions \ref{prop: far lines} and \ref{prop: high dimension away of lines}, together with Eqs. \eqref{eq: delta by rays} and \eqref{eq: lift to cover}, show that $\delta(\Gamma_{g_2}) > 1-O(1/\log A_1)$. 
  
  Leaving the proofs of these propositions to the next subsections, it is left to find $\iota_0, g_0, \Lambda, U_1, \iota_1, g_2, \iota_{2}$ as in Definition \ref{def: semi periodic surface} so that $\Gamma_{g_2}$ is not periodic.
  It follows from \cite[Thm.~4.1]{fisher2021finiteness} or \cite[Prop.~12.1.]{benoist2022geodesic}, that if $\iota_{0}(\HH^2 / \Lambda)$ intersects $V_1$ non-orthogonally then $\Gamma_{g_2}$ is not periodic. Such an immersion exists by the density of closed $H$-orbits in $G/\Gamma_1$.

\end{proof}

\subsection{Behavior of the arithmetic space near the cutting plane}
\label{ssec: arith}
In this section we prove Proposition  \ref{prop: far lines}, as well as claims \ref{claim: V injects} and \ref{claim: big volume}. 
We begin the section by linearising the distance from a hyperbolic plane in $\HH^3$. 
\subsubsection*{Linearization of the distance from a hyperbolic plane}
\begin{definition}[The representation $W$]
  Let $W\cong \RR^4$ denote the standard representation of $\SL_4(\RR)$ on which it acts from the left. 
  Note that the quadratic form $Q(x_1,x_2,x_3,x_4) = x_1^2 - x_2^2 - x_3^2 - x_4^2$ is preserved by the $G$ action (this time thought of as a quadratic form on $W$), similarly to the case with $W^{\rm op}$.
\end{definition}

\begin{obs}[Identifying $\HH^2$ in $\HH^3$]
  Let $\pi_{K_G}:G \to \HH^3$ denote the standard projection. 
  Note that $\stab_{G}(w_0) = H$ and
  \[K_G.w_0 = \{(0,w_2, w_3, w_4)^t:w_2^2 + w_3^2 + w_4^2 = 1\} = \{w\in W:Q(w) = -1, \|w\|=1\}.\]   
  Hence 
  \[\{g\in G:\|g.w_0\| = 1\} = \{g\in G:g.w_0\in K_G.w_0\} = K_gH.\]
  Hence $K_GH = \pi_{K_G}^{-1}(\HH^2)$. 
\end{obs}
\begin{definition}[Hyperbolic geometry relative to a hyperbolic plane]\label{def: hyperbolic near plane}
  Let $\varphi:\HH^3\to \RR$ be the signed distance form $\HH^2$, that is,
  \[\varphi(p) = \begin{cases}
      d_{\HH^3}(p, \HH^2),  & \text{if }p\in (\HH^3)^+, \\
      -d_{\HH^3}(p, \HH^2), & \text{if }p\in (\HH^3)^-, \\
      0,                    & \text{if }p\in \HH^2.
    \end{cases}\]
  Note that $\varphi$ is differentiable and the gradient is of fixed size $1$.

  Through every point $p\in \HH^2$ passes a unique geodesic $\xi_p:\RR\to \HH^3$ with $\xi_p(0) = p$, which is orthogonal to $\HH^2$ and oriented towards $(\HH^3)^+$.
  These geodesics forms a foliation of $\HH^3$, and satisfy $\varphi(\xi_p(t)) = t$ for all $p\in \HH^2, t\in \RR$.
  For every $h\in H$ we have that $\xi_{p.h} = \xi_{p}.h$.
  Recall $w_0 = (0,0,0,1)^t\in W$, and define $\psi:G\to \RR$ by $\psi(g) = (g.w_0)_1$.
  Since $\psi$ is invariant from the left to $K_H$ it descends to a map $\psi:\HH^3\to\RR$. 
\end{definition}
\begin{claim}\label{claim: phi psi relation}
  For every $p\in \HH^3$ we have $\sinh(\varphi(p)) = \psi(p)$. 
\end{claim}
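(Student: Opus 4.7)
The plan is to exploit the right $H$-invariance of both $\varphi$ and $\psi$ to reduce the identity to a single one-parameter calculation along the geodesic $\xi_{p_0}$, where $p_0=(1,0,0,0)$ is chosen to lie in $\HH^2$.

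First I would compute $\psi$ in coordinates. If $g\in G$ and $p=p_0.g\in \HH^3$, then reading off the first row of $g$ gives $p=(g_{11},g_{12},g_{13},g_{14})$, while $g.w_0$ is the fourth column of $g$, whose first entry is $g_{14}$. Thus $\psi(g)=g_{14}=p_4$, and this value descends well to $\HH^3$ (recovering the $K_G$-invariance noted in Definition \ref{def: hyperbolic near plane}). In particular $\psi$ extends to a function on $\HH^3$ given simply by the fourth coordinate.

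Second, I would verify right $H$-invariance of both functions. For $\varphi$, the group $H=\SO(2,1)^0$ acts on the first three coordinates, preserves $\HH^2$ setwise and preserves the sign of the last coordinate, so it preserves the signed distance. For $\psi$, if $h\in H=\stab_G(w_0)$ then $(gh).w_0=g.(h.w_0)=g.w_0$, so $\psi(gh)=\psi(g)$; equivalently, in the coordinate picture, $H$ preserves the fourth coordinate of any point in $\HH^3$.

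Third, I would do the one-parameter calculation on the normal geodesic through $p_0\in\HH^2$. Explicit parametrization in the hyperboloid model gives
\begin{equation*}
\xi_{p_0}(t)=(\cosh t,\,0,\,0,\,\sinh t),
\end{equation*}
which is oriented into $(\HH^3)^+$ for $t>0$. Hence $\varphi(\xi_{p_0}(t))=t$ by construction, while the fourth-coordinate computation above gives $\psi(\xi_{p_0}(t))=\sinh t$, so the identity $\sinh(\varphi)=\psi$ holds along this geodesic.

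Finally, since $H$ acts transitively on $\HH^2$ and $\xi_{q.h}=\xi_q.h$, every $p\in\HH^3$ can be written as $\xi_{p_0}(t).h$ for some $t\in\RR$, $h\in H$, and the two invariances from Step 2 propagate the equality $\sinh(\varphi(p))=\psi(p)$ from the geodesic $\xi_{p_0}$ to all of $\HH^3$. There is no real obstacle here; the only thing to be careful about is bookkeeping the sign conventions, which is automatic once $p_0\in\HH^2$ is chosen so that the basepoint lies on the hyperbolic plane and the chosen geodesic is oriented into $(\HH^3)^+$.
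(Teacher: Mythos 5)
Your proof is correct and takes essentially the same route as the paper: both $\varphi$ and $\psi$ are right $H$-invariant, so the identity reduces to the normal geodesic through $p_0$, where the direct computation $\varphi(\xi_{p_0}(t))=t$ and $\psi(\xi_{p_0}(t))=\sinh t$ finishes the argument. Incidentally, your parametrization $\xi_{p_0}(t)=(\cosh t,0,0,\sinh t)$ is the correct one; the point $(\sinh t,0,0,\cosh t)$ written in the paper's proof is a typo, since it satisfies $Q=-1$ and does not lie in $\HH^3$.
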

\begin{proof}
  Both functions $p\mapsto \psi(p)$ and $p\mapsto \sinh(\varphi(\pi_{K_G}(p)))$ are invariant from the right to $H$, which allows us to test this equality only on points of the form $\xi_{p_0}(t) = (\sinh(t), 0,0,\cosh(t))$, on which the equality holds.
\end{proof}
\begin{corollary}\label{cor: hyp dist is lenght of vec}
  Let $\HH^2.g_1$ be a hyperbolic plane and $p_2 = \pi_{K_G}(g_2)$ be a point. 
  Then $d_{\HH^3}(p_2, \HH^2.g_1) = \log \|g_2g_1^{-1}.w_0\| + O(1)$. 
\end{corollary}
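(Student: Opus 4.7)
The plan is to reduce the two-argument distance to a one-point computation via the right $G$-invariance of $d_{\HH^3}$, then feed the result into Claim \ref{claim: phi psi relation}. First I would note that right translation by $g_1^{-1}$ is a $d_{\HH^3}$-isometry sending $\HH^2.g_1$ to $\HH^2$ and sending $p_2 = \pi_{K_G}(g_2)$ to $\pi_{K_G}(g_2 g_1^{-1})$; combined with the definition of $\varphi$ in Definition \ref{def: hyperbolic near plane}, this gives
\[
d_{\HH^3}(p_2, \HH^2.g_1) = d_{\HH^3}(\pi_{K_G}(g_2 g_1^{-1}), \HH^2) = |\varphi(\pi_{K_G}(g_2 g_1^{-1}))|.
\]

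Next, set $w := g_2 g_1^{-1}.w_0 \in W$ and $\varphi := \varphi(\pi_{K_G}(g_2 g_1^{-1}))$. By Claim \ref{claim: phi psi relation}, the first coordinate of $w$ is $w_1 = \psi(\pi_{K_G}(g_2 g_1^{-1})) = \sinh \varphi$. Since the $G$-action preserves $Q$ and $Q(w_0) = -1$, the constraint $Q(w) = w_1^2 - (w_2^2 + w_3^2 + w_4^2) = -1$ forces
\[
\|w\|^2 = 2 w_1^2 + 1 = 2\sinh^2 \varphi + 1 = \cosh(2\varphi),
\]
so $\log \|w\| = \tfrac{1}{2}\log \cosh(2\varphi)$.

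Finally I would invoke the elementary uniform estimate $\tfrac{1}{2}\log \cosh(2t) = |t| + O(1)$ on $\RR$ (the difference tends to $-\tfrac{1}{2}\log 2$ as $|t| \to \infty$ and vanishes at $t=0$, and is continuous in between). Applied with $t = \varphi$, this yields $|\varphi| = \log \|w\| + O(1)$, which is the statement of the corollary. There is no real obstacle: the corollary is a direct translation of Claim \ref{claim: phi psi relation} combined with the Lorentz constraint $Q(w) = -1$ and the hyperbolic identity $2\sinh^2 + 1 = \cosh(2\cdot)$.
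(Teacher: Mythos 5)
Your proposal is correct and follows essentially the same route as the paper: reduce to the distance from $\HH^2$ by right $G$-invariance, apply Claim \ref{claim: phi psi relation} to identify the first coordinate of $g_2g_1^{-1}.w_0$ with $\sinh\varphi$, and finish with an elementary estimate. The only difference is that you spell out the ``direct computation'' the paper leaves implicit, via $\|w\|^2 = 2w_1^2+1 = \cosh(2\varphi)$ for $Q(w)=-1$, which is a fine and complete way to do it.
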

\begin{proof}
  Using Claim \ref{claim: phi psi relation} we deduce that 
  \begin{align*}
    d_{\HH^3}(p_2, \HH^2.g_1) &
    = d_{\HH^3}(p_2.g_1^{-1}, \HH^2)
    = |\varphi(p_2.g_1^{-1})| \\ &
    \leftstackrel{\ref{claim: phi psi relation}}{=} |\sinh^{-1}(\psi(g_2g_1^{-1}))|
    = |\sinh^{-1}((g_2g_1^{-1}.w_0)_1)|.
  \end{align*}
  The result follows from the fact that for every vector $w\in W$ with $Q(w) = -1$ we have
  \[|\sinh^{-1}(w_1)| = \log \|w\| + O(1),\]
  which is a direct computation. 
\end{proof}
Denote by $C_0$ the implicit constant in Corollary \ref{cor: hyp dist is lenght of vec} so that 
\begin{align}\label{eq: distance to plane}
  \left|d_{\HH^3}(p_2, \HH^2.g_1) - \log \|g_2g_1^{-1}.w_0\|\right| \le C_0.
\end{align}

Finally, we prove the following claim
\begin{claim}\label{claim: hyperbolic regulation}
  Let $v\in W$ so that $Q(v) = -1$. Then there are $k, k'\in K_G$ and $t\in \RR$ such that
  $k'\ta(t)k. v = w_0$ and $\cosh(t) \le \|v\|$.
\end{claim}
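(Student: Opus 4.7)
The plan is to carry out a concrete $KAK$-style reduction in three short steps.

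First, since $K_G \cong \SO(3)$ acts by rotations on the last three coordinates of $W$, I would choose $k \in K_G$ that rotates the vector $(v_2, v_3, v_4)$ onto the $w_2$-axis. Writing $v = (v_1, v_2, v_3, v_4)^{t}$, the relation $Q(v) = -1$ forces $v_2^2 + v_3^2 + v_4^2 = 1 + v_1^2$, so after this rotation
\[ k.v = (v_1, r, 0, 0)^t, \qquad r := \sqrt{1 + v_1^2}. \]

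Second, I would use the element $\ta(t) \in G$ to kill the first coordinate. Under the isogeny $\SL_2(\CC) \to \SO(3,1)^0$ given by Claim \ref{claim: W}, a direct computation in the basis of Eq. \eqref{eq: W basis} identifies $\ta(t) = \diag(e^{t/2}, e^{-t/2})$ with the hyperbolic rotation in the $(w_1, w_2)$-plane that fixes $w_3$ and $w_4$ (in particular $w_0$). Choosing $t$ with $\tanh(t) = v_1/r$, one obtains $\cosh(t) = r$ and $\sinh(t) = v_1$, so that
\[ \ta(t)\,k.v = \bigl(\cosh(t) v_1 - \sinh(t) r,\ -\sinh(t) v_1 + \cosh(t) r,\ 0,\ 0\bigr)^{t} = (0, 1, 0, 0)^{t}. \]
Finally I would pick any $k' \in K_G$ rotating $(1,0,0)$ to $(0,0,1)$ in the last three coordinates; then $k' \ta(t) k.v = w_0$, as required.

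For the norm bound, it only remains to compare: $\|v\|^2 = v_1^2 + r^2 = 2v_1^2 + 1$ while $\cosh^2(t) = r^2 = 1 + v_1^2$, and $v_1^2 \ge 0$ immediately gives $\cosh^2(t) \le \|v\|^2$. There is no genuine obstacle here; the only mild point of care is matching the convention for $\ta(t)$ with its image under the isogeny used throughout Section \ref{sec: example}, after which the whole claim reduces to the elementary identities above.
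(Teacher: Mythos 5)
Your proof is correct and follows essentially the same route as the paper: rotate the last three coordinates by $k\in K_G$ to bring $v$ to $(v_1,\sqrt{1+v_1^2},0,0)^t$, apply the hyperbolic rotation $\ta(t)$ with $\sinh t=\pm v_1$, $\cosh t=\sqrt{1+v_1^2}$ to reach $(0,1,0,0)^t$, and finish with a rotation $k'\in K_G$ sending $(0,1,0,0)^t$ to $w_0$; the bound $\cosh^2 t=1+v_1^2\le 1+2v_1^2=\|v\|^2$ is the same elementary comparison. The only difference is cosmetic (your $k'$ is described abstractly, the paper writes it as an explicit matrix, and the sign of $t$ depends on the isogeny convention, which is harmless since $\cosh$ is even).
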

\begin{proof}
  Express $v = (v_1, v_2, v_3, v_4)^t$.
  For some $k\in K_G$, we have $k. v = (v_1, v_2', 0, 0)^t$ with $v_2'>0$ and $v_1^2 - (v_2')^2 = -1$.
  This implies that for some $t'>0$ we have $v_1 = \sinh t'$ and $v_2' = \cosh(t')$.
  In particular $\cosh(t') < \|v\|$ and $\ta(-t')k.v = (0,1,0,0)^t$. The desired follows for $t=-t'$ and $k' = \left(\begin{smallmatrix}
      1&0&0&0\\
      0&0&0&-1\\
      0&0&1&0\\
      0&1&0&0
    \end{smallmatrix}\right)$.
\end{proof}

\subsubsection*{Arithmetic properties}
Fix $i=1,2$ for the entire subsection. 
Consider the vector $w_0 = (0,0,0,1)^t\in W$ and the set $W_{\Gamma_i} = \Gamma_i. w_0\subset W$.
\begin{claim}\label{claim: discreteness of tiny orbits}
  Let $v\in W_{\Gamma_i}$. Then either $v = w_0$, or $\|v\| \ge \sqrt {A_i/7}$.
\end{claim}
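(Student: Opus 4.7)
The plan is to reduce everything to an integral equation via the change-of-variables by $g_i$, then use two arithmetic facts: the integrality of the orbit vector and the congruence condition modulo $7$ coming from $\Gamma(7)$.

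\medskip

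First I would unwind the definitions. Any $v \in W_{\Gamma_i}$ has the form $v = \gamma.w_0 = g_i^{-1}\gamma' g_i.w_0$ for some $\gamma' \in \SO(Q_i, \ZZ)'$. Since $g_i.w_0 = \diag(\sqrt 7,1,1,\sqrt{A_i}).(0,0,0,1)^t = \sqrt{A_i}\,e_4$, and $\gamma' \in \SL_4(\ZZ)$, the vector $\gamma'.e_4 =: (a,b,c,d)^t$ lies in $\ZZ^4$. Applying $g_i^{-1} = \diag(1/\sqrt 7, 1, 1, 1/\sqrt{A_i})$ to $\sqrt{A_i}(a,b,c,d)^t$ gives
\[v = \bigl(\sqrt{A_i/7}\,a,\ \sqrt{A_i}\,b,\ \sqrt{A_i}\,c,\ d\bigr)^t, \qquad \|v\|^2 = \tfrac{A_i}{7}a^2 + A_i(b^2+c^2) + d^2.\]
The $\SO(Q_i)$-condition on $\gamma'$ yields $Q_i(\gamma'.e_4) = Q_i(e_4) = -A_i$, i.e.\ $7a^2 - b^2 - c^2 - A_i d^2 = -A_i$.

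\medskip

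Next I would do a case analysis on $(a,b,c)$. If any one of $a,b,c$ is nonzero, then because $a,b,c \in \ZZ$ we have $a^2 \geq 1$ or $b^2+c^2 \geq 1$, and in either case $\|v\|^2 \geq A_i/7$, which is the desired lower bound. So it remains to handle the case $a = b = c = 0$. In this case the equation $7a^2 - b^2 - c^2 - A_i d^2 = -A_i$ collapses to $A_i d^2 = A_i$, forcing $d = \pm 1$ and hence $\|v\| = 1$.

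\medskip

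The final (and main) step is to rule out $d = -1$ using the principal congruence subgroup. Since $\gamma' \in \SO(Q_i, \ZZ)' = \SO(Q_i,\ZZ) \cap \Gamma(7)$, we have $\gamma' \equiv I \pmod 7$, so in particular $\gamma'.e_4 \equiv e_4 \pmod 7$, giving $d \equiv 1 \pmod 7$. Combined with $d = \pm 1$, this forces $d = 1$, and then $v = (0,0,0,1)^t = w_0$. This completes the dichotomy, finishing the proof.

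\medskip

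I do not expect any real obstacle here: the entire argument is a direct computation once one writes $v$ in coordinates. The only subtlety is remembering to invoke the $\Gamma(7)$-condition in the degenerate case $a=b=c=0$ to kill the stray $d=-1$ solution; without it the claim would fail by the counterexample $v = -w_0$.
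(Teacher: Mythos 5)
Your proposal takes the same route as the paper's proof: write $v=g_i^{-1}\gamma' g_i.w_0$ with $\gamma'\in \SO(Q_i,\ZZ)'$, note that in coordinates $v=(\sqrt{A_i/7}\,a,\ \sqrt{A_i}\,b,\ \sqrt{A_i}\,c,\ d)^t$ where $(a,b,c,d)^t\in\ZZ^4$ is the fourth column of $\gamma'$, get $\|v\|\ge\sqrt{A_i/7}$ from integrality as soon as $(a,b,c)\neq(0,0,0)$, and use $\gamma'\equiv I\ (\mathrm{mod}\ 7)$ to exclude $v=-w_0$ in the remaining case. This is exactly the paper's argument, which phrases the first part as ``$\Gamma_i$ preserves the lattice $\sqrt{A_i}\,g_i^{-1}\ZZ^4$'' and the last part as ``the action on $W_{\ZZ,i}/7W_{\ZZ,i}$ is trivial''.

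There is one incorrect intermediate step: the displayed relation $7a^2-b^2-c^2-A_id^2=-A_i$, derived from ``$Q_i(\gamma'.e_4)=Q_i(e_4)$''. The paper defines $\SO(Q_i,\ZZ)$ by invariance of $Q_i$ under the \emph{right} action on row vectors, i.e.\ $\gamma' M_i\gamma'^{\,t}=M_i$ with $M_i=\diag(7,-1,-1,-A_i)$; this does not give $\gamma'^{\,t}M_i\gamma'=M_i$, so the left action on column vectors does not preserve $Q_i$. (It preserves the form with Gram matrix $M_i^{-1}$; the two coincide only when the Gram matrix is proportional to its inverse, which holds for $Q$ but not for $Q_i$.) Your equation therefore does not follow from the hypotheses, and it fails for general elements of $\SO(Q_i,\RR)$. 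The relation that is actually available is $Q(v)=Q(w_0)=-1$, since $\Gamma_i\le G$ preserves $Q$ in the left action; in your coordinates this reads $\tfrac{A_i}{7}a^2-A_i(b^2+c^2)-d^2=-1$. The slip is harmless for the conclusion: you invoke the quadratic relation only when $a=b=c=0$, and there the correct equation also forces $d=\pm1$, after which your mod-$7$ congruence gives $d=1$, i.e.\ $v=w_0$. So replacing your equation by $Q(v)=-1$ repairs the argument, and the result is then identical in substance to the paper's proof.
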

\begin{proof}
  Let $W_\ZZ\cong \ZZ^4$ be the integer vectors in $W \cong \RR^4$. 
  By the definition of $\Gamma_i$ we deduce that $\Gamma_i$ preserves that lattice $W_{\ZZ, i} = \sqrt A_i g_i^{-1}W_\ZZ\subset W$, where $g_i = \diag(\sqrt 7, 1, 1, \sqrt A_i)$.

  Let $v\in W_{\Gamma_i}\setminus \{w_0\}\subseteq W_{\ZZ, i}$.
  Note that $Q(v) = -1$. Assume to the contrary that $v\in \RR w_0$.
  Then we must have $v= - w_0$.
  However, since $\Gamma_i$-s action on $W_{\ZZ, i}$ descends to a trivial action on $W_{\ZZ, i}/7W_{\ZZ, i}$, we deduce that $W_{\Gamma_i}\cap -W_{\Gamma_i} = \emptyset$, which contradicts $v= - w_0$. Hence we have $v\nin \RR w_0$.
  Let $j=1,2,3$ be an index satisfying $(v)_j\neq 0$.
  Then since $v\in W_{\ZZ, i}$ we must have $|(v)_j| \ge \begin{cases}
      \sqrt {A_i},         & \text{if }j=2,3, \\
      \sqrt {A_i}/\sqrt 7, & \text{if }j=1.   \\
    \end{cases}$.
  Hence $\|v\| \ge \sqrt {A_i/7}$.
\end{proof}
\begin{claim}\label{claim: new alpha unique}
  There is $c>0$ independent of $A_i$ such that for all $g\in G$ there is at most one $z\in g.W_{\Gamma_i}$ such that $\|z\| < cA_i^{1/4}$.
\end{claim}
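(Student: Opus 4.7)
The plan is to argue by contradiction: assume there are two distinct $z_1, z_2 \in g.W_{\Gamma_i}$ with $\|z_j\| < cA_i^{1/4}$, and show this is impossible once $c < 1$. The key tool is the $G$-invariant bilinear form associated to $Q$, namely $B(x,y) = x_1y_1 - x_2y_2 - x_3y_3 - x_4y_4$, which Cauchy-Schwarz bounds by $|B(x,y)| \le \|x\|\|y\|$.

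Write $z_j = g\gamma_j.w_0$ for $\gamma_j \in \Gamma_i$, and put $u := \gamma_1^{-1}\gamma_2.w_0 \in W_{\Gamma_i}$. Since $z_1 \ne z_2$ we have $u \ne w_0$. The invariance of $B$ gives
\[
B(z_1,z_2) = B(g\gamma_1.w_0,\; g\gamma_1.u) = B(w_0,u) = -u_4,
\]
so Cauchy-Schwarz yields $|u_4| \le \|z_1\|\|z_2\| < c^2\sqrt{A_i}$.

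The opposing lower bound on $|u_4|$ comes from the arithmetic description of $W_{\Gamma_i}$ already used in the proof of Claim \ref{claim: discreteness of tiny orbits}. Writing $u = (n_1\sqrt{A_i/7},\,n_2\sqrt{A_i},\,n_3\sqrt{A_i},\,n_4)$ with $n_j \in \ZZ$, the containment $\Gamma_i \subseteq g_i^{-1}\Gamma(7)g_i$ forces $n_1,n_2,n_3 \equiv 0 \pmod 7$ and $n_4 \equiv 1 \pmod 7$. Writing $n_1 = 7m_1$, the identity $Q(u) = -1$ becomes
\[
n_4^2 \;=\; 1 + A_i\bigl(7m_1^2 - n_2^2 - n_3^2\bigr).
\]
The integer $N := 7m_1^2 - n_2^2 - n_3^2$ must be nonnegative. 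If $N = 0$, then $n_4 = \pm 1$ and the congruence forces $n_4 = 1$, while the equation $7m_1^2 = n_2^2 + n_3^2$ with $7 \mid n_2, n_3$ yields, by the same infinite descent used in Claim \ref{claim: cocompact} (since $-1$ is not a square mod $7$), $m_1 = n_2 = n_3 = 0$, hence $u = w_0$ --- ruled out. Therefore $N \ge 1$, whence $|u_4| \ge \sqrt{A_i+1} > \sqrt{A_i}$.

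Combining, we get $\sqrt{A_i} < |u_4| < c^2\sqrt{A_i}$, which fails for any $c \le 1$. Choosing $c = 1/2$ (or any fixed $c < 1$, independently of $A_i$) completes the proof. The only delicate step is the infinite-descent argument excluding $N=0$, but this is identical in flavor to the argument already deployed in Claim \ref{claim: cocompact} and needs no new ingredient.
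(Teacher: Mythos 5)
Your proof is correct, but it takes a genuinely different route from the paper's. The paper argues geometrically: via Claim \ref{claim: hyperbolic regulation} it moves $z$ back to $w_0$ by an element $k'\ta(t)k$ with $e^t \le 2cA_i^{1/4}$, uses the cocompactness of $\Gamma_3=\Gamma_i\cap H$ in $H$ to correct the resulting element of $H$ by some $\gamma_3\in\Gamma_i\cap H$ so that the product has operator norm at most a constant $C$, and then applies the crude lower bound $\|v'\|\ge\sqrt{A_i/7}$ of Claim \ref{claim: discreteness of tiny orbits} to the image of the second vector, which forces $c^2<1/(2C\sqrt 7)$. You instead pair the two small vectors directly through the $G$-invariant bilinear form $B$ polarizing $Q$: invariance gives $B(z_1,z_2)=B(w_0,u)=-u_4$ for $u=\gamma_1^{-1}\gamma_2.w_0\in W_{\Gamma_i}$, Cauchy--Schwarz gives $|u_4|\le\|z_1\|\|z_2\|<c^2\sqrt{A_i}$, and a sharpened arithmetic input --- for $u\in W_{\Gamma_i}\setminus\{w_0\}$ one has $|u_4|\ge\sqrt{A_i+1}$, coming from the mod-$7$ congruences on the lattice coordinates and the fact that $7m_1^2=n_2^2+n_3^2$ has only the trivial integer solution --- produces a contradiction for any $c\le 1$. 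Your route is more elementary and self-contained (no normalization step, no compactness constant from $\Gamma_3$ being cocompact in $H$) and yields an explicit, larger constant such as $c=1/2$; what it buys this with is the coordinate-wise congruence $n_4\equiv 1\pmod 7$ and the anisotropy of $7x^2-y^2-z^2$ over $\QQ$, both of which you handle correctly. One cosmetic slip: the descent you invoke is not literally ``the same'' argument as in Claim \ref{claim: cocompact}, whose proof is a mod-$8$ computation rather than a mod-$7$ descent; but your descent (using that $-1$ is a nonsquare mod $7$) is valid, so this does not affect correctness.
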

\begin{proof}
  Suppose that $z\neq z' \in g.W_{\Gamma_i}$ and $\|z\|, \|z'\| < cA_i^{1/4}$.
  Assume that
  \begin{align}\label{eq: z z' by gamma}
    z = g \gamma.w_0\quad \text{and}\quad z' = g \gamma'.w_0.
  \end{align}
  Every $z\in g.W_{\Gamma_i}$ has $Q(z) = -1$. Applying Claim \ref{claim: hyperbolic regulation} to $z$, we get that there are $k, k'\in K_G$ and $t\in \RR$ with $\cosh(t) \le \|z\| \le cA_i^{1/4}$ such that $k' \ta(t) k. z = w_0$. In particular, $e^t < 2cA_i^{1/4}$.
  Substituting $z = g \gamma.w_0$ to the previous equality, we obtain $k' \ta(t) kg \gamma. w_0 = w_0$. Using $H = \stab_G(w_0)$, we deduce that $k' \ta(t) kg \gamma \in H$.

  Since $\Gamma_i\cap H = \Gamma_3$ is cocompact in $H$ (see Claim \ref{claim: cocompact}) and independent of $A_i$, there is $C>0$ such that for every $h\in H$ there is $\gamma_3\in H\cap \Gamma_i$ such that $\|\gamma_3h\|_{\rm op} < C$.
  Here we use the operator norm defined by the action on $W$.
  Applying this to $(k' \ta(t) kg \gamma)^{-1} \in H$, we deduce that for some $\gamma_3\in H\cap \Gamma_i$ we have $\|\gamma_3(k' \ta(t) kg \gamma)^{-1}\|_{\rm op} < C$.
  Set
  \begin{align}\label{eq: def h}
    h = \gamma_3(k' \ta(t) kg \gamma)^{-1},\quad\text{satisfying}\quad\|h\|_{\rm op} < C.
  \end{align}
  Then $h k' \ta(t) k .z = w_0$.
  Denote $v = h k' \ta(t) k.z = w_0$ and $v' = h k' \ta(t) k .z'$.
  We will now estimate $v'$. Note that
  \[v' = h k' \ta(t) k .z' \stackrel{\eqref{eq: z z' by gamma}}{=} h k' \ta(t) k g\gamma'.w_0 \stackrel{\eqref{eq: def h}}{=} \gamma_3 \gamma^{-1} \gamma'.w_0 \in W_{\Gamma_i}.\]
  By Claim \ref{claim: discreteness of tiny orbits}, we obtain that $\|v'\|\ge \sqrt {A_i/7}$.
  On the other hand,
  \[\|v'\| \le \|z'\|\cdot \|k \ta(t) k' h\|_{\rm op} \le
    cA_i^{1/4}\|h\|_{\rm op}e^t \le
    cA_i^{1/4}\cdot C\cdot 2cA_i^{1/4}\le 2c^2 CA_i^{1/2}.
  \]
  Therefore, choosing $c$ for which $c^2 < \frac{1}{2C\sqrt 7}$, we obtain a contradiction to Claim \ref{claim: discreteness of tiny orbits} and the desired uniqueness follows.
\end{proof}

\begin{claim}\label{claim: disjoint nbds}
  Let $\cA = \frac{1}{4}\log A_i - C_0 + \log c = \Theta(\log A_i)$, wehre $C_0$ is as in Eq. \eqref{eq: distance to plane} and $c$ as in Claim \ref{claim: new alpha unique}. 
  Define
  \[S = \{p\in \HH^3: d(p, \HH^2) < \cA\}.\]
  Then for every $\gamma\in \Gamma_i$, either 
  \begin{enumerate}
    \item
    $\gamma\in \Gamma_3$ and then $S.\gamma = S$,
    \item or $\gamma\nin H$ and then $S.\gamma \cap S = \emptyset$.
  \end{enumerate}
\end{claim}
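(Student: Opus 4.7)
The plan is to handle the two cases separately, both reducing to results that have already been established. The first case is essentially definitional; the second is a pigeonhole argument on the discrete set $g_2.W_{\Gamma_i}$ via Claim \ref{claim: new alpha unique}, using the linearization of the distance to a plane from Eq. \eqref{eq: distance to plane} as the bridge between the geometric and representation-theoretic sides.

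For the case $\gamma \in \Gamma_3$, recall that $\Gamma_3 = H \cap \Gamma_i$, so $\gamma \in H$ acts on $\HH^3$ as an isometry preserving the hyperbolic plane $\HH^2$. Since $S$ is defined purely through $p \mapsto d_{\HH^3}(p, \HH^2)$, we immediately obtain $S.\gamma = S$. Note moreover that, because $\Gamma_3 = H\cap \Gamma_i$, the two cases listed exhaust all of $\Gamma_i$.

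For the case $\gamma \in \Gamma_i \setminus H$, assume for contradiction that there exists $p \in S \cap S.\gamma$ and pick $g_2 \in G$ with $\pi_{K_G}(g_2) = p$. From $p \in S$ we get $d_{\HH^3}(p, \HH^2) < \cA$, and from $p \in S.\gamma$, using that $\gamma$ acts by isometry, $d_{\HH^3}(p, \HH^2.\gamma) = d_{\HH^3}(p.\gamma^{-1}, \HH^2) < \cA$. Applying Eq. \eqref{eq: distance to plane} once with $g_1 = e$ and once with $g_1 = \gamma$ (so that $\HH^2.g_1$ equals $\HH^2$ and $\HH^2.\gamma$ respectively) gives
\begin{align*}
\log \|g_2.w_0\| &< \cA + C_0 = \tfrac{1}{4}\log A_i + \log c,\\
\log \|g_2\gamma^{-1}.w_0\| &< \cA + C_0 = \tfrac{1}{4}\log A_i + \log c.
\end{align*}
Since $w_0, \gamma^{-1}.w_0 \in \Gamma_i.w_0 = W_{\Gamma_i}$, the two vectors $z := g_2.w_0$ and $z' := g_2(\gamma^{-1}.w_0)$ both lie in $g_2.W_{\Gamma_i}$ with norm strictly less than $cA_i^{1/4}$. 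By Claim \ref{claim: new alpha unique}, they must coincide, whence $\gamma^{-1}.w_0 = w_0$, i.e.\ $\gamma^{-1} \in \stab_G(w_0) = H$, contradicting $\gamma \notin H$.

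The only subtle point, and the reason for the particular form of the constant $\cA$, is the numerology: one must ensure that the bound $\cA + C_0$ coming from Eq. \eqref{eq: distance to plane} falls strictly below the uniqueness threshold $\log(cA_i^{1/4})$ of Claim \ref{claim: new alpha unique}. The choice $\cA = \tfrac{1}{4}\log A_i - C_0 + \log c$ makes these quantities exactly equal, so the strict inequalities in the hypothesis $p \in S$ (an open set) are used in an essential way. There is no further obstacle, but this tight matching is what dictates the radius of the tubular neighborhood of $\HH^2$ that one is able to embed into $M_i$, and is ultimately the source of the $\tfrac{1}{2}\log A_1$ term in Proposition \ref{prop: far lines}.
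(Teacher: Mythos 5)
Your proof is correct and follows essentially the same route as the paper: the case $\gamma\in\Gamma_3$ is immediate since $\gamma$ preserves $\HH^2$, and the case $\gamma\nin H$ is handled by applying Eq.~\eqref{eq: distance to plane} to both $\HH^2$ and $\HH^2.\gamma$, invoking the uniqueness of a short vector in $g.W_{\Gamma_i}$ from Claim~\ref{claim: new alpha unique}, and concluding $\gamma^{-1}.w_0=w_0$, hence $\gamma\in H$. Your remark about the strict inequalities exactly matching the threshold $\log(cA_i^{1/4})$ is also an accurate reading of why $\cA$ is chosen as it is.
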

\begin{proof}
  If $\gamma\in H$ then the first option holds. 
  If $\gamma \nin H$, assume to the contrary that $\pi_{K_G}(g)\in S.\gamma \cap S$. 
  Then 
  \[\log \|g.w_0\| \stackrel{\eqref{eq: distance to plane}}{\le} d(\pi_{K_G}(g), \HH^2) + C_0 < \cA + C_0 = \frac{1}{4}\log A_i + \log c\]
  Similarly, 
  \[\log \|g\gamma^{-1}.w_0\| \stackrel{\eqref{eq: distance to plane}}{\le} d(\pi_{K_G}(g), \HH^2.\gamma) + C_0 < \cA + C_0 = \frac{1}{4}\log A_i + \log c\]
  Hence, by Claim \ref{claim: new alpha unique} we deduce that $g\gamma^{-1}.w_0 = g.w_0$, which implies that $\gamma\in H$. This contradicts the assumption and completes the proof.
\end{proof}
The following corollary is immediate. 
\begin{corollary}\label{cor: far planes}
  For every two different hyperbolic planes $\HH^2.\gamma, \HH^2.\gamma'$ for $\gamma, \gamma'\in \Gamma_i$ we have $d_{\HH^3}(\HH^2.\gamma, \HH^2.\gamma') \ge 2\cA$. \qed
\end{corollary}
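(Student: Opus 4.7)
The plan is to reduce the statement to Claim~\ref{claim: disjoint nbds} by right-translating one of the planes to be $\HH^2$ itself. Since $G$ acts by isometries on $\HH^3$, we have $d_{\HH^3}(\HH^2.\gamma, \HH^2.\gamma') = d_{\HH^3}(\HH^2, \HH^2.\gamma'\gamma^{-1})$, so after replacing $\gamma'\gamma^{-1}$ by $\gamma$ it suffices to show that whenever $\HH^2 \ne \HH^2.\gamma$ for some $\gamma \in \Gamma_i$, we have $d_{\HH^3}(\HH^2, \HH^2.\gamma) \ge 2\cA$.

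First I would observe that the setwise stabilizer of $\HH^2$ in $G$ is $N(H) = H \cup g_0H$. Hence $\HH^2.\gamma \ne \HH^2$ forces $\gamma \notin N(H)$, and in particular $\gamma \notin H$. Claim~\ref{claim: disjoint nbds} then yields $S.\gamma \cap S = \emptyset$, where $S$ is the open $\cA$-neighbourhood of $\HH^2$.

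The final step is a metric argument converting this disjointness into the desired distance bound. If one had two points $p \in \HH^2$ and $p' \in \HH^2.\gamma$ with $d_{\HH^3}(p, p') < 2\cA$, then the midpoint $q$ of the geodesic segment from $p$ to $p'$ would satisfy $d_{\HH^3}(q, \HH^2) < \cA$ and $d_{\HH^3}(q, \HH^2.\gamma) < \cA$, placing $q \in S \cap S.\gamma$ and contradicting the previous step. Therefore $d_{\HH^3}(\HH^2, \HH^2.\gamma) \ge 2\cA$, as required. There is no real obstacle here; the only thing to be careful about is the elementary point that the $\ge$ (rather than $>$) bound follows from the open set $S$ being defined with strict inequality together with the midpoint argument.
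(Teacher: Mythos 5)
Your argument is correct and is exactly the route the paper intends: it leaves the corollary as "immediate" from Claim~\ref{claim: disjoint nbds}, and your reduction to $\gamma'\gamma^{-1}$, the observation that a nontrivially translated plane forces $\gamma'\gamma^{-1}\notin H$, and the midpoint/triangle-inequality step converting disjointness of the open $\cA$-neighbourhoods into $d_{\HH^3}\ge 2\cA$ is precisely the omitted verification. Nothing is missing.
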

\begin{corollary}[Strengthening of Claim \ref{claim: V injects}]\label{cor: S injects}
  Recall the projection $\bar\rho:\HH^3\to \HH^3 / \Gamma_3 = \bar M$ and recall the standard projection $\tau_i:\bar M\to M_i$. 
  Let $\bar S = \bar\rho(S)$. Then $\tau_i|_{\bar S}$ is one to one.
\end{corollary}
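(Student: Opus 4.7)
\medskip

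\noindent\textbf{Proof proposal.} The plan is to reduce the statement directly to Claim \ref{claim: disjoint nbds}. Unwinding definitions, $\tau_i|_{\bar S}$ being one-to-one is equivalent to the following assertion: whenever $p_1,p_2\in S$ satisfy $p_2 = p_1.\gamma$ for some $\gamma\in\Gamma_i$, we have $\gamma\in\Gamma_3$ (so that $\bar\rho(p_1)=\bar\rho(p_2)$). The whole point of Claim \ref{claim: disjoint nbds} is precisely this dichotomy: elements of $\Gamma_i$ either lie in $\Gamma_3$ and stabilize $S$, or lie outside $H$ and move $S$ entirely off of itself.

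So the argument I have in mind is the following. Take $q_1,q_2\in \bar S$ with $\tau_i(q_1)=\tau_i(q_2)$, and pick representatives $p_1,p_2\in S$ with $\bar\rho(p_j)=q_j$. The equality $\tau_i(q_1)=\tau_i(q_2)$ means $p_2=p_1.\gamma$ for some $\gamma\in\Gamma_i$. Then $p_1\in S\cap S.\gamma^{-1}$, so $S\cap S.\gamma^{-1}\neq\emptyset$. By Claim \ref{claim: disjoint nbds} applied to $\gamma^{-1}\in\Gamma_i$, the alternative $\gamma^{-1}\notin H$ is excluded (since it would force $S.\gamma^{-1}\cap S=\emptyset$), hence $\gamma^{-1}\in\Gamma_3$ and therefore $\gamma\in\Gamma_3$. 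Consequently $q_2=\bar\rho(p_1.\gamma)=\bar\rho(p_1)=q_1$.

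There is essentially no obstacle here beyond correctly bookkeeping the quotients $\HH^3\to\bar M\to M_i$: all the real content is packaged inside Claim \ref{claim: disjoint nbds}, which in turn rests on the arithmetic uniqueness provided by Claims \ref{claim: discreteness of tiny orbits} and \ref{claim: new alpha unique} (the latter being where the thickness $\cA=\tfrac14\log A_i+O(1)$ enters). It is also worth remarking that Claim \ref{claim: V injects} is a formal consequence, since $\HH^2\subset S$ implies $V=\bar\rho(\HH^2)\subset \bar S$, so the injectivity of $\tau_i$ on $\bar S$ restricts to injectivity on $V$.
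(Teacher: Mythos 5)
Your argument is correct and is essentially the paper's own proof: lift the two points of $\bar S$ to $S$, observe that equality in $M_i$ gives a $\gamma\in\Gamma_i$ with $S\cap S.\gamma\neq\emptyset$, and invoke Claim \ref{claim: disjoint nbds} to force $\gamma\in\Gamma_3$, hence equality already in $\bar M$. The only difference is the immaterial choice of applying the dichotomy to $\gamma^{-1}$ rather than $\gamma$.
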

\begin{proof}
  Assume that for $p_1, p_2 \in \bar S$ we have $\tau_i(p_1) = \tau_i(p_2)$. 
  Choose $\tilde p_j \in \bar\rho^{-1}(p_j)$ for $j=1,2$. Since $\tau_i\circ \bar\rho = \pi_{\Gamma_i}$ agrees on $\tilde p_1, \tilde p_2$ we deduce that for some $\gamma\in \Gamma_i$ we have $\tilde p_1 = \tilde p_2.\gamma$. Hence $\tilde p_1 \in S\cap S.\gamma$. Claim \ref{claim: disjoint nbds} implies that $\gamma \in \Gamma_3$, which in turn implies that $p_1 = p_2$. This proves the injectivity of $\tau_i$ on $\bar S$. 
\end{proof}
\begin{proof}[Proof of Claim \ref{claim: big volume}]
  In view of Corollary \ref{cor: S injects} it is sufficient to show that $\vol(\bar S) = \Omega(A_i^{1/2})$. 

  Recall the map $\varphi:\HH^3\to \RR$ form definition \ref{def: hyperbolic near plane}. 
  Its gradient is of fixed size $1$.
  This implies that for every set $\Omega\subseteq \HH^3$,
  \begin{align}\label{eq: volume formula 1}
    {\rm Vol}(\Omega) = \int_{-\infty}^\infty{\rm Area}\left(\Omega\cap \varphi^{-1}(t)\right)\bd t.
  \end{align}
  Recall the foliation $\{\xi_p:p\in \HH^2\}$ of $\HH^3$ form definition \ref{def: hyperbolic near plane}.
  This gives a parametrization $\HH^2\to \varphi^{-1}(t_0)$ for every $t_0\in \RR$ by $p\mapsto \xi_p(t_0)$.
  This parametrization can be seen to expand the Riemannian metric by $\cosh(t)$.
  Therefore, for every $\Omega\subseteq \HH^2, t_0\ge 1$, 
  \begin{align}\label{eq: volume formula 2}
    \vol(\{\xi_p(t):t\in [ -t_0, t_0], p\in \Omega\}) = {\rm Area}(\Omega)\int_{-t_0}^{t_0}\cosh^2(t)\bd t = \Theta(e^{2t_0}{\rm Area}(\Omega)).
  \end{align}

  The function $\varphi$ is $H$ invariant and hence descends to a function $\bar\varphi:\HH^3/\Gamma_3=\bar M\to \RR$.
  For every $h\in H$, we have that $\xi_{p.h} = \xi_{p}.h$.
  Thus, the foliation $\xi_\bullet$ descends to $\bar M$ as follows:
  for every $q\in V = \HH^2 / \Gamma_3$ there is a geodesic $\bar \xi_q:\RR\to \bar M$, and these geodesics form a foliation of $\bar M$.
  Choosing a fundamental domain $\Omega\subseteq \HH^2$ to $V = \HH^2 / \Gamma_3$ we deduce from Eq. \eqref{eq: volume formula 2} that 
  \begin{align*}
    \vol(\bar S) 
    &= \vol(\{\bar\xi_q(t):t\in (-\cA, \cA), q\in V\})
    = \vol(\{\xi_p(t):t\in (-\cA, \cA), p\in \Omega\})\\&
    = \Theta(e^{2\cA}{\rm Area}(\Omega))
    = \Theta(\sqrt A_i{\rm Area}(V)).
  \end{align*}
  Since ${\rm Area}(V)$ is fixed we deduce that $\vol(\bar S) = \Theta(\sqrt A_i)$. 
  By Corollary \ref{cor: S injects} we obtain $\vol(M_i) = \Omega(\sqrt A_i)$.
  The equality $\vol(G/\Gamma_i) = \Omega(\vol(M_i))$ completes the proof.
\end{proof}
\begin{proof}[Proof of Proposition \ref{prop: far lines}]
  Let $\zeta: \HH^2 \to M_i$ be a locally isometric immersion. 
  Recall the standard projection $\rho_i:\HH^3\to M_i$. 
  Then $\zeta$ factors as $\zeta = \rho_i\circ \tilde \zeta$ for some isometric embedding 
  $\tilde \zeta:\HH^2\to \HH^3$. 

  Note that since $V_i = \rho_i(\HH^2)$ we have 
  \[\rho_i^{-1}(V_i) = \rho_i^{-1}(\rho_i(\HH^2)) = \bigcup_{\gamma\in \Gamma_i}\HH^2.\gamma.\]
  Hence \[\zeta^{-1}(V_i) = \bigcup_{\gamma\in \Gamma_i}\tilde \zeta^{-1} (\HH^2.\gamma)\]
  This is a representation of $\zeta^{-1}(V_i)$ as a union of lines.
  To complete the proof we need to show that for every $\gamma, \gamma'$ for which $\tilde \zeta^{-1} (\HH^2.\gamma)\neq \tilde \zeta^{-1} (\HH^2.\gamma')$ we have 
  \[d_{\HH^2}(\tilde \zeta^{-1} (\HH^2.\gamma), \tilde \zeta^{-1} (\HH^2.\gamma')) \ge 2\frac{1}{2} \log A_i + O(1).\]
  However, since $\tilde \zeta^{-1}$ is an isometric embedding we obtain
  \[d_{\HH^2}(\tilde \zeta^{-1} (\HH^2.\gamma), \tilde \zeta^{-1} (\HH^2.\gamma')) \ge 
  d_{\HH^3}(\HH^2.\gamma, \HH^2.\gamma') \stackrel{\ref{cor: far planes}}{\ge} 
  2\cA.\]
\end{proof}
\subsection{Proof of Proposition \ref{prop: high dimension away of lines}}
\label{ssec: proof of geometric}
To prove Proposition \ref{prop: high dimension away of lines}, we will first rephrase it as a question on an estimate of the Hausdorff dimension of a certain Cantor set, and then bound it. 
\subsubsection*{Reformulation of Proposition \ref{prop: high dimension away of lines}}
Let $\cL = \bigcup_{\ell \in L} \ell$ such that for every $\ell_1, \ell_2\in L$ we have $d_{\HH^2}(\ell_1, \ell_2) \ge \cA$. We may assume without loss of generality that $\cA \ge 10$. Let $p\in \HH^2\setminus \cL$. 
Denote by $U$ the connected component of $\HH^2\setminus \cL$ containing $p$. Denote by $L'\subseteq L$ the collection of lines composing the boundary of $U$. 
Denote by $D(U)\subseteq \partial \HH^2$ the limit set of $U$. Since $U$ is convex, 
\[D(U)=\{q\in \partial \HH^2: [p,q)\subseteq U\}. \]
Hence we have an equality of Hausdorff dimensions 
\begin{align*}
  H.\dim(\left\{\gamma\in {\rm Ray}_{p}:\gamma(t)\nin \cL \ \forall t\ge 0\right\}) &= 
  H.\dim(\left\{\gamma\in {\rm Ray}_{p}:\gamma(t)\in U \ \forall t\ge 0\right\})\\&=
  H.\dim(D(U)).  
\end{align*}For every geodesic line $\ell\subseteq \HH^2\setminus \{p\}$ denote by $x_\ell, y_\ell \in \partial \HH^2$ the limit points of $\ell$
so that the ray $[p,x_\ell)$ can be rotated less then $\pi$ degrees counterclockwise about $p$ to obtain $[p,y_\ell)$. 
Denote by $I_\ell\subset \partial \HH^2$ the open interval with the boundary points $x_\ell$ and $y_\ell$, which lies on the other side of $\ell$ than $p$.
Then $D(U) = \partial \HH^2 \setminus \bigcup_{\ell \in L'}I_L$. 
The intervals $I_L$ are disjoint. 
\begin{claim}
  If $\ell_1, \ell_2$ are nonintersecting lines in $\HH^2\setminus \{p\}$ then 
  \begin{align}\label{eq: dist is sinh}
    \sinh(d(\ell_1, \ell_2)/2) = \sqrt{|[x_{\ell_1}, x_{\ell_2}; y_{\ell_2}, y_{\ell_1}]|},
  \end{align}
  where $[a,b;c,d] = \frac{(a-c)(b-d)}{(a-d)(b-c)}$ is the cross ratio on $\PP^1_\RR \cong \partial \HH^2$. 
\end{claim}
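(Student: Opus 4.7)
The plan is to reduce the identity to a single-parameter calculation by invoking the 3-transitivity of the Möbius group and the fact that both sides of the asserted equality are invariant under orientation-preserving isometries of $\HH^2$. Indeed, the hyperbolic distance $d(\ell_1,\ell_2)$ is isometry-invariant by definition, and orientation-preserving isometries of $\HH^2$ extend to Möbius transformations of $\partial\HH^2\cong\PP^1_\RR$, under which cross-ratios are preserved. One checks directly from the formula that swapping $x_{\ell_i}\leftrightarrow y_{\ell_i}$ simultaneously for $i=1,2$ leaves the cross-ratio unchanged (the four minus-signs cancel pairwise), so the outcome does not depend on the particular choice of labeling dictated by $p$, only on the requirement that the labelings of $\ell_1$ and $\ell_2$ be cyclically consistent.

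Next I would use $\mathrm{PSL}_2(\RR)$ to normalize three of the four boundary points, sending $x_{\ell_1}\mapsto 0$, $y_{\ell_1}\mapsto \infty$, and $x_{\ell_2}\mapsto 1$; then the remaining point $y_{\ell_2}=t$ is the single free parameter, with $t>1$ corresponding to $\ell_1\cap\ell_2=\emptyset$ and $I_{\ell_2}=(1,t)$ lying on the opposite side of $\ell_2$ from $p$. In the upper half-plane model $\ell_1$ becomes the positive imaginary axis, and $\ell_2$ becomes the Euclidean upper semicircle joining $1$ and $t$ on the real axis.

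The right-hand side is now an immediate calculation of the cross-ratio at the four explicit points $0,1,t,\infty$, with the standard convention for limits at $\infty$. For the left-hand side, I would identify the common perpendicular to $\ell_1$ and $\ell_2$: any Euclidean semicircle centered at the origin meets $\ell_1$ orthogonally, and the requirement that such a semicircle also meets $\ell_2$ orthogonally (which, in the half-plane model, is equivalent to Euclidean orthogonality of the two circles at their intersection) reduces to a Pythagoras identity on the Euclidean centers and radii, fixing the radius to $\sqrt t$. The two feet of the common perpendicular then have explicit coordinates on the semicircle of radius $\sqrt t$ centered at $0$, and their hyperbolic distance is obtained by integrating the arc-length element $d\theta/\sin\theta$ along that semicircle. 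Substituting into $\sinh(d/2)$ and comparing with the value of the cross-ratio then yields the identity after elementary algebra (for instance, $\cosh d=(t+1)/(t-1)$ in this normalization).

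The argument is a routine computation in hyperbolic trigonometry; the only points requiring care are the handling of the point at infinity in the cross-ratio formula and the alignment of the sign/orientation conventions for the labels $x_\ell,y_\ell$ relative to the base point $p$. There is no substantive obstacle, the identity being a classical one — a variant of the standard formula expressing the distance between two ultraparallel geodesics in terms of the cross-ratio of their endpoints.
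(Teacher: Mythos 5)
Your plan founders on the labeling convention, and in a way that actually reverses the identity. With your normalization $x_{\ell_1}=0$, $y_{\ell_1}=\infty$, $x_{\ell_2}=1$, $y_{\ell_2}=t$, $t>1$, your own (correct) computations give $\cosh d=\frac{t+1}{t-1}$, hence $\sinh^2(d/2)=\frac{1}{t-1}$, while the cross-ratio in the statement is $\left|[0,1;t,\infty]\right|=\frac{t}{t-1}=\cosh^2(d/2)$. So if you carry the computation through faithfully you do not verify the claim — you get $\sqrt{|[x_{\ell_1},x_{\ell_2};y_{\ell_2},y_{\ell_1}]|}=\cosh(d/2)$ instead of $\sinh(d/2)$. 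The reason is that your normal form is unreachable under the convention in force: the rule "rotate $[p,x_\ell)$ counterclockwise by less than $\pi$ to reach $[p,y_\ell)$, sweeping $I_\ell$" forces, in the configuration where the claim is used (both lines bound the component $U$ containing $p$, so neither line separates $p$ from the other), the cyclic order $x_{\ell_1},y_{\ell_1},x_{\ell_2},y_{\ell_2}$ on $\partial\HH^2$ — this is exactly the circular-order assertion the paper's proof starts from. Orientation-preserving isometries preserve cyclic order, so they cannot take that configuration to $(0,1,t,\infty)=(x_{\ell_1},x_{\ell_2},y_{\ell_2},y_{\ell_1})$; the latter is the labeling one gets when $\ell_1$ separates $p$ from $\ell_2$, and there the right-hand side genuinely equals $\cosh(d/2)$. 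The invariance you check (swapping $x_{\ell_i}\leftrightarrow y_{\ell_i}$ simultaneously for both lines) is not the relevant one: your normalization implicitly relabels the endpoints of only one line (equivalently, changes the cyclic order), and under that operation the cross-ratio does change, precisely from $\sinh^2(d/2)$ to $\cosh^2(d/2)$.

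The fix is to normalize within the correct cyclic order, which is what the paper does: take $x_{\ell_1}=e^t$, $y_{\ell_1}=-e^t$, $x_{\ell_2}=-1$, $y_{\ell_2}=1$, with $t=d(\ell_1,\ell_2)$ (the common perpendicular is then the imaginary axis and the distance is read off immediately); a one-line computation gives $\left|[e^t,-1;1,-e^t]\right|=\frac{(e^t-1)^2}{4e^t}=\sinh^2(t/2)$. Your general strategy (Möbius normalization plus the orthogonal-circle computation of the common perpendicular, where your $\rho=\sqrt t$ and $\cosh d=\frac{t+1}{t-1}$ are correct) is fine in principle, but as written the proof does not establish the stated equality; it must either be redone with a normal form respecting the cyclic order $x_{\ell_1},y_{\ell_1},x_{\ell_2},y_{\ell_2}$, or accompanied by an explicit verification of where $p$ sits relative to both lines.
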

\begin{proof}
  The choice to labeling of the limit points of $\ell_1,\ell_2$ ensures that  and $x_{\ell_1}, y_{\ell_1}, x_{\ell_2}, y_{\ell_2}$ are in this circular order on $\partial\HH^2$.
  Up to an isometry, we may assume that 
  \[
    x_{\ell_1} = e^t, 
    y_{\ell_1} = -e^t, 
    x_{\ell_2} = -1, 
    y_{\ell_2} = 1,
  \]
  where $t = d_{\HH^2}(\ell_1, \ell_2)$. 
  Then Eq. \eqref{eq: dist is sinh} is a direct computation.
\end{proof}
Identify $\HH^2$ with the Poincaré half-plane model in $\CC\cup \{\infty\}$. 
Sample one $\ell_0\in L'$. Up to an isometry we may assume that 
$x_{\ell_0} = 1, y_{\ell_0} = 0$ so that $|p-1/2| < 1/2$ and $I_{\ell_0} = \PP^1_\RR\setminus [0,1]$. Let $L'' = L'\setminus \{\ell_0\}$. 
Then $I_\ell = (x_\ell, y_\ell)$ for every $\ell\in L''$ and $D(U) = [0,1]\setminus \bigcup_{\ell\in L} (x_{\ell}, y_\ell)$, where 
\begin{enumerate}[label=L-\alph*), ref=(L-\alph*)]
  \item for all $\ell\in L''$ we have $x_\ell < y_\ell \in (0,1)$;
  \item\label{point: cross ratio one} for all $\ell\in L''$ we have $\frac{x_\ell(1- y_\ell)}{x_\ell - y_\ell} \ge \sinh(\cA/2)^2$;
  \item \label{point: cross ratio two}for all $\ell_1, \ell_2\in L''$ we have $\frac{(x_{\ell_2}-y_{\ell_1})(y_{\ell_2}-x_{\ell_1})}{(y_{\ell_1} - x_{\ell_1})(y_{\ell_2} - x_{\ell_2})} \ge \sinh(\cA/2)^2$.
\end{enumerate}
Denote $\cA' = \sinh(\cA/2)^2 > 5000$.
\subsubsection*{Lower bound on the dimension of the Cantor set $D(U)$}
\begin{obs}\label{obs: stupid ineq}
  The function $a,b,c\mapsto \frac{b(a+b+c)}{ac}$ is monotone increasing in $b$ and monotone decreasing in $a,c$ whenever $a,b,c > 0$. 
\end{obs}
\begin{definition}[A random variable in $z$ in $D(U)$]\label{def: random var}
  We construct a random sequence of decreasing intervals 
  $[0,1] = J_0\supset J_1\supset J_2\supset \cdots$ such that $J_{k+1}$ is one of the three thirds of $J_{k}$ for every $k$. That is, if $J_k = [a_k, a_k+3^{-k}]$, then $J_{k+1} = [a_{k+1}, a_{k+1} + 3^{-k-1}]$ for some $a_{k+1} \in  \{a_k, a_k + 3^{-k-1}, a_k + 2\cdot 3^{-k-1}\}$. 
  We will show how to sample iteratively $J_1,J_2,J_3,\ldots$ so that so that for every $\ell\in L'', k\ge 0$ we have 
  \begin{align}\label{eq: recursive length}
    |J_k \cap I_\ell| < 3^{-k-1}.
  \end{align}
  Note that Eq. \eqref{eq: recursive length} is satisfied for $J_0 = [0,1]$ as for every $\ell\in L''$ we have 
  \[|J_k \cap I_\ell| = x_\ell - y_\ell < \frac{x_\ell - y_\ell}{x_\ell(1- y_\ell)} 
  \stackrel{\ref{point: cross ratio one}}{\le}
  \frac{1}{\cA'} \le \frac{1}{3}.\] 
  Suppose that we have constructed $J_k$ that satisfies Eq. \eqref{eq: recursive length}.
  We say that $J_k$ is a \emph{regular interval} if for all $\ell\in L''$ we have $|J_k \cap I_\ell| < 3^{-k-2}$, and \emph{irregular interval} otherwise. 
  If $J_k$ is a regular interval, we may choose each of the three thirds of $J_k$ to be $J_{k+1}$. We sample $J_{k+1}$ uniformly from these three thirds. 
  \begin{claim}
    If $J_k$ is irregular, then the interval $\ell\in L''$ with $|J_k \cap I_\ell| \ge 3^{-k-2}$ is unique. 
  \end{claim}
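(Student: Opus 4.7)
The plan is to argue by contradiction using the cross-ratio separation condition \ref{point: cross ratio two}. Suppose two distinct intervals $\ell_1,\ell_2 \in L''$ both satisfy $|J_k \cap I_{\ell_i}| \ge 3^{-k-2}$, and (without loss of generality) that $\ell_1$ lies to the left of $\ell_2$, i.e.\ $y_{\ell_1} < x_{\ell_2}$.

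First I would pin down the geometric configuration. Since $J_k$ is a single interval of length $3^{-k}$, and $J_k \cap I_{\ell_1}$, $J_k \cap I_{\ell_2}$ are two disjoint nonempty subintervals with the first lying entirely to the left of the second, connectedness of $J_k$ forces $J_k$ to contain the closed gap $[y_{\ell_1}, x_{\ell_2}]$ between them. The only alternative would be that one of the $I_{\ell_i}$ swallows all of $J_k$, but then the other $I_{\ell_j}$ could not meet $J_k$ at all, contradicting the assumption. In particular $y_{\ell_1}, x_{\ell_2} \in J_k$.

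Next I would translate this into estimates on the quantities $a = y_{\ell_1} - x_{\ell_1}$, $b = x_{\ell_2} - y_{\ell_1}$ and $c = y_{\ell_2} - x_{\ell_2}$, in terms of which \ref{point: cross ratio two} reads $\frac{b(a+b+c)}{ac} \ge \cA'$. The inclusion $J_k \cap I_{\ell_i} \subseteq I_{\ell_i}$ gives $a, c \ge 3^{-k-2}$, while the fact that $J_k \cap I_{\ell_1}$, the gap $[y_{\ell_1}, x_{\ell_2}]$, and $J_k \cap I_{\ell_2}$ are three disjoint subintervals of $J_k$ yields $b \le 3^{-k} - 2\cdot 3^{-k-2} = 7\cdot 3^{-k-2}$.

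The last step is to invoke Observation \ref{obs: stupid ineq}: since $\frac{b(a+b+c)}{ac}$ is monotone increasing in $b$ and decreasing in $a, c$, the constraints above force
\[
\frac{b(a+b+c)}{ac} \;\le\; \frac{7\cdot 3^{-k-2}\,(3^{-k-2} + 7\cdot 3^{-k-2} + 3^{-k-2})}{3^{-k-2}\cdot 3^{-k-2}} \;=\; 63.
\]
This is well below $\cA' = \sinh(\cA/2)^2 > 5000$, contradicting \ref{point: cross ratio two}. The main (and essentially only) subtlety lies in the configurational step — verifying that the two intersections sit as disjoint subintervals of $J_k$ separated by a gap inside $J_k$ — but disjointness of the $I_\ell$'s together with connectedness of $J_k$ makes this routine.
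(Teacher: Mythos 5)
Your proof is correct and follows essentially the same route as the paper: assume two intervals meet $J_k$ in length at least $3^{-k-2}$, bound $a,c$ from below and the gap $b$ from above, and feed these into the monotonicity observation to contradict condition \ref{point: cross ratio two}. The only difference is cosmetic — by accounting for the disjointness of the three subintervals you get $b\le 7\cdot 3^{-k-2}$ and the bound $63$, whereas the paper uses the cruder $b\le 3^{-k}$ and gets $99$; both are far below $\cA'$.
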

  \begin{proof}
    Otherwise there are $\ell_1\neq  \ell_2\in L''$ with $|J_k \cap I_{\ell_i}| \ge 3^{-k-2}$ for $i=1,2$. 
    This implies that $y_{\ell_i} - x_{\ell_i} \ge 3^{-k-2}$. Assume without loss of generality $x_{\ell_2} > y_{\ell_1}$. Then since the two intervals intersect $J_k$ we get that $x_{\ell_2} - y_{\ell_1} < 3^k$. 
    Then 
    \[\cA' \le \frac{(x_{\ell_2}-y_{\ell_1})(y_{\ell_2}-x_{\ell_1})}{(y_{\ell_1} - x_{\ell_1})(y_{\ell_2} - x_{\ell_2})} \le 
    \frac{3^{-k}\cdot (2 \cdot 3^{-k-2} + 3^{-k})}{3^{-2(k+2)}} = 99
    \]
    which is a contradiction. 
    The last inequality follows from Observation \ref{obs: stupid ineq} applied to $a = y_{\ell_1} - x_{\ell_1} \ge 3^{-k-2}$, $b = x_{\ell_2}-y_{\ell_1} \le 3^{-k}$, $c = y_{\ell_2}-x_{\ell_2} \ge 3^{-k-2}$. 
  \end{proof}
  Consequently, if $J_k$ is an irregular interval, then there is a unique $\ell_k\in L''$ such that $|J_k \cap I_{\ell_k}| \ge 3^{-k-2}$. 
  By Eq. \eqref{eq: recursive length} we obtain that
  $|J_k \cap I_{\ell_k}| \in [3^{-k-2}, 3^{-k-1})$. 
  Hence at least one of the three thirds $J$ of $J_k$ satisfies $J\cap I_{\ell_k} = \emptyset$ and hence we choose $J_{k+1}$ uniformly among these intervals. 
  For every $\ell\in L''$, eihter $\ell = \ell_k$ and then $J_{k+1}\cap I_{\ell} = \emptyset$, or 
  $\ell \neq \ell_k$, and then 
  \[|J_{k+1}\cap I_{\ell}|\le |J_{k}\cap I_{\ell}| \le 3^{-k-2}.\]
  Hence $J_{k+1}$ satisfies Eq. \eqref{eq: recursive length}, as desired for the iterative process to continue.
  Let $z$ be the unique element in $\bigcap_{k=0}^\infty J_k$. 
\end{definition}
\begin{claim}\label{claim: z is good}
  Sample $z$ as in Definition \ref{def: random var}. Then $z\in D(U)$. 
\end{claim}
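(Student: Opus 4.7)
The plan is to show that $z\notin I_\ell$ for every $\ell\in L'$; since $D(U)=\partial\HH^2\setminus\bigcup_{\ell\in L'}I_\ell$ and $z\in[0,1]$, this will give $z\in D(U)$. The construction in Definition \ref{def: random var} was designed precisely to make this conclusion automatic, so there is really no obstacle left beyond invoking the bound \eqref{eq: recursive length}; the hard work was done in the preceding claim that irregular $J_k$ has a \emph{unique} offending interval $\ell_k$, which let the inductive bound be propagated.

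First, since $z\in\bigcap_{k\ge 0}J_k\subseteq J_0=[0,1]$, and $I_{\ell_0}=\PP^1_\RR\setminus[0,1]$, we immediately get $z\notin I_{\ell_0}$. For the remaining case, fix $\ell\in L''$ and suppose for contradiction that $z\in I_\ell$. Since $I_\ell=(x_\ell,y_\ell)$ is open in $\RR$, there exists $\varepsilon>0$ such that $(z-\varepsilon,z+\varepsilon)\subseteq I_\ell$. Choose any $k$ with $3^{-k}<\varepsilon$; then $z\in J_k$ and $|J_k|=3^{-k}<\varepsilon$ force
\[ J_k\subseteq (z-\varepsilon,z+\varepsilon)\subseteq I_\ell, \]
so $|J_k\cap I_\ell|=|J_k|=3^{-k}$. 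This contradicts Eq.~\eqref{eq: recursive length}, which asserts $|J_k\cap I_\ell|<3^{-k-1}$. Hence $z\notin I_\ell$, and the claim follows.
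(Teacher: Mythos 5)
Your proof is correct and follows essentially the same route as the paper: both argue that $z\in[0,1]$ and that if $z$ lay in some open interval $I_\ell$, then $J_k\subseteq I_\ell$ for large $k$, contradicting the bound $|J_k\cap I_\ell|<3^{-k-1}$ of Eq.~\eqref{eq: recursive length}. Your explicit $\varepsilon$-argument just spells out the step the paper states directly.
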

\begin{proof}
  By its definition $z\in [0,1]$. Suppose that $z\in I_\ell$ for some $\ell\in L''$. Then since $I_\ell$ is open, for some $k$ we have $J_k\subseteq I_\ell$. However, by Eq. \eqref{eq: recursive length} we have $|J_k \cap  I_\ell| < 3^{-k-1} < 3^{-k} = |J_k|$. This contradicts $J_k \subseteq I_\ell$ and hence $z \in [0,1]\setminus \bigcup_{\ell\in L} I_\ell= D(U)$. 
\end{proof}
\begin{claim}
  For every $J = [a/3^m, (a+1)/3^m]$ we have \[\PP(J_m = J) < 3^{-(1-1/\cA'')m + 1},\]
  where $\cA'' = \log_3 \cA' - 5 > 2$. 
\end{claim}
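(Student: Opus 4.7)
The plan is to observe that, once $J$ is fixed, the sequence $J_0 \supset J_1 \supset \cdots \supset J_m = J$ is deterministic (each $J_k$ is the unique triadic interval of length $3^{-k}$ containing $J$), so
\[
\PP(J_m = J) \;=\; \prod_{k=0}^{m-1} \PP(J_{k+1} \mid J_k),
\]
where each conditional probability equals $\tfrac{1}{3}$ when $J_k$ is regular and is at most $1$ when $J_k$ is irregular. The task therefore reduces to bounding the number $N$ of irregular indices $k \in \{0,1,\ldots,m-1\}$ along this path: if $N \le 1 + m/\mathcal{A}''$, then
\[
\PP(J_m = J) \;\le\; (1/3)^{\,m-N} \;\le\; 3 \cdot 3^{-(1-1/\mathcal{A}'')m} \;=\; 3^{-(1-1/\mathcal{A}'')m + 1}.
\]

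To bound $N$, enumerate the irregular times $k_1 < k_2 < \cdots < k_N$, with associated lines $\ell_{k_i} \in L''$ (well-defined by the uniqueness claim already proved in the excerpt). The key step, and the only real calculation, is the spacing estimate $k_{i+1} - k_i \ge \mathcal{A}''$. First, since the sampling rule forces $J_{k_i+1} \cap I_{\ell_{k_i}} = \emptyset$ and $J_{k_{i+1}} \subseteq J_{k_i+1}$ meets $I_{\ell_{k_{i+1}}}$, it follows immediately that $\ell_{k_{i+1}} \neq \ell_{k_i}$.

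Writing $\ell = \ell_{k_i}$, $\ell' = \ell_{k_{i+1}}$, $a = |I_\ell|$, $b = |I_{\ell'}|$, $j = k_{i+1} - k_i$, and letting $g$ denote the gap between the two disjoint intervals (say $I_\ell$ is to the left of $I_{\ell'}$), irregularity gives $a \ge 3^{-k_i - 2}$ and $b \ge 3^{-k_{i+1} - 2}$, while the fact that both intervals meet $J_{k_i}$ forces $g \le |J_{k_i}| = 3^{-k_i}$. The cross-ratio condition \ref{point: cross ratio two} rewrites as $g(a+g+b) \ge \mathcal{A}' ab$; setting $n = 3^{-k_i}$ and dividing by $ab$ we get
\[
\mathcal{A}' \;\le\; \frac{n}{a} + \frac{n^2}{ab} + \frac{n}{b} \;\le\; 9 + 3^{j+4} + 3^{j+2} \;\le\; 3^{j+5},
\]
using $n/a \le 9$ and $n/b \le 3^{j+2}$. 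Hence $j \ge \log_3 \mathcal{A}' - 5 = \mathcal{A}''$.

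The spacing then yields $k_N - k_1 \ge (N-1)\mathcal{A}''$, so $N \le 1 + (m-1)/\mathcal{A}'' \le 1 + m/\mathcal{A}''$, which is what was required. The only obstacle I foresee is bookkeeping in the cross-ratio inequality, namely making sure the $O(1)$ lower-order terms are absorbed cleanly enough for the constant in the exponent to come out exactly as $\mathcal{A}'' = \log_3 \mathcal{A}' - 5$; this is what motivates the somewhat generous loss of $5$ in the definition of $\mathcal{A}''$, so that each of $n/a$, $n/b$, $n^2/(ab)$ can be bounded crudely with no further optimisation.
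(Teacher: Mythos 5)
Your proposal is correct and follows essentially the same route as the paper: it bounds the number of irregular steps by showing, via the cross-ratio condition together with the lower bounds $|I_{\ell}|\ge 3^{-k-2}$ and the containment of both intervals near $J_{k_i}$, that consecutive irregular indices are at least $\mathcal{A}''=\log_3\mathcal{A}'-5$ apart, and then multiplies the factor $1/3$ over the regular steps. The only (cosmetic) difference is that the paper bounds the spacing for an arbitrary pair of irregular indices using its monotonicity observation, whereas you bound the three terms $n/a$, $n^2/(ab)$, $n/b$ directly for consecutive ones.
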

\begin{proof}
  Let $F_m = \{k=0,\dots,m-1: J_k \text{ is an irregular interval}\}$. 
  Let $k_1 < k_2 \in F_m$. 
  Then
  $I_{\ell_{k_1}} \cap J_{k_1}\neq \emptyset$, $I_{\ell_{k_1}} \cap J_{k_1 + 1} = \emptyset$ and 
  $I_{\ell_{k_2}} \cap J_{k_2} \neq \emptyset$. 
  This implies that $\ell_{k_1} \neq \ell_{k_2}$.
  Note that $|I_{\ell_{k_1}}| \ge 3^{-k_1-2}, |I_{\ell_{k_2}}| \ge 3^{-k_2-2}$ and since both intervals intersect $J_{k_1}$ we deduce that $d_{\RR}(I_{\ell_{k_1}}, I_{\ell_{k_2}}) < 3^{-k_1}$. 
  Applying Observation \ref{obs: stupid ineq} and Point \ref{point: cross ratio two} to $I_{\ell_{k_1}}, I_{\ell_{k_2}}$ we deduce that 
  \[\cA' \le \frac{3^{-k_1}(3^{-k_1} + 3^{-k_1-2} + 3^{-k_2-2})}{3^{-k_1-2}\cdot 3^{-k_2-2}} \le 3^5 \cdot 3^{k_2 - k_1}.\]
  Hence $k_2 - k_1 \ge \log_3 \cA' - 5$. 
  Therefore, $\#F_m < m/(\log_3 \cA' - 5) + 1$. 
  Note that when sampling $J_{k+1}$ iteratively for $k=0,...,m-1$, if $k\nin F_m$ then $J_{k+1}$ is sampled uniformly between the the three options. 
  Hence the probability $J_m$ was sampled is at most 
  \[\frac{1}{3^{m - \#F_m}} \le \frac{1}{3^{m - m/(\log_3 \cA' - 5) - 1}}.\]
\end{proof}
Consequently, for every $J = [a/3^m, (a+1)/3^m]$ we have $\PP(z\in J) \le 3^{-(1-1/\cA'')m + 1}$. 
This fact, together with Claim \ref{claim: z is good} and standard covering arguments shows that 
\[H.\dim(D(U)) \ge 1-1/\cA'' = 1-O(1/\log \cA') = 1-O(1/\cA). \]
This concludes the proof of Proposition \ref{prop: high dimension away of lines}.
\bibliographystyle{plain}
\bibliography{BibErg}{}

\begin{thebibliography}{10}

\bibitem{bader2021arithmeticity}
Uri Bader, David Fisher, Nicholas Miller, and Matthew Stover.
\newblock Arithmeticity, superrigidity, and totally geodesic submanifolds.
\newblock {\em Annals of mathematics}, 193(3):837--861, 2021.

\bibitem{beardon2012geometry}
Alan~F Beardon.
\newblock {\em The geometry of discrete groups}, volume~91.
\newblock Springer Science \& Business Media, 2012.

\bibitem{benoist2022geodesic}
Yves Benoist and Hee Oh.
\newblock Geodesic planes in geometrically finite acylindrical-manifolds.
\newblock {\em Ergodic Theory and Dynamical Systems}, 42(2):514--553, 2022.

\bibitem{BorelHarishChandra}
A.~Borel and Harish. Chandra.
\newblock Arithmetic subgroups of algebraic groups.
\newblock {\em Ann. Math.}, 75(3):485--535, 1962.

\bibitem{bowditch1993geometrical}
Brian~Hayward Bowditch.
\newblock Geometrical finiteness for hyperbolic groups.
\newblock {\em Journal of functional analysis}, 113(2):245--317, 1993.

\bibitem{bowen1973maximizing}
Rufus Bowen.
\newblock Maximizing entropy for a hyperbolic flow.
\newblock {\em Mathematical systems theory}, 7(3):300--303, 1973.

\bibitem{burger1991ramanujan}
Marc Burger and Peter Sarnak.
\newblock Ramanujan duals ii.
\newblock {\em Inventiones mathematicae}, 106:1--11, 1991.

\bibitem{buser1982note}
Peter Buser.
\newblock A note on the isoperimetric constant.
\newblock In {\em Annales scientifiques de l'{\'E}cole normale sup{\'e}rieure}, volume~15, pages 213--230, 1982.

\bibitem{calabi1961compact}
Eugenio Calabi.
\newblock On compact, {R}iemannian manifolds with constant curvature {I}.
\newblock {\em Proc. Sympos. Pure Math., Amer. Math. Soc., Providence, RI, 1961}, pages 155--180, 1961.

\bibitem{clozel2003demonstration}
Laurent Clozel.
\newblock D{\'e}monstration de la conjecture $\tau$.
\newblock {\em Invent. Math.}, 151(2):297--328, 2003.

\bibitem{dani16limit}
Shrikrishna~G Dani and Grigory~A Margulis.
\newblock Limit distributions of orbits of unipotent flows and values of quadratic forms. im gelfand seminar, 91--137.
\newblock {\em Adv. Soviet Math}, 16, 1993.

\bibitem{einsiedler2010diagonal}
M~Einsiedler and E~Lindenstrauss.
\newblock Diagonal actions on locally homogeneous spaces.
\newblock {\em Homogeneous flows, moduli spaces and arithmetic}, 10:155--241, 2010.

\bibitem{einsiedler2023rigidity}
Manfred Einsiedler and Elon Lindenstrauss.
\newblock Rigidity of non-maximal torus actions, unipotent quantitative recurrence, and diophantine approximations.
\newblock {\em arXiv preprint arXiv:2307.04163}, 2023.

\bibitem{eskinMozesMargulisFunctions}
Alex Eskin and Shahar Mozes.
\newblock Margulis functions and their applications.
\newblock {\em Dynamics, geometry, number theory: the impact of {M}argulis on modern mathematics}, pages 342--361, 2022.

\bibitem{fisher2021finiteness}
David Fisher, Jean-Fran{\c{c}}ois Lafont, Nicholas Miller, and Matthew Stover.
\newblock Finiteness of maximal geodesic submanifolds in hyperbolic hybrids.
\newblock {\em Journal of the European Mathematical Society (EMS Publishing)}, 23(11), 2021.

\bibitem{furstenberg1970intersections}
Harry Furstenberg.
\newblock Intersections of {C}antor sets and transversality of semigroups.
\newblock In {\em Problems in analysis ({S}ympos. in honor of {S}alomon {B}ochner, {P}rinceton {U}niv., {P}rinceton, {N}.{J}., 1969)}, pages 41--59. Princeton Univ. Press, Princeton, NJ, 1970.

\bibitem{garland1966deformation}
Howard Garland.
\newblock On deformations of lattices in {L}ie groups.
\newblock {\em Algebraic Groups and Discontinuous Subgroups (Proc. Sympos. Pure Math., Boulder, Colo., 1965)}, pages 400--404, 1966.

\bibitem{gromov1987non}
Michael Gromov and Ilya Piatetski-Shapiro.
\newblock Non-arithmetic groups in {L}obachevsky spaces.
\newblock {\em Publications Math{\'e}matiques de {l'IH{\'E}S}}, 66:93--103, 1987.

\bibitem{kazhdan1967connection}
David~A Kazhdan.
\newblock Connection of the dual space of a group with the structure of its close subgroups.
\newblock {\em Functional analysis and its applications}, 1(1):63--65, 1967.

\bibitem{lindenstrauss2023polynomial}
Elon Lindenstrauss and Amir Mohammadi.
\newblock Polynomial effective density in quotients of {$\HH^3$} and {$\HH^2\times \HH^2$}.
\newblock {\em Inventiones mathematicae}, 231(3):1141--1237, 2023.

\bibitem{lindenstrauss2022effective}
Elon Lindenstrauss, Amir Mohammadi, and Zhiren Wang.
\newblock Effective equidistribution for some one parameter unipotent flows.
\newblock {\em arXiv preprint arXiv:2211.11099}, 2022.

\bibitem{margulis1977discrete}
Gregori~A Margulis.
\newblock Discrete groups of motions of manifolds of nonpositive curvature.
\newblock {\em Amer. Math. Soc. Transl}, 109:33--45, 1977.

\bibitem{mohammadi2022arithmeticity}
Amir Mohammadi and Gregorii Margulis.
\newblock Arithmeticity of hyperbolic-manifolds containing infinitely many totally geodesic surfaces.
\newblock {\em Ergodic Theory and Dynamical Systems}, 42(3):1188--1219, 2022.

\bibitem{mohammadi2023isolations}
Amir Mohammadi and Hee Oh.
\newblock Isolations of geodesic planes in the frame bundle of a hyperbolic 3-manifold.
\newblock {\em Compositio Mathematica}, 159(3):488--529, 2023.

\bibitem{otal2004principe}
Jean-Pierre Otal and Marc Peign{\'e}.
\newblock Principe variationnel et groupes kleiniens.
\newblock {\em Duke Mathematical Journal}, 125(1):15--44, 2004.

\bibitem{phillips1985laplacian}
Ralph~S Phillips and Peter Sarnak.
\newblock The laplacian for domains in hyperbolic space and limit sets of kleinian groups.
\newblock {\em Acta Mathematica}, 155:173--241, 1985.

\bibitem{raghunathan1968cohomology}
Madabusi~S Raghunathan.
\newblock Cohomology of arithmetic subgroups of algebraic groups: {II}.
\newblock {\em Annals of Mathematics}, pages 279--304, 1968.

\bibitem{raghunathan1972discrete}
Madabusi~S Raghunathan.
\newblock {\em Discrete subgroups of {L}ie groups}, volume~3.
\newblock Springer, 1972.

\bibitem{ratner1990acta}
Marina Ratner.
\newblock On measure rigidity for unipotent subgroups of semisimple groups.
\newblock {\em Acta math.}, 165:229--309, 1990.

\bibitem{ratner1990strict}
Marina Ratner.
\newblock Strict measure rigidity for unipotent subgroups of solvable groups.
\newblock {\em Inventiones mathematicae}, 101:449--482, 1990.

\bibitem{ratner1991raghunathan}
Marina Ratner.
\newblock On {R}aghunathan's measure conjecture.
\newblock {\em Annals of Mathematics}, 134(3):545--607, 1991.

\bibitem{reid1991arithmeticity}
Alan~W Reid.
\newblock Arithmeticity of knot complements.
\newblock {\em Journal of the London Mathematical Society}, 2(1):171--184, 1991.

\bibitem{selberg1960discontinuous}
Atle Selberg.
\newblock On discontinuous groups in higher-dimensional symmetric spaces.
\newblock In {\em Contributions to function theory (internat. Colloq. Function Theory, Bombay, 1960)}. Tata Institute of Fundamental Research, 1960.

\bibitem{selberg1965estimation}
Atle Selberg.
\newblock On the estimation of fourier coefficients of modular forms.
\newblock In {\em Proceedings of Symposia in Pure Mathematics}, pages 1--15. American Mathematical Society, 1965.

\bibitem{sullivan1979density}
Dennis Sullivan.
\newblock The density at infinity of a discrete group of hyperbolic motions.
\newblock {\em Publications Math{\'e}matiques de l'IH{\'E}S}, 50:171--202, 1979.

\bibitem{sullivan1984entropy}
Dennis Sullivan.
\newblock Entropy, hausdorff measures old and new, and limit sets of geometrically finite {K}leinian groups.
\newblock {\em Acta Math.}, 1984.

\bibitem{viana2016foundations}
Marcelo Viana and Krerley Oliveira.
\newblock {\em Foundations of ergodic theory}.
\newblock Cambridge University Press, 2016.

\bibitem{vinberg1967discrete}
{\`E}rnest~Borisovich Vinberg.
\newblock Discrete groups generated by reflections in lobachevskii spaces.
\newblock {\em Matematicheskii Sbornik}, 114(3):471--488, 1967.

\bibitem{vinberg1971rings}
Ernest~Borisovich Vinberg.
\newblock Rings of definition of dense subgroups of semisimple linear groups.
\newblock {\em Mathematics of the USSR-Izvestiya}, 5(1):45, 1971.

\bibitem{witte2001introduction}
Dave Witte~Morris.
\newblock Introduction to arithmetic groups.
\newblock {\em arXiv Mathematics e-prints}, pages math--0106063, 2001.

\end{thebibliography}
\end{document}